\numberwithin{equation}{section}
\newtheorem{Proposition}{Proposition}[section]
\newtheorem{Lemma}[Proposition]{Lemma}
\newtheorem{Theorem}[Proposition]{Theorem}
\newtheorem{Corollary}[Proposition]{Corollary}
\newtheorem{iTheorem}{Theorem}
\newtheorem{Remark}[Proposition]{Remark}
\newtheorem{Definition}[Proposition]{Definition}
\newbox\squ  
\def\lamdba{\lambda}
\def\lo{{\scriptstyle\!\,\vee}}
\def\up{{\scriptstyle\!\,\wedge}}
\def\scriptlo{{\scriptscriptstyle\!\,\vee}}
\def\scriptup{{\scriptscriptstyle\!\,\wedge}}
\def\C{{\mathbb C}}
\def\K{{\mathbb K}}
\def\Q{{\mathbb Q}}
\def\Z{{\mathbb Z}}
\def\N{{\mathbb N}}
\def\pC{\operatorname{p}\!\mathcal C}
\def\0{{\bar 0}}
\def\1{{\bar 1}}
\def\eps{{\varepsilon}}
\def\bn{{\underline{n}}}
\def\bc{{\underline{c}}}
\def\bi{\text{\boldmath$i$}}
\def\bj{\text{\boldmath$j$}}
\def\bk{\text{\boldmath$k$}}
\def\qdim{\operatorname{dim}_q}
\def\Hom{\operatorname{Hom}}
\def\End{\operatorname{End}}
\def\Ext{\operatorname{Ext}}
\def\mod{\operatorname{\!-mod}}
\def\boldrmod{\operatorname{grmod-}\!\!}
\def\rmod{\operatorname{mod-\!\!}}
\def\rproj{\operatorname{pmod-\!\!}}
\def\rinj{\operatorname{imod-\!\!}}
\def\top{\operatorname{top}}
\def\tail{\operatorname{tail}}
\def\head{\operatorname{head}}
\def\pr{\operatorname{pr}}
\def\V{\mathbb{U}}
\newcommand{\jontodo}{\todo[inline,color=green!20]}
\newcommand{\arxiv}[1]{\href{http://arxiv.org/abs/#1}{\tt
    arXiv:\nolinkurl{#1}}}
\newcommand{\excise}[1]{}
\title[Super Kazhdan-Lusztig]{Tensor product categorifications and the super Kazhdan-Lusztig conjecture}
\author{Jonathan Brundan, Ivan Losev and Ben Webster}
\address{Department of Mathematics, University of Oregon, Eugene, OR 97403}
\email{brundan@uoregon.edu}
\address{Department of Mathematics, Northeastern University,
Boston, MA 02115}
\email{i.loseu@neu.edu}
\address{Department of Mathematics, University of Virginia, Charlottesville,
VA 22904}
\email{btw4e@virginia.edu}
\thanks{2010 {\it Mathematics Subject Classification}: 16E05, 16S38, 17B37.}
\thanks{Authors
supported in part by NSF grant nos. DMS-1161094,
DMS-1161584 and DMS-1151473, respectively.}
\begin{document}

\begin{abstract}
We give a new proof of the ``super Kazhdan-Lusztig conjecture'' for
the Lie superalgebra $\mathfrak{gl}_{n|m}(\C)$ as formulated
originally by the first author. 
We also prove for the first time that any integral block of
category $\mathcal O$ for $\mathfrak{gl}_{n|m}(\C)$ 
(and also all of its parabolic analogs) 
possesses a graded version which is Koszul.
Our approach depends crucially on an application of
the uniqueness of tensor product categorifications established recently by the second two authors.
\tableofcontents
\end{abstract}

\maketitle
\section{Introduction}

In this paper we explain how the uniqueness of tensor product categorifications
established by the second two authors in \cite{LW} yields a quick 
proof of the Kazhdan-Lusztig conjecture for the general linear Lie
superalgebra $\mathfrak{gl}_{n|m}(\C)$.
This conjecture was formulated originally by the first author in
\cite{B} and has been proved already by a different method by 
Cheng, Lam and Wang in \cite{CLW}. 
Actually we prove here a
substantially stronger result, namely, that the analog
of the Bernstein-Gelfand-Gelfand category $\mathcal O$
for $\mathfrak{gl}_{n|m}(\C)$ possesses 
a Koszul graded lift, in the spirit of the classic work \cite{BGS}.

Roughly, the super Kazhdan-Lusztig conjecture asserts that
combinatorics in integral blocks of category $\mathcal O$ for
$\mathfrak{gl}_{n|m}(\C)$ is controlled by various canonical bases in
the $\mathfrak{sl}_\infty$-module $V^{\otimes n} \otimes W^{\otimes
  m}$, where $V$ is the natural $\mathfrak{sl}_\infty$-module and $W$
is its dual.  In fact, we prove a generalization of the conjecture
which is adapted to the highest weight structure on $\mathcal O$ arising from any choice of conjugacy class of Borel subalgebra;
changing the Borel corresponds to shuffling the tensor factors in the
mixed tensor space $V^{\otimes n} \otimes W^{\otimes m}$ 
into more general orders. This generalization was suggested in the introduction of
\cite{Kuj}, then precisely formulated and proved in
\cite{CLW}. 
(We point out also the paper \cite{CMW} which
establishes an equivalence of categories from an arbitrary
non-integral block of category $\mathcal O$ for
$\mathfrak{gl}_{n|m}(\C)$ to an integral block of a direct sum of
other general linear Lie superalgebras of the same total rank.)

The basic idea of our proof is as follows. For a finite interval $I \subset \Z$, let $\mathfrak{sl}_I$
be the special linear Lie algebra consisting 
of (complex) trace zero matrices with rows and columns indexed 
by integers from the set
$I_+ := I \cup (I+1).$
Let $V_I$ be the natural $\mathfrak{sl}_I$-module of column vectors
and $W_{I} := V_I^*$.
We construct a subquotient
$\mathcal O_I$ of the super category $\mathcal O$ which is an
$\mathfrak{sl}_I$-categorification of
the tensor product $V_I^{\otimes n} \otimes W_I^{\otimes m}$ in the sense of 
Chuang and Rouquier \cite{CR},\cite{R}.
Then,
observing that
$$
V_I^{\otimes n} \otimes W_I^{\otimes m}
\cong V_I^{\otimes n} \otimes ({\textstyle\bigwedge^{|I|}}
V_I)^{\otimes m},
$$
one can apply the uniqueness of tensor product categorifications from
\cite{LW} to deduce that $\mathcal O_I$ is equivalent to another
well-known categorification $\mathcal O_I'$ of this tensor 
product arising from the parabolic category $\mathcal O$ 
associated to the Lie algebra $\mathfrak{gl}_{n+m|I|}(\C)$ and its
Levi subalgebra 
$\mathfrak{gl}_{1}(\C)^{\oplus n} \oplus
\mathfrak{gl}_{|I|}(\C)^{\oplus m}$. 
The combinatorics of the latter category is 
understood by the ordinary Kazhdan-Lusztig conjecture
proved in \cite{BB},\cite{BKa}. 
Since the finite interval $I$ can be chosen freely,
this gives enough information to deduce the 
super Kazhdan-Lusztig conjecture. 

By the well-known results from \cite{BGS} and \cite{Back}, 
the category $\mathcal O_I'$ has a graded version which is Koszul.
Hence so does the equivalent category $\mathcal O_I$.
To construct a Koszul grading on 
category $\mathcal O$ for $\mathfrak{gl}_{n|m}(\C)$,
we show further that the 
Koszul gradings on each $\mathcal O_I$
can be chosen in a compatible way so that they
lift to $\mathcal O$ itself.
Again we do this also for all of the parabolic analogs of $\mathcal O$, so that a very special case recovers 
the Koszul grading on the subcategory of
$\mathcal O$ consisting of
finite dimensional representations
that was constructed explicitly in \cite{BS}.
Our main result here can be paraphrased as follows.

\begin{iTheorem}
  Any block of parabolic category $\mathcal O$ for $\mathfrak{gl}_{n|m}(\C)$ with integral
  central character has a
  graded lift which is a standard Koszul highest weight
  category. Moreover its
 graded decomposition numbers can be computed in terms of finite
  type $A$ parabolic Kazhdan-Lusztig
  polynomials, as predicted in \cite{B}.
\end{iTheorem}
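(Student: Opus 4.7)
The introduction already sketches the strategy; I would make it precise in three stages. \emph{Local Koszulity.} Fix a finite interval $I\subset\Z$. As outlined above, $\mathcal O_I$ is an $\mathfrak{sl}_I$-tensor product categorification of $V_I^{\otimes n}\otimes W_I^{\otimes m}$. After rewriting $W_I\cong \bigwedge^{|I|-1}V_I$ (up to a determinant character, which is immaterial at the categorified level), the uniqueness theorem of \cite{LW} identifies $\mathcal O_I$ with the standard parabolic category $\mathcal O_I'$ for $\mathfrak{gl}_{n+m|I|}(\C)$ with Levi factor $\mathfrak{gl}_1^{\oplus n}\oplus\mathfrak{gl}_{|I|}^{\oplus m}$. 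By \cite{BGS} and Backelin's parabolic extension \cite{Back}, each integral block of $\mathcal O_I'$ admits a standard Koszul graded lift whose decomposition numbers are parabolic Kazhdan-Lusztig polynomials; transporting this grading through the equivalence proves the theorem for each $\mathcal O_I$ (and the same argument handles the parabolic super analogs).

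\emph{Gluing.} To descend to all of parabolic $\mathcal O$ for $\mathfrak{gl}_{n|m}(\C)$, fix a block $B$ and observe that every sufficiently large $I$ realizes $B$ as a direct summand of $\mathcal O_I$, equipped with a Koszul grading $\gamma_I$ from the previous stage. I would first establish a rigidity statement: on a basic Koszul standard highest weight category, any graded autoequivalence fixing the standard modules is trivial up to rescaling of idempotents. Then, given nested intervals $J\subset I$, the truncation functor $\mathcal O_I\to\mathcal O_J$ is a morphism of tensor product categorifications and hence respects standard objects, so rigidity forces $\gamma_I$ and $\gamma_J$ to agree on $B$ after a canonical normalization. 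Passing to the direct limit over $I$ then yields a single graded lift of $B$. Applying the same rigidity to the bimodules representing the categorification functors $E_i$ and $F_i$ upgrades the full categorification action to a graded one.

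\emph{Combinatorics and main obstacle.} The standard Koszul property and the parabolic Kazhdan-Lusztig formula for graded decomposition numbers pass from $\mathcal O_I'$ to $\mathcal O$ by transport of structure, and stabilize as $I$ grows to match the prediction of \cite{B}. The principal obstacle is the gluing stage: local Koszulity of each $\mathcal O_I$ is a soft consequence of uniqueness of categorifications, but synchronizing the gradings on the infinite family $\{\mathcal O_I\}$ requires a delicate rigidity and coherence argument controlling the 2-group of graded autoequivalences of a standard highest weight cover — without such rigidity the independently constructed $\gamma_I$ need not be mutually compatible.
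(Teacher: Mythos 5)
Your first stage agrees with the paper's, but your gluing step has a substantive gap. You claim that for sufficiently large $I$ the block $B$ is a \emph{direct summand} of the truncation $\mathcal O_I$. This is false: a single block of parabolic category $\mathcal O$ for $\mathfrak{gl}_{n|m}(\C)$ can have infinitely many simple objects (for instance, in the level-$2$ case $\bn=(1,1)$, $\bc=(0,1)$, the block containing all $\lambda$ with $|\lambda|=0$ is infinite), while $\mathcal O_I$ has finitely many. The truncation $\mathcal C_J = \mathcal C_{\leq J}/\mathcal C_{<J}$ used in the paper is a \emph{subquotient}, obtained by first passing to a Serre subcategory and then to a Serre quotient, neither of which is a direct summand. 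Consequently the rigidity-of-autoequivalences argument cannot be applied as you propose: you never have two Koszul gradings on the same abelian category to compare, only gradings on a sequence of categories connected by (non-invertible) quotient and inclusion functors. The passage ``rigidity forces $\gamma_I$ and $\gamma_J$ to agree on $B$'' therefore does not parse.

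The paper avoids this issue entirely by making the gluing constructive rather than abstract. It introduces a tower $H^1\hookrightarrow H^2\hookrightarrow\cdots$ of cyclotomic quiver Hecke algebras (the inclusions $\phi^r:H^r\stackrel{\sim}{\rightarrow}e^rH^{r+1}e^r$ arise from categorified divided powers) and the category $\rmod{H}$ of \emph{stable modules} over this tower, which replaces the role of Soergel's combinatorial functor in the finite case. The crucial step is Theorem~\ref{lifts}: for each $\lambda$ there is a \emph{unique} graded lift $Y(\lambda)\in\boldrmod{H}$ of the Young module $\overline{Y}(\lambda)$ such that each $q^{\Sigma_r-\operatorname{def}(\lambda)}Y^r(\lambda)$ is self-dual as a graded vector space, where $\Sigma_r$ and $\operatorname{def}(\lambda)$ are explicit combinatorial shifts. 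This pins down the gradings on the Young modules coherently across the whole tower at once; the graded basic algebra $A=\bigoplus_{\lambda,\mu}\Hom_H(Y(\lambda),Y(\mu))$ from (\ref{lastA}) then gives the graded lift of the full category directly, and the graded categorical $\mathfrak{sl}_\Z$-action is built from the bimodules (\ref{B5})--(\ref{B6}). Standard Koszulity (Theorem~\ref{kosmain}) and the Kazhdan-Lusztig combinatorics do indeed follow by truncation to finite subquotients once the graded lift exists, as you suggest; what you are missing is the mechanism (stable modules plus defect-normalized self-duality) that makes the graded lift exist coherently in the first place.
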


In the main body of the article we adopt a more axiomatic approach
in the spirit of \cite{LW}. In Section~\ref{tpc},
we write down the formal definition of an
{\em $\mathfrak{sl}_\infty$-tensor product categorification} of a
tensor product of exterior powers of $V$ and $W$.
This is a category with
  \begin{itemize}
 \item an $\mathfrak{sl}_\infty$-action in the sense discussed in
Definition~\ref{catdef},
\item a highest weight category structure as in Definition~\ref{hwdef}, and
\item some compatibility between these structures
explained in Definition~\ref{tpcdef},
\end{itemize}
such that the complexified Grothendieck group of the underlying category of $\Delta$-filtered objects is isomorphic to the given
  tensor product of exterior powers of $V$ and $W$.
Then the bulk of the article is taken up with proving the following
fundamental result about such categorifications.

\begin{iTheorem}
There exists a unique $\mathfrak{sl}_\infty$-tensor
product categorification
associated to any tensor product of exterior powers of $V$ and $W$.
Moreover such a category has a unique graded
lift compatible with all the above structures.
\end{iTheorem}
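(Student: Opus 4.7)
The plan is to bootstrap the finite-rank uniqueness theorem of \cite{LW} up to $\mathfrak{sl}_\infty$ via a truncation-and-limit argument, and then to transport the Koszul grading of parabolic category $\mathcal{O}$ through this comparison.

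For existence, the integral block of (parabolic) super category $\mathcal{O}$ for $\mathfrak{gl}_{n|m}(\mathbb{C})$, equipped with the translation functors coming from tensoring with the natural module and its dual, serves as the model $\mathfrak{sl}_\infty$-tensor product categorification of $V^{\otimes n}\otimes W^{\otimes m}$ (or the appropriate exterior power analog). Verifying the axioms of Definition~\ref{tpcdef} amounts to setting up the usual highest weight structure, computing the Grothendieck group via Verma characters, and recognizing the $\mathfrak{sl}_\infty$-action through standard character computations.

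For uniqueness, let $\mathcal{C}$ be an arbitrary such categorification. For each finite interval $I\subset\mathbb{Z}$ I would define a Serre subquotient $\mathcal{C}_I$ cut out by weights supported in $I_+$. The crucial observation is that $V_I^{\otimes n}\otimes W_I^{\otimes m} \cong V_I^{\otimes n}\otimes(\bigwedge^{|I|} V_I)^{\otimes m}$, so $\mathcal{C}_I$ becomes an $\mathfrak{sl}_I$-tensor product categorification of a tensor product of exterior powers of $V_I$ alone. The finite-rank uniqueness theorem of \cite{LW} then produces a strongly equivariant equivalence between $\mathcal{C}_I$ and the corresponding truncation of any other candidate. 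The principal obstacle is assembling these level-wise equivalences into a single global one: I would organize the truncations into an inverse system compatible with the natural embeddings as $I$ grows, and use the rigidity of standard objects (which are pinned down by the Grothendieck-group data together with the highest weight structure) to verify that the comparison functors commute with these embeddings on the nose, so they glue into a well-defined equivalence of the full categories.

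For the graded lift, each truncated model is equivalent to a parabolic category $\mathcal{O}_I'$ which is Koszul by \cite{BGS,Back}; transporting the grading along the equivalence produces a graded lift of $\mathcal{C}_I$, and uniqueness at finite level follows from the general fact that a standard Koszul highest weight category admits a graded lift unique up to isomorphism once gradings of the standard objects are normalized. The hard part, which I expect to consume the bulk of the work in the body of the paper, is to check that these finite-stage gradings are mutually compatible, so that they assemble into a single well-defined grading on $\mathcal{C}$. I would attack this by rigidifying the gradings through standard objects, showing that the truncation functors $\mathcal{C}_{I'}\to\mathcal{C}_I$ and their section functors in the opposite direction are naturally graded with respect to this normalization, and then invoking finite-level uniqueness to propagate compatibility through the entire tower of intervals.
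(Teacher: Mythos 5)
Your overall strategy---establish existence via super parabolic category $\mathcal O$, bootstrap the finite-rank $\mathfrak{sl}_I$ uniqueness of \cite{LW} via truncation, and transport the Koszul grading from parabolic category $\mathcal O$---matches the paper's high-level plan. The existence argument and the observation that $W_I \cong \bigwedge^{|I|-1} V_I$ lets one apply \cite{LW} to each finite truncation are exactly right. But the step you flag as ``the principal obstacle'' is in fact a genuine gap, and the proposed remedy does not close it.

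The issue is that the equivalences $\mathbb{G}_I : \mathcal C_I \stackrel{\sim}{\rightarrow} \mathcal C_I'$ supplied by \cite{LW} are not canonical: the finite-rank proof constructs $\mathbb{G}_I$ only after choosing, for each weight $\lambda$, an isomorphism $Y(\lambda) \cong Y'(\lambda)$ of Young modules over the cyclotomic Hecke algebra $H$ attached to $I$ (Theorem~\ref{dcp}). ``Rigidity of standard objects'' gives uniqueness of the standard objects up to isomorphism, but says nothing about whether the isomorphisms (hence the equivalences) for different intervals can be chosen coherently with the truncation functors. Two strongly equivariant equivalences between the same pair of tensor product categorifications can differ by a non-identity automorphism, so one cannot ``verify that the comparison functors commute with the embeddings on the nose'' without a separate argument. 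Assembling a compatible system of $\mathbb{G}_I$'s is precisely the problem that needs to be solved, and no mechanism for solving it is given.

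The paper's solution is to sidestep gluing of equivalences entirely. It builds a single global object: the category $\rmod{H}$ of \emph{stable modules} over the tower $H^1 \hookrightarrow H^2 \hookrightarrow \cdots$ of cyclotomic quiver Hecke algebras attached to an exhaustion $I_1 \subset I_2 \subset \cdots$ of $\Z$ (\S\ref{tower}--\ref{secca}). The key technical inputs are Lemma~\ref{tl} (the explicit idempotent truncation maps $\phi^r : H^r \stackrel{\sim}{\rightarrow} e^r H^{r+1} e^r$ compatible with the prinjective objects $T^r$), Theorems~\ref{abelian1} and \ref{abelian2} (which show $\V = \Hom_{\mathcal C}(T^\bullet, -) : \mathcal C \to \rmod H$ is the quotient by the non-prinjective irreducibles), and Theorem~\ref{ffp} (the double centralizer property: $\V$ is fully faithful on projectives and $Y(\lambda) = \V P(\lambda)$ is independent of $\mathcal C$). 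With these in hand the finite-rank argument of \cite{LW} goes through verbatim, because $\rmod H$ plays the role of $\rmod{\End_{\mathcal C}(T)}$ and all coherence is absorbed into the construction of the stable category. For the graded lift the same issue recurs: the finite-stage gradings of $Y(\lambda)$ are determined only up to shift, and aligning these shifts across $r$ requires the normalization in Lemma~\ref{shift}, Lemma~\ref{yuk}, and Theorem~\ref{lifts}, which pin down each $Y^r(\lambda)$ by graded self-duality after a shift of $\Sigma_r - \operatorname{def}(\lambda)$. Your instinct to ``rigidify gradings through standard objects'' is a reasonable starting point, but the specific combinatorial device (the defect statistic and the $\Sigma_r$ corrections) needed to make the shifts consistent is not something you would arrive at without identifying the stable-module machinery first.

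In short: the finite-level comparison and the use of super category $\mathcal O$ are sound, but your proposal is missing the key new idea---the stable module category over the tower of quiver Hecke algebras---that the paper introduces precisely to replace the uncontrolled gluing of finite-level equivalences, and this omission is fatal to the proposed argument.
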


The existence part of this theorem is proved in Section~\ref{gl}, simply by verifying that parabolic category $\mathcal O$ for the general linear Lie superalgebra satisfies the axioms; in fact this is the only time Lie superalgebras
enter into the picture.
The uniqueness (up to strongly equivariant equivalence) is proved in Section~\ref{sstable}. It is a non-trivial
extension of the uniqueness theorem
for finite $\mathfrak{sl}_I$-tensor product categorifications 
established in \cite{LW}. The proof for $\mathfrak{sl}_\infty$ depends
on the construction of an interesting new category of {\em stable modules}
for a certain tower of quiver Hecke algebras.
Finally in Section~\ref{gtpc}, we incorporate gradings into the picture,
defining the notion of a
{\em $U_q\mathfrak{sl}_\infty$-tensor product categorification} 
of a tensor product of $q$-deformed exterior powers of $V$ and $W$;
see Definitions~\ref{ghwdef}, \ref{qcatdef} and \ref{qtpcdef}.
We prove the existence and uniqueness of these by exploiting 
{\em graded} stable  modules over our tower of quiver Hecke algebras.
Then
we prove 
that any such category is standard Koszul and
deduce the graded version of the 
Kazhdan-Lusztig conjecture.

\vspace{2mm}
\noindent
{\em Conventions.}
We fix an algebraically closed field $\K$ of characteristic $0$
throughout the article. 
All categories and functors will be assumed to be $\K$-linear without
further notice.
Let $\mathcal V{ec}$ be the category of finite dimensional vector spaces.
For a finite dimensional graded vector space 
$V = \bigoplus_{n \in \Z} V_n$, we write 
$\qdim V$ for its {\em graded dimension}
$\sum_{n \in \Z} (\dim V_n) q^n \in \Z[q,q^{-1}]$.

\section{Tensor product categorifications}\label{tpc}

In this section, we 
review the definition of tensor product categorification from \cite[Definition  3.2]{LW} in the special case of tensor products of exterior powers of the natural and dual natural representations of $\mathfrak{sl}_I$.
We include the possibility that the interval $I \subseteq \Z$ is
infinite, when these are not highest weight modules.
Then we state our first main result asserting the existence and uniqueness of such tensor product categorifications, extending the case of 
finite intervals from \cite{LW}. After that, we make some preparations
for the proof (which actually takes place in Sections~\ref{gl} and
\ref{sstable}), and discuss some first applications.

\subsection{Schurian categories}\label{sc}
By a {\em Schurian category}  we mean an Abelian category
$\mathcal C$
such that all objects are of finite length, there are enough projectives and injectives,
and the endomorphism
algebras of the irreducible objects are one dimensional.
For example, the category $\rmod{A}$ of finite dimensional 
right modules over a finite dimensional $\K$-algebra $A$ is Schurian.
Note throughout this text we will work in terms of projectives, 
but obviously $\mathcal C$ is Schurian if and only if $\mathcal C^{\operatorname{op}}$ is Schurian, 
so that everything could be expressed equivalently in terms of injectives.
We use the made-up word {\em prinjective} for an object that is both projective and injective.

Given a Schurian category $\mathcal C$, we let $\pC$
be the full subcategory consisting of all projective
objects.
Let $\operatorname{Fun}_f(\pC,
\mathcal{V}ec^{\operatorname{op}})$
denote the category of all contravariant functors from $\pC$
to $\mathcal{V}ec$ which are zero on all but finitely many
isomorphism classes of indecomposable projectives.
The Yoneda functor
\begin{equation}\label{yoneda}
\mathcal C \rightarrow
\operatorname{Fun}_f(\pC, \mathcal{V}ec^{\operatorname{op}}),
\qquad
M \mapsto \Hom_{\mathcal C}(-,M)
\end{equation}
is an equivalence.
Hence $\mathcal C$ can be recovered (up to equivalence)
from $\pC$.

This assertion can be formulated in more algebraic terms as follows.
Let $\{L(\lambda)\:|\:\lambda \in \Lambda\}$ be a complete set of pairwise non-isomorphic irreducible objects in $\mathcal C$, and fix a choice of a projective cover $P(\lambda)$ of each $L(\lambda)$.
Let
\begin{equation}\label{assertion}
A := \bigoplus_{\lambda,\mu \in \Lambda} 
\Hom_{\mathcal C}(P(\lambda),P(\mu))
\end{equation}
viewed as an associative algebra with multiplication coming from composition in $\mathcal C$. Let $1_\lambda \in A$ be the identity endomorphism of $P(\lambda)$.
If $\Lambda$ is finite then $A$ is a unital algebra with
$1 = \sum_{\lambda \in \Lambda} 1_\lambda$, indeed, $A$ is the endomorphism algebra of the minimal projective generator $\bigoplus_{\lambda \in \Lambda} P(\lambda)$.
However in general $A$
is only {\em locally unital}, meaning that it is
equipped with the system $\{1_\lambda\:|\:\lambda\in \Lambda\}$
of mutually orthogonal idempotents such that
$A = \bigoplus_{\lambda,\mu\in\Lambda} 1_\mu A 1_\lambda$.
Let $\rmod A$ denote the category of all finite dimensional 
locally unital right $A$-modules, that is, finite dimensional 
right $A$-modules $M$ such that
$M = \bigoplus_{\lambda \in \Lambda} M 1_\lambda$.
Then our earlier assertion about the Yoneda equivalence 
amounts to the statement that the functor
\begin{equation}\label{stupid1}
\mathbb{H}:\mathcal C \rightarrow \rmod A, \qquad
M \mapsto \bigoplus_{\lambda \in \Lambda}\Hom_{\mathcal C}(P(\lambda), M)
\end{equation}
is an equivalence of categories.
Of course this functor sends $P(\lambda)$ to the (necessarily finite dimensional) right ideal
$1_\lambda A$; these are the indecomposable projective modules
in $\rmod A$. The linear duals of the indecomposable injective modules are 
isomorphic to the left ideals $A 1_\lambda$, so that the latter
 are finite dimensional too.
Conversely given any locally unital $\K$-algebra $A$ with
distinguished
idempotents $\{1_\lambda\:|\:\lambda\in\Lambda\}$ such that all of the ideals 
$1_\lambda A$ and $A 1_\lambda$ are finite dimensional, the category $\rmod A$ is Schurian.

Let $K_0(\mathcal C)$ (resp.\ $G_0(\mathcal C)$) 
be the split Grothendieck group of the additive category 
$\pC$ (resp. the Grothendieck group of the
Abelian category $\mathcal C$).
Set $$
[\mathcal C] := \C \otimes_{\Z} K_0(\mathcal C),
\quad\qquad
[\mathcal C]^* :=\C\otimes_{\Z} G_0(\mathcal C).
$$
So
$[\mathcal C]$ is the complex vector space on
basis $\{[P(\lambda)]\:|\:\lambda \in \Lambda\}$, while
$[\mathcal C]^*$ has basis $\{[L(\lambda)]\:|\:\lambda \in \Lambda\}$.
These bases are dual with respect to the bilinear {\em Cartan pairing}
$(-,-):[\mathcal C]\times [\mathcal C]^* \rightarrow \C$ defined from
$([P], [L]) := \dim \Hom_{\mathcal C}(P, L)$.

\subsection{Combinatorics}
Let $I \subseteq \Z$ be a (non-empty) interval and set
$$
I_+ := I \cup (I+1).
$$
Let $\mathfrak{sl}_I$ be the Lie algebra of (complex) trace zero matrices
with rows and columns indexed by $I_+$, all but finitely many of whose
entries are zero.
It is generated by the matrix units $f_i := e_{i+1,i}$ and $e_i := e_{i,i+1}$
for all $i \in I$.
The {\em weight lattice} of $\mathfrak{sl}_I$
is $P_I := 
\bigoplus_{i \in I} \Z \varpi_i$ where $\varpi_i$ is the {\em $i$th
  fundamental weight}.
The {\em root lattice} is $Q_I 
:= \bigoplus_{i \in I} \Z \alpha_i < P_I$ where $\alpha_i$
is the {\em $i$th simple root} defined from $$
\alpha_i := 2 \varpi_i -
\varpi_{i-1}-\varpi_{i+1},$$ 
interpreting $\varpi_i$ as $0$ if $i
\notin I$.
Let $P_I \times Q_I \rightarrow \Z, \,
(\varpi,\alpha) \mapsto \varpi\cdot\alpha$ be the bilinear pairing 
defined from
$\varpi_i\cdot\alpha_j := \delta_{i,j}$, so that $\big(\alpha_i\cdot\alpha_j\big)_{i,j \in I}$
is the Cartan matrix.
Let $P_I^+$ (resp.\ $Q_I^+$) be the positive cone in $P_I$ (resp.\
$Q_I$) generated by the fundamental weights (resp.\ the simple roots).
The {\em dominance order} $\geq$ on $P_I$
is defined by $\beta \geq \gamma$ if $\beta - \gamma \in Q_I^+$.
For any $i \in I_+$ we set
$$
\eps_i := \varpi_i - \varpi_{i-1},
$$
again interpreting $\varpi_i$ as $0$ for $i
\notin I$.
The following lemma is well known.

\begin{Lemma}\label{dominance}
For $\beta = \sum_{i \in I_+} b_i \eps_i$ and $\gamma = \sum_{i \in I_+} c_i \eps_i$
in $P_I$ with $\sum_{i} b_i = \sum_{i} c_i$, we have that
$\beta \geq \gamma$ 
if and only if
$\sum_{i \leq h} b_i \geq \sum_{i \leq h} c_i$
for all $h \in I$.
\end{Lemma}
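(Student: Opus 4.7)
The plan is to reduce the statement to the single identity $\alpha_i = \eps_i - \eps_{i+1}$, valid for every $i \in I$ under the convention $\varpi_k = 0$ for $k \notin I$, which follows immediately from the given definitions $\alpha_i = 2\varpi_i - \varpi_{i-1} - \varpi_{i+1}$ and $\eps_j = \varpi_j - \varpi_{j-1}$. Setting $d_j := b_j - c_j$, only finitely many $d_j$ are nonzero, and the total-weight assumption becomes $\sum_{j \in I_+} d_j = 0$.

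For the forward direction, I write $\beta - \gamma = \sum_{i \in I} n_i \alpha_i$ with $n_i \in \Z_{\geq 0}$ (and only finitely many nonzero) and, after substituting $\alpha_i = \eps_i - \eps_{i+1}$, compare coefficients of $\eps_j$ to obtain $d_j = n_j - n_{j-1}$ for every $j \in I_+$, where by convention $n_k := 0$ for $k \notin I$. Summing this telescoping identity over $j \in I_+$ with $j \leq h$ for fixed $h \in I$ collapses to $\sum_{j \leq h} d_j = n_h \geq 0$, which is the desired inequality.

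For the converse, I define $n_h := \sum_{j \in I_+,\, j \leq h} d_j$ for $h \in I$, extended by $0$ outside $I$. These integers are nonnegative by hypothesis, and all but finitely many vanish. The task then reduces to verifying $d_j = n_j - n_{j-1}$ for every $j \in I_+$, which I handle by cases: (i) both $j$ and $j-1$ lie in $I$, where the identity is immediate from the definition of $n_h$; (ii) $j = \min I$, where $n_{j-1} = 0$ and $n_j = d_j$ (since $j$ is then also the minimum of $I_+$); (iii) $j = \max(I)+1 \in I_+ \setminus I$ when $I$ has a maximum, where $n_j = 0$ and the global identity $\sum_{k \in I_+} d_k = 0$ forces $n_{j-1} = -d_j$. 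Combining these equalities gives $\beta - \gamma = \sum_{h \in I} n_h \alpha_h \in Q_I^+$.

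The only real obstacle is the bookkeeping at the endpoints of $I_+$, ensuring that the telescoping degenerates correctly when $I$ has a minimum or maximum; when $I$ is unbounded on one or both sides the corresponding boundary cases simply do not arise and the argument is even simpler. Otherwise the proof is a mechanical coefficient comparison in the $\eps$-basis together with the total-sum constraint.
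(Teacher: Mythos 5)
The paper supplies no proof of this lemma, merely remarking that it is well known, so there is no paper argument to compare against; taken on its own terms, your proof is correct and follows the standard route via the identity $\alpha_i = \eps_i - \eps_{i+1}$ together with a telescoping partial-sum computation.

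One point you slide over in the forward direction deserves a sentence. When $I$ is finite, the vectors $\{\eps_j : j \in I_+\}$ satisfy the nontrivial linear relation $\sum_{j \in I_+}\eps_j = 0$, so one cannot naively ``compare coefficients of $\eps_j$'' in two expansions of $\beta - \gamma$. What saves the step is exactly the hypothesis you invoke at the outset: both tuples $(d_j)_{j\in I_+}$ and $(n_j - n_{j-1})_{j\in I_+}$ expand $\beta-\gamma$ in the $\eps$-basis and have total sum zero (the first by $\sum_j b_j = \sum_j c_j$, the second by telescoping), and the kernel of $(t_j)\mapsto\sum_j t_j\eps_j$ on $\Z^{I_+}$ is spanned by the all-ones vector, so an $\eps$-expansion with a prescribed total sum is unique. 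Stating this uniqueness explicitly would close the only small gap; the endpoint case analysis in the converse and the treatment of unbounded $I$ are both handled correctly.
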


An $\mathfrak{sl}_I$-module $M$ is {\em integrable} if it decomposes
into weight spaces as
$M = \bigoplus_{\varpi \in P_I} M_\varpi$, and moreover each of the
Chevalley generators $f_i$ and $e_i$ acts locally nilpotently.
Basic examples are the natural $\mathfrak{sl}_I$-module $V_I$ of column vectors with 
standard basis $\{v_i\:|\:i \in I_+\}$ and its dual
$W_I$ with basis $\{w_i\:|\:i \in I_+\}$;
the Chevalley generators act on these basis vectors by
\begin{align*}
f_i v_j &= \delta_{i,j} v_{i+1},
&e_i v_j &= \delta_{i+1,j} v_i,\\
f_i w_j &= \delta_{i+1,j} w_{i},
&
e_i w_j &= \delta_{i,j} w_{i+1}.
\end{align*}
The vector $v_i$ is of weight $\eps_i$ while $w_i$ is of weight $-\eps_i$.

More generally we have the exterior powers
$\bigwedge^n V_I$ and $\bigwedge^n W_I$ for $n \geq 0$;
henceforth we denote these instead by $\bigwedge^{n,0} V_I$ and
$\bigwedge^{n,1} V_I$, respectively.
For $c \in \{0,1\}$ let $\Lambda_{I;n,c}$ denote the set
of
$01$-tuples $\lambda = (\lambda_i)_{i \in I_+}$
such that 
$$
\big|\{i\in I_+ \:|\:\lambda_i \neq c\}\big| = n.
$$
This set parametrizes the natural monomial basis $\{v_\lambda\:|\:\lambda\in\Lambda_{I;n,c}\}$
of $\bigwedge^{n,c}V_I$
defined 
from
$$
v_\lambda :=
\left\{
\begin{array}{ll}
v_{i_1}\wedge\cdots\wedge v_{i_n}
&\text{if $c=0$,}\\
w_{i_1}\wedge\cdots\wedge w_{i_n}&\text{if $c=1$,}
\end{array}\right.$$
where $i_1 < \cdots < i_n$ are chosen so that $\lambda_{i_j} \neq c$
for each $j$.
The actions of the Chevalley generators are given explicitly by
\begin{align}\label{cact1}
f_i v_\lambda &:= \left\{
\begin{array}{ll}
v_{t_i(\lambda)}&\text{if $\lambda_i = 1$ and $\lambda_{i+1}=0$,}\\
0&\text{otherwise,}
\end{array}\right.\\
e_i v_\lambda &:= \left\{
\begin{array}{ll}
v_{t_i(\lambda)}&\text{if $\lambda_i = 0$ and $\lambda_{i+1}=1$,}\\
0&\text{otherwise,}
\end{array}\right.\label{cact2}
\end{align}
where $t_i(\lambda)$ denotes the tuple obtained from $\lambda$ by 
switching $\lambda_i$ and $\lambda_{i+1}$.
Let $|\lambda| \in P_I$ denote the weight of
the vector $v_\lambda$.
We have that
$$
|\lambda| = \sum_{i \in I_+} \lambda_i \eps_i,
$$ 
interpreting the 
sum on the right hand side when $I$ is infinite and $c=1$ using the
convention that
$\cdots + \eps_{i-1}+\eps_i = \varpi_i$ and $\eps_{i+1}+\eps_{i+2}+\cdots
= -\varpi_i$.

We are also going to be interested in tensor products of the
modules $\bigwedge^{n,c} V_I$.
Suppose that we are given 
$\bn = (n_1,\dots,n_l) \in \N^l$ and $\bc = (c_1,\dots,c_l) \in
\{0,1\}^l$;
we refer to the pair $(\bn,\bc)$ as a {\em type} of {\em level} $l$.
Let
\begin{equation}\label{tp}
\textstyle\bigwedge^{\bn,\bc}V_I := \bigwedge^{n_1,c_1}V_I\otimes\cdots\otimes \bigwedge^{n_l,c_l}V_I.
\end{equation}
This module has the obvious basis of monomials
$v_\lambda := v_{\lambda_1} \otimes\cdots \otimes v_{\lambda_l}$
indexed by 
elements $\lambda = (\lambda_1,\dots,\lambda_l)$ of the set
\begin{equation}\label{lal}
\Lambda_{I;\bn,\bc} := 
\Lambda_{I;n_1,c_1} \times\cdots\times \Lambda_{I;n_l,c_l}.
\end{equation}
The vector $v_\lambda$ is of weight
\begin{equation*}
|\lambda| := |\lambda_1|+\cdots+|\lambda_l|.
\end{equation*}
It is often convenient to regard $\lambda \in \Lambda_{I;\bn,\bc}$
as a
$01$-matrix
$\lambda = (\lambda_{ij})_{1 \leq i \leq l, j \in I_+}$
with $i$th row $\lambda_i = (\lambda_{ij})_{j \in I_+}$.  
(There are
several other indexing conventions possible; for example earlier
papers of the first and third authors have used the convention that
$\lambda$ is represented by a column-strict tableau with $l$ columns
such that the $i$th column is filled with all $j \in I_+$
such that $\lambda_{ij}=1$.)

Assume for a moment that $I$ is finite and that $\Lambda_{I;\bn,\bc}$ is non-empty.
Let $\kappa =\kappa_{I;\bn,\bc}$
be the $01$-matrix in $\Lambda_{I;\bn,\bc}$
in which all the entries
$1$ are as far to the left as possible within each row.
Thus $|\kappa| \in P_I$
is the unique
highest weight of $\bigwedge^{\bn,\bc}V_I$ with respect to the
dominance ordering.
For any $\lambda \in \Lambda_{I;\bn,\bc}$ define its {\em defect} by
\begin{equation}\label{defect}
\textstyle\operatorname{def}(\lambda) := \frac{1}{2}(|\kappa|\cdot|\kappa|-|\lambda|\cdot|\lambda|)
=|\kappa|\cdot\alpha - \frac{1}{2}\alpha\cdot\alpha,
\end{equation}
where $\alpha := |\kappa|-|\lambda|$.
In combinatorial terms, this is $\frac{1}{2}\sum_{j \in I_+}
(k_j^2-l_j^2)$ where $k_j$ (resp.\ $l_j$) counts the number of entries
equal to $1$ in the $j$th column of $\kappa$ (resp.\ $\lambda$).
The following lemma extends this definition to 
include infinite
intervals $I$.

\begin{Lemma}\label{defectL}
Suppose that $I$ is an infinite interval and $\lambda \in
\Lambda_{I;\bn,\bc}$.
Let $J \subset I$ be a finite subinterval such that $|J_+| \geq 2
\max(\bn)$ and $\lambda_{i,j} = c_i$ for all $1 \leq i \leq l$ and $j
\in I_+ \setminus J_+$.
Let $\lambda_J \in \Lambda_{J;\bn,\bc}$ be the submatrix
$(\lambda_{i,j})_{1 \leq i \leq l, j \in J_+}$ of $\lambda$.
Let $\kappa_J := \kappa_{J;\bn,\bc}$.
Then the natural number 
\begin{equation*}
\textstyle\operatorname{def}(\lambda) := \frac{1}{2}(|\kappa_J|\cdot|\kappa_J| - |\lambda_J|\cdot|\lambda_J|)
\end{equation*}
is independent of the particular choice of $J$.
\end{Lemma}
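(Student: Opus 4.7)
The plan is to reduce to the elementary case where $J \subseteq J'$ differ by adding a single column at one end of the interval, and then iterate. Given any two valid choices $J, J'$, their convex hull in $I$ is again a finite subinterval containing the (finite) set of positions where some $\lambda_{i,j} \neq c_i$ and has $|(\text{hull})_+| \geq |J_+| \geq 2\max(\bn)$, so it too satisfies the hypotheses; by transitivity it suffices to check that the proposed value of $\operatorname{def}(\lambda)$ is unchanged when $J_+ = [q,p]$ is replaced by $J'_+ = [q, p+1]$, the case of adding a column on the left being analogous.

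Write $N := p+1$ and $c := |\{i : c_i = 1\}|$, and use the combinatorial formula $\operatorname{def}(\lambda) = \frac{1}{2}\sum_j (k_j^2 - \ell_j^2)$ recorded just before the lemma (here $k_j, \ell_j$ are the column sums of $\kappa_J, \lambda_J$). Because $\lambda_{i,N} = c_i$ by hypothesis, the only change to the $\ell$-sums is $\ell_N^{J'} = c$, so $\sum_j \ell_j^2$ grows by $c^2$. For $\kappa_{J'}$: each $c_i = 0$ row coincides with row $i$ of $\kappa_J$ extended by a $0$ at $N$, while each $c_i = 1$ row acquires one extra $1$ at position $p+1-n_i$ (its $1$s now fill $[q, p+1-n_i]$ instead of $[q, p-n_i]$). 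Setting $\delta_j := |\{i : c_i = 1,\ n_i = p+1-j\}|$, this gives $k_j^{J'} = k_j^J + \delta_j$ for $j \in J_+$ and $k_N^{J'} = 0$; the vanishing at $N$ relies on $|J_+| \geq 2\max(\bn)$, which keeps the leftmost $n_i$-block of every $c_i = 0$ row safely inside $J_+$. Hence $\sum_j k_j^2$ grows by $\sum_{j \in J_+}(2 k_j^J \delta_j + \delta_j^2)$, and the task is to show this equals $c^2$.

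The heart of the argument is therefore the identity $\sum_j (2 k_j^J \delta_j + \delta_j^2) = c^2$. With $m_n := |\{i : c_i = 1,\ n_i = n\}|$, one has $\sum_j \delta_j^2 = \sum_n m_n^2$ on the nose, and the same hypothesis $|J_+| \geq 2\max(\bn)$ forces $k_j^J$ at $j = p+1-n_i$ to count only the $c_{i'} = 1$ rows with $n_{i'} < n_i$ (the $c_{i'} = 0$ rows contribute nothing, since their $1$s sit in the leftmost $n_{i'} \leq \max(\bn)$ positions of $J_+$, safely to the left of $j$). A symmetric double-count of ordered pairs of $c_i = 1$ rows then gives $\sum_j k_j^J \delta_j = \frac{1}{2}(c^2 - \sum_n m_n^2)$, so the total is $c^2$, matching the $\ell$-side. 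The main obstacle is precisely this combinatorial identity; the role of the size hypothesis is to decouple the two row types at the shifted positions, collapsing the count cleanly.
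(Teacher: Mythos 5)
Your argument is correct, and at the top level it follows the same route as the paper: reduce to the case $J\subset J'$ differing by a single column (the convex-hull remark handles general $J,J'$), then compute the change in the combinatorial formula $\operatorname{def}=\tfrac12\sum_j(k_j^2-\ell_j^2)$. Where you diverge is in how the invariance is verified. The paper makes the sharper structural observation that, under the hypothesis $|J_+|\geq 2\max(\bn)$, \emph{both} $\lambda_J$ and $\kappa_J$ are obtained from $\lambda_{J'}$ and $\kappa_{J'}$ by deleting a single trivial column: for $\lambda$ this is the outermost column, and for $\kappa$ it is some column in the ``gap'' between the left-justified $1$-blocks of the $c_i=0$ rows and the right-justified $0$-blocks of the $c_i=1$ rows (this gap is nonempty precisely because $|J'_+|\geq 2\max(\bn)+1$). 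Since deleting a trivial column removes a term $c^2$ from both $\sum k_j^2$ and $\sum\ell_j^2$, invariance is immediate — no identity to verify.

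Your version instead leaves $\kappa_{J'}$ in place and tracks how each column sum $k_j$ grows as the $c_i=1$ rows' blocks shift right by one; this scatters the change across many columns, and you must then establish $\sum_j(2k_j^J\delta_j+\delta_j^2)=c^2$ by a double-count. This is correct (the only caveat is the implicit assumption $n_i\geq 1$ for all $i$ — if some $n_i=0$ with $c_i=1$, then $k_N^{J'}=m_0\neq 0$; but such rows are entirely trivial in both $\lambda$ and $\kappa$ and may be discarded without loss), but it buys nothing over the paper's one-line argument. The trivial-column observation you made on the $\lambda$-side applies equally on the $\kappa$-side after a column shift, and spotting that collapses the entire computation. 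Worth internalizing: the defect formula is manifestly invariant under inserting or deleting trivial columns, and both $\lambda$ and $\kappa$ only ever change by such columns.
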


\begin{proof}
Define the {\em trivial column} to be the column vector $(c_i)_{1
  \leq i \leq l}$.
Let $J$ and $J'$ be two intervals satisfying the hypotheses of the
lemma 
with $J \subset J'$.
The conditions imply that $\kappa_{J}$ (resp.\ $\lambda_J$) can be obtained from
$\kappa_{J'}$ (resp.\ $\lambda_{J'}$) by removing $|J'|-|J|$ trivial columns.
The lemma follows easily from this using the combinatorial formulation
of the definition
of defect.
\end{proof}

\subsection{Hecke algebras}\label{ha}
To prepare for the definition of an
$\mathfrak{sl}_I$-categorification, we recall the 
definition of certain
associative unital $\K$-algebras, namely, the {(degenerate) affine
  Hecke algebra} $AH_d$, and the {quiver Hecke
  algebra} $QH_{I,d}$ associated to the linear quiver with vertex set $I$
and an edge $i \rightarrow j$ if $i = j+1$.
The latter is also known as a Khovanov-Lauda-Rouquier algebra after \cite{KL1} and \cite{R}.

\begin{Definition}\rm 
 The {\em affine Hecke algebra} $AH_d$ is the vector space
  $$\K[x_1,\dots,x_d] \otimes \K S_d
$$ 
with multiplication defined so
  that the polynomial algebra $\K[x_1,\dots,x_d]$ and the group
  algebra $\K S_d$ of the symmetric group $S_d$ are subalgebras, and
  also
  \begin{itemize}
  \item[(AH)] $t_j x_k -x_{t_j(k)} t_j =\left\{
      \begin{array}{rl}
        1&\text{if $k=j+1$,}\\
        -1&\text{if $k=j$,}\\
        0&\text{otherwise.}
      \end{array}\right.
    $
  \end{itemize}
  Here $t_j$ denotes the transposition $(j\:j\!+\!1) \in S_d$.
\end{Definition}

\begin{Definition}
\rm  The {\em quiver Hecke algebra} $QH_{I,d}$
 is defined by generators \[\{1_\bi\:|\:\bi \in
  I^d\}\cup\{\xi_1,\dots,\xi_d\}\cup\{\tau_1,\dots,\tau_{d-1}\}\]
  subject to relations:
  \begin{itemize}
  \item[(QH1)] the elements $\xi_1,\dots,\xi_d$ commute with each
    other and all $\{1_\bi\:|\:\bi \in I^d\}$;
  \item[(QH2)] the elements $\left\{1_\bi\:|\:\bi \in I^d\right\}$ are
    mutually orthogonal idempotents whose sum is the identity;
  \item[(QH3)] $\tau_j 1_\bi = 1_{t_j(\bi)} \tau_j$ where $t_j(\bi)$
    is the tuple obtained from $\bi = (i_1,\dots,i_d)$ by flipping its
    $j$th and $(j+1)$th entries;
  \item[(QH4)] $(\tau_j \xi_k -\xi_{t_j(k)} \tau_j)1_\bi =\left\{
      \begin{array}{rl}
        1_\bi&\text{if $k=j+1$ and $i_j = i_{j+1}$,}\\
        -1_\bi&\text{if $k=j$ and $i_j = i_{j+1}$,}\\
        0&\text{otherwise;}
      \end{array}\right.
    $
  \item[(QH5)] $\tau_j \tau_k = \tau_k \tau_j$ if $|j-k|>1$;
  \item[(QH6)] $\tau_j^2 1_\bi = \left\{
      \begin{array}{ll}
        0&\text{if $i_j=i_{j+1}$,}\\
        (\xi_{j}-\xi_{j+1}) 1_\bi&\text{if $i_{j} = i_{j+1}-1$,}\\
        (\xi_{j+1}-\xi_j) 1_\bi&\text{if $i_{j} = i_{j+1}+1$,}\\
        1_\bi&\text{otherwise;}
      \end{array}\right.$
  \item[(QH7)] $(\tau_{j+1} \tau_{j} \tau_{j+1} -
    \tau_{j}\tau_{j+1}\tau_{j}) 1_\bi= \left\{\begin{array}{ll}
        1_\bi&\text{if $i_{j} = i_{j+1}-1=i_{j+2}$,}\\
        -1_\bi&\text{if $i_{j} = i_{j+1}+1=i_{j+2}$,}\\
        0&\text{otherwise.}
      \end{array}\right.
    $
  \end{itemize}
\end{Definition}

  An important feature of $QH_{I,d}$ is that it possesses
  a non-trivial $\Z$-grading.  This is defined by declaring that each
  idempotent $1_\bi$ is in degree $0$, each $\xi_j$ in degree $2$, and
  finally $\tau_k 1_\bi$ is in degree
  $-\alpha_{i_k}\cdot\alpha_{i_{k+1}}$.

The algebras $AH_d$ and $QH_{I,d}$ are closely related as explained in
\cite[Proposition 3.15]{R}. This result can also be formulated as an
isomorphism between certain {\em cyclotomic quotients} of $AH_d$ and
$QH_{I,d}$
as in \cite{BKl}. 
Let $\varpi \in P_I^+$  be a dominant weight.
Define $AH_d^\varpi$ (resp.\ $QH_{I,d}^\varpi$) to be the quotient of $AH_d$
(resp.\ $QH_{I,d}$) 
by the two-sided
ideal generated by the polynomial $\prod_{i \in I} (x_1-i)^{\varpi\cdot\alpha_i}$
(resp.\ by the elements $\{\xi_1^{\varpi\cdot\alpha_{i_1}} 1_\bi\:|\:\bi \in I^d\}$).
These are finite dimensional algebras.
The image of the polynomial algebra $\K[x_1,\dots,x_d]$
in $AH_d^\varpi$ is a finite dimensional commutative algebra, 
hence it contains mutually orthogonal idempotents $\{1_\bi\:|\:\bi \in \K^d\}$
such that $1_\bi$
projects any module $M$ onto its $\bi$-th {\em word space}
$$
M_\bi := \left\{v \in M\:\big|\:(x_j-i_j)^N v = 0\text{ for each $j=1,\dots,d$
  and $N \gg 0$}\right\}.
$$
Then let
$$
AH_{I,d}^{\varpi} := \bigoplus_{\bi,\bj \in I^d} 1_\bi AH_d^{\varpi}
1_\bj
= AH_d^{\varpi} \big/ \langle 1_\bi\:|\:\bi \notin I^d\rangle,
$$
which is a sum of blocks of the algebra $AH^\varpi_d$.
The following theorem gives an explicit choice of isomorphism between
$QH_{I,d}^\varpi$ and $AH_{I,d}^\varpi$;
any other reasonable choice of isomorphism
such as the one from \cite[Proposition 3.15]{R} could be used instead
throughout this article.

\begin{Theorem}[{\cite{BKl}, \cite{R}}]\label{grth}
For $\varpi \in P_I^+$ there is an isomorphism
$QH_{I,d}^\varpi \stackrel{\sim}{\rightarrow} AH_{I,d}^\varpi$
defined on generators by
\begin{align}
1_\bi &\mapsto 1_\bi;\label{iso1}\\
\xi_j 1_\bi &\mapsto (x_j - i_j) 1_\bi;\label{iso2}\\
\tau_j 1_\bi &\mapsto
\left\{
\begin{array}{ll}
(1+t_j)(1-x_j+x_{j+1})^{-1} 1_\bi&\text{if $i_j = i_{j+1}$,}\\
 (1+t_j x_j-t_j x_{j+1})1_\bi&\text{if $i_j = i_{j+1}+1$,}\\
 (1+t_j x_j-t_j x_{j+1})(1-x_j+x_{j+1})^{-1} 1_\bi&\text{otherwise.}
\end{array}
\right.\label{iso3}
\end{align}
\end{Theorem}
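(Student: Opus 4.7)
The plan is in two stages: first establish that formulas (\ref{iso1})--(\ref{iso3}) define an algebra homomorphism $\Phi\colon QH_{I,d}^\varpi \to AH_{I,d}^\varpi$, then show this homomorphism is bijective. The key mechanism throughout is the word space decomposition
\[
AH_d^\varpi = \bigoplus_{\bi \in \K^d} 1_\bi AH_d^\varpi,
\]
on each summand of which $(x_j-i_j)^N 1_\bi = 0$ for $N \gg 0$. Consequently any formal power series in the $(x_k - i_k)$ with invertible constant term defines an honest element after multiplication by $1_\bi$. In particular $(1-x_j+x_{j+1})^{-1} 1_\bi$ is well-defined precisely when $i_{j+1} \neq i_j - 1$, which is exactly the complement of the case in which the formula for $\tau_j 1_\bi$ uses $(1 + t_j x_j - t_j x_{j+1}) 1_\bi$ alone.

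For well-definedness I would verify the relations (QH1)--(QH7) and the cyclotomic relation in turn. Relations (QH1)--(QH3) and (QH5) are formal and reduce immediately to commutation of the $x_k$, the orthogonality of the $1_\bi$, and the fact that $t_j$ permutes word spaces via $t_j 1_\bi = 1_{t_j(\bi)} t_j$. Relation (QH4) follows directly from (AH) after substituting $\xi_k = x_k - i_k$. The cyclotomic relation is immediate: on $1_\bi AH_d^\varpi$, the factors $(x_1 - i)^{\varpi \cdot \alpha_i}$ for $i \neq i_1$ are invertible, so the vanishing of $\prod_{i \in I} (x_1 - i)^{\varpi \cdot \alpha_i}$ forces the vanishing of $(x_1 - i_1)^{\varpi \cdot \alpha_{i_1}} 1_\bi$, which is precisely the image of the cyclotomic generator of $QH_{I,d}^\varpi$.

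To prove bijectivity I would construct an explicit inverse $\Psi$ on generators. Setting $\xi_j 1_\bi \mapsto (x_j - i_j) 1_\bi$ inverts (\ref{iso2}), and on each word space equation (\ref{iso3}) can be solved for $t_j 1_\bi$ in terms of $\tau_j 1_\bi$ since the polynomial factors that appear are invertible on the appropriate word spaces. Checking $\Phi \Psi = \mathrm{id}$ and $\Psi \Phi = \mathrm{id}$ on generators is then routine. A cleaner alternative is to invoke the existence of some isomorphism $QH_{I,d}^\varpi \cong AH_{I,d}^\varpi$ from \cite{BKl} or \cite{R} together with a dimension count, so that $\Phi$, once known to be a well-defined surjection, is automatically an isomorphism.

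The main obstacle is the verification of the quadratic relation (QH6) and especially the braid relation (QH7), which when expanded yield intricate power-series identities to be checked on each word space. The worst subcase of (QH7) is $i_j = i_{j+1} = i_{j+2}$, where factors $(1 - x_j + x_{j+1})^{-1}$ and $(1 - x_{j+1} + x_{j+2})^{-1}$ proliferate and must be carefully reordered using (AH) before the desired cancellations emerge. In practice I would import these computations from \cite{BKl}, where a closely related normalization of the isomorphism is verified in complete detail, and then show that the formulas here agree with those of \cite{BKl} up to an inner automorphism of $QH_{I,d}^\varpi$ that rescales each $\tau_j 1_\bi$ by a unit from the commutative subalgebra.
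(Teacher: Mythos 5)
Your overall strategy is sound and in the end coincides with the paper's, which simply invokes the Main Theorem of \cite{BKl} and then pins down the normalization. The direct verification you sketch for (QH1)--(QH5) and the cyclotomic relation is correct and routine, and your observation about when $(1-x_j+x_{j+1})^{-1}1_\bi$ makes sense is exactly right. Deferring (QH6)--(QH7) to \cite{BKl} is also what the paper does, though the paper skips the preliminary verifications entirely and cites \cite{BKl} for the whole isomorphism at once.

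The one real defect is in the final reconciliation step. You assert that the formulas here agree with those of \cite{BKl} ``up to an inner automorphism of $QH_{I,d}^\varpi$ that rescales each $\tau_j 1_\bi$ by a unit from the commutative subalgebra.'' Such a diagonal rescaling automorphism (sending $\tau_j 1_\bi \mapsto u_j(\bi)\,\tau_j 1_\bi$ while fixing $\xi_j 1_\bi$ and $1_\bi$) is \emph{not} an inner automorphism in general: conjugation by an element of the commutative subalgebra would also act nontrivially on the $\xi$'s via (QH4), and would not produce a clean rescaling of $\tau_j 1_\bi$. What is actually happening is more specific and should be stated: the isomorphisms of \cite{BKl} depend on a choice of power series $q_j(\bi)$ (subject to constraints), and the formula in the theorem corresponds to the particular choice $q_j(\bi) = p_j(\bi)$ when $i_j = i_{j+1}+1$ and $q_j(\bi) = 1-p_j(\bi)$ when $i_j \notin \{i_{j+1}, i_{j+1}+1\}$. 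Separately, \cite{BKl} work with the opposite orientation of the linear quiver, so their $\psi_j e(\bi)$ corresponds to $\tau_j 1_\bi$ when $i_j \in \{i_{j+1}, i_{j+1}+1\}$ but to $-\tau_j 1_\bi$ otherwise, and their $y_j e(\bi)$ is $\xi_j 1_\bi$. Your formulation absorbs the sign into the same ``rescaling by a unit'' language, which is technically permissible since $-1$ is a unit, but it obscures the fact that the sign comes from a different source (the orientation convention, not the choice of $q_j$). If you make these two bookkeeping items explicit and drop the word ``inner,'' your argument becomes correct and matches the paper's.
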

\noindent(In fact, this isomorphism can be extended to the completions
$\widehat{QH}_{I,d}$ and $\widehat{AH}_{I,d}$
with respect to these systems of quotients as discussed in \cite{W4}.)
\begin{proof}
This follows by \cite[Main Theorem]{BKl}. To
get exactly this isomorphism
one needs to choose the power series $q_j(\bi)$ of
\cite[(3.27)--(3.29)]{BKl} so that $q_j(\bi) = p_j(\bi)$ if $i_j = i_{j+1}+1$
and $q_j(\bi) = 1 - p_j(\bi)$ if $i_j \notin\{ i_{j+1}, i_{j+1}+1\}$.
Note also that the opposite orientation of
the quiver was used in \cite{BKl} so that the elements
$\psi_j e(\bi)$ in \cite{BKl} are our $\tau_j 1_\bi$ if
$i_j \in \{i_{j+1}, i_{j+1}+1\}$ and our $-\tau_j 1_\bi$ otherwise;
the elements $y_j e(\bi)$ in \cite{BKl} are our elements $\xi_j 1_\bi$.
\end{proof}

Henceforth we will simply {\em identify} $QH_{I,d}^\varpi$ and $AH_{I,d}^\varpi$
via the isomorphism from the theorem.

\subsection{Categorification}\label{scat}
Following their work \cite{CR}, Chuang and Rouquier introduced
the notion of an {$\mathfrak{sl}_I$-categorification}, also 
known as a {categorical $\mathfrak{sl}_I$-action}. 
The following is essentially \cite[Definition 5.32]{R}  (taking $q=1$ and switching the roles of $E$ and $F$).

\begin{Definition}\label{catdef}
\rm
An {\em $\mathfrak{sl}_I$-categorification} is a 
Schurian category
$\mathcal C$
together 
with an
endofunctor $F$, a right adjoint $E$ to $F$ (with a specified
adjunction),
and natural transformations $x \in \End(F)$ and $t \in
\End(F^2)$ satisfying the axioms (SL1)--(SL4) formulated below.
For the first axiom, we let $F_i$ be the subfunctor of $F$ defined by 
the generalized $i$-eigenspace of $x$,
i.e. $F_i M = \sum_{k \geq 0} \ker (x_M-i)^k$ for each $M \in \mathcal C$.
\begin{itemize}
\item[(SL1)]
We have that $F = \bigoplus_{i \in I} F_i$,
i.e. $F M = \bigoplus_{i \in I} F_i M$ for each $M \in \mathcal C$.
\item[(SL2)]
For $d \geq 0$ the endomorphisms $x_j := F^{d-j} x F^{j-1}$
and $t_k := F^{d-k-1} t F^{k-1}$ of $F^d$ satisfy the relations
of the degenerate affine Hecke algebra $AH_d$.
\item[(SL3)]
The functor $F$ is isomorphic to a right adjoint of $E$.
\end{itemize}
For the final axiom, we let
$c:\operatorname{id} \rightarrow EF$ and $d:FE \rightarrow \operatorname{id}$ be the unit and counit of the given adjunction, respectively.
The
endomorphisms $x$ and $t$ of $F$ and $F^2$ induce endomorphisms
$x$ and $t$ of $E$ and $E^2$ too:
\begin{align}\label{xp}
x&:E \stackrel{cE}{\rightarrow} EFE \stackrel{ExE}{\rightarrow} EFE 
\stackrel{Ed}{\rightarrow} E,\\
t&:E^2 \stackrel{cE^2}{\rightarrow} EF E^2
\stackrel{EcFE^2}{\rightarrow} E^2 F^2 E^2
\stackrel{E^2 t E^2}{\rightarrow} E^2 F^2 E^2
\stackrel{E^2 FdE}{\rightarrow} E^2 FE \stackrel{E^2 d}{\rightarrow} E^2.\label{ttp}
\end{align}
(We remark that 
these satisfy slightly different relations to the original $x$ and $t$: the signs on the right hand side of the
degenerate affine Hecke algebra relation (AH) must be reversed.) 
Let $E_i$ be the subfunctor of $E$ defined by the generalized $i$-eigenspace of 
$x \in \End(E)$.
The
axioms so far imply that 
$E = \bigoplus_{i \in I} E_i$ and moreover
$F_i$ and $E_i$ are biadjoint, so they are both exact and send projectives to projectives.
\begin{itemize}
\item[(SL4)]
The endomorphisms $f_i$ and $e_i$
of $[\mathcal C] = \C\otimes_\Z K_0(\mathcal C)$ induced by $F_i$ and $E_i$, respectively,
make $[\mathcal C]$
into an integrable representation of
$\mathfrak{sl}_I$. Moreover the classes of the indecomposable projective
objects are weight vectors.
\end{itemize}
The axiom (SL4) has the following equivalent dual formulation.
\begin{itemize}
\item[(SL4$^*$)]
The endomorphisms $f_i$ and $e_i$
of $[\mathcal C]^* = \C \otimes_{\Z} G_0(\mathcal C)$ induced by $F_i$
and $E_i$, respectively,
make $[\mathcal C]^*$
into an integrable representation of
$\mathfrak{sl}_I$. Moreover the classes of the irreducible
objects are weight vectors.
\end{itemize}
\end{Definition}

The axiom (SL1) implies that $F^d$ decomposes as $\bigoplus_{\bi \in I^d}
F_\bi$ where 
$$
F_\bi := F_{i_d} \circ\cdots\circ F_{i_1}.
$$ This further shows that the action of $AH_d$ factors through the
completion $\widehat{AH}_{I,d}$ of the inverse system of cyclotomic quotients $\{AH_{I,d}^\varpi\:|\:\varpi\in P_I^+\}$.
Letting $1_\bi \in \End(F^d)$ be the projection onto
$F_{\bi}$ we can then use the isomorphism of completions given by (\ref{iso1})--(\ref{iso3}) 
to convert the homomorphism $AH_d \rightarrow \widehat{AH}_{I,d}\rightarrow
\End(F^d)$ into a homomorphism $QH_{I,d} \rightarrow \widehat{QH}_{I,d}
\rightarrow \End(F^d)$.
Hence the definition of
 an $\mathfrak{sl}_I$-categorification can be formulated equivalently using
the quiver Hecke algebra $QH_{I,d}$ in place of the degenerate affine Hecke
algebra $AH_d$. In this incarnation, 
$\mathcal C$ should be equipped with 
adjoint pairs
$(F_i,E_i)$ of endofunctors for all $i \in I$ (with specified adjunctions),
together with
natural transformations $\xi \in \End(F)$ and $\tau \in
\End(F^2)$ where $F := \bigoplus_{i \in I} F_i$, satisfying
the axioms (SL1$'$)--(SL4$'$).
\begin{itemize}
\item[(SL1$'$)]
 The endomorphism $\xi$ is locally nilpotent, i.e.
$F M = \sum_{k \geq 0} \ker \xi_M^k$ for each $M \in \mathcal C$.
\item[(SL2$'$)]
For $d \geq 0$ the endomorphisms $\xi_j := F^{d-j} \xi F^{j-1}$
and $\tau_k := F^{d-k-1} \tau F^{k-1}$ of $F^d$ 
plus the projections $1_\bi$ of $F^d$ onto its summands
$F_\bi$ for each $\bi
\in I^d$ satisfy the relations
of the quiver Hecke algebra $QH_{I,d}$.
\item[(SL3$'$)]
Each functor $F_i$ is isomorphic to a right adjoint of $E_i$.
\item[(SL4$'$)] 
Same as (SL4).
\end{itemize}
In fact this is just the first of several alternate definitions of
$\mathfrak{sl}_I$-categorification in the literature. Notably in \cite[Theorem 5.30]{R} Rouquier proves  
that the data of an $\mathfrak{sl}_I$-categorification as above is equivalent to
the data of an integrable $2$-representation of the Kac-Moody
2-category
associated to $\mathfrak{sl}_{I}$ in the sense of \cite[Definition
5.1]{R}; see also \cite{KL3} and \cite{CL} for closely related
notions.
(We point out also the recent article \cite{Bnew}, which shows that the
seemingly different definitions in \cite{R, KL3, CL} actually yield
isomorphic $2$-categories.)

\begin{Definition}\label{sev}\rm
Given two
$\mathfrak{sl}_I$-categorifications $\mathcal C$
and $\mathcal C'$, and denoting $F, E, x, t$ for $\mathcal C'$
instead by $F', E', x', t'$ for clarity,
a functor $\mathbb{G}:\mathcal C \rightarrow \mathcal C'$
is
{\em strongly equivariant}
if there exists an
isomorphism of functors 
$\zeta:
F' \circ \mathbb{G}
\stackrel{\sim}{\rightarrow} 
\mathbb{G}\circ F$
such that
\begin{itemize}
\item[(E1)]
the natural transformation
$E' \mathbb{G} \eps \circ E' \zeta E \circ \eta'
\mathbb{G} E:\mathbb{G}\circ E \rightarrow E' \circ \mathbb{G}$
is an isomorphism;
\item[(E2)]
$\zeta \circ x' \mathbb{G}  = \mathbb{G} x \circ \zeta$
in $\Hom(F' \circ \mathbb{G},\mathbb{G}\circ F)$;
\item[(E3)]
$\zeta F \circ F' \zeta
\circ t' \mathbb{G}=\mathbb{G} t \circ \zeta F \circ F' \zeta$
in $\Hom({F'}^2\circ \mathbb{G}, \mathbb{G} \circ F^2)$.
\end{itemize}
If it happens that $\mathbb{G}$ is an equivalence of categories then the axiom (E1) holds
automatically, and
we call $\mathbb{G}$ a 
{\em strongly equivariant equivalence}.
\end{Definition}

As usual, the definition of strongly equivariant functor
can be
formulated in terms of quiver Hecke algebras. In that setting, the
isomorphism $\zeta$ is induced by isomorphisms
$\zeta_i:F_i' \circ \mathbb{G} \stackrel{\sim}{\rightarrow}
\mathbb{G} \circ F_i$ for each $i$, and the endomorphisms $x$
and $t$ in (E2)--(E3)
are replaced by $\xi$ and $\tau$.

\subsection{Recollections about highest weight categories}\label{rhw}
We must also make a few reminders about (artinian) highest weight categories
in the sense of \cite{CPS}; see also \cite[Appendix]{Donkin} which is a good source for
proofs of all the results stated in this subsection (although it only treats finite weight posets).

\begin{Definition}\rm\label{hwdef}
A {\em highest weight category} is
a Schurian category $\mathcal C$
together with an interval-finite poset $(\Lambda, \leq)$
indexing a complete set
of pairwise non-isomorphic irreducible objects 
$\{L(\lambda)\:|\:\lambda\in\Lambda\}$ of $\mathcal C$,
such that the following axiom holds.
\begin{itemize}
\item[(HW)]
Let $P(\lambda)$ be a projective cover of $L(\lambda)$ in $\mathcal C$.
Define the {\em standard object}
$\Delta(\lambda)$ to be the largest quotient of $P(\lambda)$
such that $[\Delta(\lambda):L(\mu)] = \delta_{\lambda,\mu}$
for $\mu\not<\lambda$.
Then $P(\lambda)$ has a finite filtration with top section isomorphic to
$\Delta(\lambda)$ and other sections of the form $\Delta(\mu)$ for $\mu > \lambda$.
\end{itemize}
It is well known that this is equivalent to the axiom (HW$^*$)
below; in other words $\mathcal C$ is highest weight if and only if $\mathcal C^{\operatorname{op}}$ is highest weight.
\begin{itemize}
\item[(HW$^*$)]
Let $I(\lambda)$ be an injective hull of $L(\lambda)$ in $\mathcal C$.
Define the {\em costandard object}
$\nabla(\lambda)$ to be the largest subobject of $I(\lambda)$
such that $[\nabla(\lambda):L(\mu)] = \delta_{\lambda,\mu}$
for $\mu\not<\lambda$.
 Then $I(\lambda)$ has a finite filtration with bottom section isomorphic to
$\nabla(\lambda)$ and other sections of the form $\nabla(\mu)$ for $\mu > \lambda$.
\end{itemize}
\end{Definition}

If $\mathcal C$ is a highest weight category, we write
$\mathcal C^\Delta$ and $\mathcal C^\nabla$ for the exact subcategories
consisting of objects with a $\Delta$-flag 
and objects with a $\nabla$-flag, respectively.
Their complexified Grothendieck groups will be denoted
$[\mathcal C^\Delta]$ and $[\mathcal C^\nabla$];
they have distinguished bases $\{[\Delta(\lambda)]\:|\:\lambda\in\Lambda\}$
and $\{[\nabla(\lambda)]\:|\:\lambda\in\Lambda\}$, respectively.
The natural inclusion functors induce
linear maps
$[\mathcal C] \hookrightarrow [\mathcal C^\Delta] \hookrightarrow
[\mathcal C]^* \hookleftarrow [\mathcal C^\nabla]$.
When $\Lambda$ is finite all these maps are actually isomorphisms so
that all the Grothendieck groups are usually identified.

There are a couple of well-known
constructions which will be essential later on.
Suppose that we are given a decomposition $\Lambda = \Lambda_\lo \sqcup \Lambda_\up$ such that $\Lambda_\lo$ is an ideal (lower set); equivalently
$\Lambda_\up$ is a coideal (upper set).
Let $\mathcal C_\lo$ be the Serre subcategory of $\mathcal C$
generated by $\{L(\lambda)\:|\:\lambda \in \Lambda_\lo\}$.
We write $\iota:\mathcal C_\lo \hookrightarrow \mathcal C$ for the natural inclusion,
and $\iota^!$ (resp.\ $\iota^*$) for the left (resp.\ right) adjoint
to $\iota$ which sends an object $M$ to its largest quotient (resp.\ subobject)
belonging to $\mathcal C_\lo$.
The category $\mathcal C_\lo$ is itself a highest weight category with weight poset
$\Lambda_\lo$. Its irreducible, standard and costandard objects are the same as the ones in $\mathcal C$ indexed by the set $\Lambda_\lo$.
For $\lambda \in \Lambda_\lo$ the projective cover (resp.\ injective hull) of $L(\lambda)$
in $\mathcal C_\lo$ is $\iota^! P(\lambda)$ (resp.\ $\iota^* I(\lambda)$),
which will in general be a proper quotient of $P(\lambda)$ (resp.\ 
a proper subobject of $I(\lambda)$).
For any $M, N \in \mathcal C_\lo$ we have that
\begin{equation}\label{ext1}
\Ext^n_{\mathcal C}(M, N) \cong \Ext^n_{\mathcal C_\scriptlo}(M,N)
\end{equation}
for all $n \geq 0$.
This is proved by a Grothendieck spectral sequence argument
exactly like in \cite[A3.2--A3.3]{Donkin}.
A key step is to check that the higher right derived functors $R^n \iota^*$
vanish on objects from $\mathcal C^\nabla$;
dually the higher left derived functors $L_n \iota^!$ vanish on objects from 
$\mathcal C^\Delta$.

As well as the subcategory $\mathcal C_\lo$, we can consider the Serre
quotient category
$\mathcal C_\up := \mathcal C / \mathcal C_\lo$;
we stress that according to 
the definition of quotient category
the objects of $\mathcal C_\up$ are {\em the same} 
as the objects of $\mathcal C$; 
morphisms $M \rightarrow N$ in $\mathcal C_\up$
are obtained by taking a direct limit of the morphisms 
$M' \rightarrow N / N'$
 in $\mathcal C$ over all subobjects $M'$ of $M$ and $N'$ of $N$ such
 that $M / M'$ and $N'$ belong to $\mathcal C_\lo$.
Let $\pi:\mathcal C \rightarrow \mathcal C_\up$ be the quotient functor,
and fix a choice $\pi^!$ (resp.\ $\pi^*$) of a left (resp.\ right)
adjoint to $\pi$.
Note that the unit (resp.\ counit) of adjunction gives a canonical isomorphism
$\operatorname{id}\stackrel{\sim}{\rightarrow}\pi \circ \pi^!$
(resp.\ $ \pi \circ \pi^*\stackrel{\sim}{\rightarrow}\operatorname{id}$).
The irreducible, standard, costandard, indecomposable 
projective and indecomposable injective objects in $\mathcal C_\up$ are 
the same as the ones in $\mathcal C$ indexed by weights from $\Lambda_\up$.
Also for $\lambda \in \Lambda_\up$ we have that
$\pi^! P(\lambda) \cong P(\lambda),
\pi^* I(\lambda) \cong I(\lambda)$,
$\pi^! \Delta(\lambda) \cong \Delta(\lambda)$
and $\pi^* \nabla(\lambda)\cong\nabla(\lambda)$ in $\mathcal C$;
the first two isomorphisms here follow from properties of adjunctions;
see Lemma~\ref{shriek} below for justification of the latter two.
Finally
for $M, N \in \mathcal C$ such that 
either $M$ has a $\Delta$-flag with sections of the form $\Delta(\lambda)$ indexed by weights $\lambda \in \Lambda_\up$,
or $N$ has a $\nabla$-flag with sections 
$\nabla(\lambda)$ for $\lambda \in \Lambda_\up$,
we have that
\begin{equation}\label{ext2}
\Ext^n_{\mathcal C}(M,N) \cong \Ext^n_{\mathcal C_\scriptup}(M,N)
\end{equation}
for all $n \geq 0$.
This is \cite[A3.13]{Donkin}.


\begin{Lemma}\label{shriek}
Let $\pi:\mathcal C \rightarrow \mathcal C_\up$ be the quotient
associated to a coideal $\Lambda_\up \subseteq \Lambda$.
For $\lambda \in \Lambda_\up$ there are canonical isomorphisms
$\pi^! \Delta(\lambda) \cong \Delta(\lambda)$
and $\nabla(\lambda)
 \cong \pi^* \nabla(\lambda)$
in $\mathcal C$
induced by the counit 
and unit 
of the fixed adjunctions.
\end{Lemma}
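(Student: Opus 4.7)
The canonical maps to show are isomorphisms are the counit $\epsilon_{\Delta(\lambda)}\colon\pi^!\pi\Delta(\lambda)\to\Delta(\lambda)$ of $\pi^!\dashv\pi$ and the unit $\eta_{\nabla(\lambda)}\colon\nabla(\lambda)\to\pi^*\pi\nabla(\lambda)$ of $\pi\dashv\pi^*$; since the standard and costandard objects in $\mathcal C_\up$ coincide with those in $\mathcal C$ for $\lambda\in\Lambda_\up$, these are precisely the maps asserted. The plan is to prove the first isomorphism in detail; the second then follows by the symmetric argument.

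Applying $\pi$ to $\epsilon_{\Delta(\lambda)}$ produces an isomorphism, since the triangle identity $\pi\epsilon\circ\eta\pi=\operatorname{id}_\pi$ combined with the unit isomorphism $\operatorname{id}\stackrel{\sim}{\to}\pi\pi^!$ stated in the excerpt forces $\pi\epsilon_{\Delta(\lambda)}$ to be the inverse of an iso. Hence both the kernel $K$ and cokernel $C$ of $\epsilon_{\Delta(\lambda)}$ are annihilated by $\pi$, i.e.\ they lie in the Serre subcategory $\mathcal C_\lo$. The key tool is now~(\ref{ext2}): since $\Delta(\lambda)$ trivially has a $\Delta$-flag with section indexed by $\lambda\in\Lambda_\up$, it gives $\Ext^n_{\mathcal C}(\Delta(\lambda),N)\cong\Ext^n_{\mathcal C_\scriptup}(\Delta(\lambda),\pi N)$ for all $N\in\mathcal C$ and all $n\geq 0$, which vanishes whenever $N\in\mathcal C_\lo$. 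The vanishing of $\Hom_{\mathcal C}(\Delta(\lambda),C)$ forces the quotient map $\Delta(\lambda)\twoheadrightarrow C$ to be zero, so $C=0$. The vanishing of $\Ext^1_{\mathcal C}(\Delta(\lambda),K)$ splits the resulting short exact sequence $0\to K\to\pi^!\Delta(\lambda)\to\Delta(\lambda)\to 0$, giving $\pi^!\Delta(\lambda)\cong K\oplus\Delta(\lambda)$; the adjunction isomorphism $\Hom_{\mathcal C}(\pi^!\Delta(\lambda),K)\cong\Hom_{\mathcal C_\scriptup}(\Delta(\lambda),\pi K)=0$ then kills the projection onto the summand $K$, forcing $K=0$.

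For the $\nabla$-isomorphism I would run the symmetric argument: apply $\pi$ to $\eta_{\nabla(\lambda)}$ (using now the counit isomorphism $\pi\pi^*\stackrel{\sim}{\to}\operatorname{id}$) to confine its kernel and cokernel to $\mathcal C_\lo$; apply~(\ref{ext2}) with $\nabla(\lambda)$ in the second slot (its trivial $\nabla$-flag having section indexed by $\lambda\in\Lambda_\up$) to vanish $\Hom_{\mathcal C}(X,\nabla(\lambda))$ and $\Ext^1_{\mathcal C}(X,\nabla(\lambda))$ for every $X\in\mathcal C_\lo$; and finally use the adjunction $\Hom_{\mathcal C}(C',\pi^*\nabla(\lambda))\cong\Hom_{\mathcal C_\scriptup}(\pi C',\nabla(\lambda))=0$ to eliminate the cokernel summand after the analogous splitting. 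There is no serious obstacle here: the substantive input is~(\ref{ext2}), cited from \cite[A3.13]{Donkin} in the preceding discussion, and the rest is formal manipulation of adjunctions and triangle identities.
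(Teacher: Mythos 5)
Your proof is correct but takes a genuinely different route from the paper's.  You show the kernel $K$ and cokernel $C$ of the counit lie in $\mathcal C_\lo$ (since $\pi$ applied to the counit is an iso by the triangle identity and the stated isomorphism $\operatorname{id}\stackrel{\sim}{\rightarrow}\pi\pi^!$), and then eliminate them via the Ext-vanishing $\Ext^n_{\mathcal C}(\Delta(\lambda),N)=0$ for $N\in\mathcal C_\lo$ and $n\in\{0,1\}$, derived from (\ref{ext2}).  The paper instead proceeds without any homological input: it factors the quotient through the atomic quotient $\mathcal C_\lambda := \mathcal C_{\leq\lambda}/\mathcal C_{<\lambda}\cong\mathcal{V}ec$, identifies $\Delta(\lambda)\cong\pi_\lambda^! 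L(\lambda)$ because $\Delta(\lambda)$ is the projective cover of $L(\lambda)$ in $\mathcal C_{\leq\lambda}$, and then composes adjoints using $\pi_\lambda^!\cong\pi^!\circ\pi_{\up,\lambda}^!$ to read off $\pi^!\Delta(\lambda)\cong\Delta(\lambda)$; the ``canonical'' part is then a one-line surjectivity check.  Your approach is arguably more direct in exhibiting the counit itself as the isomorphism, while the paper's is more elementary and self-contained, which matters here because (\ref{ext2}) is cited to Donkin's appendix (which only treats finite weight posets).  In your argument there is no actual circularity, since the two instances you use ($n=0,1$) can be established directly from the highest-weight axioms ($\Delta(\lambda)$ has irreducible head $L(\lambda)\notin\mathcal C_\lo$, and $\Ext^1_{\mathcal C}(\Delta(\lambda),L(\mu))=0$ unless $\mu>\lambda$), but invoking the full force of (\ref{ext2}) is logically heavier than needed.
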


\begin{proof}
Let $\mathcal C_{\leq \lambda}$ (resp.\ $\mathcal C_{< \lambda}$) be the highest weight
subcategory of $\mathcal C$ associated to the ideal 
$\{\mu \in \Lambda\:|\:\mu \leq \lambda\}$ (resp. 
$\{\mu \in \Lambda\:|\:\mu < \lambda\}$).
Let $\mathcal C_\lambda := \mathcal C_{\leq \lambda} / \mathcal C_{< \lambda}$.
This category is a copy of $\mathcal{V}ec$ with unique (up to isomorphism) irreducible object $L(\lambda)$.
Let $\pi_\lambda:\mathcal C_{\leq \lambda} \rightarrow \mathcal C_\lambda$ be the quotient functor with left adjoint $\pi_\lambda^!$. The projective cover of $L(\lambda)$ in $\mathcal C_{\leq \lambda}$ is $\Delta(\lambda)$, hence 
by properties of adjunctions we have that $\Delta(\lambda) \cong \pi_\lambda^! L(\lambda)$ in $\mathcal C$.

Similarly, working with $\mathcal C_{\up}$ in place of $\mathcal C$, 
we define 
subcategories 
$\mathcal C_{\up,\leq \lambda}$
and $\mathcal C_{\up, < \lambda}$.
The quotient $\mathcal C_{\up,\leq \lambda}/ \mathcal C_{\up, < \lambda}$
is another copy of $\mathcal{V}ec$, hence is equivalent to $\mathcal C_\lambda$.
This means that there is another quotient functor $\pi_{\up,\lambda}:
\mathcal C_{\up,\leq \lambda} \rightarrow \mathcal C_\lambda$
such that $\pi_\lambda = \pi_{\up,\lambda}\circ \pi$,
hence $\pi_\lambda^! \cong \pi^! \circ \pi_{\up,\lambda}^!$.
Again we have that
$\Delta(\lambda) \cong \pi_{\up,\lambda}^! L(\lambda)$
in $\mathcal C_\up$.
Hence we get isomorphisms in $\mathcal C$: 
$$
\pi^! \Delta(\lambda) \cong 
\pi^!(\pi_{\up,\lambda}^! L(\lambda)) \cong
\pi_\lambda^! L(\lambda)
\cong \Delta(\lambda).
$$
It remains to observe that 
the counit $\pi^! \Delta(\lambda)
= \pi^! (\pi \Delta(\lambda))\rightarrow \Delta(\lambda)$
is an epimorphism as $\Delta(\lambda)$ has irreducible head $L(\lambda)$
and $\lambda \in \Lambda_\up$; hence this 
gives a canonical 
choice for the isomorphism.

The argument for $\nabla$ is similar.
\end{proof}

\subsection{Tensor product categorifications}\label{stpc}
Suppose we are given a type $(\bn,\bc)$ of level $l$. Recall the $\mathfrak{sl}_I$-module
$\bigwedge^{\bn,\bc}V_I$ from (\ref{tp}).

\begin{Definition}\rm\label{tpcdef}
An {\em $\mathfrak{sl}_I$-tensor product categorification} of {type} $(\bn,\bc)$ 
means a highest weight category
$\mathcal C$
together with an endofunctor $F$ of $\mathcal C$, a right adjoint $E$ to
$F$ (with specified adjunction), and natural transformations $x
\in \End(F)$ and $t \in \End(F^2)$
satisfying 
axioms (SL1)--(SL3)
and (TP1)--(TP3).
\begin{itemize}
\item[(TP1)]
The weight poset $\Lambda$ is the set $\Lambda_{I;\bn,\bc}$ from (\ref{lal})
partially ordered by $\lambda \leq \mu$ if and only if
$|\lambda|=|\mu|$ and
$|\lambda_1|+\cdots+|\lambda_k| \geq |\mu_1|+\cdots+|\mu_k|$
for all $k$.
\item[(TP2)]
The exact functors $F_i$ and $E_i$ send objects with $\Delta$-flags to objects with $\Delta$-flags.
\item[(TP3)]
The linear isomorphism
$[\mathcal C^\Delta] \stackrel{\sim}{\rightarrow} \bigwedge^{\bn,\bc} V_I,
[\Delta(\lambda)] \mapsto v_\lambda$ intertwines the endomorphisms
$f_i$ and $e_i$ of $[\mathcal C^\Delta]$ induced by 
$F_i$ and $E_i$ with the endomorphisms of $\bigwedge^{\bn,\bc} V_I$ arising from
the actions of the Chevalley generators $f_i$ and $e_i$
of $\mathfrak{sl}_I$.
\end{itemize}
Since $[\mathcal C]$ embeds into $[\mathcal C^\Delta] \cong
\bigwedge^{\bn,\bc} V_I$,
we deduce immediately from the axioms that $[\mathcal C]$ is itself an
integrable $\mathfrak{sl}_I$-module, i.e.
the axiom (SL4) holds automatically.
Thus tensor product categorifications are
categorifications in the sense of Definition~\ref{catdef} too.
\end{Definition}

\begin{Remark}\rm
This definition is a slightly modified version of \cite[Definition
3.2]{LW}, where a general notion of tensor product
categorification for arbitrary
Kac-Moody algebras was introduced.
The definition in \cite{LW} is expressed in terms of
quiver Hecke algebras rather than affine Hecke algebras; but of course
the above definition 
can be formulated equivalently with the axioms (SL1$'$)--(SL3$'$)
replacing (SL1)--(SL3); so this is a superficial difference. 
More significantly, in our formulation of the axioms (TP2)--(TP3), 
we have incorporated the explicit monomial basis $\{v_\lambda\:|\:\lambda \in \Lambda\}$ which is
only available in our special minuscule situation.
The analogous axioms (TPC2)--(TPC3) in \cite{LW} 
are couched in terms of some commuting categorical
$\mathfrak{sl}_I$-actions
on the associated graded category $\operatorname{gr} \mathcal C := \bigoplus_{\lambda \in
  \Lambda} \mathcal C_\lambda$ (where $\mathcal C_\lambda$ is as in the proof of Lemma~\ref{shriek}).
The functors ${_i}F_j$ defining these actions
can be recovered by taking a sum of equivalences
$\mathcal C_\lambda \rightarrow \mathcal C_{t_{ij}(\lambda)}$
for all $\lambda \in \Lambda$ such that $\lambda_{ij}=1$ and
$\lambda_{i(j+1)}=0$, where $t_{ij}(\lambda)$ is obtained from $\lambda$
by interchanging $\lambda_{ij}$ and $\lambda_{i(j+1)}$.
Such functors exist since for a highest weight category
each $\mathcal C_\lambda$ is equivalent to 
$\mathcal{V}ec$.
\end{Remark}

Any $\mathfrak{sl}_I$-tensor product categorification decomposes as
\begin{equation}\label{wtdec}
\mathcal C = \bigoplus_{\varpi \in P_I} \mathcal C_\varpi
\end{equation}
where $\mathcal C_\varpi$ is the Serre subcategory of $\mathcal C$ generated
by the irreducible objects $\{L(\lambda)\:|\:\lambda\in\Lambda, |\lambda|=\varpi\}$.
In particular, two irreducible objects $L(\lambda)$
and $L(\mu)$ belong to the same block of $\mathcal C$ only if
$|\lambda|=|\mu|$; see Theorem~\ref{blockclass} for the converse.

Given another type $(\bn',\bc')$ of the same level, we say that $(\bn,\bc)$ and
$(\bn',\bc')$
are {\em equivalent} if one of the following holds for each $i$:
either $c_i=c_i'$ and $n_i = n_i'$; or $I$ is finite,
$c_i \neq c_i'$ and
$n_i = |I_+|-n_i'$.
Observe in that case that the posets of $01$-matrices
$\Lambda_{I;\bn,\bc}$ 
and $\Lambda_{I;\bn',\bc'}$
are simply {\em equal},
and there is an $\mathfrak{sl}_I$-module isomorphism
$\bigwedge^{\bn,\bc}V_I \stackrel{\sim}{\rightarrow} \bigwedge^{\bn',\bc'}V_I, v_\lambda
\mapsto v_\lambda$.

We can now state the first main result of the article.

\begin{Theorem}\label{lwmain}
For any interval $I \subseteq \Z$ and type $(\bn,\bc)$, 
there exists an $\mathfrak{sl}_I$-tensor product categorification $\mathcal C$ of
type $(\bn,\bc)$. 
Moreover $\mathcal C$ is unique in the sense that if
 $\mathcal C'$ is another tensor product categorification 
of an equivalent type
$(\bn',\bc')$ then
there is 
a strongly equivariant equivalence 
$\mathbb{G}:\mathcal C \stackrel{\sim}{\rightarrow} \mathcal C'$
with $\mathbb{G} L(\lambda) \cong L'(\lambda)$
for each weight $\lambda$.
\end{Theorem}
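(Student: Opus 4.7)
For existence, the plan is to exhibit an explicit model. Take parabolic category $\mathcal O$ for a suitable general linear Lie superalgebra $\mathfrak{gl}_{N|M}(\K)$ whose parabolic subalgebra has Levi type $\bigoplus_{i=1}^l \mathfrak{gl}_{n_i|0}$ or $\mathfrak{gl}_{0|n_i}$ according to $c_i \in \{0,1\}$, and truncate to the integral block associated to $I$. The categorical $\mathfrak{sl}_I$-action comes from translation functors obtained by tensoring with the natural supermodule and its dual, with $x$ given by a Casimir-type element and $t$ by the flip. Axioms (SL1)--(SL3) reduce to standard properties of these translation functors, while (TP1) follows because the parabolic Verma modules are parametrized by $01$-matrices in $\Lambda_{I;\bn,\bc}$ with the ordering of weights matching the dominance order via Lemma~\ref{dominance}, and (TP2)--(TP3) amount to computing the effect of translation on standard filtrations and matching it with (\ref{cact1})--(\ref{cact2}). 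I would defer the detailed verification to Section~\ref{gl}, where (per the introduction) this is the sole place superalgebras enter.

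For uniqueness, when $I$ is finite the result is essentially \cite{LW}, applied to the minuscule dominant weights $\varpi_i$ whose associated simple modules are $\bigwedge^{n_i,c_i} V_I$. When $I$ is infinite the strategy is to truncate. For each finite subinterval $J \subset I$ satisfying the hypothesis of Lemma~\ref{defectL}, let $\Lambda^J \subseteq \Lambda_{I;\bn,\bc}$ be the subset of matrices whose entries are $c_i$ in each column outside $J_+$, and let $\mathcal{C}^J$ (resp.\ ${\mathcal{C}'}^J$) be the Serre subcategory of $\mathcal C$ (resp.\ $\mathcal{C}'$) generated by the corresponding simples. A key preliminary step is that $\Lambda^J$ is an ideal, that $\mathcal{C}^J$ is a highest weight subcategory, and that restricting the $\mathfrak{sl}_I$-action to the Chevalley generators $f_i,e_i$ for $i \in J$ turns $\mathcal{C}^J$ into an $\mathfrak{sl}_J$-tensor product categorification of type $(\bn,\bc)$. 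The finite uniqueness then produces a strongly $\mathfrak{sl}_J$-equivariant equivalence $\mathbb{G}_J\colon\mathcal{C}^J \stackrel{\sim}{\rightarrow} {\mathcal{C}'}^J$ carrying $L(\lambda) \mapsto L'(\lambda)$.

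The main obstacle is to promote the family $\{\mathbb{G}_J\}$ to a single strongly $\mathfrak{sl}_I$-equivariant equivalence $\mathbb{G}\colon \mathcal C \stackrel{\sim}{\rightarrow}\mathcal{C}'$: the $\mathbb{G}_J$ are determined by \cite{LW} only up to non-canonical isomorphism, so a naive $2$-colimit fails and the compatibility data must be engineered by hand. Following the strategy advertised in the introduction, one should pass to the algebra side using (\ref{stupid1}) and work with a projective-limit object $\widehat{P}(\lambda) := \varprojlim_J \iota^!_J P(\lambda)$, whose endomorphisms assemble into a locally unital $\K$-algebra $\widehat{A}$ acted on by the tower $\{QH_{I,d}\}_{d \geq 0}$. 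The point of the ``category of stable modules'' promised for Section~\ref{sstable} is precisely to show that $\widehat{A}$, together with its $QH_{I,d}$-action, depends up to isomorphism only on the abstract combinatorial data $(I,\bn,\bc)$ and not on the categorification producing it. Once this rigidity is established, reconstructing $\mathcal C$ as the category of finite dimensional locally unital $\widehat A$-modules (and likewise for $\mathcal{C}'$) yields the desired $\mathbb{G}$. The equivalent-types clause requires no extra work, since $\Lambda_{I;\bn,\bc} = \Lambda_{I;\bn',\bc'}$ literally as posets and $v_\lambda \mapsto v_\lambda$ intertwines the two $\mathfrak{sl}_I$-actions on the tensor products.
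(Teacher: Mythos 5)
Your outline of the existence half matches the paper's: super parabolic category $\mathcal O$ for $\mathfrak{gl}_{n|m}(\K)$ with $U=\bigoplus_i U_i$ and Levi $\bigoplus_i\mathfrak{gl}(U_i)$, translation by $U$ and $U^*$, $x$ from a Casimir, $t$ the flip; deferring verification to Section~\ref{gl} is fine. The finite uniqueness citation to \cite{LW} and the remark about equivalent types are also correct. The problems are all in the infinite uniqueness argument.

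First, the claim that the subset $\Lambda^J$ (matrices with entries equal to $c_i$ outside $J_+$) is an ideal of $(\Lambda,\leq)$ is \emph{false} in general, and the resulting Serre subcategory $\mathcal{C}^J$ is not a highest weight subcategory with poset $\Lambda^J$. Take $l=2$, $\bn=(1,1)$, $\bc=(0,1)$, $I=\Z$ and $J_+=\{0,\dots,k\}$ with $k\geq 0$. The matrix $\lambda$ with $\lambda_{1,0}=1$, $\lambda_{2,0}=0$ lies in $\Lambda^J$; the matrix $\mu$ with $\mu_{1,-1}=1$, $\mu_{2,-1}=0$ has the same weight $|\mu|=|\lambda|=0$, and $|\mu_1|=\eps_{-1}>\eps_0=|\lambda_1|$ in dominance, so $\mu<\lambda$, yet $\mu\notin\Lambda^J$. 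The paper's construction in $\S$\ref{trun} deals with exactly this: one defines two \emph{ideals} $\Lambda_{\leq J}\supset\Lambda_{<J}$ with $\Lambda_J=\Lambda_{\leq J}\setminus\Lambda_{<J}$ locally closed, and takes the Serre \emph{subquotient} $\mathcal{C}_J:=\mathcal{C}_{\leq J}/\mathcal{C}_{<J}$. This subquotient, not a subcategory, is the object carrying an $\mathfrak{sl}_J$-tensor product categorification structure; the restricted functors $F_j,E_j$ ($j\in J$) preserve $\mathcal{C}_{\leq J}$ and $\mathcal{C}_{<J}$ by the lemma following (\ref{ineq}), which is what makes the descent work. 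Working with your $\mathcal C^J$ instead, the $\Delta$-flags and Grothendieck group would not match $\bigwedge^{\bn,\bc}V_J$, so the finite uniqueness from \cite{LW} could not be applied.

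Second, the proposed "projective-limit object" $\widehat{P}(\lambda):=\varprojlim_J\iota^!_J P(\lambda)$ does not produce the right auxiliary. Since $\iota^!_J P(\lambda)$ is the largest quotient of $P(\lambda)$ lying in the subcategory, and $P(\lambda)$ itself lies in $\mathcal C_{\leq J}$ once $J$ is large enough, the system stabilizes at $P(\lambda)$, so $\widehat P(\lambda)\cong P(\lambda)$ and nothing new is gained. What the paper actually does is orthogonal: it fixes a chain $I_1\subset I_2\subset\cdots$, builds the prinjective tensor spaces $T^r=\bigoplus_d F_{I_r}^d L(\kappa^r)$ and identifies $\End_{\mathcal C}(T^r)$ with the cyclotomic quiver Hecke algebra $H^r$; then the crucial new ingredient (Lemma~\ref{tl}) is a family of \emph{corner} algebra isomorphisms $\phi^r\colon H^r\stackrel{\sim}{\rightarrow}e^r H^{r+1}e^r$ compatible with explicit isomorphisms $T^r\stackrel{\sim}{\rightarrow} e^r T^{r+1}$. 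The category $\rmod{H}$ of stable modules is defined in terms of this tower, and the functor $\V=\Hom_{\mathcal C}(T^\bullet,-)$ plays the role of the Soergel-type quotient functor, yielding the double centralizer property (Theorem~\ref{ffp}) and hence the identification of $\mathcal C$ with $\rmod{A}$ for an $A$ depending only on $(I,\bn,\bc)$. Your intuition that some limiting rigidity statement over finite truncations is needed is sound, but the technical device is the corner-algebra tower and stable modules, not an inverse limit of projective covers.
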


In the case that $I$ is finite, Theorem~\ref{lwmain} is 
a special case of the main
result of \cite{LW}; see $\S$\ref{finite} for some further discussion of that.
For infinite intervals, Theorem~\ref{lwmain} is new and will be proved later in the article.
Specifically, 
we will establish existence for $I = \Z$ in $\S$\ref{secsuper},
then existence for the other infinite but
bounded above or below intervals follows by the truncation argument
explained in $\S$\ref{trun}.
The uniqueness will be established in $\S$\ref{uniq}.

\begin{Corollary}\label{duality}
Any $\mathfrak{sl}_I$-tensor product categorification $\mathcal C$ admits a 
duality
$\circledast$
such that 
$F_i \circ \circledast \cong \circledast \circ F_i, E_i \circ \circledast \cong \circledast \circ E_i$ and
$L(\lambda) \cong L(\lambda)^\circledast$ for each weight
$\lambda$.  Similarly its category of projectives has a 
duality $\#$ such that 
$F_i \circ \# \cong \# \circ F_i, E_i \circ \# \cong \# \circ E_i$ and
$P(\lambda) \cong P(\lambda)^\#$ for each $\lambda$.
\end{Corollary}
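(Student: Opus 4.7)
The plan is to derive the corollary from the uniqueness clause of Theorem~\ref{lwmain} by equipping the opposite category $\mathcal C^{\operatorname{op}}$ with a tensor product categorification structure of the same type $(\bn,\bc)$. By (HW$^*$), $\mathcal C^{\operatorname{op}}$ is already Schurian and highest weight with the same poset $\Lambda_{I;\bn,\bc}$, standard objects $\nabla(\lambda)$, and indecomposable projectives $I(\lambda)$. Since $(F,E)$ is biadjoint on $\mathcal C$ by (SL3), the opposite pair $(F^{\operatorname{op}}, E^{\operatorname{op}})$ is a biadjoint pair of endofunctors of $\mathcal C^{\operatorname{op}}$, and the natural transformations $x,t$ transfer to natural transformations of $F^{\operatorname{op}}$ and $(F^{\operatorname{op}})^2$ satisfying the relations of $AH_d$ via the anti-involution of $AH_d$ that fixes each $x_j$ and $t_k$.

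Axioms (SL1)--(SL3) and (TP1) are straightforward after these transfers. The crux is (TP2): one needs $F_i$ and $E_i$ to preserve $\nabla$-flags in $\mathcal C$. This follows from the homological criterion characterising $\mathcal C^\nabla$ as $\{M : \Ext^1_{\mathcal C}(\Delta(\mu), M) = 0 \text{ for all } \mu\}$, combined with biadjointness: one computes $\Ext^1(\Delta(\mu), F_i M) \cong \Ext^1(E_i \Delta(\mu), M) = 0$, using that $E_i \Delta(\mu) \in \mathcal C^\Delta$ by (TP2) for $\mathcal C$. For (TP3), the map $[\nabla(\lambda)] \mapsto v_\lambda$ is $\mathfrak{sl}_I$-equivariant because a further application of biadjointness and (TP3) for $\mathcal C$ identifies the multiplicity $(F_i \nabla(\lambda) : \nabla(\mu))$ with $(E_i \Delta(\mu) : \Delta(\lambda))$, and hence with the coefficient of $v_\mu$ in $f_i v_\lambda$ (using that the monomial basis is self-dual with respect to an $\mathfrak{sl}_I$-invariant form on $\bigwedge^{\bn,\bc} V_I$).

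With $\mathcal C^{\operatorname{op}}$ established as a tensor product categorification of type $(\bn,\bc)$, the uniqueness clause of Theorem~\ref{lwmain} provides a strongly equivariant equivalence $\mathbb{G}: \mathcal C \to \mathcal C^{\operatorname{op}}$ with $\mathbb{G} L(\lambda) \cong L(\lambda)$. Viewed as a contravariant self-equivalence $\circledast$ of $\mathcal C$, this is the required duality; the relations $F_i \circ \circledast \cong \circledast \circ F_i$ and $E_i \circ \circledast \cong \circledast \circ E_i$ restate strong equivariance. Involutivity $\circledast^2 \cong \operatorname{id}$ follows from a second invocation of uniqueness, applied to the strongly equivariant covariant self-equivalence $\circledast^2$, which fixes every simple.

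For the projective duality $\#$ I would proceed analogously: $\circledast$ induces an idempotent-preserving algebra isomorphism $A \cong B^{\operatorname{op}}$, where $A = \bigoplus \Hom_{\mathcal C}(P(\lambda), P(\mu))$ and $B = \bigoplus \Hom_{\mathcal C}(I(\lambda), I(\mu))$. A complementary idempotent-preserving identification $A \cong B$ is extracted from the categorification structure (by a further application of uniqueness comparing the two possible presentations of $\mathcal C^{\operatorname{op}}$ as a tensor product categorification, one tracking projectives and the other injectives), yielding an anti-involution $\sigma: A \to A^{\operatorname{op}}$ fixing each $1_\lambda$. The duality $\#$ is then realised as the contravariant functor on $\mathcal C^{\operatorname{proj}}$ obtained by twisting right $A$-modules through $\sigma$; compatibility with $F_i$ and $E_i$ is inherited from $\circledast$. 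The principal obstacle throughout is the verification of (TP2) for $\mathcal C^{\operatorname{op}}$, which is the only step that is not a purely formal consequence of the axioms; all remaining steps are bookkeeping with adjunctions and the uniqueness theorem.
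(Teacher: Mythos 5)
Your construction of $\circledast$ follows the paper's approach almost exactly: show $\mathcal C^{\operatorname{op}}$ is a tensor product categorification of the same type, then apply the uniqueness in Theorem~\ref{lwmain}. Your spelled-out verifications of (TP2) and (TP3) via the $\Ext$-vanishing criterion and the bilinear form are sound and fill in what the paper only sketches as ``the homological criteria.''

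The construction of $\#$ is where you diverge, and where your reasoning is mis-attributed. The paper obtains $\#$ directly and cleanly: $P^\# := {}*{}\circ \Hom_{\mathcal C}(P,-)\circ\circledast$ under the Yoneda embedding, equivalently $\circledast\circ\mathcal N$ where $\mathcal N=\Hom_A(-,A)^*$ is the Nakayama functor; this immediately gives $\Hom(P^\#,M)\cong\Hom(P,M^\circledast)^*$, from which $P(\lambda)^\#\cong P(\lambda)$ and the $F_i,E_i$-commutation (by adjointness) drop out. You instead try to produce an anti-automorphism of the basic algebra $A$ by composing two isomorphisms $A\cong B^{\operatorname{op}}$ (from $\circledast$) and $A\cong B$. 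The conclusion is reachable, but the second isomorphism does \emph{not} require ``a further application of uniqueness comparing two presentations of $\mathcal C^{\operatorname{op}}$ as a tensor product categorification''—it is a purely Morita-theoretic fact about any Schurian category (it is precisely the Nakayama identification between the endomorphism algebra of the minimal projective generator and that of the minimal injective cogenerator, via $\mathcal C^{\operatorname{op}}\cong\rmod{A^{\operatorname{op}}}\cong\rmod{B^{\operatorname{op}}}$). Invoking Theorem~\ref{lwmain} here is both unnecessary and opaque: that theorem asserts the \emph{existence} of a strongly equivariant equivalence, not its uniqueness, and it is not clear what the ``two presentations'' are meant to be. The same remark applies to your appeal to ``a second invocation of uniqueness'' to get $\circledast^2\cong\operatorname{id}$: Theorem~\ref{lwmain} does not assert that a strongly equivariant self-equivalence fixing the simples is isomorphic to the identity, so this step is not justified as stated (the paper does not claim involutivity and sidesteps this). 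In short, your $\circledast$ argument matches the paper; your $\#$ argument would work but uses an over-engineered and mis-labeled mechanism where the paper's one-line Nakayama-functor construction is available.
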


\begin{proof}
Using the homological criteria for $\Delta$- and $\nabla$-flags,
one checks that the axioms (TP2)--(TP3) are equivalent to the axioms
(TP2$^*$)--(TP3$^*$) obtained from them by replacing all occurrences of
$\Delta$ with $\nabla$. In other words
$\mathcal C$ is a tensor product categorification if and only if $\mathcal C^{\operatorname{op}}$ is one; when $I$ is finite this assertion is \cite[Proposition 3.9]{LW}.
Now apply the uniqueness from Theorem~\ref{lwmain} with $\mathcal C'
:= \mathcal C^{\operatorname{op}}$ to get $\circledast$.

To obtain the duality $\#$ on projectives,
one can use (\ref{yoneda}) to reduce to the problem of
defining a duality $\#$
on the subcategory of $\operatorname{Fun}_f(\pC, \mathcal{V}ec^{\operatorname{op}})$ consisting of all exact functors;
there one sets
$\Hom_{\mathcal C}(P, -)^\# := * \circ \Hom_{\mathcal C}(P, -) \circ \circledast$
(where the final $*$ is the duality on $\mathcal{V}ec$).
Transporting through the Yoneda equivalence this yields a
duality $\#$ on $\pC$ such that
\begin{equation}
\Hom_{\mathcal C}(P^\#,M)\cong \Hom_{\mathcal C}(P,M^\circledast)^*\label{star-rep}
\end{equation}
for all $M \in \mathcal C$.
It is clear from (\ref{star-rep}) that $P(\lambda)^\# \cong P(\lambda)$, while
the fact that 
$\#$ commutes with $F_i$ and $E_i$ follows
by adjunction as $\circledast$ commutes with $E_i$ and $F_i$.
(Alternatively this definition can be understood via (\ref{stupid1})
in terms of the algebra $A$:
it corresponds to the composition $\circledast \circ \mathcal{N}:\rproj{A}\rightarrow \rproj{A}$
where $\mathcal{N}$ is the Nakayama functor $\Hom_A(-,A)^*:\rproj{A}
\rightarrow \rinj{A}$,
and $\rproj{A}$ and $\rinj{A}$ denote the categories of projective and
injective $A$-modules, respectively.)
\end{proof}

\subsection{Review of the proof of Theorem~\ref{lwmain} for finite intervals}\label{finite}
In this subsection, we assume that $I$ is finite and 
recall for future reference some of the key ideas behind the proof of
Theorem~\ref{lwmain} from \cite{LW}.
Suppose we are given a type $(\bn,\bc)$.
To avoid trivialities we assume that $n_i \leq |I_+|$ for each $i$.
There are two general approaches to the construction of the tensor
product categorification $\mathcal C$ in Theorem~\ref{lwmain}. First
it can be realized in terms of certain blocks of the parabolic
category $\mathcal O$ associated to the general linear Lie algebra;
see \cite[Definition 3.13]{LW}.
Alternatively, $\mathcal C$
can be constructed using 
the tensor product algebras of \cite{W1}; see \cite[Theorem 3.12]{LW}.

Turning our attention to uniqueness, let $\mathcal C$ be some given 
$\mathfrak{sl}_I$-tensor product categorification with weight poset
$\Lambda = \Lambda_{I, \bn,\bc}$.
Let $A$ be the algebra 
\begin{equation}\label{A1}
A := 
\bigoplus_{\lambda,\mu\in\Lambda} \Hom_{\mathcal C}
(P(\lambda), P(\mu))
\end{equation}
from (\ref{assertion}),
and
$\mathbb{H}:\mathcal C \rightarrow \rmod{A}$
be the canonical equivalence of categories from (\ref{stupid1}).
There is a formal way to transport the categorical
$\mathfrak{sl}_I$-action
from $\mathcal C$ to $\rmod{A}$ in such a way that
$\mathbb{H}:
\mathcal C\rightarrow \rmod{A}$ becomes a strongly equivariant
equivalence.
The appropriate functor $F:\rmod{A} \rightarrow \rmod{A}$ is the functor
defined by tensoring over $A$ with the $(A,A)$-bimodule
\begin{equation}\label{B1}
B :=
\bigoplus_{\lambda,\mu \in \Lambda} \Hom_{\mathcal C}(P(\lambda), F
P(\mu)).
\end{equation}
The natural transformations $x \in \End(F)$
and $t \in \End(F^2)$ come from
bimodule endomorphisms $x:B \rightarrow B$ and $t:B \otimes_A B \rightarrow B \otimes_A B$ defined as follows:
let $x:B \rightarrow B$ be defined on the summand $\Hom_{\mathcal
  C}(P(\lambda), FP(\mu))$ of $B$ by composing
with 
$x_{P(\mu)}:F P(\mu) \rightarrow F P(\mu)$;
let $t:B \otimes_A B \rightarrow B \otimes_A B$ be induced similarly by
$t_{P(\mu)}:F^2 P(\mu) \rightarrow F^2 P(\mu)$
using also the following canonical isomorphism
$$
B \otimes_A B \cong 
\bigoplus_{\lambda,\mu \in \Lambda} \Hom_{\mathcal C}(P(\lambda),F^2 P(\mu)).
$$
Then we may take $E:\rmod{A}\rightarrow \rmod{A}$ to be the canonical
right adjoint to $F$ given by the functor $\bigoplus_{\lambda \in
  \Lambda} \Hom_{A}(1_\lambda B, -)$.
In this way we have made explicit the categorical $\mathfrak{sl}_I$-action
on $\rmod{A}$.

The strategy for the proof of uniqueness is as follows. Suppose that we are 
given another $\mathfrak{sl}_I$-categorification $\mathcal C'$
of an equivalent type $(\bn',\bc')$. We repeat all of the above,
defining
its associated basic algebra 
\begin{equation}\label{A2}
A' := \bigoplus_{\lambda,\mu\in\Lambda} \Hom_{\mathcal C'}
(P'(\lambda), P'(\mu)),
\end{equation}
and an $(A',A')$-bimodule
\begin{equation}\label{B2}
B' := \bigoplus_{\lambda,\mu \in \Lambda} \Hom_{\mathcal C'}(P'(\lambda), F' P'(\mu))
\end{equation}
together with endomorphisms $x':B' \rightarrow B'$ and
$t':B'\otimes_{A'} B'$ leading to a categorical $\mathfrak{sl}_I$-action on
$\rmod{A'}$ too, such that the equivalence
$\mathbb{H}':\mathcal C'\rightarrow
\rmod{A'}$ is strongly equivariant.
Then the point is to construct an algebra isomorphism $A \cong A'$,
inducing an isomorphism of categories $\rmod{A}
\stackrel{\sim}{\rightarrow} \rmod{A'}$.
To see that this isomorphism of categories is strongly
equivariant, we must also define an isomorphism $B \cong B'$ that
intertwines the actions of $A, x$ and $t$ with $A', x'$
and $t'$.
Composing 
the isomorphism 
$\rmod{A}\stackrel{\sim}{\rightarrow} \rmod{A'}$ on one side with $\mathbb{H}$ and with the canonical
adjoint equivalence to $\mathbb{H}'$ on the other, we
obtain the
desired strongly equivariant equivalence $\mathbb{G}:\mathcal C\rightarrow\mathcal C'$
from the statement of Theorem~\ref{lwmain}.

Let us begin.
Recall that $\kappa = \kappa_{I;\bn,\bc}$ is the $01$-matrix indexing
the basis vector of maximal
weight in $\bigwedge^{\bn,\bc}V_I$.
The irreducible object $L(\kappa)$ is the only irreducible in its
block, i.e. 
$L(\kappa)=\Delta(\kappa)=\nabla(\kappa)=P(\kappa)=I(\kappa)$.
Since the functor $F$ has both a left and right adjoint it sends prinjectives to prinjectives, hence the object
\begin{equation}\label{prinjec}
T = \bigoplus_{d \geq 0} T_d := \bigoplus_{d \geq 0}F^d L(\kappa) \in \mathcal C
\end{equation}
is prinjective.
The modules $T_d$ and $T_{d'}$ for $d \neq
d'$ belong to different sums of the blocks from (\ref{wtdec}), hence
we have that $\Hom_{\mathcal C}(T_d, T_{d'}) = 0$ for $d \neq d'$.
We say that $M \in \mathcal C$ is {\em homogeneous of degree $d$} if it
belongs to the same sum of blocks as $T_d$; then we have that $\Hom_{\mathcal
  C}(T,M) = \Hom_{\mathcal C}(T_d,M)$.
Note further that $T_d = 0$ for $d \gg 0$.
Let
\begin{equation}
H = \bigoplus_{d \geq 0} H_d :=
\bigoplus_{d \geq 0} AH_{I,d}^{|\kappa|}.
\end{equation}
The following theorem is at the heart of everything; 
see \cite[Proposition 3.2]{LW} for the first assertion,
\cite[Theorem 5.1]{LW} for the second, and the proof of \cite[Theorem 6.1]{LW} for the final one; in the special case that $\mathcal C$ 
is the tensor product categorification arising from parabolic category $\mathcal O$ from \cite[Definition 3.13]{LW} the results here go back to \cite{SWD}.

\begin{Theorem}[\cite{LW}]\label{dcp}
The action of 
$AH_d$ 
on $T_d$
induces a canonical isomorphism between $H_d$ and
$\End_{\mathcal C}(T_d)$; hence $H = \End_{\mathcal C}(T)$.
Moreover the exact functor
\begin{equation}
\V:= \Hom_{\mathcal C}(T, -):\mathcal C \rightarrow
\rmod{H}
\end{equation}
is fully faithful on projectives.
Finally for each weight $\lambda \in \Lambda$
the $H$-module 
\begin{equation}
Y(\lambda) := \V P(\lambda)
\end{equation}
is independent (up to isomorphism) of
the particular choice of $\mathcal C$.
\end{Theorem}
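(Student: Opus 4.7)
The plan is to prove the three assertions in sequence, translating each one into combinatorics in $\bigwedge^{\bn,\bc} V_I$ via axiom (TP3).

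For the first assertion, the action of $AH_d$ on $F^d$ provided by (SL2), evaluated at $L(\kappa)$, defines a canonical homomorphism $\rho_d : AH_d \to \End_{\mathcal C}(T_d)$. Since $\kappa$ is the unique maximal element of $\Lambda_{I;\bn,\bc}$, axiom (TP3) combined with (\ref{cact2}) forces $E_i L(\kappa) = 0$ for all $i \in I$; the standard cyclotomic quotient argument (compare \cite{R}) then pushes $\rho_d$ through $AH_d^{|\kappa|}$, and further through $H_d = AH_{I,d}^{|\kappa|}$ using $F = \bigoplus_{i \in I} F_i$. To upgrade $\rho_d$ to an isomorphism I would compare dimensions. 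Biadjointness combined with $L(\kappa) = P(\kappa) = I(\kappa)$ gives
$$
\dim \End_{\mathcal C}(T_d) = [E^d F^d L(\kappa) : L(\kappa)],
$$
which by (TP3) equals the coefficient of $v_\kappa$ in $e^d f^d v_\kappa$ computed inside $\bigwedge^{\bn,\bc} V_I$; this matches the known dimension of $H_d$, so a faithfulness check completes the isomorphism.

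For the second assertion, the key step is to show that every indecomposable projective $P(\lambda)$ is a direct summand of some $T_d$. Since $P(\kappa) = L(\kappa)$ and each $F_i$ preserves projectives (being biadjoint to $E_i$), every $F_\bi L(\kappa)$ is projective; by (TP3) its class in $[\mathcal C]$ agrees with $f_{i_d}\cdots f_{i_1} v_\kappa$. Direct inspection of (\ref{cact1}) shows that $\bigwedge^{\bn,\bc} V_I$ is generated from $v_\kappa$ by the operators $f_i$, so by Krull--Schmidt inside $\mathcal C^{\operatorname{proj}}$ each $P(\lambda)$ must appear as a summand of some $T_d$. Full faithfulness of $\V$ on projectives is then automatic: whenever $P$ and $Q$ are summands of a common $T_d$, both $\Hom_{\mathcal C}(P,Q)$ and $\Hom_H(\V P, \V Q)$ identify canonically with the appropriate idempotent-truncated piece of $H = \End_{\mathcal C}(T)$.

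For the third assertion, I would reconstruct $Y(\lambda)$ intrinsically from the module data. By Part~2 one has $Y(\lambda) \cong e_\lambda H$ for a primitive idempotent $e_\lambda$ inside $1_\bi H 1_\bi$, where $\bi$ is any tuple with $v_\lambda$ occurring in $f_{i_d}\cdots f_{i_1} v_\kappa$. The isomorphism class of $e_\lambda H$ as an $H$-module is controlled by the multiplicities $[T_d : L(\mu)]$ together with the dominance poset on $\Lambda_{I;\bn,\bc}$; both of these are determined by (TP3) and the highest weight axiom, hence are independent of $\mathcal C$. The main obstacle is precisely this final rigidification: showing that the categorical action, constrained only by the tensor product categorification axioms, pins down the isomorphism class of $e_\lambda H$ canonically, which amounts to matching primitive idempotents across two a priori unrelated realizations of $\mathcal C$.
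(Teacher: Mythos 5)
There are genuine gaps in two of your three parts, and the middle one is fatal.

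\textbf{Part 2.} Your key step --- that every indecomposable projective $P(\lambda)$ is a direct summand of some $T_d$ --- is false, as is the premise used to justify it. The $U(\mathfrak{sl}_I)$-submodule of $\bigwedge^{\bn,\bc}V_I$ generated by $v_\kappa$ under the $f_i$'s is the irreducible highest weight module $V(|\kappa|)$, not the whole tensor product: already for $V_I \otimes V_I$ with $\mathfrak{sl}_2$ the vector $v_1\otimes v_2 - v_2 \otimes v_1$ is not reachable. Categorically, the indecomposable summands of $T$ are exactly the prinjective objects, which by Theorem~\ref{prinj} are $\{P(\lambda): \lambda \in \Lambda^\circ\}$ for the proper subset $\Lambda^\circ\subseteq \Lambda$ given by the crystal component of $\kappa$. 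Your identification of $\Hom_{\mathcal C}(P,Q)$ with an idempotent truncation of $H$ therefore only applies to prinjectives, and full faithfulness on all of $\mathcal C^{\operatorname{proj}}$ does not follow. What \cite[Theorem 5.1]{LW} actually proves is a Soergel-type struktursatz: one shows that every projective $P$ admits an exact copresentation $0\to P \to J_0 \to J_1$ with $J_0, J_1 \in \operatorname{add}(T)$, and then deduces the double centralizer property by a left-exactness argument. The existence of that copresentation is the genuine content, and your outline does not supply it.

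\textbf{Part 3.} This is also unsound, independently of the error above. You claim the isomorphism type of $Y(\lambda)$ is controlled by the composition multiplicities $[T_d : L(\mu)]$ and assert these are ``determined by (TP3) and the highest weight axiom.'' Axiom (TP3) only fixes the $\Delta$-flag multiplicities $[T_d : \Delta(\mu)]$; converting these to composition multiplicities requires the decomposition numbers $[\Delta(\mu):L(\nu)]$, which are precisely the quantities the super Kazhdan-Lusztig conjecture is about and are certainly not given a priori by the axioms. Moreover even full knowledge of composition multiplicities would not pin down the module $e_\lambda H$ up to isomorphism. The proof of \cite[Theorem 6.1]{LW} cited in the text does the required rigidification by an induction on the highest weight order, using the categorical action itself, not just Grothendieck-group level data. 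Your Part~1 dimension-counting sketch is a plausible outline of \cite[Proposition 3.2]{LW}, modulo the unelaborated faithfulness check, and is the only part on reasonable footing.
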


\begin{Remark}\label{stronger}\rm
The proof of \cite[Theorem 5.1]{LW}
establishes a slightly stronger result:
the map $\V:\Hom_{\mathcal C}(M, P)
\rightarrow \Hom_H(\V M, \V P)$ is an isomorphism
for any $M, P \in \mathcal C$ with $P$ projective.
\end{Remark}

Thus the functor $\V$
has similar properties to Soergel's combinatorial functor $\mathbb{V}$
from \cite{Soergel}.
The modules $Y(\lambda)$ may be called {\em Young modules} by analogy with the modular representation theory of symmetric groups.
The second assertion of Theorem~\ref{dcp} is a version of the
{\em double centralizer property}, which has already appeared in
numerous related contexts in representation theory; see e.g. \cite[Example
2.7]{MS} where several are listed.
It implies that the functor $\V$ defines an algebra
isomorphism
\begin{equation}\label{C1}
A \cong \bigoplus_{\lambda,\mu \in \Lambda} \Hom_{H}(Y(\lambda),
Y(\mu)).
\end{equation}
Similarly for the primed category we get that
\begin{equation}\label{C2}
A' \cong \bigoplus_{\lambda,\mu \in\Lambda} \Hom_H(Y'(\lambda), 
Y'(\mu))
\end{equation}
where $Y'(\lambda) := \V' P'(\lambda)$ for $\V'$
defined analogously to $\V$.
Then, applying the final assertion of Theorem~\ref{dcp}, we
choose $H$-module isomorphisms $Y(\lambda) \cong Y'(\lambda)$
for each $\lambda$. These choices induce the desired algebra isomorphism $A
\cong A'$.

It remains to construct the bimodule isomorphism $B \cong B'$. This needs just a little more preparation.
The category $\rmod{H}$ is also equipped with a categorical
$\mathfrak{sl}_I$-action.
The endofunctors 
\begin{equation}\label{forthis}
F:\rmod{H}\rightarrow \rmod{H},
\qquad
E:\rmod{H}\rightarrow \rmod{H}
\end{equation}
for this 
are the induction and restriction functors
associated to the homomorphisms $H_d \rightarrow H_{d+1}$ induced by the 
natural inclusions
$AH_d \hookrightarrow AH_{d+1}$ for all $d \geq 0$; so 
for a right $H_d$-module $M$ we have that
$F M := \operatorname{ind}_{H_d}^{H_{d+1}} M = M \otimes_{H_d} H_{d+1}$
and $E M := \operatorname{res}^{H_d}_{H_{d-1}} M$.
The canonical adjunction makes $(F, E)$ into an adjoint pair.
Left multiplication by $x_{d+1}$ defines an $(H_d, H_{d+1})$-bimodule 
endomorphism of 
$H_{d+1}$, from which we obtain the 
natural transformation $x \in \End(F)$.
Also, by transitivity of induction, $F^2$
is isomorphic to the functor sending a right $H_d$-module $M$ to
$M \otimes_{H_d} H_{d+2}$; then left multiplication by $t_{d+1}$
defines an $(H_d, H_{d+2})$-bimodule endomorphism of $H_{d+2}$
inducing $t \in \End(F^2)$.
This gives us the data of an $\mathfrak{sl}_I$-categorification in the
sense of Definition~\ref{catdef}. The fact that the axioms
(SL1)--(SL4) hold goes back at least to
\cite[Remark 7.13]{CR}; see also \cite[Corollary 7.7.5]{Kbook} for the proof that $F$ is isomorphic to
a right adjoint of $E$.

The following lemma was noted already in the first paragraph of
\cite[$\S$5.1]{LW}; the alternative proof given below is a bit more
explicit.

\begin{Lemma}\label{seq}
The quotient functor $\V:\mathcal C \rightarrow \rmod{H}$ is strongly equivariant.
\end{Lemma}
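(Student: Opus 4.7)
My plan is to construct an isomorphism $\zeta^+: \V \circ E \xrightarrow{\sim} E \circ \V$ satisfying (\ref{sep1})--(\ref{sep2}) and then appeal to the standard Chuang--Rouquier recipe to pass back to $\zeta^-$ with (\ref{sem1})--(\ref{sem2}). I prefer to work with $E$ first because the endofunctor $E$ on $\rmod H$ is simply restriction along $H_d \hookrightarrow H_{d+1}$, so the comparison with $\V$ becomes essentially tautological.

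For the construction, fix $M \in \mathcal C$. Because $T_{d+1} = FT_d$ by the very definition of $T$, the given adjunction $(F,E)$ produces a natural bijection
$$
\Hom_{\mathcal C}(T_d, EM) \xrightarrow{\sim} \Hom_{\mathcal C}(T_{d+1}, M),
$$
and the right-hand side is exactly the degree-$d$ component of $E(\V M)$ since $E$ on $\rmod H$ is restriction. Assembling these bijections over all $d$ gives $\zeta^+_M$. I would then need to check this map intertwines the right $H_d$-actions on the two sides; this reduces to the observation that the inclusion $H_d \hookrightarrow H_{d+1}$, translated through the identification $H_d \cong \End_{\mathcal C}(T_d)$ of Theorem~\ref{dcp}, sends an endomorphism $\phi \in \End_{\mathcal C}(T_d)$ to $F\phi \in \End_{\mathcal C}(T_{d+1})$, which matches the action coming through adjunction.

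Next I would verify the compatibility (\ref{sep1}) with $x$. Under the identification of Theorem~\ref{dcp} the endomorphism $x \in \End(E)$ transported to $\rmod H$ acts on $\Hom_{\mathcal C}(T_{d+1},M)$ by precomposition with $x_{T_{d+1}} = (F^d x)_{L(\kappa)}$, while the endomorphism $x \in \End(E)$ on $\mathcal C$ is built from $x \in \End(F)$ together with the unit and counit of adjunction via (\ref{xp}). The claim then follows from a short diagram chase using the triangular identities. The verification of (\ref{sep2}) for $t$ is structurally identical but somewhat more intricate, because the recipe (\ref{ttp}) transporting $t$ from $\End(F^2)$ to $\End(E^2)$ involves several instances of the unit and counit.

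The main obstacle is this last diagram chase, but it is purely formal: both natural transformations to be compared ultimately trace back to the single endomorphism $(F^d t)_{L(\kappa)}$ of $T_{d+2}$, so once the relevant pentagons are drawn it is just a matter of repeatedly invoking the triangular identities. Once $\zeta^+$ is constructed and shown to satisfy (\ref{sep1})--(\ref{sep2}), the formal passage recalled in the text following these displays produces the required $\zeta^-$ with (\ref{sem1})--(\ref{sem2}), completing the proof that $\V$ is strongly equivariant.
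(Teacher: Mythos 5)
Your approach is the same as the paper's: you construct $\zeta^+$ from the adjunction isomorphism $\Hom_{\mathcal C}(T_d, EM) \cong \Hom_{\mathcal C}(FT_d, M) = \Hom_{\mathcal C}(T_{d+1}, M)$, note that the inclusion $H_d \hookrightarrow H_{d+1}$ corresponds to $\phi \mapsto F\phi$ under the Theorem~\ref{dcp} identification, and then reduce (\ref{sep1})--(\ref{sep2}) to matching the explicit endomorphisms of the restriction functor on $\rmod{H}$. The paper simply records the key fact ($x$ and $t$ on $E, E^2$ act on a right $H_d$-module by $x_d$ and $t_{d-1}$) and leaves the chase to the reader.

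One index slip in your sketch is worth flagging because it points at the actual content of the check: you claim $x \in \End(E)$ on $\rmod{H}$ acts on $\Hom_{\mathcal C}(T_{d+1}, M)$ by precomposition with $(F^d x)_{L(\kappa)}$. That endomorphism is $F^d(x_{L(\kappa)})$, which under $H_{d+1} \cong \End_{\mathcal C}(T_{d+1})$ is the generator $x_1$. The correct element is $x_{d+1}$ — the ``new'' generator, not in the image of $H_d \hookrightarrow H_{d+1}$ — which corresponds to $(xF^d)_{L(\kappa)} = x_{T_d}$, i.e.\ the natural transformation $x$ evaluated at $T_d$ rather than whiskered inside. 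The distinction matters: the restriction functor's $x$ must act by the outermost generator, which is exactly what falls out of tracing (\ref{xp}) through the tensor-hom adjunction for $F = - \otimes_{H_d} H_{d+1}$ and its right adjoint. With that whiskering corrected, your diagram chase closes up as you describe, and the same correction applies to your $(F^d t)_{L(\kappa)}$.
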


\begin{proof}
First we construct the required natural transformation
$\zeta:F \circ \V \rightarrow \V \circ F$.
Take $M \in \mathcal C$ that is homogeneous of degree $d$.
We need to produce a 
natural $H_{d+1}$-module homomorphism
$\zeta_M:\Hom_{\mathcal C}(T_d, M) \otimes_{H_d} H_{d+1} \rightarrow
\Hom_{\mathcal C}(T_{d+1}, F M)$.
The functor $F$ defines a natural $H_d$-module homomorphism from $\Hom_{\mathcal C}(T_d, M)$
to the restriction of the $H_{d+1}$-module $\Hom_{\mathcal C}(T_{d+1},
FM)$.
Then we use the adjunction of induction and restriction to convert this
into the desired homomorphism
$\zeta_M$.

Next we show that $\zeta$ is
an isomorphism of functors.
It is certainly an isomorphism on $T$
as for that it reduces to an identity map.
Hence it is an isomorphism on any direct sum of summands of $T$.
By \cite[Lemma 5.3]{LW}, any projective object $P\in \mathcal C$ 
fits into an exact sequence
$0 \rightarrow P \rightarrow J \rightarrow K$ 
such that $J$ and $K$ are direct sums of summands of $T$.
Note further that both $F \circ \V$ and $\V \circ F$ are exact functors.
Hence when we apply $\zeta$ to our exact sequence we obtain a commuting diagram
with exact rows:
$$
\begin{CD}
0&@>>>&F \circ \V(P)&@>>>&F \circ \V(J)&@>>>&F \circ \V(K)\\
&&&&@VVV&&@VVV&&@VVV\\
0&@>>>&\V \circ F(P)&@>>>&\V \circ F(J)&@>>>&\V \circ F(K).
\end{CD}
$$
We know already that the right hand vertical maps are isomorphisms,
hence so too is the first one. Now we have proved that $\zeta$ 
defines an isomorphism on every projective object. For an arbitrary
object $M$ we pick a projective resolution $Q \rightarrow P
\rightarrow M \rightarrow 0$ and make another argument with the Five Lemma.

It remains to check the axioms (E1)--(E3) from Definition~\ref{sev}.
For (E1), we observe on some homogeneous $M \in \mathcal C$ of degree $(d+1)$
that
the natural transformation $E \V \eps \circ E \zeta E \circ \eta \V E$
defines the $H_d$-module homomorphism
$$
\Hom_{\mathcal C}(T_d, EM) \rightarrow \Hom_{\mathcal C}(F T_d, M)
$$ 
defined by the given
adjunction between $F$ and $E$. Hence it is an isomorphism.
For (E2),
assume that $M \in \mathcal C$ is homogeneous of degree $d$.
Then it suffices to show that the following diagram commutes:
$$
\begin{CD}
\Hom_{\mathcal C}(T_d, M) \otimes_{H_d} H_{d+1}&@>(x \V)_M>>&
\Hom_{\mathcal C}(T_d, M) \otimes_{H_d} H_{d+1}\\
@V\zeta_MVV&&@VV\zeta_M V\\
\Hom_{\mathcal C}(T_{d+1}, FM)&@>(\V x)_M>>&
\Hom_{\mathcal C}(T_{d+1}, FM).
\end{CD}
$$
Take $\theta \in \Hom_{\mathcal C}(T_d, M)$ and $h \in H_{d+1}$.
Going south then east, $\theta\otimes h$ maps
to the homomorphism $T_{d+1} \rightarrow FM,
v \mapsto x_M ((F \theta)(hv))$, while going east then south produces
the homomorphism $v \mapsto (F \theta)(x_{d+1} hv)$.
These are equal by the naturality of $x:F \rightarrow F$ with respect to the homomorphism
$\theta:T_d \rightarrow M$.
The proof of (E3) is similar.
\end{proof}

Applying Lemma~\ref{seq}, we deduce that the functor $\V$ defines
$(A,A)$-bimodule isomorphisms
\begin{align}\label{D1}
B &\cong \bigoplus_{\lambda,\mu \in \Lambda} \Hom_H(Y(\lambda),
FY(\mu)),\\
B \otimes_A B &\cong \bigoplus_{\lambda,\mu \in \Lambda} \Hom_H(Y(\lambda), 
F^2 Y(\mu)).\label{E1}
\end{align}
Under these isomorphisms, 
$x:B \rightarrow B$ and $t:B \otimes_A B \rightarrow B \otimes_A B$
correspond to the endomorphisms of the bimodules on the right
induced by all of the homomorphisms $x_{Y(\mu)}:F Y(\mu)\rightarrow FY(\mu)$
and $t_{Y(\mu)}:F^2 Y(\mu) \rightarrow F^2 Y(\mu)$, respectively.
Similarly
\begin{align}\label{D2}
B' &\cong \bigoplus_{\lambda,\mu \in \Lambda} \Hom_H(Y'(\lambda), FY'(\mu)),
\\
B' \otimes_A B' &\cong \bigoplus_{\lambda,\mu \in \Lambda} \Hom_H(Y'(\lambda), F^2 Y'(\mu)).\label{E2}
\end{align}
Then the $H$-module isomorphisms $Y(\lambda) \cong Y'(\lambda)$ chosen
earlier
induce the desired isomorphism $B \cong B'$.
It is immediate that it intertwines the actions of $A, x$ and $t$ with
$A', x'$ and $t'$.
This completes our sketch of the proof of uniqueness in
Theorem~\ref{lwmain} for finite intervals.

\subsection{Truncation}\label{trun}
In this subsection we introduce our key tool for proving results about 
tensor product categorifications when $I$ is infinite.
Throughout we fix a type $(\bn,\bc)$ and any interval $I$,
and set $\Lambda := \Lambda_{I;\bn,\bc}$.
Given a subinterval $J \subseteq I$,
there is an obvious embedding $\mathfrak{sl}_J \hookrightarrow
\mathfrak{sl}_I$.
Let $\Lambda_J$ be the subposet of $\Lambda$ consisting 
of all $01$-matrices
$\lambda$ such that $\lambda_{ij} = c_i$ whenever $j \notin J_+$.
This is order-isomorphic to the poset $\Lambda_{J;\bn,\bc}$
via the map sending $\lambda =(\lambda_{ij})_{1 \leq i \leq l,
  j \in I_+} \in \Lambda_J$ to its submatrix
$\lambda_J := (\lambda_{ij})_{1 \leq i \leq l, j \in J_+} \in \Lambda_{J;\bn,\bc}$. 
In turn, the $\mathfrak{sl}_J$-module $\bigwedge^{\bn,\bc} V_J$
can be identified with the
$\mathfrak{sl}_J$-submodule of $\bigwedge^{\bn,\bc} V_I$
spanned by 
$\{v_\lambda\:|\:\lambda \in \Lambda_J\}$.
We then have that
$$
\textstyle
\bigwedge^{\bn,\bc} V_I = \bigcup_J \bigwedge^{\bn,\bc} V_J,
$$ 
taking the union just over the finite
subintervals $J \subseteq I$.
We are going to develop a categorical analog of this decomposition.

\begin{Lemma}\label{pset}
For $\lambda,\mu \in \Lambda$, the following are equivalent:
\begin{itemize}
\item[(i)]
$\lambda \leq \mu$;
\item[(ii)] 
for all $h \in I$ and $1\leq k\leq l$ we have that 
$\displaystyle
\sum_{i=1}^k \!\!\sum_{\substack{j \leq h \\ \lambda_{ij} \neq c_i}} (-1)^{c_i}\!
\geq \sum_{i=1}^k \!\!\sum_{\substack{j \leq h \\ \mu_{ij} \neq c_i}} (-1)^{c_i},$
with equality when $k=l$;
\item[(iii)]
for all $h\in I$ and $1 \leq k \leq l$ we have that 
$\displaystyle
\sum_{i=1}^k\!\! \sum_{\substack{j > h \\ \lambda_{ij} \neq c_i}} (-1)^{c_i}
\!\leq \sum_{i=1}^k \!\!\sum_{\substack{j > h \\ \mu_{ij} \neq c_i}} (-1)^{c_i}$,
with equality when $k=l$.
\end{itemize}
\end{Lemma}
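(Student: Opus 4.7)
The plan is to reduce this to Lemma~\ref{dominance} applied to the partial-sum weights $|\lambda_1|+\cdots+|\lambda_k|$ and $|\mu_1|+\cdots+|\mu_k|$, and then interpret the resulting inequality on coefficients in the $\eps$-basis in the more combinatorial form asked for. The key computational identity is
\[
\lambda_{ij}-c_i \;=\; (-1)^{c_i}\,[\lambda_{ij}\neq c_i],
\]
valid in both cases $c_i=0$ and $c_i=1$; this is what produces the sign $(-1)^{c_i}$ appearing in (ii) and (iii).

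For (i) $\Leftrightarrow$ (ii), I would start by computing the difference
\[
\sum_{i=1}^{k}\bigl(|\lambda_i|-|\mu_i|\bigr)
\;=\;
\sum_{i=1}^{k}\sum_{j\in I_+}(\lambda_{ij}-\mu_{ij})\,\eps_j
\;=\;
\sum_{i=1}^{k}(-1)^{c_i}\!\!\sum_{j\in I_+}
\bigl([\lambda_{ij}\neq c_i]-[\mu_{ij}\neq c_i]\bigr)\eps_j.
\]
All but finitely many coefficients vanish (since each row of $\lambda$ and $\mu$ differs from $c_i$ in only $n_i$ positions), and the total coefficient sum is $\sum_i(-1)^{c_i}(n_i-n_i)=0$, so the difference lies in $Q_I$. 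Hence Lemma~\ref{dominance} applies directly and the dominance inequality $|\lambda_1|+\cdots+|\lambda_k|\geq|\mu_1|+\cdots+|\mu_k|$ is equivalent to the partial-sum inequality in (ii) at each $h\in I$. For the clause ``with equality when $k=l$'', note that the definition (TP1) of the order builds in $|\lambda|=|\mu|$, so at $k=l$ the dominance comparison collapses to equality, forcing equality of the partial sums at every $h$; conversely that equality at $k=l$ is exactly the statement $|\lambda|=|\mu|$ required by (TP1).

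For (ii) $\Leftrightarrow$ (iii), I would use that $|\{j\in I_+:\lambda_{ij}\neq c_i\}|=n_i$ by the very definition of $\Lambda_{I;n_i,c_i}$, so the total
\[
\sum_{i=1}^{k}\!\!\sum_{\substack{j\in I_+\\ \lambda_{ij}\neq c_i}}\!(-1)^{c_i}
\;=\;\sum_{i=1}^{k}(-1)^{c_i}n_i
\]
depends only on $(\bn,\bc,k)$ and is the same for $\lambda$ and $\mu$. Splitting this total as $\sum_{j\leq h}+\sum_{j>h}$ and subtracting then shows that the deficit appearing in (ii) is the negative of the deficit appearing in (iii); the inequality therefore flips, and the equality at $k=l$ carries over.

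There is no serious obstacle here: the only mild point to be careful about is that for infinite $I$ a single $|\lambda_i|$ need not be a finite sum of $\eps_j$'s, but what one actually feeds into Lemma~\ref{dominance} is the \emph{difference} of two such weights, and this difference has finitely supported coefficients summing to zero, so everything is unambiguous.
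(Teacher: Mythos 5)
Your proposal is correct and is essentially the same argument the paper has in mind: it reduces (i)$\Leftrightarrow$(ii) to Lemma~\ref{dominance} applied to the difference of partial weights, and gets (ii)$\Leftrightarrow$(iii) by subtracting from the fixed totals $\sum_{i=1}^k(-1)^{c_i}n_i$. The paper's own proof states both steps as immediate; your write-up just fills in the computational details (notably the identity $\lambda_{ij}-c_i=(-1)^{c_i}[\lambda_{ij}\neq c_i]$ and the remark that the relevant difference is finitely supported with zero total, which is what makes the lemma applicable also for infinite $I$).
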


\begin{proof}
The equivalence of (i) and (ii) follows from Lemma~\ref{dominance}.
The equivalence of (ii) and (iii) is obvious.
\end{proof}

Again let $J \subseteq I$ be any subinterval.
Let 
$\Lambda_{\leq J}$ 
denote the set of all $\lambda \in \Lambda$ which satisfy the conditions
\begin{equation}\label{ineq}
\left\{
\begin{array}{ll}
\displaystyle
\sum_{i=1}^k \sum_{\substack{j \leq h \\ \lambda_{ij} \neq c_i}} (-1)^{c_i}
\geq 0&
\text{for all $h < \min(J)$ and $1\leq k\leq l$,}
\\
\displaystyle
\sum_{i=1}^k \sum_{\substack{j > h \\ \lambda_{ij} \neq c_i}} (-1)^{c_i}
\leq 0&
\text{for all $h > \max(J)$ and $1 \leq k\leq l$.}\\
\end{array}
\right.
\end{equation}
Also let $\Lambda_{< J}$ denote the
set of all $\lambda \in \Lambda_{\leq J}$
such that at least one of the above inequalities is strict.
Lemma~\ref{pset} implies that both $\Lambda_{\leq J}$
and $\Lambda_{< J}$ are ideals in the poset $\Lambda$.
Moreover $\Lambda_J = \Lambda_{\leq J} \setminus \Lambda_{<J}$.

Suppose next that we are given an $\mathfrak{sl}_I$-tensor product 
categorification $\mathcal C$ of type $(\bn,\bc)$.
Let $\mathcal C_{\leq J}$ (resp.\ $\mathcal C_{< J}$) 
be the highest weight subcategory of
$\mathcal C$ associated to the ideal $\Lambda_{\leq J}$
(resp. $\Lambda_{< J}$).
Let $\mathcal C_J := \mathcal C_{\leq J} / \mathcal C_{< J}$.
We denote the 
quotient functor by $\pi_J:\mathcal C_{\leq J} \rightarrow \mathcal C_J$.

\begin{Lemma}
For $j \in J$
the functors $F_j$ and $E_j$ preserve the subcategories
$\mathcal C_{\leq J}$ and $\mathcal C_{< J}$ of $\mathcal C$.
\end{Lemma}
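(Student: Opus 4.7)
The plan is to reduce to checking the statement on standard objects, then use the combinatorics on the Grothendieck group. Since $\mathcal C_{\leq J}$ and $\mathcal C_{< J}$ are Serre subcategories of $\mathcal C$ generated by the irreducibles indexed by $\Lambda_{\leq J}$ and $\Lambda_{< J}$ respectively, and $F_j$ and $E_j$ are exact, it suffices to show that $F_j L(\lambda)$ and $E_j L(\lambda)$ lie in $\mathcal C_{\leq J}$ (resp.\ $\mathcal C_{<J}$) whenever $\lambda \in \Lambda_{\leq J}$ (resp.\ $\Lambda_{<J}$). Since $L(\lambda)$ is a quotient of $\Delta(\lambda)$, exactness reduces this further to showing the same statement with $\Delta(\lambda)$ in place of $L(\lambda)$.

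By axiom (TP2), both $F_j \Delta(\lambda)$ and $E_j \Delta(\lambda)$ carry $\Delta$-flags, and by axiom (TP3) the multiplicity of a section $\Delta(\mu)$ in such a flag agrees with the coefficient of $v_\mu$ when $f_j v_\lambda$ (resp.\ $e_j v_\lambda$) is expanded in the monomial basis of $\bigwedge^{\bn,\bc}V_I$. Using the coproduct together with the explicit formulas (\ref{cact1})--(\ref{cact2}), the nonzero sections are precisely those $\Delta(\mu)$ with $\mu$ obtained from $\lambda$ by a single swap of entries $\lambda_{ij}$ and $\lambda_{i(j+1)}$ in some row $i$ (with the appropriate $01$-pattern in that row for $f_j$ or $e_j$). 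Thus the task reduces to showing that any such swap preserves $\Lambda_{\leq J}$ and $\Lambda_{<J}$.

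This is the combinatorial heart of the argument, and it hinges on the observation that $j\in J$ forces both $j$ and $j+1$ to lie in $J_+$. Consequently, for any $h < \min(J)$ we have $h < j < j+1$, so neither $j$ nor $j+1$ contributes to the sums $\sum_{j'\leq h}$ appearing in (\ref{ineq}); and for any $h > \max(J)$ we have $j<j+1\leq h$, so neither contributes to the sums $\sum_{j'>h}$. Hence none of the defining inequalities of $\Lambda_{\leq J}$ in (\ref{ineq}) changes value when we pass from $\lambda$ to $\mu$. This immediately shows $\mu\in\Lambda_{\leq J}$ whenever $\lambda\in\Lambda_{\leq J}$, and since strictness of any inequality is preserved as well, also $\mu\in\Lambda_{<J}$ whenever $\lambda\in\Lambda_{<J}$.

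The only step that requires any real care is the verification of the combinatorial invariance in the previous paragraph, and even that is straightforward given the formulation of $\Lambda_{\leq J}$ in (\ref{ineq}); no serious obstacle is anticipated. I expect the lemma to follow in a few lines once the reduction to $\Delta(\lambda)$ is made explicit.
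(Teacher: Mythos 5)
Your proof is correct and follows essentially the same route as the paper's: reduce to $\Delta(\lambda)$ via exactness and the surjection $\Delta(\lambda)\twoheadrightarrow L(\lambda)$, identify the sections of the $\Delta$-flag via (TP2)--(TP3) as single row swaps $t_j$, and then observe that since $j,j+1\in J_+$ neither position contributes to any of the defining sums in (\ref{ineq}), so all inequalities (and their strictness) are preserved. The paper states this last combinatorial step more tersely but with identical content.
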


\begin{proof}
We just explain for $\mathcal C_{\leq J}$; the same argument works for
$\mathcal C_{< J}$.
Take any $\lambda \in \Lambda_{\leq J}$.
We need to show that $F_j L(\lambda)$ and $E_j L(\lambda)$
both belong to $\mathcal C_{\leq J}$.
Since $L(\lambda)$ is a quotient of $\Delta(\lambda)$,
this follows if
we can show that $F_j \Delta(\lambda)$ and $E_j \Delta(\lambda)$ 
belong to $\mathcal C_{\leq J}$.
These objects have filtrations
with sections of the form $\Delta(\mu)$
for weights $\mu$
obtained from $\lambda$ by applying the transposition $t_j$ to one of
its rows.
The integers on the left hand side of the inequalities
(\ref{ineq}) are the same for each of these $\mu$ as they are for $\lambda$,
so that each $\mu$ arising is an element of $\Lambda_{\leq J}$
and $\Delta(\mu)$ does indeed belong to $\mathcal C_{\leq J}$.
\end{proof}

Hence for $j \in J$ 
the functors $F_j$ and $E_j$
induce a well-defined biadjoint pair of 
endofunctors of $\mathcal C_J$. 
Let $F_J := \bigoplus_{j \in J} F_j$ 
and $E_J := \bigoplus_{j \in J}
E_j$.
The natural transformations $x$ and $s$ restrict to 
endomorphisms of $F_J$ and $F_J^2$, respectively, such that
the associated endomorphisms $x_j$ and $t_k$ of $F_J^d$ 
satisfy the
degenerate affine Hecke algebra relations as in (SL2).
The axioms (TP1)--(TP3) for $\mathcal C$
imply the analogous statements for $\mathcal C_J$.
Thus we have proved:

\begin{Theorem}\label{subq}
The subquotient $\mathcal C_J$ of $\mathcal C$ equipped with the endofunctors $F_J$ and $E_J$
is an $\mathfrak{sl}_J$-tensor
product categorification of type $(\bn,\bc)$.
\end{Theorem}

We record one more technical lemma for later use.

\begin{Lemma}\label{we}
Suppose that $I$ is infinite and $J \subset I$ is a finite subinterval
with $|J_+|\geq 2 \max(\bn)$.
Let $\kappa \in \Lambda$ be the unique weight such that
$\kappa_J =\kappa_{J;\bn,\bc} \in \Lambda_{J;\bn,\bc}$.
Then $L(\kappa)$ is the unique indecomposable object in its block; in particular it is prinjective in $\mathcal C$.
\end{Lemma}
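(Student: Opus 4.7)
The plan is to show that $\kappa$ is the only element of $\Lambda = \Lambda_{I;\bn,\bc}$ with weight $|\kappa|\in P_I$; everything else is formal. Granted this, the block decomposition (\ref{wtdec}) immediately implies that $L(\kappa)$ is the unique irreducible in the Serre subcategory $\mathcal{C}_{|\kappa|}$, and the poset axiom (TP1) in Definition~\ref{tpcdef} shows that any $\mu\geq\kappa$ in $\Lambda$ must satisfy $|\mu|=|\kappa|$ and hence $\mu=\kappa$. In particular no $\mu>\kappa$ exists, so the $\Delta$-flag of $P(\kappa)$ supplied by (HW) has no sections beyond $\Delta(\kappa)$. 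This forces $P(\kappa)=\Delta(\kappa)=L(\kappa)$, and dually $I(\kappa)=\nabla(\kappa)=L(\kappa)$, so $L(\kappa)$ is the unique indecomposable object in its block and in particular is prinjective.

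For the uniqueness of $\kappa$ relative to its weight, I would expand
\[
|\lambda|=\sum_{j\in I_+}\bigl(a_j(\lambda)-b_j(\lambda)\bigr)\eps_j,
\]
where $a_j(\lambda):=\#\{i\:|\:c_i=0,\lambda_{ij}=1\}$ and $b_j(\lambda):=\#\{i\:|\:c_i=1,\lambda_{ij}=0\}$; when $I$ is infinite this uses the convention $\sum_{i\in I_+}\eps_i=0$ recorded in the discussion after (\ref{cact2}). The elements $\eps_j$ with $j\in I_+$ are linearly independent in $P_I$ when $I$ is infinite, and satisfy exactly the one relation $\sum_j\eps_j=0$ when $I$ is finite. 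Because the global counts $\sum_j a_j(\lambda)=\sum_{c_i=0}n_i$ and $\sum_j b_j(\lambda)=\sum_{c_i=1}n_i$ depend only on the type $(\bn,\bc)$, this single relation is neutralized, and the equation $|\mu|=|\kappa|$ is equivalent to the pointwise identity $a_j(\mu)-b_j(\mu)=a_j(\kappa)-b_j(\kappa)$ for every $j\in I_+$.

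The hypothesis $|J_+|\geq 2\max(\bn)$ is exactly what makes the last combinatorial step go through: it ensures that $a_j(\kappa)$ is supported in a ``left region'' of $J_+$ while $b_j(\kappa)$ is supported in a disjoint ``right region''. Using the pointwise identity together with the global totals and the non-negativity of all four sequences $a_j(\mu),b_j(\mu),a_j(\kappa),b_j(\kappa)$, a sign-comparison argument then forces $a_j(\mu)$ to vanish outside the left region and equal $a_j(\kappa)$ on it, and likewise $b_j(\mu)$ to vanish outside the right region and equal $b_j(\kappa)$ on it. The column-sum sequence $(a_j(\kappa))$ restricted to the left region is, by construction, the conjugate partition of the row-sum vector $(n_i)_{c_i=0}$, and a standard Gale-Ryser uniqueness statement pins the type-$0$ rows of $\mu$ down row by row to agree with those of $\kappa$; a symmetric argument handles the type-$1$ rows. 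The hardest part of the proof is this final combinatorial step, which only becomes tractable because the hypothesis $|J_+|\geq 2\max(\bn)$ cleanly separates the ``left'' and ``right'' contributions coming from type-$0$ and type-$1$ rows.
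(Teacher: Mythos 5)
Your proof is correct, and the formal deduction from ``$\kappa$ is the only element of its weight'' to ``$L(\kappa)$ is prinjective'' via (\ref{wtdec}), (TP1) and (HW) matches the paper, which simply invokes (\ref{wtdec}) without spelling that part out. For the combinatorial heart of the lemma, however, you take a genuinely different route. The paper argues by induction on the level $l$: choose a row index $i$ for which $c_i=0$ and $n_i$ is minimal (or, if all $c_i=1$, with $n_i$ minimal among those); the hypothesis $|J_+|\geq 2\max(\bn)$ forces columns $s+1,\dots,s+n_i$ of $\kappa$ to be identically $1$ (resp.\ $0$), so the matching column sums of $\lambda$ pin down row $i$ of $\lambda$; one then deletes that row and recurses. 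Your argument instead proves all the column equalities $a_j(\mu)=a_j(\kappa)$, $b_j(\mu)=b_j(\kappa)$ simultaneously from the weight identity by exploiting the (near-)linear independence of the $\eps_j$ and the disjointness of the left and right supports of $\kappa$, and then recovers the rows by the observation that when row sums and column sums of a $01$-matrix are conjugate partitions of each other the matrix is unique (this is a classical greedy-peeling fact; calling it a ``Gale--Ryser uniqueness statement'' is a slight misnomer since Gale--Ryser is an existence criterion, but the elementary argument is standard). Both proofs are valid. The paper's induction is shorter and avoids invoking any external combinatorics, while your version makes more transparent exactly where the separation hypothesis $|J_+|\geq 2\max(\bn)$ enters (in disjointing the left and right column supports of $\kappa$) and treats all rows of a given type in one pass. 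The one place you should be a touch more careful in writing it out is the case distinction between $I$ finite and $I$ infinite in the linear-independence step: for finite $I$ the $\eps_j$ satisfy the single relation $\sum_{j\in I_+}\eps_j=0$, and you correctly note this is neutralized by the fixed global totals, but that cancellation is worth displaying explicitly.
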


\begin{proof}
In view of (\ref{wtdec}) it suffices to show 
for $\lambda \in \Lambda$
that
$|\lambda| = |\kappa| \Rightarrow\lambda = \kappa$.
We proceed by induction on $l$, 
the case $l=0$ being trivial.

For the induction step assume first that $c_i = 0$ for some $i$.
Amongst all the $i$ with $c_i=0$ choose one for which $n_i$ is minimal.
Thus $n_i \leq n_j$ for all $j$ with $c_j = 0$, and $n_i \leq 2 \max(\bn) - n_j
\leq |J_+|-n_j$ for all $j$ with $c_j = 1$.
Letting $s := \min(J_+)-1$, it follows that the columns
$s+1,\dots,s+n_i$ of the $01$-matrix $\kappa$
have all entries equal to $1$. Since $|\lambda| = |\kappa|$
the number of entries $1$ in each column of $\lambda$ is the same as in $\kappa$.
Hence all $n_i$ of the entries $1$ in the $i$th row of $\lambda$
appear in columns $s+1,\dots,s+n_i$. 
Thus the $i$th row of $\lambda$ is 
the same as the $i$th row of $\kappa$.
Then we remove this row and proceed by induction.

This just leaves us with the case that $c_i = 1$ for all $i$.
Choose $i$ so that $n_i$ 
is minimal and let $s := \max(J_+)-n_i$.
Then columns $s+1,\dots,s+n_i$ of $\kappa$, hence also of $\lambda$,
have all entries equal to $0$. So the $i$th row of $\lambda$ is 
the same as in $\kappa$, and then we can induct as before.
\end{proof}

\subsection{Decomposition numbers and blocks}
For any $I$ and $(\bn,\bc)$, 
let $\mathcal C$ be an $\mathfrak{sl}_I$-tensor product categorification of
type $(\bn,\bc)$. 
In the finite case, 
Theorem~\ref{lwmain} shows that $\mathcal C$ is equivalent to some
blocks of parabolic category $\mathcal O$ for the general linear Lie algebra,
hence we can exploit the extensive literature about parabolic category
$\mathcal O$ to deduce
results about $\mathcal C$.
In the infinite case, many 
questions about $\mathcal C$ can be answered by picking
a sufficiently large finite subinterval $J \subset I$, then passing
to the subquotient $\mathcal C_J$ and invoking Theorem~\ref{subq}.

For example, the following theorem shows that decomposition
numbers in $\mathcal C$
can be computed by reducing to the Kazhdan-Lusztig conjecture (which
describes the decomposition numbers in parabolic
category $\mathcal O$); see $\S$\ref{cc} for more about the explicit combinatorics here.
We will appeal to this observation in the next section to prove the super Kazhdan-Lusztig
conjecture for $\mathfrak{gl}_{n|m}(\C)$.

\begin{Theorem}\label{obviously}
Given $\lambda,\mu \in \Lambda$, choose a finite subinterval
$J \subseteq I$ such that $\lambda$ and $\mu$ both belong to $\Lambda_J$.
Then the composition multiplicity $[\Delta(\lambda):L(\mu)]$ in $\mathcal C$ coincides with the
multiplicity $[\Delta(\lambda_J):L(\mu_J)]$
computed in $\mathcal C_J$.
Hence, recalling that $\mathcal C_J$ is equivalent to a sum of integral blocks of
parabolic category $\mathcal O$ for the general linear Lie algebra,
these multiplicities can be computed via the Kazhdan-Lusztig conjecture.
\end{Theorem}

\begin{proof}
This is immediate from the exactness of the quotient functor $\pi_J$.
\end{proof}

As another illustration of the truncation technique, we classify the blocks of $\mathcal C$.

\begin{Theorem}\label{blockclass}
For $\lambda,\mu \in \Lambda$,
the irreducible objects $L(\lambda)$ and $L(\mu)$ lie in the same
block of $\mathcal C$ if and only if $|\lambda|=|\mu|$ in the weight
lattice $P_I$.
\end{Theorem}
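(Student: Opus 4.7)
The forward direction is immediate from the weight decomposition (\ref{wtdec}) noted just after the statement. Only the converse requires work, and the plan is to reduce it to the finite case already handled by \cite{LW}, using the truncation machinery of \S\ref{trun}.

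Given $\lambda, \mu \in \Lambda$ with $|\lambda| = |\mu|$ in $P_I$, the first step is to choose a finite subinterval $J \subseteq I$ large enough that both $\lambda$ and $\mu$ lie in $\Lambda_J$; such a $J$ exists because each weight differs from the trivial column tuple $(c_i)$ in only finitely many columns, and once $J$ is chosen the hypothesis automatically gives $|\lambda_J| = |\mu_J|$ in $P_J$ (the contributions from columns outside $J_+$ being the same for $\lambda$ and $\mu$). By \S\ref{trun} the subquotient $\mathcal C_J := \mathcal C_{\leq J}/\mathcal C_{<J}$ is then a finite $\mathfrak{sl}_J$-tensor product categorification of type $(\bn,\bc)$. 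The finite case of Theorem~\ref{lwmain} identifies $\mathcal C_J$ with the standard categorification arising from parabolic category $\mathcal O$ for $\mathfrak{gl}_{n+m|J|}(\C)$, for which the block classification for integral central characters is classical and matches precisely the $\mathfrak{sl}_J$-weight space decomposition on the module side. Thus $L(\lambda)$ and $L(\mu)$ lie in a common block of $\mathcal C_J$; equivalently, $[P_J(\lambda):L(\mu)] > 0$, where $P_J(\lambda)$ is the projective cover of $L(\lambda)$ in $\mathcal C_J$.

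The remaining step is to lift this conclusion back to $\mathcal C$. Write $\iota : \mathcal C_{\leq J} \hookrightarrow \mathcal C$ for the inclusion and $\pi_J : \mathcal C_{\leq J} \to \mathcal C_J$ for the quotient, with left adjoints $\iota^!$ and $\pi_J^!$. The highest weight category facts recalled in \S\ref{rhw} give that $\iota^! P(\lambda) \cong \pi_J^! P_J(\lambda)$ is the projective cover of $L(\lambda)$ in $\mathcal C_{\leq J}$; and since the Serre quotient $\pi_J$ preserves composition multiplicities at weights indexed by $\Lambda_J$, the multiplicity $[\iota^! P(\lambda):L(\mu)]_{\mathcal C_{\leq J}}$ equals $[P_J(\lambda):L(\mu)]_{\mathcal C_J} > 0$. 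But $\iota^! P(\lambda)$ is a quotient of $P(\lambda)$ in $\mathcal C$, so $[P(\lambda):L(\mu)]_{\mathcal C}$ is at least as large, hence positive, and therefore $L(\lambda)$ and $L(\mu)$ belong to a common block of $\mathcal C$, as required.

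The step most warranting care is the block classification for finite $J$; it rests either on the classical analysis of blocks of parabolic $\mathcal O$ under Harish-Chandra linkage (matched with the categorical $\mathfrak{sl}_J$-weight via the combinatorics of the column-strict tableaux indexing $\Lambda$), or alternatively on a direct categorical argument producing a chain of nonzero $\Ext^1$'s between $L(\lambda)$ and $L(\mu)$ inside $\mathcal C_J$ using the biadjoint functors $F_i$ and $E_i$ to move within a fixed $\mathfrak{sl}_J$-weight space.
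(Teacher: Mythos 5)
The overall strategy here matches the paper's: handle the forward implication by the weight decomposition (\ref{wtdec}), and for the converse truncate to a finite subinterval $J$ with $\lambda,\mu \in \Lambda_J$, invoke the finite block classification in $\mathcal C_J$, and transfer back to $\mathcal C$. However, there is a genuine gap at the transfer step. You assert that ``$L(\lambda)$ and $L(\mu)$ lie in a common block of $\mathcal C_J$; \emph{equivalently}, $[P_J(\lambda):L(\mu)] > 0$.'' These two statements are not equivalent. Two irreducibles lie in the same block precisely when they can be linked by a \emph{chain} of nonzero Cartan matrix entries (or $\Ext^1$'s), not when they are directly linked by a single one. Already for parabolic $\mathcal O$ of $\mathfrak{gl}_4$ with Levi $\mathfrak{gl}_3\oplus\mathfrak{gl}_1$ (the case of $\mathbb{P}^3$), the regular block has four simples in a chain $L_1 - L_2 - L_3 - L_4$, and $[P(L_1):L_4] = 0$ despite $L_1$ and $L_4$ sharing a block. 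So the step where you deduce $[P(\lambda):L(\mu)]_{\mathcal C} > 0$ does not follow, and the conclusion is left hanging.

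The fix, which is what the paper does, is to propagate a chain rather than a single Cartan entry: the finite block classification produces a sequence $\lambda = \lambda_0, \lambda_1, \dots, \lambda_n = \mu$ in $\Lambda_J$ with, for each $i$, one of $[\Delta(\lambda_i):L(\lambda_{i-1})]$ or $[\Delta(\lambda_{i-1}):L(\lambda_i)]$ nonzero in $\mathcal C_J$; since the exact quotient functor $\pi_J$ preserves these standard composition multiplicities at weights in $\Lambda_J$, the same nonvanishing holds in $\mathcal C$, and the chain then forces $L(\lambda)$ and $L(\mu)$ into the same block of $\mathcal C$. Your closing remark about ``producing a chain of nonzero $\Ext^1$'s'' is actually pointing at the right idea, but it is offered there only as an optional aside for the finite case, not deployed where it is actually needed in the transfer step. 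Note also that the paper cites \cite{Bcent} directly for the finite block classification rather than re-deriving it from Harish-Chandra linkage, but that part of your argument is fine.
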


\begin{proof}
When $I$ is finite, the theorem has been proved already in \cite{Bcent} (working in the
 parabolic category $\mathcal O$ setting).
Now suppose that $I$ is infinite.
We observed already from (\ref{wtdec}) that $\lambda$ and $\mu$ lie in the same block only if $|\lambda|=|\mu|$.
Conversely suppose that $|\lambda|=|\mu|$. Pick a finite interval $J \subset I$
such that $\lambda,\mu \in \Lambda_J$.
Then $|\lambda_J| = |\mu_J|$ in $P_J$, so by the finite result
there exists a sequence of weights
$\lambda=\lambda_0,\dots,\lambda_n =\mu$
in $\Lambda_J$ such that 
one of
$[\Delta(\lambda_i):L(\lambda_{i-1})]$
or
$[\Delta(\lambda_{i-1}):L(\lambda_i)]$ is non-zero for each $i=1,\dots,n$.
Since these composition multiplicities are the same in $\mathcal C$ or
$\mathcal C_J$ this does the job.
\end{proof}

\subsection{Classification of prinjectives}\label{more}
Let $\mathcal C$ be as in the previous subsection.
To avoid trivialities assume moreover that 
$\Lambda := \Lambda_{I;\bn,\bc}$ is non-empty.
The goal in this subsection is to classify the indecomposable
prinjective objects in $\mathcal C$. For finite $I$, this is a
generalization of an old result of Irving \cite{I} 
which was established already in the context of
parabolic category $\mathcal O$ in \cite[Theorem 5.1]{MS} or
\cite[Theorem 4.8]{SWD}. The formulation for infinite $I$ given here
is new; we prove it by using truncation to reduce to the finite case.

We start by noting that there is a crystal graph structure on $\Lambda$.
This is a certain $I$-colored 
directed graph with vertex set $\Lambda$, such that 
there is
at most one edge of each color entering 
and one edge of each color
leaving any given vertex.
To determine 
the edges of color $i$ incident with vertex $\lambda$
one proceeds as follows.
First label rows of the matrix $\lambda$ by the sign
$-$ if the $i$th and $(i+1)$th entries of the row are $1\,0$,
or by $+$ 
if these entries are $0\,1$;
leave all the other rows unlabeled.
Then reduce the labels by 
repeatedly erasing $+-$-pairs of labels whenever the $+$-row is above 
the $-$-row
and all the rows in between are unlabeled.
If at the end of this process 
a $-$-row (resp.\ a $+$-row) remains, 
then there is an edge $\lambda \stackrel{i}{\rightarrow} \mu$ 
(resp.\ $\lambda\stackrel{i}{\leftarrow} \mu$)
in the crystal graph,
where
$\mu$ is obtained from $\lambda$ by 
switching the $i$th and $(i+1)$th entries
of the lowest $-$-row (resp.\ the highest $+$-row).

\begin{Lemma}\label{Crystal}
For $\lambda \in \Lambda$ and $i \in I$, we have that 
$F_i L(\lambda) = 0$ (resp.\
$E_i L(\lambda) = 0$) 
unless there is an edge $\lambda\stackrel{i}{\rightarrow} \mu$
(resp.\ $\lambda\stackrel{i}{\leftarrow}\mu$)
in the crystal graph, 
in which case $F_i L(\lambda)$ (resp.\ $E_i L(\lambda)$)
is indecomposable with irreducible head and socle isomorphic to $L(\mu)$.
\end{Lemma}

\begin{proof}
This is already known for finite $I$;
see \cite[Theorem 7.2]{LW} for the most recent but also most conceptual proof
(actually the arguments of \cite{L} are sufficient here since $\mathcal C$ is a highest weight category).
In the infinite case we 
pick $J \subset I$ containing $i$ such that $\lambda$ and all the weights $\mu$ indexing the composition factors of $E_i L(\lambda)$ and $F_i L(\lambda)$ lie in $\Lambda_J$, and then pass to
the subquotient $\mathcal C_J$.
\end{proof}

To formulate the main result, we need slightly
different notation according to whether $I$ is finite or infinite:
\begin{itemize}
\item[-]
In the finite case, we let
$\Lambda^\circ$ be the 
vertex set of the connected component of the crystal graph containing
$\kappa := \kappa_{I;\bn,\bc}$. Let $T \in \mathcal C$ denote the object from (\ref{prinjec}).
\item[-]
In the infinite case, 
we fix finite subintervals $I_1 \subset I_2 \subset \cdots \subset I$
such that $I = \bigcup_{r \geq 1} I_r$, $|I_1|+1 \geq 2 \max(\bn)$, and $|I_{r+1}| =|I_r|+1$ for each $r$.
Let $\Lambda_r := \Lambda_{I_r} \subset \Lambda$ and $\mathcal C_r :=
\mathcal C_{I_r}$.
Let $\kappa^r$ be the element of $\Lambda_r$ corresponding to
$\kappa_{I_r;\bn,\bc}$
and $\Lambda^\circ_r$
be the vertex set of the 
connected subgraph of the crystal graph generated by $\kappa^r$ and the edges of colors from $I_r$.
Then set
\begin{equation}\label{circledweights}
\Lambda^\circ := \bigcup_{r \geq 1} \Lambda^\circ_r.
\end{equation}
For each $r$ we have that $\kappa^r \in \Lambda^\circ_{r+1}$;
see (\ref{bandon}) below
for an explicit directed path $\kappa^{r+1} \rightarrow \kappa^r$.
So we have that $\Lambda_1^\circ \subset \Lambda_2^\circ \subset\cdots$, and $\Lambda^\circ$ can be described equivalently 
as the vertex set of the 
unique connected component of the entire crystal graph that contains
every $\kappa^r$.
Finally, let $T^r$ be the object 
\begin{align}\label{tr}
T^r = \bigoplus_{d \geq 0} T^r_d &:= \bigoplus_{d \geq 0} F_{I_r}^d 
L(\kappa^r) \in \mathcal C.
\end{align}
\end{itemize}

\begin{Theorem}\label{prinj}
Let notation be as above.
For $\lambda \in \Lambda$, the following are equivalent:
\begin{itemize}
\item[(i)]
$\lambda \in \Lambda^\circ$;
\item[(ii)] $L(\lambda) \hookrightarrow T$
  (resp. $L(\lambda)\hookrightarrow T^r$ for some $r \geq 1$)
if $I$ is finite (resp. infinite);
\item[(ii$'$)] $T \twoheadrightarrow L(\lambda)$ 
  (resp. $T^r\twoheadrightarrow L(\lambda)$ for some $r \geq 1$)
if $I$ is finite (resp. infinite);
\item[(iii)] $P(\lambda)\cong I(\lambda)$.
\item[(iv)] $I(\lambda)$ is projective.
\item[(iv$'$)] $P(\lambda)$ is injective;
\item[(v)] $L(\lambda)$ is a constituent of the socle of a standard
  object;
\item[(v$'$)] 
$L(\lambda)$ is a constituent of the cosocle of a costandard object;
\end{itemize}
\end{Theorem}

\begin{proof}
We just prove the equivalence of (i), (ii), (iii), (iv) and (v); the
equivalence of (i), (ii$'$), (iii), (iv$'$) and (v$'$) is similar.

(i)$\Rightarrow$(ii).
In the finite case, the connected component of the crystal graph with vertex set $\Lambda^\circ$
is a copy of Kashiwara's crystal graph associated to the irreducible $\mathfrak{sl}_{I}$-module
of highest weight $|\kappa|$, and $\kappa$ is its highest vertex.
Hence there is a directed path 
$\kappa \stackrel{i_1}{\rightarrow} \cdots\stackrel{i_d}{\rightarrow}\lambda$ in the crystal graph for some $d \geq 0$ and $i_1,\dots,i_d \in I$.
Similarly in the infinite case there is a path
$\kappa^r \stackrel{i_1}{\rightarrow} \cdots\stackrel{i_d}{\rightarrow}\lambda$ in the crystal graph for some $r\geq 1$, $d \geq 0$ and $i_1,\dots,i_d \in I_r$.
It remains to apply Lemma~\ref{Crystal} to deduce that $L(\lambda)$
appears in 
the socle of $F_{i_d} \cdots F_{i_1} L(\kappa)$
(resp.\ $F_{i_d} \cdots F_{i_1} L(\kappa^r)$), which is a summand of $T$
(resp.\ $T^r$). 

(ii)$\Rightarrow$(iii).
In the finite case, we exploit the duality $\circledast$ from
Corollary~\ref{duality} as follows. The module $T$ is
self-dual and prinjective. By (ii), it has $I(\lambda)$ as
a summand. Hence $I(\lambda)^\circledast$ is an indecomposable
injective object too. Since $\circledast$ fixes irreducibles (up to
isomorphism), $I(\lambda)^\circledast$ has all the same composition multiplicities as
$I(\lambda)$.
Since the classes $[I(\lambda)]$ of indecomposable injectives are linearly independent in the
Grothendieck group, we must therefore have
that $I(\lambda)^\circledast \cong I(\lambda)$. But obviously
$I(\lambda)^\circledast \cong P(\lambda)$, so this shows that
$I(\lambda) \cong P(\lambda)$.

In the infinite case, we have not yet established Corollary~\ref{duality}, so
must argue indirectly. 
Since $T^r$ is prinjective, (ii) implies that
$I(\lambda) \cong P(\mu)$ for some (possibly different) $\mu \in
\Lambda_r$.
But 
the previous
paragraph shows that 
$I(\lambda) \cong P(\lambda)$ in $\mathcal C_r$, hence $P(\lambda) \cong
P(\mu)$ in $\mathcal C_r$. Since both $\lambda$ and $\mu$ lie in
$\Lambda_r$, this implies in fact that $\lambda = \mu$.

(iii)$\Rightarrow$(iv). Clear.

(iv)$\Rightarrow$(v). This follows because projectives have
$\Delta$-flags.

(v)$\Rightarrow$(i).
We first prove this in the finite case.
Suppose that $L(\lambda) \hookrightarrow \Delta(\nu)$
for some $\nu\in\Lambda$.
We show that $\lambda \in \Lambda^\circ$ by induction on the height of $|\kappa|-|\nu| \in Q_{I}^+$. The base case is clear as then $\lambda = \nu = \kappa$.
For the induction step we apply \cite[Proposition 5.2]{LW} to deduce that
there exists $i \in I$ such that $E_i L(\lambda) \neq 0$ in $\mathcal C$.
The socle of $E_i L(\lambda)$ is isomorphic to $L(\mu)$
where $\mu \stackrel{i}{\rightarrow} \lambda$ in the crystal graph. Now it suffices to show that $\mu \in \Lambda^\circ$.
This follows by the induction hypothesis, noting that $L(\mu) \hookrightarrow
E_i L(\lambda) \hookrightarrow E_i \Delta(\nu)$
and $E_i \Delta(\nu)$ has a $\Delta$-flag.

For the infinite case suppose that $L(\lambda) \hookrightarrow \Delta(\mu)$.
Then we pick $r \geq 1$ so that all composition factors of $\Delta(\mu)$
lie in $\Lambda_r$.
Passing to the subquotient $\mathcal C_r$,
we still have that $L(\lambda)$ is a constituent of the socle of $\Delta(\mu)$
in $\mathcal C_r$. Hence $\lamdba \in \Lambda^\circ_r \subset \Lambda^\circ$ thanks to the previous paragraph. 
\end{proof}

\section{The general linear Lie superalgebra}\label{gl}

Throughout the section we fix a type $(\bn,\bc)$ of level $l$.
The goal is to give an explicit construction of an $\mathfrak{sl}_\Z$-tensor
product categorification $\mathcal C$ of type $(\bn,\bc)$, thus establishing the
existence in Theorem~\ref{lwmain}.
We will obtain $\mathcal C$ from the parabolic analog of the
BGG category $\mathcal O$ for the general linear Lie superalgebra.
It is a (mostly known) generalization of \cite[$\S$7.4]{CR} where
category $\mathcal O$ for the general linear Lie algebra was
considered. 
The super Kazhdan-Lusztig conjecture (in its most general parabolic
form)
then follows using also Theorem~\ref{obviously}.

\subsection{Super category $\mathcal O$}
For $i=1,\dots,l$ let $U_i$ be a vector superspace of
dimension $n_i$ concentrated in degree $\bar c_i \in \Z / 2$ (over the ground field
$\K$ as always).
Then set 
\begin{equation}\label{directsum}
U := U_1\oplus\cdots\oplus U_l,
\end{equation}
so that $U$ is a vector
superspace of even dimension $n := \sum_{c_i= 0} n_i$ and odd
dimension $m :=
\sum_{c_i =1} n_i$.
Let $\mathfrak{g}$ denote the Lie superalgebra $\mathfrak{gl}(U) \cong
\mathfrak{gl}_{n|m}(\K)$ consisting of
all linear endomorphisms of $U$ under the supercommutator $[-,-]$.
We choose a homogeneous basis $u_1,\dots,u_{m+n}$ for $U$ by concatenating bases for 
$U_1,\dots,U_l$ in order
 and let $\{e_{i,j}\:|\:1 \leq i,j \leq m+n\}$
be the resulting basis of matrix units for $\mathfrak{g}$. We then have that
$$
[e_{i,j}, e_{k,l}] = \delta_{k,j} e_{i,l}  - (-1)^{(p_i+ 
p_j)(p_k+p_l)}
\delta_{i,l} e_{k,j},
$$
where $p_i \in \Z / 2$ is the parity of the $i$th basis vector $u_i$.

Let $\mathfrak{t}$ (resp.\ $\mathfrak{b}$) 
be the Cartan (resp.\ Borel) subalgebra of $\mathfrak{g}$
consisting of diagonal (resp.\ upper triangular) matrices
relative to the ordered basis just chosen.
Let $\delta_1,\dots,\delta_{m+n}$ be the basis for $\mathfrak{t}^*$ dual
to the basis $e_{1,1},\dots,e_{m+n,m+n}$, and define a
non-degenerate symmetric bilinear form $(-,-)$ on $\mathfrak{t}^*$
by setting $(\delta_i,\delta_j) := (-1)^{p_i} \delta_{i,j}$.
The {\em root system} of $\mathfrak{g}$ is
$$
R := \{\delta_i - \delta_j\:|\:1 \leq i,j \leq m+n, i \neq j\},
$$
which decomposes into even and odd roots $R = R_\0 \sqcup R_\1$
so that $\delta_i - \delta_j$ is of parity $p_i + p_j$.
Let $R^+ = R^+_{\0} \sqcup R^+_{\1}$ denote the 
positive roots associated to the Borel subalgebra $\mathfrak{b}$, i.e.
$\delta_i - \delta_j$ is positive if and only if $i < j$.
The {\em dominance order} $\unrhd$ on $\mathfrak{t}^*$
is defined so that $\lambda \unrhd \mu$
if $\lambda-\mu \in \N R^+$.

An important role is played by the weight $\bar\rho \in
\mathfrak{t}^*$ that is one half of the sum of the positive even roots
minus one half of the sum of the positive odd roots.
Translating by a multiple of supertrace, we obtain 
the weight
\begin{equation}\label{rhodef}
\rho := \bar\rho + 
\frac{1}{2}(m-n+1) \sum_{i=1}^{m+n}(-1)^{p_i} \delta_i,
\end{equation}
which is uniquely determined by the properties
\begin{equation}\label{rhoequiv}
(\rho,\delta_1) = 
\left\{
\begin{array}{ll}
0&\text{if $p_1 = \0$,}\\
1&\text{if $p_1 = \1$;}
\end{array}\right.
\qquad
(\rho,\delta_i-\delta_{i+1}) = \left\{
\begin{array}{cl}(-1)^{p_i} &\text{if $p_i = p_{i+1}$},\\
0&\text{if $p_i \neq p_{i+1}$}.\end{array}\right.
\end{equation}
Note that $\rho$ is an 
{\em integral weight}, that is, it belongs to 
$\mathfrak{t}_\Z^* := \Z \delta_1\oplus\cdots\oplus\Z \delta_{m+n} \subset \mathfrak{t}^*$.

We are ready to introduce the category $\mathcal O$ associated to 
$\mathfrak{t} \subset \mathfrak{b} \subset \mathfrak{g}$,
restricting our attention from the outset just to blocks corresponding
integral weights.

\begin{Definition}\rm
For $\lambda \in \mathfrak{t}_\Z^*$ let $p_\lambda := \sum_{p_i = \1} 
(\lambda,\delta_i) \in \Z / 2$. 
Then define $\mathcal O$ to be the category of all finitely generated
$\mathfrak{g}$-supermodules $M = M_\0 \oplus M_\1$
which are locally finite dimensional over $\mathfrak{b}$ and satisfy
\begin{equation}\label{parity}
M = \bigoplus_{\lambda \in \mathfrak{t}_\Z^*} M_{\lambda,p_\lambda},
\end{equation}
where for $\lambda \in \mathfrak{t}^*$ and $p \in \Z / 2$ we write $M_{\lambda,p}$ for the
$\lambda$-weight space of $M_p$ with respect to $\mathfrak{t}$ defined
in the standard way. Morphisms in $\mathcal O$ mean arbitrary
$\mathfrak{g}$-supermodule homomorphisms. The parity
assumption in (\ref{parity}) ensures that all morphisms are automatically even,
hence $\mathcal O$ is an Abelian category.
\end{Definition}
\begin{Remark}\rm
  One can also define an equivalent category where we remove the
  parity assumption above, but allow inhomogeneous morphisms.  See the
  discussions in \cite[\S 4-e]{B}, \cite[\S 2.5]{ChL} and \cite[Remarks 2.1-2.3]{BcatO}.
\end{Remark}

Note that both the natural $\mathfrak{g}$-supermodule
$U$ and its dual belong to $\mathcal O$, and $\mathcal O$ is closed
under tensoring with these objects.
As usual, to classify the irreducible objects in $\mathcal O$, one
starts from 
the {\em Verma supermodules} $\{M(\lambda)\:|\:\lambda \in \mathfrak{t}_\Z^*\}$ defined from
$$
M(\lambda) := U(\mathfrak{g}) \otimes_{U(\mathfrak{b})}
\K_{\lambda,p_\lambda}
$$
where $\K_{\lambda,p_\lambda}$ is the one-dimensional $\mathfrak{b}$-supermodule of
weight $\lambda$ with $\Z / 2$-grading concentrated in degree
$p_\lambda$.
The weight $\lambda$ is the highest weight of $M(\lambda)$ with
respect to the dominance ordering, and the corresponding weight space
is one dimensional.
Therefore, by the 
usual arguments of highest weight theory,
$M(\lambda)$ has a unique irreducible quotient
$L(\lambda)$, and the supermodules $\{L(\lambda)\:|\:\lambda \in \mathfrak{t}_\Z^*\}$
give a complete set of pairwise non-isomorphic irreducible objects
of $\mathcal O$.

For later use, we note that the {\em Casimir element}
\begin{equation}\label{casimir}
c := \sum_{1 \leq i,j \leq m+n} (-1)^{p_j} e_{i,j} e_{j,i} \in
Z(U(\mathfrak{g}))
\end{equation}
acts on $M(\lambda)$ by the scalar
\begin{equation}\label{casscalar}
c_\lambda := (\lambda+2\bar\rho,\lambda).
\end{equation}
We need one other non-trivial result about linkage. 
For $\alpha \in R^+_{\0}$ 
and $\lambda \in \mathfrak{t}_\Z^*$
let $s_\alpha \cdot \lambda := \lambda - (\lambda+\rho,\alpha^\vee)
\alpha$,
where $\alpha^\vee := 2 \alpha / (\alpha,\alpha)$. 
Let
\begin{align*}
A(\lambda) &:= \{\alpha \in R^+_\0\:|\:(\lambda+\rho,\alpha^\vee) >
0\},\\
B(\lambda) &:= \{\alpha \in R^+_\1\:|\:(\lambda+\rho,\alpha) = 0\}.
\end{align*}
Then introduce a relation $\uparrow$ on $\mathfrak{t}_\Z^*$
by declaring that $\mu \uparrow \lambda$ if we either have that
$\mu = s_\alpha \cdot \lambda$ for some $\alpha \in A(\lambda)$
or we have that $\mu = \lambda - \beta$ for some $\beta \in B(\lambda)$.

\begin{Lemma}\label{musson}
Suppose $\lambda,\mu \in \mathfrak{t}_\Z^*$ satisfy $[M(\lambda):L(\mu)] \neq 0$. Then there exists $r \geq 0$ and weights
$\nu_0,\dots,\nu_r \in \mathfrak{t}_\Z^*$ such that
$\mu = \nu_0 \uparrow \nu_1 \uparrow\cdots\uparrow \mu_r = \lambda$.
\end{Lemma}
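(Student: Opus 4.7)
The plan is to deduce this from a super analog of Jantzen's sum formula combined with induction on the dominance order $\unrhd$. First, I would construct a Jantzen filtration $M(\lambda) = M^0(\lambda) \supseteq M^1(\lambda) \supseteq \cdots$ via the standard deformation recipe: choose a generic one-parameter deformation $\tilde\lambda$ of $\lambda$ over $\K[[t]]$, form the deformed Verma module equipped with its contravariant form, and let $M^i(\lambda)$ be the image in $M(\lambda)$ of the order-$t^i$ piece of the radical filtration under specialisation at $t = 0$.

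Next, I would invoke the super version of Jantzen's sum formula. Its even-root part is exactly the classical sum for the reductive subalgebra $\mathfrak{g}_{\bar 0}$, while the isotropic odd positive roots $\beta \in B(\lambda)$ contribute additional summands involving $\operatorname{ch} M(\lambda - \beta)$, coming from the factors $(\tilde\lambda + \rho, \beta)$ in the super Shapovalov determinant. The upshot is an identity
\begin{equation*}
\sum_{i > 0} \operatorname{ch} M^i(\lambda) = \sum_{\alpha \in A(\lambda)} c_\alpha \operatorname{ch} M(s_\alpha \cdot \lambda) + \sum_{\beta \in B(\lambda)} d_\beta \operatorname{ch} M(\lambda - \beta)
\end{equation*}
for certain non-negative integers $c_\alpha$ and $d_\beta$. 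Such a formula has been worked out by Musson for basic classical Lie superalgebras, so the cleanest route is to cite his linkage theorem rather than redo the Shapovalov analysis from scratch.

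The lemma now follows by induction on the height of $\lambda - \mu \in \N R^+$. The base case $\mu = \lambda$ gives the trivial chain of length $r = 0$. For the inductive step, if $\mu \neq \lambda$ but $[M(\lambda) : L(\mu)] > 0$, then $L(\mu)$ must appear as a composition factor of $M^1(\lambda)$; by the sum formula there exists $\nu$ with $\nu \uparrow \lambda$ (either $\nu = s_\alpha \cdot \lambda$ for some $\alpha \in A(\lambda)$ or $\nu = \lambda - \beta$ for some $\beta \in B(\lambda)$) such that $[M(\nu) : L(\mu)] > 0$. Since $\nu \lhd \lambda$ strictly, the height of $\nu - \mu$ is smaller than that of $\lambda - \mu$, so the induction hypothesis yields a chain $\mu = \nu_0 \uparrow \cdots \uparrow \nu_{r-1} = \nu$; appending $\nu \uparrow \lambda$ completes the argument.

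The main obstacle is pinning down the precise shape of the odd-root contributions to the sum formula: when several isotropic roots are simultaneously atypical at $\lambda$, their joint contribution must be organised so that each resulting composition factor is traceable to a single $\uparrow$-step of one of the two types allowed in the lemma's definition. That delicate bookkeeping is exactly what Musson carries out, so in practice this step would be handled by a direct citation.
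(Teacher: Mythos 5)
Your strategy — Jantzen filtration, super sum formula, induction on the height of $\lambda - \mu$ — is exactly the paper's approach, so the skeleton is right. However, the form of the sum formula you wrote down is not correct, and the discrepancy is precisely where the delicacy lies. In Musson's formula (\cite[Theorem 10.3.1]{M}) the contribution of each isotropic $\beta \in B(\lambda)$ is not a single non-negative multiple of $\operatorname{ch} M(\lambda - \beta)$, but a telescoping alternating sum $\sum_{k \geq 1}(-1)^{k-1}\operatorname{ch} M(\lambda - k\beta)$. With your version of the identity the inductive step is immediate, because $\lambda - \beta \uparrow \lambda$ is a single $\uparrow$-step by definition; with the correct version one must do two extra things. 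First, a positivity argument: since the left-hand side has non-negative coefficients and $[M(\lambda)_1 : L(\mu)] > 0$, some term on the right with a positive sign contributes, which yields either $[M(s_\alpha\cdot\lambda):L(\mu)] > 0$ for some $\alpha \in A(\lambda)$, or $[M(\lambda - k\beta):L(\mu)] > 0$ for some $\beta \in B(\lambda)$ and some \emph{odd} $k \geq 1$ (the odd $k$'s are the ones carrying a plus sign). Second, since $\lambda - k\beta$ for $k > 1$ is not one $\uparrow$-step from $\lambda$, one needs the observation that $(\beta,\beta) = 0$ forces $(\lambda - j\beta + \rho, \beta) = (\lambda + \rho,\beta) = 0$ for every $j$, hence $\beta \in B(\lambda - j\beta)$ for all $j$, giving the chain $\lambda - k\beta \uparrow \lambda - (k-1)\beta \uparrow \cdots \uparrow \lambda$; the induction hypothesis at $\lambda - k\beta$ then closes the argument.

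Your final paragraph flags ``delicate bookkeeping when several isotropic roots are simultaneously atypical,'' but that is not really the obstacle: the alternating telescope already appears for a single atypical $\beta$, and the resolution is the isotropy observation above, not a multi-root organisation. As written, the proposal would not go through because the identity it relies on is false as stated.
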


\begin{proof}
This is a consequence of the superalgebra analog of the Jantzen sum formula from \cite[Theorem 10.3.1]{M}; see also \cite{Gorelik}.
In more detail, the Jantzen filtration on $M(\lambda)$ is a certain exhaustive
descending filtration $M(\lambda) = M(\lambda)_0 \supset M(\lambda)_1 \supseteq M(\lambda)_2 \supseteq \cdots$
such that $M(\lambda)_0 / M(\lambda)_1 \cong L(\lambda)$, and the sum formula shows that
$$
\sum_{k \geq 1} \operatorname{ch} M(\lambda)_k
= \sum_{\alpha \in A(\lambda)}
\operatorname{ch} M(s_\alpha\cdot\lambda)
+ \sum_{\beta \in B(\lambda)} 
\sum_{k \geq 1}
(-1)^{k-1}\operatorname{ch} M(\lambda - k \beta).
$$
To deduce the lemma from this, suppose that $[M(\lambda):L(\mu)] \neq 0$.
Then $\mu \unlhd \lambda$, so that $\lambda-\mu$ is a sum of $N$ simple roots
$\delta_i-\delta_{i+1}$ for some $N \geq 0$. We proceed by induction on $N$, the case $N=0$ being vacuous. If $N > 0$ then $L(\mu)$ is a composition factor of 
$M(\lambda)_1$ and the sum formula implies that $L(\mu)$ is a composition factor either of $M(s_\alpha\cdot\lambda)$ for some $\alpha \in A(\lambda)$
or that $L(\mu)$ is a composition factor of $M(\lambda-k\beta)$ for some 
odd $k \geq 1$ and $\beta \in B(\lambda)$. It remains to apply the induction hypothesis and the definition of $\uparrow$.
\end{proof}

There is another partial order $\leq$ on
$\mathfrak{t}_\Z^*$, which we call the {\em Bruhat order}, 
defined from
\begin{equation}\label{love2}
\lambda \leq \mu \quad\Leftrightarrow\quad
\sum_{\substack{1 \leq i \leq j \\ (\lambda+\rho,\delta_i) \leq h}}
(-1)^{p_i}
\geq \sum_{
\substack{1 \leq i \leq j \\ (\mu+\rho,\delta_i) \leq h}}
(-1)^{p_i}
\end{equation}
for all $h \in \Z$ and $1\leq j\leq m+n$, 
with equality whenever $j=m+n$.

\begin{Lemma}\label{comb}
For $\lambda,\mu \in \mathfrak{t}_\Z^*$ we have that
$\lambda \uparrow \mu \Rightarrow  \lambda \leq \mu \Rightarrow \lambda \unlhd \mu$.
\end{Lemma}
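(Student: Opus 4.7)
My plan is to introduce coordinates $a_k := (\lambda+\rho, \delta_k)$ and $b_k := (\mu+\rho, \delta_k)$, in which both the Bruhat order (\ref{love2}) and the dominance order become sign-weighted partial sum inequalities that can be compared directly.

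For the first implication I will do a case analysis on the definition of $\uparrow$. In Case (a), where $\lambda = s_\alpha \cdot \mu$ with $\alpha = \delta_p - \delta_q \in A(\mu)$, $p < q$ and $p_p = p_q$, I use the identity $\alpha^\vee = (-1)^{p_p}\alpha$ to compute that $s_\alpha\cdot$ simply swaps the coordinates $b_p$ and $b_q$, so that $a_k = b_k$ for $k \notin \{p, q\}$, $a_p = b_q$, and $a_q = b_p$; the positivity $(\mu + \rho, \alpha^\vee) > 0$ rewrites as $(-1)^{p_p}(b_p - b_q) > 0$. Plugging into (\ref{love2}), the difference of the two sides vanishes unless $p \leq j < q$, in which case it reduces to a single term $(-1)^{p_p}\bigl([b_q \leq h] - [b_p \leq h]\bigr)$ which is non-negative by the sign hypothesis. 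In Case (b), where $\lambda = \mu - \beta$ with $\beta = \delta_p - \delta_q \in B(\mu)$, the vanishing $(\mu + \rho, \beta) = 0$ gives $b_p = b_q$ and the computation $a_p = a_q = b_p - (-1)^{p_p}$ shows that the difference of the two sides of (\ref{love2}) again vanishes outside the window $p \leq j < q$, where it reduces to a non-negative indicator of a single value of $h$. Equality at $j = m+n$ is automatic in both cases.

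For the second implication, I first express $\lambda \unlhd \mu$ in coordinates. Since $\N R^+ = \N\{\delta_k - \delta_{k+1}\}$ and $\mu_i - \lambda_i = (-1)^{p_i}(b_i - a_i)$ (using $(\delta_i, \delta_i) = (-1)^{p_i}$), the relation $\lambda \unlhd \mu$ is equivalent to
\[
\sum_{i \leq j}(-1)^{p_i}(b_i - a_i) \geq 0 \quad \text{for all } j, \quad \text{with equality at } j = m+n.
\]
I then derive this from (\ref{love2}) by Abel summation over $h$. For fixed $j$ the function
\[
g(h) := \sum_{\substack{i \leq j \\ b_i \leq h}}(-1)^{p_i} - \sum_{\substack{i \leq j \\ a_i \leq h}}(-1)^{p_i}
\]
is a compactly supported $\Z$-valued step function with $g(h) \leq 0$ at every $h$ by (\ref{love2}). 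The elementary identity $\sum_{h \in \Z}\bigl([b \leq h] - [a \leq h]\bigr) = a - b$, applied to each $i \leq j$ and summed against $(-1)^{p_i}$, then yields $\sum_{h \in \Z} g(h) = \sum_{i \leq j}(-1)^{p_i}(a_i - b_i)$, which is therefore $\leq 0$; equality at $j = m+n$ is automatic because $g$ is then identically zero.

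The main obstacle is purely sign bookkeeping: one must split Case (a) into the subcases $p_p = p_q = \bar 0$ versus $\bar 1$, and Case (b) into $(p_p, p_q) = (\bar 0, \bar 1)$ versus $(\bar 1, \bar 0)$, keeping all $(-1)^{p_i}$ factors consistent when translating between the Bruhat partial sums (weighted by $(-1)^{p_i}$ alone) and the dominance partial sums (weighted by $(-1)^{p_i}$ and by the coordinates themselves). Once this is handled carefully both implications drop out from the computations indicated above.
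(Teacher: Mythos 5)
Your proof is correct. The paper leaves this lemma as an exercise, but your argument is exactly the intended one: the $\uparrow\Rightarrow\leq$ direction is a routine coordinate check in the two cases (the reflection swapping $b_p,b_q$, the odd-root shift producing $a_p=a_q=b_p-(-1)^{p_p}$), and the $\leq\Rightarrow\unlhd$ direction follows by the Abel/telescope summation over $h$ that converts the partial-sum inequalities of \eqref{love2} into the sign-weighted coordinate inequalities characterizing $\unlhd$.
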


\begin{proof}
The implication $\lambda \uparrow \mu\Rightarrow \lambda \leq \mu$ is
an easy exercise.
To prove that $\lambda \leq \mu$ implies $\lambda \unlhd \mu$, observe
as in Lemma~\ref{dominance} that
$\lambda \unlhd \mu$ if and only if
$$
\sum_{i=1}^j (-1)^{p_i} 
(\lambda+\rho,\delta_i) \leq\sum_{i=1}^j (-1)^{p_i}
(\mu+\rho,\delta_i)
$$
for each $j=1,\dots,m+n$, with equality when $j=m+n$.
Moreover, for any integer $k$ such that $k \geq
(\lambda+\rho,\delta_i)$
for all $i$,
 we have that
\begin{align*}
\sum_{i=1}^j (-1)^{p_i} (\lambda+\rho,\delta_i)
&=\sum_{h \leq k} \left(
\sum_{\substack{1 \leq i \leq j\\ (\lambda+\rho,\delta_i) \leq h}} (-1)^{p_i} h
-
\sum_{\substack{1 \leq i \leq j\\ (\lambda+\rho,\delta_i) < h}} (-1)^{p_i} h\right)\\
&=
\sum_{h \leq k} \left(
h\sum_{\substack{1 \leq i \leq j\\ (\lambda+\rho,\delta_i) \leq h}} (-1)^{p_i}
-
(h+1)\sum_{\substack{1 \leq i \leq j\\ (\lambda+\rho,\delta_i) \leq
    h}} (-1)^{p_i}\right)
\\&= - \sum_{h \leq k}
\sum_{\substack{1 \leq i \leq j\\ (\lambda+\rho,\delta_i) \leq h}} (-1)^{p_i}.
\end{align*}
Now use the definition (\ref{love2}).
\end{proof}

\begin{Remark}\rm
The Bruhat order $\leq$ coincides with the transitive closure of the relation
$\uparrow$
if all the $0$'s appear either before or after all of the $1$'s in the
sequence $\bc$; see \cite[Lemma 2.5]{B}. 
However in general $\leq$ is a proper
refinement of the transitive closure of $\uparrow$; see \cite[Remark 2.3]{CLW} for an example.
\end{Remark}

\begin{Theorem}
The category $\mathcal O$ is a highest weight category with weight
poset
$(\mathfrak{t}_\Z^*, \leq)$.
Its 
standard objects are the Verma supermodules $\{M(\lambda)\:|\:\lambda\in\mathfrak{t}_\Z^*\}$.
\end{Theorem}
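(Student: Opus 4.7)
The plan is to verify the axioms of Definition~\ref{hwdef} for the weight poset $(\mathfrak t_\Z^*,\leq)$, proceeding in four steps: (a) show that $\mathcal O$ is Schurian; (b) verify that $(\mathfrak t_\Z^*,\leq)$ is interval-finite; (c) identify $\Delta(\lambda)=M(\lambda)$; (d) construct the required Verma filtration on the projective cover $P(\lambda)$.

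For step (a) I would first combine Lemmas~\ref{musson} and \ref{comb} to see that any composition factor $L(\mu)$ of $M(\lambda)$ satisfies $\lambda-\mu\in\N R^+$; since only finitely many such $\mu$ exist, Vermas have finite length. Any $M\in\mathcal O$ is a quotient of a finite direct sum of Vermas (by finite generation plus $\mathfrak b$-local finiteness), so has finite length. The identity $\End_{\mathcal O}(L(\lambda))=\K$ is the routine highest-weight argument. For enough projectives I would apply the classical truncation technique: for any finite ideal $\Xi\subset\mathfrak t_\Z^*$ in the dominance order that contains $\lambda$ together with all weights $\unlhd$-above $\lambda$ in its $\uparrow$-linkage class, the Serre subcategory $\mathcal O^\Xi$ is equivalent to the category of finite-dimensional modules over a finite-dimensional algebra, so has a projective cover $P^\Xi(\lambda)$ of $L(\lambda)$. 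As $\Xi$ grows these stabilize on each weight space by Lemma~\ref{musson}, producing a projective cover $P(\lambda)\in\mathcal O$.

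Step (b) is immediate from Lemma~\ref{comb}: any $\leq$-interval lies in a $\unlhd$-interval, and the latter are finite. Step (c) also follows at once: by Lemmas~\ref{musson} and \ref{comb}, $M(\lambda)$ has head $L(\lambda)$ with multiplicity one and every other composition factor $L(\mu)$ with $\mu<\lambda$, which is exactly the defining property of $\Delta(\lambda)$ as the maximal such quotient of $P(\lambda)$.

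The main obstacle is step (d). The plan is the classical BGG strategy adapted to the super setting. The crucial ingredient is that for any finite-dimensional $\mathfrak g$-supermodule $V$, the tensor product
\[
M(\nu)\otimes V\;\cong\;\operatorname{ind}_{\mathfrak b}^{\mathfrak g}\!\bigl(\K_{\nu,p_\nu}\otimes V|_{\mathfrak b}\bigr)
\]
admits a Verma flag with sections $M(\nu+\eta)$ as $\eta$ runs over the $\mathfrak t$-weights of $V$; this is obtained by filtering $V|_{\mathfrak b}$ by one-dimensional quotients and applying the exact functor $\operatorname{ind}_{\mathfrak b}^{\mathfrak g}$. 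Choosing $\tau$ with $A(\tau)=B(\tau)=\emptyset$ (so $M(\tau)=L(\tau)$ by Lemma~\ref{musson}, and projective in every truncated subcategory containing it) and a finite-dimensional $V$ such that $\lambda-\tau$ is a weight of $V$, one shows via Frobenius reciprocity that $M(\tau)\otimes V$ is projective with a Verma flag, and extracts $P(\lambda)$ as a summand. Finally, to arrange the sections so that $M(\lambda)$ is the top section and the remaining sections $M(\mu)$ satisfy $\mu>\lambda$, I would invoke the standard vanishing $\Ext^1_{\mathcal O}(M(\lambda),M(\mu))=0$ unless $\mu>\lambda$ — proved by applying $\Hom_{\mathcal O}(M(\lambda),-)$ to $0\to\operatorname{rad}M(\mu)\to M(\mu)\to L(\mu)\to 0$ and inducting on the linkage — which permits reordering any Verma flag into the required shape.
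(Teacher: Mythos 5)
Your outline has the right shape, but step (d) has the dominance condition on $\tau$ exactly backwards, and this breaks the BGG construction. The hypothesis $A(\tau)=B(\tau)=\emptyset$ forces $(\tau+\rho,\alpha^\vee)\leq 0$ for every $\alpha\in R^+_\0$, i.e.\ $\tau+\rho$ is \emph{antidominant}. Lemma~\ref{musson} then indeed gives $M(\tau)=L(\tau)$, but an antidominant typical Verma is the \emph{smallest} module in its linkage class, not the largest, and so is not projective in any truncation that sees another weight of its block: already for $\mathfrak{gl}_2$ with $\tau+\rho=(-1,0)$ one has $\tau=s_{\alpha}\cdot(0,0)$ with $\alpha\in A((0,0))$, so $L(\tau)$ occurs in $M(0,0)$ and $P(\tau)$ strictly contains $M(\tau)$. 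Consequently $M(\tau)\otimes V$ is not projective, and your extraction of $P(\lambda)$ as a summand cannot proceed. What you need is the opposite choice: $(\tau+\rho,\alpha^\vee)\geq 0$ for all $\alpha\in R^+_\0$, together with $B(\tau)=\emptyset$. Then the computation underlying Lemma~\ref{musson} shows there is \emph{no} $\mu$ with $\tau\uparrow\mu$ (for even $\alpha$ one would need $(\tau+\rho,\alpha^\vee)<0$, and for odd $\beta$ one would need $(\tau+\rho,\beta)=0$), so $M(\tau)$ is a maximal Verma in any truncation in which $\tau$ is maximal; this is the Verma that is actually projective, and your tensoring argument then applies.

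For comparison, the paper's own proof is a one-liner because it outsources exactly this machinery: it uses Lemmas~\ref{musson} and \ref{comb} only to place the composition factors of $M(\lambda)$ below $\lambda$ in the Bruhat order, and then cites \cite[(6.6)]{Btilt} for BGG reciprocity and \cite[Lemma 7.3]{Btilt} for finite length of projectives. If you mean to re-derive those rather than cite them, your step (a) also needs care: the set of weights $\unlhd$-above $\lambda$ in its $\uparrow$-linkage class can be infinite in the super case, so it cannot all be packed into a finite ideal $\Xi$, and the stabilization of the truncated projectives $P^{\Xi}(\lambda)$ as $\Xi$ grows is not automatic but is precisely what \cite[Lemma 7.3]{Btilt} establishes.
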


\begin{proof}
By Lemmas~\ref{musson} and \ref{comb}, 
all composition factors of $M(\lambda)$ are of the form $L(\mu)$
for $\mu \leq \lambda$.
The theorem follows from this and the 
BGG reciprocity established
in a general graded Lie superalgebra setting in 
\cite[(6.6)]{Btilt}; one also needs to repeat the argument of \cite[Lemma
7.3]{Btilt} to check that projectives are of finite length.
\end{proof}

\subsection{Super parabolic category $\mathcal O$}\label{secsuper}
Let $\mathfrak{h} \cong \mathfrak{gl}_{n_1}(\K)\oplus\cdots\oplus
\mathfrak{gl}_{n_l}(\K)$
be the subalgebra of $\mathfrak{g}$ that is the stabilizer of the
direct sum decomposition (\ref{directsum})
and set $\mathfrak{p} := \mathfrak{h}+\mathfrak{b}$.

\begin{Definition}\rm
Let $\mathcal C$  be the full subcategory of
$\mathcal O$ consisting of all objects that are locally finite
dimensional over $\mathfrak{p}$.
We refer to this as {\em super parabolic category $\mathcal O$} of type $(\bn,\bc)$.
\end{Definition}

The isomorphism classes of irreducible objects in $\mathcal C$ are
represented by the supermodules 
$L(\lambda)$ for $\lambda$ in the set 
\begin{equation}\label{lambdan}
\Lambda := 
\left\{\lambda \in \mathfrak{t}^*_\Z\:\bigg|\:
\begin{array}{l}
(-1)^{c_k}(\lambda+\rho,\delta_r-\delta_{r+1}) > 0 \text{ for $1 \leq k\leq l$}\\
\text{and $n_1+\cdots+n_{k-1}<r<n_1+\cdots+n_k$}
\end{array}\right\}.
\end{equation}
To see this, note first that the condition in the definition of $\Lambda$
is the usual dominance condition for
finite dimensionality of irreducible highest weight modules for
$\mathfrak{h}$,
so clearly for $L(\lambda)$ 
to belong to $\mathcal C$ it is necessary that
$\lambda \in \Lambda$. For the sufficiency, 
for each $\lambda
\in \Lambda$, let $V(\lambda)$ be a finite dimensional irreducible
$\mathfrak{h}$-supermodule of highest weight $\lambda$
with $\Z/2$-grading concentrated in degree $p_\lambda$. The
corresponding {\em parabolic Verma supermodule}
\begin{equation*}
\Delta(\lambda) := U(\mathfrak{g}) \otimes_{U(\mathfrak{p})}
V(\lambda)
\end{equation*}
belongs to $\mathcal C$ and is a quotient of $M(\lambda)$.
Hence $\Delta(\lambda)$ has irreducible head $L(\lambda)$,
implying that
$L(\lambda)$ belongs to $\mathcal C$.
We see moreover from this argument (and our knowledge of composition
factors of $M(\lambda)$) that all other composition factors of $\Delta(\lambda)$
are of the form $L(\mu)$ for $\mu < \lambda$ in the Bruhat order.
The following theorem now follows on making another application 
of \cite[(6.6)]{Btilt}.

\begin{Theorem}
The category $\mathcal C$ is a highest weight category with weight
poset
$(\Lambda, \leq)$.
Its 
standard objects are the parabolic 
Verma supermodules $\{\Delta(\lambda)\:|\:\lambda\in\Lambda\}$.
\end{Theorem}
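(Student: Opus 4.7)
The plan is to apply the BGG reciprocity machinery for graded Lie superalgebras from \cite[(6.6)]{Btilt} in the parabolic setting, using Lemmas~\ref{musson} and \ref{comb} to pin down the composition factor control with respect to the Bruhat order $\leq$ on $\Lambda$.

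First I would verify that $\mathcal{C}$ is Schurian and identify its standard objects. The irreducibles have been parametrized by $\Lambda$ in the discussion above, with $\End(L(\lambda))$ one-dimensional by Schur's lemma applied to the one-dimensional highest weight line; enough projectives $P(\lambda)$ come from the usual truncation argument applied to parabolic Verma supermodules tensored with finite dimensional $\mathfrak{g}$-modules, and injective hulls come by duality. Since $\Delta(\lambda)$ is a quotient of $M(\lambda)$, Lemma~\ref{musson} combined with Lemma~\ref{comb} shows that every composition factor $L(\mu)$ of $\Delta(\lambda)$ satisfies $\mu \leq \lambda$ in the Bruhat order; and the one-dimensionality of the $\lambda$-weight space $V(\lambda)_\lambda$ forces $L(\lambda)$ to appear as the head with multiplicity one. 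Thus $\Delta(\lambda)$ satisfies the defining property of the standard object attached to $\lambda$ with respect to $(\Lambda,\leq)$.

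Second, to establish the filtration part of axiom (HW), I would invoke the BGG reciprocity formula
\[
(P(\lambda):\Delta(\mu)) = [\Delta(\mu):L(\lambda)]
\]
proved in the graded Lie superalgebra generality in \cite[(6.6)]{Btilt}. Together with costandards $\nabla(\lambda)$ built by duality and the standard $\Ext^1$-vanishing criterion for the existence of $\Delta$-flags, this produces a filtration of $P(\lambda)$ whose sections are parabolic Verma supermodules $\Delta(\mu)$ with $\mu \geq \lambda$, topped by $\Delta(\lambda)$; the composition factor analysis in the previous step guarantees that no $\mu \not\geq \lambda$ can occur.

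The main obstacle, flagged in the proof sketch, is that a priori in super category $\mathcal{O}$ projectives need not have finite length, so the $\Delta$-flag could in principle fail to terminate. I would handle this by repeating the argument of \cite[Lemma~7.3]{Btilt}: restrict along the even reductive subalgebra $\mathfrak{g}_{\bar 0} \subset \mathfrak{g}$, where the analogous parabolic BGG category is classically known to have finite length projectives, and use that each parabolic Verma $\Delta(\mu)$ admits a finite filtration by $\mathfrak{g}_{\bar 0}$-parabolic Vermas arising from tensoring with the exterior algebra on the odd part of $\mathfrak{g}/\mathfrak{p}$. Combined with the interval-finiteness of $(\Lambda,\leq)$ visible from (\ref{love2}), this forces only finitely many $\Delta(\mu)$ to appear in the $\Delta$-flag of $P(\lambda)$, completing the verification of (HW).
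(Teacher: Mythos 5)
Your proposal is correct and follows essentially the same route as the paper: composition factor control for $\Delta(\lambda)$ via Lemmas~\ref{musson} and \ref{comb}, then BGG reciprocity from \cite[(6.6)]{Btilt}, with finite length of projectives handled by the argument of \cite[Lemma~7.3]{Btilt}. You simply spell out in more detail the mechanism (restriction to $\mathfrak{g}_{\bar 0}$ and interval-finiteness) that the paper leaves implicit by its citation.
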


It is time to switch to more combinatorial
notation by {\em identifying}
the set $\Lambda \subset \mathfrak{t}_\Z^*$ 
from (\ref{lambdan})
with the set $\Lambda_{\Z;\bn,\bc}$ of $01$-matrices
from (\ref{lal}).
We do this 
so that
$\lambda \in \Lambda$ 
corresponds to the $01$-matrix
$(\lambda_{ij})_{1 \leq i \leq l, j \in \Z}$
defined from
\begin{equation}\label{wtdict}
\lambda_{ij} = \left\{
\begin{array}{ll}
1-c_i&
\text{if $j = (\lambda+\rho,\delta_r)$
for $n_1+\cdots+n_{i-1} < r \leq n_1+\cdots+n_i$,}\\
c_i&\text{otherwise.}
\end{array}
\right.
\end{equation}
When compared with Lemma~\ref{pset}(ii),
the following lemma checks under this identification that the Bruhat
order $\leq$ on $\Lambda$ agrees with the partial order $\leq$ from
the axiom (TP1) in the previous section.

\begin{Lemma}\label{orderdesc}
For $\lambda,\mu \in \Lambda$ we have that
$\lambda \leq \mu$ in the Bruhat order if and only if 
$$
\sum_{\substack{1 \leq r \leq n_1+\cdots+n_k\\ (\lambda+\rho,\delta_r) \leq h}} (-1)^{p_r}
\geq 
\sum_{\substack{1 \leq r \leq n_1+\cdots+n_k\\ (\mu+\rho,\delta_r) \leq h}} (-1)^{p_r}
$$
for all $h \in \Z$ and $1\leq k\leq l$, with equality whenever $k=l$.
\end{Lemma}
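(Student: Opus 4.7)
The plan is to reduce the proof to an elementary counting argument, using the identification (\ref{wtdict}) and the parabolic dominance condition (\ref{lambdan}).

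The forward implication is immediate since the displayed inequalities are a special case of (\ref{love2}). For the converse, fix $h\in\Z$ and define
\[
f(j) := \sum_{\substack{1\leq i\leq j\\(\lambda+\rho,\delta_i)\leq h}}(-1)^{p_i}
- \sum_{\substack{1\leq i\leq j\\(\mu+\rho,\delta_i)\leq h}}(-1)^{p_i};
\]
we must show $f(j)\geq 0$ for all $1\leq j\leq m+n$, assuming we know this at $j = n_1+\cdots+n_k$ for $k=0,1,\dots,l$ (the case $k=0$ is vacuous). For $1\leq k\leq l$ and $0\leq s\leq n_k$ set
\[
N_k^{(s)}(\lambda,h) := \#\bigl\{i\:\big|\:n_1+\cdots+n_{k-1}<i\leq n_1+\cdots+n_{k-1}+s,\ (\lambda+\rho,\delta_i)\leq h\bigr\},
\]
and write $N_k(\lambda,h) := N_k^{(n_k)}(\lambda,h)$. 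Since $p_i=\bar c_k$ throughout block $k$, a telescoping gives
\[
f(n_1+\cdots+n_{k-1}+s) - f(n_1+\cdots+n_{k-1}) = (-1)^{c_k}\bigl(N_k^{(s)}(\lambda,h) - N_k^{(s)}(\mu,h)\bigr).
\]

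The key step uses the parabolic dominance condition: for $\lambda\in\Lambda$, the sequence $\bigl((\lambda+\rho,\delta_i)\bigr)_i$ is strictly decreasing on block $k$ when $c_k=0$ and strictly increasing when $c_k=1$. Consequently the first $s$ entries of the block are precisely the $s$ largest (resp.\ smallest) values, yielding the closed formulas
\[
N_k^{(s)}(\lambda,h) = \max\bigl(0,\, N_k(\lambda,h)+s-n_k\bigr)\quad\text{if }c_k=0,
\]
\[
N_k^{(s)}(\lambda,h) = \min\bigl(s,\, N_k(\lambda,h)\bigr)\quad\text{if }c_k=1,
\]
and likewise for $\mu$. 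From these formulas one checks directly that $(-1)^{c_k}\bigl(N_k^{(s)}(\lambda,h)-N_k^{(s)}(\mu,h)\bigr)$ lies between $0$ and $(-1)^{c_k}\bigl(N_k(\lambda,h)-N_k(\mu,h)\bigr)$. Concretely, if this latter quantity is $\geq 0$ then the intermediate quantity is $\geq 0$ too, so $f(n_1+\cdots+n_{k-1}+s)\geq f(n_1+\cdots+n_{k-1})\geq 0$; whereas if it is $<0$ then the intermediate quantity is $\geq (-1)^{c_k}\bigl(N_k(\lambda,h)-N_k(\mu,h)\bigr)$, giving $f(n_1+\cdots+n_{k-1}+s)\geq f(n_1+\cdots+n_k)\geq 0$.

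This settles the nonstrict inequalities; the equality clause when $j=m+n$ is handled identically, using the corresponding equality at $k=l$ and noting that the equality forces the argument to return to zero. The main (and only) subtlety is the bookkeeping needed to exploit the monotonicity within blocks, together with the sign $(-1)^{c_k}$ flipping the direction of the bound; once the explicit $\min/\max$ formulas above are in hand, the rest is routine.
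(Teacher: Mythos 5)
Your proof is correct, and it takes a genuinely different route from the paper. The paper argues by contraposition: it picks the minimal $j$ at which the Bruhat inequality fails, observes (via the strict monotonicity of $(\lambda+\rho,\delta_i)$ within each block) that at this minimal $j$ the two sums can only separate because $(\lambda+\rho,\delta_j)>h$ while $(\mu+\rho,\delta_j)=h$ (when $p_j=\0$) or vice versa (when $p_j=\1$), and then pushes this failure forward to the block boundary $n_1+\cdots+n_k$, contradicting the hypothesis. Your argument runs in the direct direction: it packages the within-block monotonicity into closed-form $\min/\max$ expressions for the block-truncated counts $N_k^{(s)}$, and then uses the observation that the resulting signed intermediate increment is sandwiched between $0$ and the full-block increment to propagate non-negativity of $f$ from the two adjacent block boundaries to any interior position. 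Both approaches hinge on the same structural fact (the strictly monotone sequence of $\rho$-shifted entries within each block), but yours makes the quantitative bookkeeping explicit and avoids the extremal-$j$ bookkeeping, while the paper's is more compressed. One small clarifying note for your write-up: the equality clause at $j=m+n$ and the equality clause at $k=l$ are literally the same condition (since $n_1+\cdots+n_l=m+n$), so no separate argument is needed there — it suffices to say so.
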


\begin{proof}
The forward implication is 
clear from the definition (\ref{love2}).
For the converse suppose that 
$\lambda \not\leq \mu$ in the Bruhat order.
Let $1 \leq s \leq m+n$ be minimal so that
$$
\sum_{\substack{1 \leq r \leq s\\ (\lambda+\rho,\delta_r) \leq h}}
 (-1)^{p_r} 
<
\sum_{\substack{1 \leq r \leq s\\ (\mu+\rho,\delta_r) \leq h}}(-1)^{p_r}
$$
for some $h \in \Z$.
To complete the proof, we show for $k$ defined from $n_1+\cdots+n_{k-1} < s \leq n_1+\cdots+n_k$
that
$$
\sum_{\substack{1 \leq r \leq n_1+\cdots+n_k\\ (\lambda+\rho,\delta_r) \leq h}}
(-1)^{p_r}
< 
\sum_{\substack{1 \leq r \leq n_1+\cdots+n_k\\  (\mu+\rho,\delta_r) \leq h}}
(-1)^{p_r}.
$$
Suppose first that $p_s = \0$. Then by the minimality of $s$ 
we must have that $(\lambda+\rho,\delta_s) > h$ and $(\mu+\rho,\delta_s) = h$, hence $(\mu+\rho,\delta_r) < h$
for all $s < r \leq n_1+\cdots+n_k$.
We deduce that
\begin{align*}
\sum_{\substack{1 \leq r \leq n_1+\cdots+n_k\\ (\lambda+\rho,\delta_r) \leq h}}
(-1)^{p_r}
&\leq
n_1+\cdots+n_k-s+\sum_{\substack{1 \leq r \leq s\\ (\lambda+\rho,\delta_r) \leq h}}
(-1)^{p_r}\\
&< 
n_1+\cdots+n_k-s+\sum_{\substack{1 \leq r \leq s\\ (\mu+\rho,\delta_r) \leq h}}
(-1)^{p_r}=\!\!\!
\sum_{\substack{1 \leq r \leq n_1+\cdots+n_k\\ (\mu+\rho,\delta_r) \leq h}}
(-1)^{p_r}.
\end{align*}
Instead suppose that $p_s = \1$. Then $(\lambda+\rho,\delta_s) = h$ and $(\mu+\rho,\delta_s) > h$, 
hence $(\lambda+\rho,\delta_r) > h$ and $(\mu+\rho,\delta_r) > h$
for all $s < r \leq n_1+\cdots+n_k$.
We deduce that
$$
\sum_{\substack{1 \leq r \leq n_1+\cdots+n_k\\ (\lambda+\rho,\delta_r) \leq h}}
(-1)^{p_r}
=\sum_{\substack{1 \leq r \leq s\\ (\lambda+\rho,\delta_r) \leq h}}
(-1)^{p_r}
< 
\sum_{\substack{1 \leq r \leq s\\ (\mu+\rho,\delta_r) \leq h}}
(-1)^{p_r}=
\sum_{\substack{1 \leq r \leq n_1+\cdots+n_k\\ (\mu+\rho,\delta_r) \leq h}}
(-1)^{p_r}.
$$
We are done.
\end{proof}

Finally we introduce a categorical $\mathfrak{sl}_\Z$-action
on $\mathcal C$.
Let $F$ 
(resp.\ $E$) be the endofunctor of $\mathcal C$
defined by tensoring with $U$ (resp. its dual $U^*$).
Let $x \in \End(F)$ be the endomorphism 
defined on a supermodule $M$ by letting $x: M \otimes U \rightarrow M
\otimes U$ 
be the endomorphism defined by multiplication by 
$$
\Omega := \sum_{1 \leq r,s \leq m+n} (-1)^{p_s} e_{r,s} \otimes
e_{s,r}.
$$
Let $s \in \End(F^2)$ be the endomorphism arising from the flip
$U \otimes U\rightarrow U \otimes U, u_r \otimes u_s
\mapsto (-1)^{p_r p_s} u_s \otimes u_r$.

\begin{Theorem}\label{catact}
The preceding definitions make $\mathcal C$
into an $\mathfrak{sl}_\Z$-tensor product categorification of type $(\bn,\bc)$.
\end{Theorem}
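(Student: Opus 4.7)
The plan is to verify axioms (SL1)--(SL3) of Definition~\ref{catdef} together with (TP1)--(TP3) of Definition~\ref{tpcdef}, following the pattern established for the ordinary general linear Lie algebra in \cite[\S7.4]{CR}. First I would dispatch the easy axioms: the natural supermodule $U$ carries a non-degenerate even invariant bilinear form (the supertrace pairing), so that $U^*\cong U$ up to parity shift, which endows $F={-}\otimes U$ and $E={-}\otimes U^*$ with canonical unit/counit data making them biadjoint, proving (SL3) and that both functors are exact and preserve $\mathcal C$. Axiom (TP1) for the weight poset follows at once by comparing Lemma~\ref{orderdesc} with Lemma~\ref{pset}(ii) under the dictionary (\ref{wtdict}).

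\medskip
The central step is to analyze $F\Delta(\lambda) = \Delta(\lambda)\otimes U$. Using the tensor identity that turns $\bigl(U(\mathfrak{g})\otimes_{U(\mathfrak{p})}V(\lambda)\bigr)\otimes U$ into $U(\mathfrak{g})\otimes_{U(\mathfrak{p})}(V(\lambda)\otimes U)$, together with the $\mathfrak{p}$-stable flag on $U$ with successive quotients the natural modules for the Levi factors $\mathfrak{gl}(U_i)$, and the classical Pieri rule (each $\mathfrak{gl}(U_i)$ is either purely even or purely odd, hence semisimple on finite-dimensional modules), one obtains a $\Delta$-flag on $\Delta(\lambda)\otimes U$ whose nonzero sections are $\{\Delta(\lambda+\delta_k) : 1\le k\le m+n,\;\lambda+\delta_k\in\Lambda\}$. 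Evaluating $\Omega$ on such a section via the super-Casimir identity $2\Omega = \Delta(C) - C\otimes 1 - 1\otimes C$ (applied to the quadratic super-Casimir $C = \sum (-1)^{p_s} e_{r,s} e_{s,r}$) yields the eigenvalue $(\lambda+\rho,\delta_k)$ if $p_k=\0$, and $(\lambda+\rho,\delta_k)-1$ if $p_k=\1$: the relevant shifts $(\delta_k,\delta_k), (\delta_1,\delta_1), (\rho,\delta_1)$ conspire to produce an integer in every parity case for $p_1$. A direct comparison with (\ref{cact1}) under (\ref{wtdict}) shows that this eigenvalue is precisely the Chevalley index $i\in\Z$ for which $f_i v_\lambda$ contains $v_{\lambda+\delta_k}$ as a summand. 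This simultaneously establishes (SL1) (with $F_i$ the summand of $F$ on which $\Omega$ has eigenvalue $i$), (TP2) for $F$, and the $f_i$ part of (TP3); the dual analysis with $U^*$ handles the $e_i$.

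\medskip
Finally, for (SL2) I would verify the degenerate affine Hecke algebra relations between $x=\Omega$ and $t=s$ on $F^d = {-}\otimes U^{\otimes d}$. The symmetric group relations among the $t_k$ hold because the super flips give a well-defined $S_d$-action on $U^{\otimes d}$. The mixed relation (AH) reduces to the computation that $s\circ(\Omega\otimes 1) - (1\otimes\Omega)\circ s$ acts as the identity on $U\otimes U$, where the super signs $(-1)^{p_s}$ built into $\Omega$ and $(-1)^{p_r p_s}$ built into the flip cancel to reduce everything to the classical Arakawa--Suzuki computation. This sign bookkeeping is the main technical delicacy of the theorem; once it is sorted out, together with the steps above, the entire list (SL1)--(SL3), (TP1)--(TP3) of axioms is verified, completing the proof.
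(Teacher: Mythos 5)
Your analysis of $F\Delta(\lambda)$ via the tensor identity, the $\mathfrak{p}$-stable filtration on $U$ with Levi sections $U_i$, the classical Pieri rule, and the super-Casimir identity $2\Omega = \Delta(c) - c\otimes 1 - 1\otimes c$ is exactly the argument the paper uses for (TP2)--(TP3), and your eigenvalue formula for $\Omega$ matches the paper's (the eigenvalue is precisely the column index $j$ under the dictionary (\ref{wtdict}), namely $(\lambda+\rho,\delta_k)$ or $(\lambda+\rho,\delta_k)-1$ according to parity).

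However, your justification of (SL3) is wrong. You assert that ``the natural supermodule $U$ carries a non-degenerate even invariant bilinear form (the supertrace pairing), so that $U^*\cong U$ up to parity shift.'' Neither claim holds: the supertrace is a form on $\mathfrak{gl}(U)$, not on $U$, and the natural module $U$ for $\mathfrak{gl}_{n|m}$ is \emph{not} self-dual (for instance $U$ and $U^*$ have different $\mathfrak{t}$-weights). Fortunately, biadjointness of $F = -\otimes U$ and $E = -\otimes U^*$ does not need self-duality of $U$; it follows from the rigidity of any finite-dimensional supermodule. Both the evaluation $U^*\otimes U\to\K$ with coevaluation $\K\to U\otimes U^*$ (giving $F\dashv E$) and the evaluation $U\otimes U^*\to\K$ with coevaluation $\K\to U^*\otimes U$ (giving $E\dashv F$) exist for dimension reasons alone, and this is what the paper has in mind when it refers the verification of (SL1)--(SL3) to \cite[\S7.4]{CR} and \cite[Proposition 5.1]{CW}. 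Apart from that one false step, your proposal tracks the paper's argument; the (SL2) sign bookkeeping you flag is indeed the content the paper delegates to \cite{CW}.
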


\begin{proof}
The verification of the axioms (SL1)--(SL3) 
is standard; cf. \cite[$\S$7.4]{CR} and also \cite[Proposition 5.1]{CW}
where the degenerate affine Hecke algebra relations are checked in the super case.
Also we have seen already that $\mathcal C$ is a highest weight category 
with weight poset $(\Lambda, \leq)$ 
as required by axiom (TP1).

Now we take $\lambda \in \Lambda$ and consider the supermodule
$F \Delta(\lambda)$. By the tensor identity we have that
$$
F \Delta(\lambda)
 = (U(\mathfrak{g}) \otimes_{U(\mathfrak{p})} V(\lambda)) \otimes U
\cong U(\mathfrak{g}) \otimes_{U(\mathfrak{p})} (V(\lambda) \otimes
U).
$$
Since $U$ has an obvious $\mathfrak{p}$-filtration with sections $U_1,
\dots, U_l$,
we deduce that $F \Delta(\lambda)$ has a filtration with sections
$U(\mathfrak{g}) \otimes_{U(\mathfrak{p})} (V(\lambda) \otimes U_i)$
for $i=1,\dots,l$.
It is well known how to decompose the
tensor product of a finite dimensional irreducible
$\mathfrak{gl}_{n_i}(\K)$-module
with the
natural module, so that
$$
V(\lambda) \otimes U_i \cong \bigoplus_{\substack{n_1+\cdots+n_{i-1} < r
    \leq n_1+\cdots+n_i
\\
\lambda+\delta_r\in \Lambda}}
V(\lambda+\delta_r).
$$
This discussion proves that the $\mathfrak{g}$-supermodule $F \Delta(\lambda)$ has a multiplicity-free $\Delta$-flag with
sections
$$
\{\Delta(\lambda+\delta_r)
\:\big|\:
\text{for all }1 \leq r \leq m+n\text{ with }\lambda+\delta_{r} \in \Lambda
\}
$$
Converting to the $01$-matrix notation using (\ref{wtdict})
this shows equivalently that $F \Delta(\lambda)$ has a $\Delta$-flag with sections
$$
\left\{\Delta(t_{ij}(\lambda))\:\big|\:\text{for all }
1 \leq i \leq l\text{ and }j \in \Z\text{ such that }
\lambda_{ij} = 1\text{ and }
\lambda_{i(j+1)} = 0\right\},
$$
where $t_{ij}(\lambda)$ denotes
the $01$-matrix obtained by applying the transposition $t_j$ to the $i$th row of
$\lambda$.
To aid in translating between this description and the previous one, we note
that
$t_{ij}(\lambda) = \lambda+\delta_r$,
where $r$ is uniquely determined by
\begin{equation}\label{det}
n_1+\cdots+n_{i-1} < r \leq n_1+\cdots + n_i,
\quad
j = (\lambda+\rho,\delta_r) - (1-(-1)^{c_i})/2.
\end{equation}

We claim further that the endomorphism $\Omega$ preserves the
filtration just constructed and induces the endomorphism of the
section
$\Delta(t_{ij}(\lambda))$ that is multiplication by the scalar $j \in \Z$.
To see this note that $$
\Omega = 
(\Delta(c) - c \otimes 1 - 1 \otimes c) /
2
$$ 
where $\Delta$ is the comultiplication on $U(\mathfrak{g})$
and $c$ is the Casimir element (\ref{casimir}).
This shows already that $\Omega$ preserves the filtration.
Also 
$c$ acts on $\Delta(\lambda)$ by the scalar $c_\lambda$ from (\ref{casscalar}).
Hence the endomorphism of
$\Delta(t_{ij}(\lambda))$ induced by $\Omega$
is the scalar $(c_{t_{ij}(\lambda)} - c_\lambda - n+m) / 2$.
Letting $r$ be defined according to (\ref{det}),
we now calculate using (\ref{casscalar}) and (\ref{rhodef}):
\begin{align*}
(c_{t_{ij}(\lambda)}-c_\lambda-n+m)/2
&=
(c_{\lambda+\delta_r} - c_\lambda-n+m)/2\\
&=((\lambda+2\bar \rho+\delta_r, \lambda+\delta_r) -
(\lambda+2\bar\rho,\lambda)-n+m)/2\\
&= (\lambda+\bar\rho,\delta_r) + ((-1)^{c_i}-n+m)/2\\
&=
(\lambda+\rho,\delta_r) + ((-1)^{c_i} -1)/2
= j.
\end{align*}
This proves our claim.

Thus we have shown that $F_j \Delta(\lambda)$ has a $\Delta$-flag and that
$$
[F_j \Delta(\lambda)] = \sum_{i} [\Delta(t_{ij}(\lambda))]
$$
summing over all $i=1,\dots,l$ such that $\lambda_{ij} = 1$ and
$\lambda_{i(j+1)} = 0$.
Similarly one checks that $E_j \Delta(\lambda)$ has a $\Delta$-flag and
that
$$
[E_j \Delta(\lambda)] = \sum_{i} [\Delta(t_{ij}(\lambda))]
$$
summing over all $i=1,\dots,l$ such that $\lambda_{ij} = 0$ and
$\lambda_{i(j+1)} = 1$; it helps to know for this that $E_j M$  is
the generalized $(m-n-j)$-eigenspace of $\Omega$ on $M \otimes U^*$.
These formulae are consistent with the formulae for the actions of $f_j$
and $e_j$ on $v_\lambda \in \bigwedge^{\bn,\bc} V_\Z$. So we have
proved
(TP2)--(TP3).
\end{proof}

\subsection{Proof of the super Kazhdan-Lusztig conjecture}
We now have in place all of the theory needed
to prove the
super Kazhdan-Lusztig conjecture. 
Continue with $\mathcal C$ being as in Theorem~\ref{catact}.
Recall for each finite interval $J \subset \Z$ that we have defined a subset
$\Lambda_J \subset \Lambda$ and a poset isomorphism
$\Lambda_J \stackrel{\sim}{\rightarrow} \Lambda_{J;\bn,\bc},\lambda
\mapsto  \lambda_J$; see the first paragraph of $\S$\ref{trun}.
Moreover we have defined a subquotient $\mathcal C_J$ of $\mathcal C$ which is an
$\mathfrak{sl}_J$-tensor product categorification of type $(\bn,\bc)$;
see Theorem~\ref{subq}.
In view of the uniqueness from Theorem~\ref{lwmain} (for finite intervals),
$\mathcal C_J$ is equivalent to 
a block of parabolic category $\mathcal O$ for some general linear Lie algebra.

Now Theorem~\ref{obviously} implies that the multiplicities of Verma
supermodules are computed by certain parabolic Kazhdan-Lusztig polynomials
evaluated at $q=1$.
The super Kazhdan-Lusztig conjecture follows from this assertion;
we will give a more detailed discussion in
$\S$\ref{cc} after we have introduced the techniques to discuss gradings on super
parabolic category $\mathcal O$; see also the recent survey \cite{BcatO} for more about the combinatorics.

Various other known results about super parabolic category $\mathcal
O$ can be deduced from the finite case in a similar fashion.
In particular, Theorem~\ref{blockclass} gives a classification of the
blocks generalizing \cite[Theorem 3.12]{CMW},
 Lemma~\ref{Crystal} gives another proof of the main
result of \cite{Kuj}, and 
Theorem~\ref{prinj} gives a classification of prinjectives in
super parabolic category $\mathcal O$ which appears to be new.

\section{Stable modules}\label{sstable}
In this section the goal is to prove Theorem~\ref{lwmain}
for infinite intervals.
The strategy is almost exactly the same as the strategy for finite intervals 
recalled in $\S$\ref{finite}.
The main issue is to find a suitable substitute for the quotient functor $\V:\mathcal C \rightarrow \rmod{H}$ from Theorem~\ref{dcp}.
This arises from a new category $\rmod{H}$ of ``stable modules,'' which seems quite interesting in its own right. 
Throughout the section $(\bn,\bc)$ will be a fixed type and $I$ will be an
infinite interval.
Like in Theorem~\ref{prinj}, we pick finite subintervals 
$I_1\subset I_2 \subset\cdots$
of $I$ such that
$I = \bigcup_{r \geq 1} I_r$, $|I_1|+1 \geq 2 \max(\bn)$, and $|I_{r+1}| = |I_r|+1$ for each $r$.

\subsection{Tower of Hecke algebras}\label{tower}
Let $\mathcal C$ be some given $\mathfrak{sl}_I$-tensor product categorification of 
type $(\bn,\bc)$.
For each $r$ we denote the subcategories $\mathcal C_{\leq I_r}$
and $\mathcal C_{< I_r}$ from $\S$\ref{trun} by
$\mathcal C_{\leq r}$ and $\mathcal C_{< r}$, respectively.
Then each subquotient 
$\mathcal C_r := \mathcal C_{\leq r} / \mathcal C_{< r}$
gets the induced structure of an $\mathfrak{sl}_{I_r}$-tensor product categorification with weight poset 
$\Lambda_r := \Lambda_{I_r} \subset \Lambda$.
Denote the element of $\Lambda_r$ corresponding to $\kappa_{I_r;\bn,\bc} \in \Lambda_{I_r;\bn,\bc}$ by 
$\kappa^r$.
Thus $v_{\kappa^r}$ is the highest weight vector of 
$\bigwedge^{\bn,\bc} V_{I_r} \subset\bigwedge^{\bn,\bc} V_I$.

Let $T^r$ be the $r$th ``tensor space'' from (\ref{tr}).
It
has a $\Delta$-flag with sections of the form $\Delta(\mu)$
for $\mu \in \Lambda_r$. Hence by (\ref{ext2}) the quotient functor $\pi_r:\mathcal C_{\leq r} \rightarrow \mathcal C_r$ 
 defines a isomorphism between $\End_{\mathcal C}(T^r)$ 
and $\End_{\mathcal C_r}(T^r)$.
The following theorem follows immediately from this observation, Theorem~\ref{grth} and
the first part of Theorem~\ref{dcp}.

\begin{Theorem}
The action of 
$QH_d$ on $T^r_d$
induces a canonical isomorphism between 
the algebra 
\begin{equation}\label{hr}
H^{r} = \bigoplus_{d \geq 0} H^r_d
:= 
\bigoplus_{d \geq 0} QH^{|\kappa^r|}_{I_r,d}
\end{equation}
and the endomorphism algebra
$\End_{\mathcal C}(T^r)$.
\end{Theorem}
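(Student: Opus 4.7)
The plan is to reduce the statement to the finite-interval case by passing to the Serre subquotient $\mathcal C_r = \mathcal C_{\leq r}/\mathcal C_{<r}$, which carries the structure of an $\mathfrak{sl}_{I_r}$-tensor product categorification for the finite interval $I_r$ with distinguished highest weight $\kappa^r$. The argument then combines three ingredients in an essentially formal way: the Ext-comparison (\ref{ext2}), the double-centralizer statement (first assertion of Theorem~\ref{dcp}) in its finite-interval incarnation, and the identification of cyclotomic quotients from Theorem~\ref{grth}.

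First I would observe that $T^r$ belongs to $\mathcal C_{\leq r}$: the object $L(\kappa^r)$ lies in $\mathcal C_{\leq r}$ since $\kappa^r \in \Lambda_r \subseteq \Lambda_{\leq r}$, and the subcategory $\mathcal C_{\leq r}$ is preserved by each $F_j$ with $j \in I_r$ by the lemma just before Lemma~\ref{we}. Consequently $\End_{\mathcal C}(T^r) = \End_{\mathcal C_{\leq r}}(T^r)$. Next, $T^r$ admits a $\Delta$-flag all of whose sections $\Delta(\mu)$ are labelled by weights $\mu \in \Lambda_r$, and $\Lambda_r = \Lambda_{\leq r} \setminus \Lambda_{<r}$ is a coideal of $\Lambda_{\leq r}$; thus (\ref{ext2}) applied to the quotient $\pi_r:\mathcal C_{\leq r} \to \mathcal C_r$ yields a canonical isomorphism $\End_{\mathcal C_{\leq r}}(T^r) \stackrel{\sim}{\to} \End_{\mathcal C_r}(T^r)$, which is exactly the observation mentioned in the text preceding the theorem.

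Finally, since $\mathcal C_r$ is an $\mathfrak{sl}_{I_r}$-tensor product categorification with maximal weight $\kappa^r$, the first assertion of Theorem~\ref{dcp} supplies a canonical isomorphism $AH^{|\kappa^r|}_{I_r,d} \stackrel{\sim}{\to} \End_{\mathcal C_r}(T^r_d)$ induced by the affine Hecke action on $T^r_d$. Composing with the identification of $QH^{|\kappa^r|}_{I_r,d}$ with $AH^{|\kappa^r|}_{I_r,d}$ from Theorem~\ref{grth} (which we are treating as an identification throughout the paper) and with the inverse of the quotient isomorphism above produces the desired canonical map $H^r_d \stackrel{\sim}{\to} \End_{\mathcal C}(T^r_d)$. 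Assembling these over $d \geq 0$ and using $\Hom_{\mathcal C}(T^r_d, T^r_{d'}) = 0$ for $d \neq d'$ (an immediate consequence of the weight decomposition (\ref{wtdec}), since $F_j$ shifts weights by $-\alpha_j$) delivers the stated isomorphism $H^r \cong \End_{\mathcal C}(T^r)$. I do not expect a serious obstacle: the only thing to verify beyond bookkeeping is that the resulting map is really induced by the ambient $QH_d$-action on $T^r_d$ in $\mathcal C$, but this is automatic since the natural transformations $\xi$ and $\tau$ are defined at the level of $F$ in $\mathcal C$, restrict to the corresponding transformations for $F_{I_r}$ on $\mathcal C_{\leq r}$, and descend through $\pi_r$ compatibly with the strong equivariance recipe of $\S$\ref{scat}.
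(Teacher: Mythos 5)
Your proof is correct and follows exactly the same route as the paper: pass to the subquotient $\mathcal C_r$ via (\ref{ext2}), invoke the first assertion of Theorem~\ref{dcp} for the finite interval $I_r$, and then use the identification from Theorem~\ref{grth}. The only difference is that you spell out the intermediate identification $\End_{\mathcal C}(T^r)=\End_{\mathcal C_{\leq r}}(T^r)$ and the vanishing of $\Hom$ between graded pieces of $T^r$, both of which the paper treats as implicit.
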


For each $r \geq 1$ and $i \in I_r$, we introduce the idempotent
\begin{equation}\label{inductionidempts}
1_{d;i}^r :=\sum_{\substack{\bi \in I_r^{d+1}\\i_{d+1}=i}} 1_\bi \in H^r_{d+1}.
\end{equation}
There is a natural algebra homomorphism
\begin{equation}\label{theinclusion}
\iota^r_{d;i}:H^r_d \rightarrow 1_{d;i}^r H^r_{d+1} 1_{d;i}^r,
\qquad
1_\bj \mapsto 1_{\bj i},
\quad
\xi_j \mapsto \xi_j 1^r_{d;i},
\quad
\tau_j \mapsto \tau_j 1^r_{d;i}.
\end{equation}
In diagrammatic terms
this map is given by tensoring with 
a single string $i\rightarrow i$.
With the following 
lemma we construct some much less familiar maps
between Hecke algebras for different $r$.
These are given
by tensoring with 
some explicitly known diagram
$\bi\rightarrow \bi$ on the {\em left}.

\begin{Lemma}\label{tl}
Fix $r \geq 1$ and $d \geq 0$. 
If $\max(I_r) = \max(I_{r+1})$
we define
\begin{align*}
p_j &:= \big|\{i\:|\:c_i = 0, n_i \geq j\}\big|,
& a &:= \max\{n_i\:|\:c_i = 0\},\\
 s &:= \min(I_{r+1})-1,
&\eps &:= 1,\\
\bi&:=
(s+a)^{p_a} \cdots (s+2)^{p_2} (s+1)^{p_1}
&d_r &:= \sum_{c_i = 0} n_i.
\end{align*} 
Instead if $\min(I_r) = \min(I_{r+1})$, we let 
\begin{align*}
p_j &:= \big|\{i\:|\:c_i = 1, n_i \geq j\}\big|,
& a &:= \max\{n_i\:|\:c_i = 1\},\\
 s &:= \max(I_{r+1})+1,
&\eps &:= -1,\\
\bi&:=
(s-a)^{p_a} \cdots (s-2)^{p_2} (s-1)^{p_1}
&d_r &:= \sum_{c_i = 1} n_i.
\end{align*} 
(In either case $\bi$ is a word in $I_{r+1}^{d_r}$.)
\begin{itemize}
\item[(i)]
There exists an
explicit idempotent $e^r_d \in H^{r+1}_{d_r+d}$
such that the map
$$
\phi^r_d:H^r_d \stackrel{\sim}{\rightarrow} e^r_d H^{r+1}_{d_r+d} e^r_d,\qquad 
1_\bj \mapsto  1_{\bi\bj} e^r_d ,
\quad
\xi_j\mapsto \xi_{d_r+j}e^r_d,\quad \tau_j\mapsto \tau_{d_r+j}e^r_d 
$$ 
is a well-defined algebra isomorphism.
Moreover $e^r_d 1_\bk = 1_{\bk} e^r_d = 0$ unless
$\bk = \bi \bj$ for some $\bj \in I_r^d$.
\item[(ii)]
There exists
an isomorphism
$\theta^r_d:T^r_d \stackrel{\sim}{\rightarrow} e^{r}_d T^{r+1}_{d_r+d}$ in $\mathcal C$,
such that the following diagram commutes
for all $h \in H^r_d$:
\begin{equation*}
\begin{CD}
T^{r}_{d} &@>h>>& T^r_d\\
@V\theta^r_dVV&&@VV\theta^r_dV\\
e^r_d T^{r+1}_{d_r+d} &@>>\phi^r_d(h)>& e^r_d T^{r+1}_{d_r+d}.
\end{CD}
\end{equation*}
\item[(iii)]
For $i \in I_r$ the map
$\iota^{r+1}_{d_r+d;i}$ from (\ref{theinclusion})
sends $e^r_d$ to $1^{r+1}_{d_r+d;i} e^r_{d+1}$.
\item[(iv)]
For $i \in I_r$ we have that
$\phi_{d+1}^r(1_{d;i}^r) = 
1_{d_r+d;i}^{r+1}e^r_{d+1}$.
\end{itemize}
\end{Lemma}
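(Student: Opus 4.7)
The overall plan is to realise $T^r_d$ as a specific direct summand of $T^{r+1}_{d_r+d}$, with the idempotent $e^r_d$ implementing the projection diagrammatically in the quiver Hecke algebra $H^{r+1}_{d_r+d}$. Observe first that by Lemma~\ref{we} (applied with the hypothesis $|I_1|+1 \ge 2\max(\bn)$) both $\kappa^r$ and $\kappa^{r+1}$ are prinjective, so $L(\kappa^{r+1}) = \Delta(\kappa^{r+1})$ and $L(\kappa^r) = \Delta(\kappa^r)$ in $\mathcal C$. The combinatorial motivation for the word $\bi$ is that it is the rigid sequence of simple roots needed so that $F_\bi = F_{i_{d_r}} \circ \cdots \circ F_{i_1}$ slides each row of the $01$-matrix $\kappa^{r+1}$ that is affected by the extension $I_r \subset I_{r+1}$ by one column, while leaving the remaining rows untouched; a direct combinatorial check then yields $f_\bi v_{\kappa^{r+1}} = N\, v_{\kappa^r}$ in $\bigwedge^{\bn,\bc}V_I$ for the explicit positive integer $N = p_a! \cdots p_1!$, and no other basis monomial appears, because each "level" $s+j$ of $\bi$ is hit exactly $p_j$ times, which is forced to move a $1$ from position $s+j$ to $s+j+1$ in each of the $p_j$ relevant rows.

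The first categorical step is to promote this identity to the statement $F_\bi L(\kappa^{r+1}) \cong L(\kappa^r)^{\oplus N}$. Axioms (TP2)--(TP3) show that $F_\bi L(\kappa^{r+1}) = F_\bi \Delta(\kappa^{r+1})$ has a $\Delta$-flag consisting of $N$ copies of $\Delta(\kappa^r) = L(\kappa^r)$, and because $L(\kappa^r)$ is prinjective, any such extension splits. Consequently the canonical map $1_\bi H^{r+1}_{d_r} 1_\bi \to \End_{\mathcal C}(F_\bi L(\kappa^{r+1})) \cong M_N(\K)$ is surjective, and I would choose a primitive idempotent $e^r_0 \in 1_\bi H^{r+1}_{d_r} 1_\bi$ lifting a rank one idempotent of $M_N(\K)$. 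Explicitly $e^r_0$ can be built from the $\tau$ and $\xi$ generators supported on the $\bi$-word space using the standard cyclotomic KLR recipe for the "all one colour" blocks $(s+a)^{p_a}, \ldots, (s+1)^{p_1}$: inside each block the relevant symmetric group is $S_{p_j}$ acting by permuting equi-coloured strands, and one picks the appropriate Young-type idempotent. Promoting to arbitrary $d$, define
\[
e^r_d \;:=\; \sum_{\bj \in I_r^d} e^r_0 \cdot 1_{\bi\bj} \;\in\; H^{r+1}_{d_r+d},
\]
viewing $e^r_0$ inside the larger algebra via the embedding that adds $d$ idle strands on the right. This is idempotent, and the vanishing $e^r_d\, 1_\bk = 1_\bk\, e^r_d = 0$ for $\bk \notin \bi \cdot I_r^d$ is immediate from the construction.

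Parts (i)--(iv) are then verifications. For (i), well-definedness of $\phi^r_d$ is a local check of the quiver Hecke relations based on the key observation that $e^r_0$ is supported on the first $d_r$ strands and hence commutes with every $\xi_{d_r+j}$ and $\tau_{d_r+j}$ for $j \le d$. Surjectivity onto $e^r_d H^{r+1}_{d_r+d} e^r_d$ follows by a straightening argument using the (QH1)--(QH7) relations to push all dots and crossings onto the rightmost $d$ strands, combined with the fact that $e^r_0 H^{r+1}_{d_r} e^r_0 \cong \K$. For (ii), the isomorphism $\theta^r_d$ is obtained by applying $F_{I_r}^d$ to the identification $L(\kappa^r) \cong e^r_0 \cdot F_\bi L(\kappa^{r+1})$ provided by the choice of $e^r_0$, and the commuting square expressing intertwining by $\phi^r_d$ is then a consequence of the naturality of the $H^{r+1}$-action. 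Parts (iii) and (iv) are diagrammatic identities that follow directly from the formula for $e^r_d$ and the definition (\ref{theinclusion}) of $\iota^{r+1}_{d_r+d;i}$: inserting one more idle strand of colour $i \in I_r$ on the right of $e^r_d$ produces exactly the $1^{r+1}_{d_r+d;i}$-component of $e^r_{d+1}$. The main obstacle is the explicit construction of $e^r_0$ and in particular the verification that $e^r_0 H^{r+1}_{d_r} e^r_0 \cong \K$; this requires genuine control of the algebra structure of $1_\bi H^{r+1}_{d_r} 1_\bi$ beyond its semisimple quotient $M_N(\K)$, and is where the detailed cyclotomic KLR combinatorics are essential.
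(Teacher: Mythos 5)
Your proposal converges on the same underlying idea as the paper's proof --- apply $F_{\bi}$ to $L(\kappa^{r+1})$ and cut out a copy of $L(\kappa^r)$ by an explicit idempotent --- but the detailed route differs, and one of your worries is unnecessary. The paper works with the \emph{divided-power} functors $F_i^{(m)} := b_m F_i^m$, where $b_m \in 1_{i^m}QH_{I_{r+1},m}1_{i^m}$ is the standard nil-Hecke idempotent from \cite[Lemma~4.1]{R}. The payoff is that (\ref{bandon}) then reads $F_{s+\eps 1}^{(p_1)}\cdots F_{s+\eps a}^{(p_a)} L(\kappa^{r+1}) \cong L(\kappa^r)$ with no multiplicity, so $e^r_d$ is simply the product of the $b_{p_j}$'s placed on the first $d_r$ strands (tensored with the identity on the remaining $d$ strands); there is no separate step of ``choosing a primitive idempotent in $M_N(\K)$.'' Your route --- applying the full (undivided) $F_{\bi}$, observing $F_{\bi}L(\kappa^{r+1})\cong L(\kappa^r)^{\oplus N}$ with $N = p_a!\cdots p_1!$, and then selecting a rank-one projector --- is also correct, but the divided-power formalism packages exactly this choice. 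Also, for part~(i), the paper does not need a combinatorial straightening argument: once $\theta^r_d$ is constructed, the algebra isomorphism is a one-line consequence of the double-centralizer identification $\End_{\mathcal C}(T^s_d) = H^s_d$ from Theorem~\ref{dcp}, since $e^r_d H^{r+1}_{d_r+d} e^r_d = \End_{\mathcal C}(e^r_d T^{r+1}_{d_r+d}) \cong \End_{\mathcal C}(T^r_d) = H^r_d$.

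The caveat you flag at the end --- that showing $e^r_0 H^{r+1}_{d_r} e^r_0 \cong \K$ requires ``genuine control of the algebra structure beyond its semisimple quotient $M_N(\K)$'' --- is a misconception. By Theorem~\ref{dcp} the action of $QH_{I_{r+1},d_r}$ already induces an isomorphism $H^{r+1}_{d_r} \cong \End_{\mathcal C}(T^{r+1}_{d_r})$, so $1_\bi H^{r+1}_{d_r} 1_\bi = \End_{\mathcal C}(F_\bi L(\kappa^{r+1}))$ is literally $M_N(\K)$, not merely an algebra surjecting onto it. Since $e^r_0 = 1_\bi e^r_0 1_\bi$, your $e^r_0 H^{r+1}_{d_r} e^r_0$ is $e^r_0 M_N(\K) e^r_0 \cong \K$ immediately. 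The quiver-Hecke combinatorics you were bracing for are therefore not needed.
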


\begin{proof}
A straightforward check shows in $\bigwedge^{\bn,\bc} V_I$ that
\begin{equation}\label{bandon}
f_{s+\eps 1}^{(p_1)}
f_{s+\eps 2}^{(p_2)}
\cdots f_{s+\eps a}^{(p_a)} v_{\kappa^{r+1}} = v_{\kappa^r},
\end{equation}
where we write $f_i^{(m)}$ for the divided power $f_i^m / m!$.
Now recall for each $i\in I_{r+1}$ and $m \geq 1$ 
that there is an explicit idempotent $b_m \in 1_{i^m} QH_{I_{r+1},m} 1_{i^m}$
such that the summand $F_i^{(m)} := b_m F_i^m$ of the functor $F_i^m$
induces 
$f_i^{(m)}$ at the level of the Grothendieck group; see \cite[Lemma 4.1]{R}.
Hence for each $d \geq 0$ there is an explicit idempotent
$e^{r}_d \in QH^{|\kappa^{r+1}|}_{I_{r+1},d_r+d}=H^{r+1}_{d_r+d}$ such that
\begin{equation}
e^r_d F_{I_{r+1}}^{d_r+d} L(\kappa^{r+1})
=
F_{I_r}^d F_{s+\eps 1}^{(p_1)}\cdots F_{s+\eps a}^{(p_a)} L(\kappa^{r+1}).
\end{equation}
Since $L(\kappa^{r+1})$ and $L(\kappa^r)$ are standard objects, the identity (\ref{bandon}) implies that there exists a (unique up to scalars) isomorphism
$L(\kappa^r)
\stackrel{\sim}{\rightarrow} F_{s+\eps 1}^{(p_1)}\cdots F_{s+\eps a}^{(p_a)} L(\kappa^{r+1})$.
Applying the functor $F_{I_r}^d$ to this
and using the above equality we obtain
the isomorphism
$\theta^r_d: T^r_d \stackrel{\sim}{\rightarrow} T^{r+1}_{d_r+d}e^r_d$.
This definition ensures that $\theta^r_d$ intertwines the actions of $1_\bj, \xi_j, \tau_j \in H^r_d$
and $1_{\bi\bj}e^r_d, \xi_{d_r+j} e^r_d,  \tau_{d_r+j}e^r_d \in e^r_d H^{r+1}_{d_r+d}e^r_d$.
Also $e^r_d H^{r+1}_{d_r+d} e^r_d = \End_{\mathcal C}(e^r_d T_{d_r+d}^{r+1})
\cong
\End_{\mathcal C}(T^r_d) = H^r_d$.
Parts (i) and (ii) follow.

For the final parts of the lemma, the homomorphism
$\iota^{r+1}_{d_r+d;i}$ can be viewed 
simply as an application of the functor $F_i$, so it
maps $e^r_d$ to 
$F_i e^r_d$,
while $e^r_{d+1} = F_{I_r} e^r_d$ by its analogous definition.
This implies (iii).
Part (iv) is clear.
\end{proof}

In the notation of the lemma, we
set $e^r := \sum_{d \geq 0} e^r_d$, $\phi^r := \sum_{d \geq 0} \phi^r_d$
and $\theta^r := \sum_{d \geq 0} \theta^r_d$.
Thus $e^r\in H^{r+1}$ is an idempotent, $\phi^r :H^r \stackrel{\sim}{\rightarrow} e^r H^{r+1}e^r$ is an algebra isomorphism, and
$\theta^r :T^r\stackrel{\sim}{\rightarrow} e^r T^{r+1} $ is an isomorphism 
in $\mathcal C$.
In particular this gives us a tower
$$
H^1 \hookrightarrow H^2 \hookrightarrow H^3 \hookrightarrow \cdots
$$
of cyclotomic quiver Hecke algebras
which will play a key role.
If $M$ is a right (resp.\ left) $H^{r+1}$-module we will implicitly view 
$M e^r$ (resp.\ $e^r M$) as a right (resp.\ left) $H^r$-module
via $\phi^r$. 

More generally for $r \leq s$ we set
\begin{align}
\phi^{r,s} &:= \phi^{s-1} \circ \cdots\circ \phi^r,
&e^{r,s} &:= \phi^{r,s}(1_{H_r}),
&\theta^{r,s} &:= \theta^{s-1} \circ \cdots \circ \theta^{r}
\end{align}
Thus $e^{r,s}$ is an idempotent in $H^s$,
$\phi^{r,s}:H^r \stackrel{\sim}{\rightarrow} e^{r,s} H^s e^{r,s}$ is an algebra isomorphism,
and $\theta^{r,s}:T^r \rightarrow e^{r,s} T^s$ is an
isomorphism in $\mathcal C$.
If $M$ is a right (resp.\ left) $H^s$-module we will implicitly view $M e^{r,s}$ 
(resp.\ $e^{r,s} M$) as an $H^r$-module
via $\phi^{r,s}$.

\subsection{Stable modules and the double centralizer property}
With the following definition we introduce an auxiliary category which is actually a little too big; we will cut it down to size in Definition~\ref{rmod2} below.

\begin{Definition}\rm\label{rmod}
Let $\rmod{H}^\infty$ 
be the category whose objects are diagrams
$$
\begin{CD}
  M= (M^1 &@>\iota^1>>&M^2&@>\iota^2>> M^3&@>\iota^3>> \cdots)\\
\end{CD}
$$
such that $M^r \in \rmod{H^r}$ for each $r$ and $\iota^r$ gives an $H^r$-module
isomorphism $M^r \stackrel{\sim}{\rightarrow} M^{r+1} e^r$
for all $r \geq 1$. A morphism $f:M
\rightarrow N$ in $\rmod{H}^\infty$ means a sequence $(f^r)_{r \geq 1}$ of
$H^r$-module homomorphisms $f^r:M^r \rightarrow N^r$ such that the
following diagram commutes:
\begin{equation}\label{morphism}
  \begin{CD}
    M^1 &@>>>&M^2&@>>> M^3&@>>> \cdots\\
    @Vf^1VV&&@Vf^2VV&@Vf^3VV\\
    N^1 &@>>>&N^2&@>>> N^3&@>>> \cdots.
  \end{CD}
\end{equation}
We write simply $\Hom_H(M,N)$ for the morphisms in $\rmod{H}^\infty$.
It is easy to see that $\rmod{H}^\infty$ is an Abelian category.
We stress however that there is no algebra $H$ in sight here.
\end{Definition}

For any $s \geq 1$, an object $M \in\rmod{H}^\infty$ is determined uniquely up to isomorphism just by 
knowledge of its tail $(M^s \rightarrow M^{s+1}\rightarrow\cdots)$.
We make this statement precise by introducing the category
$\rmod{H^{\geq s}}$ consisting of diagrams $M = (M^s \stackrel{\iota^s}{\rightarrow} M^{s+1} \stackrel{\iota^{s+1}}{\rightarrow}\cdots)$ with $M^r \in \rmod{H^r}$
and $\iota^r:M^r \stackrel{\sim}{\rightarrow} M^{r+1} e^r$ for each $r \geq s$
(just like in 
Definition~\ref{rmod} but starting at $r$ not $1$).
Then there is a forgetful functor
\begin{equation}
\tail_{s}: \rmod{H}^\infty \rightarrow \rmod{H}^{\geq s}
\end{equation}
sending $(M^1\rightarrow M^2 \rightarrow \cdots)$ to its
tail $(M^s \rightarrow M^{s+1} \rightarrow\cdots)$.
Let
\begin{equation}
\head_{s}:\rmod{H}^{\geq s} \rightarrow \rmod{H}^\infty
\end{equation}
be the functor sending $(M^s \stackrel{\iota^s}{\rightarrow} M^{s+1} \stackrel{\iota^{s+1}}{\rightarrow} \cdots)$
to $(M^1 \stackrel{\iota^1}{\rightarrow} \cdots\stackrel{\iota^{s-1}}{\rightarrow} M^s \stackrel{\iota^{s+1}}{\rightarrow}\cdots)$,
where for $r < s$ we let $M^r := M^s e^{r,s} \in \rmod{H^r}$;
the maps $\iota^r:M^r \rightarrow M^{r+1}$ for $r < s$ are simply the inclusions.
Obviously $\tail_{s} \circ \head_{s} = \operatorname{id}$.
It is an easy exercise to show moreover that
$\head_{s} \circ \tail_{s} \cong \operatorname{id}$.
Thus the functors $\tail_{s}$ and $\head_{s}$ are quasi-inverse
equivalences of categories.

For each $r \geq 1$, we define two more functors
\begin{equation}
\top_r:\rmod{H}^{\geq r} \rightarrow \rmod{H^r},\qquad
\top_r^!:\rmod{H^r} \rightarrow \rmod{H}^{\geq r}.
\end{equation}
The first of these is
defined simply by projecting $M$ onto its top term $M^r$.
The second is defined on $M \in \rmod{H^r}$
by $\top_r^! M := (M^r \rightarrow M^{r+1}\rightarrow \cdots) \in \rmod{H}^{\geq r}$,
where
$M^s := M \otimes_{H_r} e^{r,s} H^s \in \rmod{H^s}$. 
The linear maps $\iota^s:M^s \rightarrow M^{s+1}$
are the maps
$M^s \rightarrow M^{s+1}, v \otimes h \mapsto v \otimes \phi^s(h)$.
Finally we set
\begin{align}
\pr_r &:= \top_r \circ \tail_{r}: \rmod{H}^\infty \rightarrow \rmod{H^r},\label{mmm}\\
\pr_r^! &:= \head_{r} \circ \top_r^!: \rmod{H^r} \rightarrow \rmod{H}^\infty.\label{ungradedpr}
\end{align}
The first of these $\pr_r$ is of course just the obvious projection onto the $r$th component.
It is also clear that $\pr_r \circ \pr_r^! \cong \operatorname{id}$.

\begin{Lemma}\label{leftadjoint} 
The functor $\pr_r^!$ is left adjoint to $\pr_r$.
\end{Lemma}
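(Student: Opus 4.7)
The plan is first to reduce the stated adjunction to one between $\top_r^!$ and $\top_r$, and then to write down the unit and counit of that simpler adjunction explicitly. Since $\tail_r$ and $\head_r$ are quasi-inverse equivalences they form an adjoint equivalence in both directions, and so using the factorizations $\pr_r = \top_r \circ \tail_r$ and $\pr_r^! = \head_r \circ \top_r^!$, the claim $\pr_r^! \dashv \pr_r$ reduces formally to the claim $\top_r^! \dashv \top_r$ between $\rmod{H^r}$ and $\rmod{H^{\geq r}}$.

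For the latter adjunction I exhibit unit and counit directly. The unit $\eta_M : M \to \top_r \top_r^! M$ is the identity, since $e^{r,r} = 1$ and hence $\top_r \top_r^! M = M \otimes_{H^r} H^r = M$ canonically. The counit $\epsilon_N : \top_r^! \top_r N \to N$ has $s$-th component
$$\epsilon_N^s : N^r \otimes_{H^r} e^{r,s} H^s \longrightarrow N^s, \qquad v \otimes h \mapsto \iota^{r,s}(v) \cdot h,$$
where $\iota^{r,s} := \iota^{s-1}\circ\cdots\circ\iota^r : N^r \xrightarrow{\sim} N^s e^{r,s}$ is the composite of the structure maps of $N$ and the dot denotes the right $H^s$-action on $N^s$.

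Three routine verifications then complete the proof. Well-definedness of $\epsilon_N^s$ on the tensor product uses that $\iota^{r,s}$ intertwines the right $H^r$-action on $N^r$ with that on $N^s e^{r,s}$ via $\phi^{r,s}$, so that for $h' \in H^r$ one has $\iota^{r,s}(vh') \cdot h = \iota^{r,s}(v) \cdot \phi^{r,s}(h') h$, exactly matching the relation $vh' \otimes h = v \otimes \phi^{r,s}(h') h$. Right $H^s$-linearity is immediate. Compatibility of the family $(\epsilon_N^s)_{s \geq r}$ with the transition maps $\iota^s$ reduces to the identity $\iota^s(x\cdot h) = \iota^s(x)\cdot\phi^s(h)$ for $x \in N^s$ and $h \in H^s$, which is just the statement that $\iota^s$ is a right $H^s$-module map onto $N^{s+1}e^s$. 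Finally, both triangle identities are immediate: $\top_r(\epsilon_N)\circ\eta_{\top_r N} = \epsilon_N^r = \operatorname{id}_{N^r}$ since $\iota^{r,r} = \operatorname{id}$, and $\epsilon_{\top_r^! M}\circ\top_r^!(\eta_M)$ is the identity in each degree by direct inspection of the definitions. The only real obstacle throughout is bookkeeping of the twisted bimodule structures imposed by the non-unital corner maps $\phi^{r,s} : H^r \xrightarrow{\sim} e^{r,s} H^s e^{r,s}$; once these conventions are fixed nothing else is at stake.
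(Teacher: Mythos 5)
Your proof is correct and takes essentially the same approach as the paper: both reduce via the equivalences $\tail_r, \head_r$ to showing $\top_r^! \dashv \top_r$ and then exhibit the counit $v \otimes h \mapsto \iota^{r,s}(v)h$ on each component. The paper leaves the unit and the triangle-identity/well-definedness checks as ``routine''; you have simply spelled those out.
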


\begin{proof}
It suffices to check that $\top_r^!$ is left adjoint to $\top_r$.
The counit of the adjunction on object
$M = (M^r \stackrel{\iota^r}{\rightarrow} M^{r+1} \stackrel{\iota^{r+1}}{\rightarrow}\cdots)$ 
is
$(\eta^{r,s})_{s \geq r}: \top_r^! (\top_r M) \rightarrow M$
defined from
\begin{equation}\label{topmaps}
\eta^{r,s}:M^r \otimes_{H^r} e^{r,s} H^s \mapsto M^s,
\qquad v \otimes h \mapsto \iota^{r,s}(v) h,
\end{equation}
setting $\iota^{r,s} := \iota^{s-1}\circ\cdots\circ \iota^r$.
We leave the routine checks to the reader.
\end{proof}

\begin{Definition}\label{rmod2}\rm
We say that $M \in \rmod{H}^\infty$ is {\em $r$-stable} if it is in the essential image of the functor $\pr^!_r$; equivalently the maps
(\ref{topmaps}) are isomorphisms for all $s > r$.
Then $M$ is {\em stable} if it is $r$-stable for some $r \geq 1$.
Finally let $\rmod{H}$ be the full subcategory of $\rmod{H}^\infty$ consisting of all
stable objects.
(We will see soon that $\rmod{H}$ is itself an Abelian category but this is not obvious as it is not a Serre subcategory of $\rmod{H}^\infty$.)
\end{Definition}

\begin{Lemma}\label{okay}
If $M \in \rmod{H}$ is
$r$-stable and $N \in \rmod{H}^\infty$ is any object
then $\pr_r:\Hom_H(M,N) \rightarrow \Hom_{H^r}(M^r, N^r)$
is an isomorphism.
\end{Lemma}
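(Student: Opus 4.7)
The plan is to reduce the claim to a formal consequence of the adjunction established in Lemma~\ref{leftadjoint}, using that $r$-stability of $M$ provides an isomorphism $M \cong \pr_r^!(M^r)$ in $\rmod{H}^\infty$. First I would fix such an isomorphism $\alpha:\pr_r^!(M^r)\stackrel{\sim}{\rightarrow} M$, noting that its $r$th component $\alpha^r := \pr_r(\alpha)$ is an automorphism of $M^r$. Pre-composition with $\alpha$ yields a bijection $\alpha^{*}:\Hom_H(M,N)\stackrel{\sim}{\rightarrow}\Hom_H(\pr_r^!(M^r),N)$, and Lemma~\ref{leftadjoint} supplies a natural bijection
\[
\Phi:\Hom_H(\pr_r^!(M^r),N)\stackrel{\sim}{\rightarrow}\Hom_{H^r}(M^r,\pr_r N)=\Hom_{H^r}(M^r,N^r).
\]
The composite $\Phi\circ\alpha^{*}$ is then automatically a bijection, so the content of the lemma is really that this composite agrees with $\pr_r$ up to a harmless automorphism of $M^r$.

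To check that identification, I would unwind the constructions in $\S$\ref{tower} to obtain $\pr_r(\pr_r^!(M^r))=M^r\otimes_{H^r}e^{r,r}H^r=M^r\otimes_{H^r}H^r$, using $e^{r,r}=\phi^{r,r}(1)=1$. The unit of the adjunction in Lemma~\ref{leftadjoint} is the canonical map $\eta_{M^r}:v\mapsto v\otimes 1$, which becomes the identity once we absorb the obvious isomorphism $M^r\otimes_{H^r}H^r\cong M^r$. Consequently $\Phi(f)=\pr_r(f)\circ \eta_{M^r} = \pr_r(f)$ for every $f:\pr_r^!(M^r)\to N$, so
\[
(\Phi\circ\alpha^{*})(g)=\pr_r(g\circ\alpha)=\pr_r(g)\circ\alpha^r
\]
for every $g:M\to N$. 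Since $\alpha^r$ is invertible, $\pr_r$ differs from the bijection $\Phi\circ\alpha^{*}$ only by right-composition with $(\alpha^r)^{-1}$, and is therefore itself a bijection.

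The only real technicality, modest though it is, is this bookkeeping identification of $\Phi$ with $\pr_r$: one has to keep track of the unit of the adjunction, which fortunately collapses to the identity once the canonical isomorphism $M^r\otimes_{H^r}H^r\cong M^r$ is absorbed. Everything else is a formal transport of structure along the fixed isomorphism $\alpha$ afforded by $r$-stability, and no properties of the tower $\{H^r\}$ beyond the very definitions of $\pr_r$ and $\pr_r^!$ enter the argument.
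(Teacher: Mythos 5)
Your proof is correct and follows exactly the route the paper takes (which consists of the single sentence ``We have that $M\cong \pr_r^!(M^r)$, now use Lemma~\ref{leftadjoint}''); you have simply spelled out the bookkeeping—identifying the unit of the adjunction with the identity via $e^{r,r}=1$ and tracking the automorphism $\alpha^r$—that the paper treats as routine.
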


\begin{proof}
We have that $M \cong \pr_r^! (M^r)$. Now use 
Lemma~\ref{leftadjoint}.
\end{proof}

Now we bring the category $\mathcal C$ back into the picture.
Let
\begin{equation}\label{ur}
\V^r:= \Hom_{\mathcal C}(T^r,-):\mathcal C \rightarrow \rmod{H^{r}}.
\end{equation}
Then let
$\V:\mathcal C \rightarrow \rmod{H}^\infty$
be the functor sending $M \in \mathcal C$
to
\begin{equation}\label{atlast}
\begin{CD}
\V M := (\V^1 M &@>\iota^1>>&\V^2 M&@>\iota^2>> \V^3 M&@>\iota^3>> \cdots),
\end{CD}
\end{equation}
where the isomorphism $\iota^r:
\Hom_{\mathcal C}(T^r, M) \stackrel{\sim}{\rightarrow}\Hom_{\mathcal C}(e^r T^{r+1},M)$
here is the map $\phi \mapsto \phi \circ (\theta^r)^{-1}$.
On a morphism $f:M \rightarrow N$ 
we let $\V f := (\V^r f)_{r \geq 1}$.
By Lemma~\ref{we} and (\ref{tr}), each
$T^r$ is a prinjective object of $\mathcal C$, hence
the functors $\V^r$ and $\V$ are both exact.
Also $\V^r = \pr_r \circ \V$.
Recalling the definition (\ref{circledweights}), we let
\begin{align}\label{young3}
Y(\lambda) = (Y^1(\lambda)\rightarrow Y^2(\lambda)\rightarrow\cdots) &:=
\V P(\lambda),\\
D(\mu) = (D^1(\mu)\rightarrow D^2(\mu)\rightarrow\cdots) &:=
\V L(\mu),
\end{align}
for $\lambda \in \Lambda$ and $\mu \in \Lambda^\circ$.
By Theorem~\ref{prinj} for the category $\mathcal C_r$
(and the usual theory of quotient functors)
the objects
$\{D^r(\mu)\:|\:\mu \in \Lambda^\circ_r\}$ give a complete set of pairwise non-isomorphic irreducible $H^r$-modules.
Moreover 
$Y^r(\mu)$ is the projective cover of $D^r(\mu)$
in $\rmod{H^r}$ for each $\mu \in \Lambda^\circ_r$.

\begin{Theorem}\label{abelian1}
The essential image of the functor $\V$ is the subcategory
$\rmod{H}$ of $\rmod{H}^\infty$. 
\end{Theorem}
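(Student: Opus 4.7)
The statement has two parts to verify: (a) $\V M$ is stable for every $M \in \mathcal C$; and (b) every stable $N \in \rmod{H}$ arises as $\V M$ for some $M \in \mathcal C$.

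For (a), since $M$ has finite length, one first chooses $r\geq 1$ large enough that every composition factor of $M$ lies in $\Lambda_r$, so in particular $M \in \mathcal C_{\leq r}$. The goal is to show $\V M$ is $r$-stable, meaning that for every $s \geq r$ the canonical map
$$
\V^r M \otimes_{H^r} e^{r,s} H^s \longrightarrow \V^s M,
$$
coming from the counit of the adjunction in Lemma~\ref{leftadjoint}, is an isomorphism. First I would verify this directly for $M = T^r$, where the identification $T^r \cong e^{r,s} T^s$ gives $\V^s T^r \cong e^{r,s} H^s$, and both sides identify naturally with $e^{r,s} H^s$. To pass from $T^r$ to an arbitrary $M \in \mathcal C_{\leq r}$, the plan is to apply the double centralizer property in the finite subquotient $\mathcal C_r$ (Theorem~\ref{dcp}): every indecomposable projective $P^r(\lambda)$ in $\mathcal C_r$ embeds into a direct sum of summands of $T^r$, producing a coresolution $0 \to P^r(\lambda) \to T_0 \to T_1$. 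Combining this with Lemma~\ref{shriek} (which identifies $\pi_r^! P^r(\lambda)$ with the ambient projective cover), together with a projective presentation of $M$ in $\mathcal C_{\leq r}$, one obtains a two-step presentation of $M$ built out of pullbacks of summands of $T^r$. Applying the exact functors $\V^r$ and $\V^s$ and the right exact functor $(-)\otimes_{H^r} e^{r,s}H^s$, a five-lemma argument reduces the stability isomorphism for arbitrary $M$ to the already-verified case $M = T^r$.

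For (b), suppose $N \in \rmod H$ is $r$-stable, so that $N \cong \pr_r^!(N^r)$. By Theorem~\ref{dcp} applied to $\mathcal C_r$, every indecomposable projective of $\rmod{H^r}$ is of the form $Y^r(\lambda) = \V^r P^r(\lambda)$ with $\lambda \in \Lambda^\circ_r$, and $\V^r$ is fully faithful on the projective subcategory; hence a projective presentation $P_1 \to P_0 \to N^r \to 0$ in $\rmod{H^r}$ lifts to a presentation $\bar P_1 \to \bar P_0 \to \bar M \to 0$ in $\mathcal C_r$ with $\V^r \bar M \cong N^r$. Next, one sets $M := \operatorname{coker}(\pi_r^! \bar P_1 \to \pi_r^! \bar P_0) \in \mathcal C_{\leq r}$, where the lifted morphism $\pi_r^! \bar P_1 \to \pi_r^! \bar P_0$ comes from the adjunction isomorphism $\Hom_{\mathcal C_{\leq r}}(\pi_r^!\bar P_1, \pi_r^!\bar P_0)\cong\Hom_{\mathcal C_r}(\bar P_1,\bar P_0)$ combined with Lemma~\ref{shriek}. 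Then by adjunction,
$$
\V^r M = \Hom_{\mathcal C}(T^r, M) \cong \Hom_{\mathcal C_r}(T^r,\bar M) = \V^r\bar M \cong N^r.
$$
Invoking (a), $\V M$ is $r$-stable with $\V^r M \cong N^r$, hence $\V M \cong \pr_r^!(N^r) \cong N$, as desired.

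The principal obstacle is that the Serre subcategory $\mathcal C_{\leq r}$ is not of finite length in general, since $\Lambda_{\leq r}$ is typically infinite even when $\Lambda_r$ is finite. Consequently $\pi_r^!$ is not a globally defined left adjoint to $\pi_r$; one must construct it explicitly on projectives via Lemma~\ref{shriek} and extend by taking cokernels of lifted presentations. Arranging for these constructions to interact correctly with the double centralizer coresolutions used in (a), and with the stability identification used in (b), constitutes the main technical work.
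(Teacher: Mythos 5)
Your outline of part (b) is essentially the paper's argument in disguise: the paper constructs a left adjoint $\V^!$ by $\V^! M := M^r \otimes_{H^r} T^r$ for $r$ minimal with $M$ being $r$-stable, which is exactly what your lifted-presentation construction would produce. The difference is that the paper packages this as an actual functor with an explicit unit $\operatorname{id}\cong\V\circ\V^!$, which it then re-uses in Theorem~\ref{abelian2}, so you lose a little by only extracting essential surjectivity.

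The real gap is in part (a). The five-lemma argument you sketch does not go through. To make it work you would need a presentation $Q_1\to Q_0\to M\to 0$ in which $Q_0,Q_1$ are direct sums of summands of $T^r$, since those are the only objects for which you have verified the stability map is an isomorphism; but the projective cover of $M$ is a sum of $P(\mu)$'s with $\mu\in\Lambda_r$, and $P(\mu)$ is a summand of $T^r$ only when $\mu\in\Lambda^\circ_r$. For a general $M$ (indeed for a general projective in $\mathcal C_{\leq r}$) no such presentation exists. Replacing the presentation by the two-term \emph{coresolution} $0\to P\to T_0\to T_1$ from the tensor-space coresolution does not help either, because $-\otimes_{H^r}e^{r,s}H^s$ is only right exact: after tensoring, the sequence $\V^rP\otimes e^{r,s}H^s\to\V^rT_0\otimes e^{r,s}H^s\to\V^rT_1\otimes e^{r,s}H^s$ need not be left exact, so you cannot conclude injectivity of the stability map on $P$, and without flatness of $e^{r,s}H^s$ as a left $H^r$-module the comparison diagram does not close. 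The paper avoids this altogether by a direct attack: for surjectivity it shows every morphism $T^s\to M$ factors through a summand of $T^r$ (using that $\Hom(P_i,M)=0$ unless $P_i\cong P(\mu)$ with $\mu\in\Lambda_r^\circ$), and for injectivity it shows the short exact sequence $0\to K\to\V^rM\otimes e^{r,s}H^s\to\V^sM\to 0$ splits by an $\Ext^1$-vanishing argument between the irreducibles $D^s(\nu)$, $\nu\in\Lambda_s^\circ\setminus\Lambda_r^\circ$, appearing in $K$ and the $D^s(\mu)$ appearing in $\V^sM$. That $\Ext^1$-vanishing is exactly why the paper takes the \emph{stronger} choice of $r$: it is not enough that all composition factors of $M$ lie in $\Lambda_r$; one needs all composition factors of the projective covers $P(\mu)$ of those composition factors to lie in $\Lambda_r$ as well, so that $D^s(\nu)$ cannot occur in $Y^s(\mu)$. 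Your choice of $r$ is therefore too weak even to run the paper's own proof of injectivity, quite apart from the five-lemma issue.
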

\begin{proof}
  
We first show that $\V M$ is stable
for any $M \in \mathcal C$.
Given $M$ pick $r \geq 1$ so that
\begin{equation}\label{possible}
\bigcup_{\substack{\mu \in \Lambda \\ [M:L(\mu)] \neq 0}}
\left\{\nu \in \Lambda\:\big|\:[P(\mu):L(\nu)] \neq 0\right\}
\subseteq \Lambda_r.
\end{equation}
This is possible as the set on the left hand side here is finite.
We claim for this $r$ that $\V M$ is $r$-stable.
This amounts to showing for each $s > r$ that the $H^s$-module homomorphism
$$
\Hom_{\mathcal C}(T^r, M) \otimes_{H^r} e^{r,s} H^s
\rightarrow \Hom_{\mathcal C}(T^s, M),
\qquad
f \otimes h \mapsto f \circ (\theta^{r,s})^{-1} \circ h
$$
is an isomorphism.
For surjectivity,
we split $T^s$ and $T^r$ 
into indecomposables
$T^s = 
P_1 \oplus\cdots\oplus P_n$
and $T^r = Q_1 \oplus \cdots \oplus Q_m$, so that
$$
\Hom_{\mathcal C}(T^s,M) = \bigoplus_{i=1}^n \Hom_{\mathcal C}(P_i, M),
\quad
\Hom_{\mathcal C}(T^r,M) = \bigoplus_{j=1}^m \Hom_{\mathcal C}(Q_j, M).
$$
By the assumption on $r$ all composition factors of $M$ are
of the form $L(\mu)$ for $\mu \in \Lambda_r$.
Hence $\Hom_{\mathcal C}(P_i, M) = 0$ unless $P_i \cong P(\mu)$
for some $\mu \in \Lambda^\circ_r$. In that case there is a summand
$Q_j$ with $Q_j \cong P_i$.
Thus
any $f\in \Hom_{\mathcal C}(P_i, M) \subseteq \Hom_{\mathcal C}(T^s,M)$
factors as $g \circ k$
for some $g \in \Hom_{\mathcal C}(Q_j, M) \subseteq \Hom_{\mathcal C}(T^r,M)$
and $k\in \Hom_{\mathcal C}(P_i, Q_j) \subseteq \Hom_{\mathcal C}(T^s, T^r)$.
Since $\Hom_{\mathcal C}(T^s, T^r) \cong e^{r,s} H^s$, 
we deduce that $f= g \circ (\theta^{r,s})^{-1} \circ h$
for some $h \in e^{r,s} H^s$, and surjectivity follows.

For injectivity, let $K$ be the kernel of the map, so that there is a short exact sequence
$$
0 \rightarrow K \rightarrow \Hom_{\mathcal C}(T^r, M) \otimes_{H^r} e^{r,s} H^s
\rightarrow \Hom_{\mathcal C}(T^s, M) \rightarrow 0.
$$
On truncating with the idempotent $e^{r,s}$, the second map becomes an isomorphism, 
hence $K e^{r,s} = 0$.
Thus all composition factors of $K$ are of the form
$\{D^s(\nu)\:|\:\nu \in \Lambda^\circ_s \setminus \Lambda^\circ_r\}$.
On the other hand all composition factors of $\Hom_{\mathcal C}(T^s, M)$ are of the form $D^s(\mu)$
for $\mu \in \Lambda_s^\circ$ such that
such that $[M:L(\mu)] \neq 0$.
Now the choice of $r$ ensures for such $\nu$ and $\mu$ that
$L(\nu)$ is not a composition factor of $P(\mu)$.
Hence $D^s(\nu)$ is not a composition factor of $Y^s(\mu)$, which is the projective cover of $D^s(\mu)$.
It follows that $\Ext^1_{H^s}(D^s(\mu), D^s(\nu)) = 0$, and
we have proved that the above short exact sequence splits.
But then we get that
\begin{align*}
\Hom_{H^s}(K, D^s(\nu)) &\twoheadleftarrow
\Hom_{H^s}(\Hom_{\mathcal C}(T^r, M) \otimes_{H^r} e^{r,s} H^s, D^s(\nu))\\
&\cong 
\Hom_{H^r}(\Hom_{\mathcal C}(T^r, M), D^s(\nu) e^{r,s}) = 0
\end{align*}
for any $\nu \in \Lambda^\circ_s \setminus \Lambda^\circ_r$.
This implies that $K = 0$.

So $\V$ restricts to a well-defined functor
$\V:\mathcal C \rightarrow \rmod{H}$.
We next construct a left adjoint
$\V^!:\rmod{H} \rightarrow \mathcal C$
such that $\V \circ \V^! \cong \operatorname{id}$,
implying in particular that this new $\V$ is essentially surjective.
On an object $M \in \rmod{H}$ we let $r \geq 1$ be minimal such that $M$ is $r$-stable then set
$\V^! M := M^r \otimes_{H^r} T^r \in \mathcal C_{\leq r},$
where $M^r \otimes_{H^r} -$ here is the tensor product
functor from the category of $H^r$-module objects in $\mathcal C$ to $\mathcal C$.
Note for $s\geq r$ that there is an isomorphism
$$
f^{r,s}: M^r \otimes_{H^r} T^r \stackrel{\sim}{\rightarrow} M^s \otimes_{H^s} T^s
$$
defined by the composition of the following canonical isomorphisms
$$
M^r \otimes_{H^r} T^r \stackrel{\operatorname{id} \otimes \theta^{r,s}}{\longrightarrow}
M^r \otimes_{H^r} e^{r,s} T^s
\cong
M^r \otimes_{H^r} e^{r,s} H^s \otimes_{H^s} T^s
\stackrel{\eta^{r,s} \otimes \operatorname{id}}{\rightarrow} M^s \otimes_{H^s} T^s,
$$
for $\eta^{r,s}$ coming from (\ref{topmaps}).
Then on a morphism $f = (f^r)_{r \geq 1}:M \rightarrow N$
we define $\V^! f$ by picking $t$ such that both $M$ and $N$ are $t$-stable then setting $\V^! f := (f^{s,t})^{-1} \circ 
f^t \circ f^{r,t}: M^r \otimes_{H^r} T^r \rightarrow M^s \otimes_{H^s} T^s$.
One needs to observe that this is well defined independent of the choice of $t$ as $\theta^{s,t} \circ \theta^{r,s} = \theta^{r,t}$ and $\eta^{s,t} \circ \eta^{r,s} = \eta^{r,t}$.
Then it follows easily that this is a functor.
To see that $\operatorname{id} \cong \V \circ \V^!$,
take
the natural isomorphism defined on $M$
by the canonical isomorphisms
\begin{multline*}
M \stackrel{\sim}{\rightarrow} \operatorname{pr}_r^!(\operatorname{pr}_r M)
=
\operatorname{pr}_r^!(M^r)\\
\cong
\pr_r^!(M^r \otimes_{H^r} \Hom_{\mathcal C}(T^r, T^r))
\stackrel{\sim}{\rightarrow} 
\pr_r^!(\Hom_{\mathcal C}(T^r, M^r \otimes_{H^r} T^r))\\
=
\pr_r^! (\V^r (\V^! M))=
\pr_r^! (\pr_r(\V (\V^! M)))
\stackrel{\sim}{\rightarrow} \V(\V^! M)
\end{multline*}
where $r$ is minimal so that $M$ is $r$-stable. Here the morphisms on the first and last lines come from the counit of adjunction 
$\operatorname{pr}_r^! \circ \operatorname{pr}_r \cong \operatorname{id}$
and the middle one is the obvious isomorphism.
In particular this isomorphism gives the unit 
$\epsilon:\operatorname{id}\rightarrow \V \circ \V^!$
of the claimed adjunction.
We proceed to write down the counit
$\eta:\V^! \circ \V \rightarrow \operatorname{id}$.
To define this on $M \in \mathcal C$, let $r$ be minimal such that $\V M$ is $r$-stable.
Then we take $\eta_M$ to be the obvious evaluation
\begin{equation}\label{onto}
\V^!(\V M)
= \Hom_{H^r}(T^r, M) \otimes_{H^r} T^r \stackrel{\operatorname{ev}_r}{\rightarrow}
M.
\end{equation}
This can also be obtained 
as the composition
$$
\V^!(\V M)
= \Hom_{H^r}(T^r, M) \otimes_{H^r} T^r \stackrel{f^{r,s}}{\rightarrow}
\Hom_{H^s}(T^s, M) \otimes_{H^s} T^s 
\stackrel{\operatorname{ev}_s}{\rightarrow}
M
$$
for any $s \geq r$; in particular this makes it clear that it is surjective.
We leave the remaining checks to the reader.
\end{proof}

\begin{Remark}\rm\label{howbig}
For $M \in \mathcal C$, one can consider the integer
$$
r_M := \min\{r \geq 1\:|\:\text{$\V M$ is $r$-stable}\}.
$$
Of course this depends implicitly on the choices of the intervals
$I_1 \subset I_2 \subset\cdots$.
We observed during the proof of Theorem~\ref{abelian1}
that the map (\ref{onto}) with $r=r_M$ is surjective. 
Hence all constituents of the head of $M$ are of the form
$L(\lambda)$ for $\lambda \in \Lambda_r$
and $M \in \mathcal C_{\leq r}$;
these properties give a lower bound for $r_M$.
For a (surely much too big) 
upper bound one can take the smallest $r$ satisfying (\ref{possible}).
\end{Remark}

\begin{Theorem}\label{abelian2}
The category $\rmod{H}$ is Schurian
with a complete set
of pairwise non-isomorphic irreducibles
given by the objects $\{D(\mu)\:|\:\mu \in \Lambda^\circ\}$.
Moreover $Y(\mu)$ is the projective cover of $D(\mu)$ in $\rmod{H}$
for each $\mu \in \Lambda^\circ$.
Finally
\begin{equation}
\V :\mathcal C \rightarrow \rmod{H}
\end{equation}
satisfies the universal property of
the quotient of $\mathcal C$ by the Serre subcategory generated by
the irreducible objects $\{L(\lambda)\:|\:\lambda\in\Lambda\setminus\Lambda^\circ\}$.
\end{Theorem}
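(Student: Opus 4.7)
The plan is to identify $\V$ with an equivalent form of the quotient functor $\pi: \mathcal C \to \mathcal C/\mathcal S$, where $\mathcal S$ denotes the Serre subcategory of $\mathcal C$ generated by $\{L(\lambda) : \lambda \in \Lambda \setminus \Lambda^\circ\}$. First I would compute $\ker \V$ by identifying the indecomposable summands of $T^r$. By Lemma~\ref{we} together with the biadjointness of $F_{I_r}$, the object $T^r$ is prinjective in $\mathcal C$, so Theorem~\ref{prinj} combined with the crystal paths from $\kappa^r$ produced in the proof of that theorem identifies these summands up to multiplicity as exactly $\{P(\mu) : \mu \in \Lambda_r^\circ\}$. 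Consequently $\V^r L(\lambda) = \Hom_{\mathcal C}(T^r, L(\lambda)) \neq 0$ iff $\lambda \in \Lambda_r^\circ$, which gives $\V L(\lambda) = 0$ iff $\lambda \notin \Lambda^\circ$; by exactness of $\V$ this yields $\ker \V = \mathcal S$.

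The universal property of the Serre quotient $\pi$ then provides a unique factorization $\V = \bar \V \circ \pi$ with $\bar \V: \mathcal C/\mathcal S \to \rmod{H}$ exact. Theorem~\ref{abelian1} already supplies essential surjectivity for $\bar \V$, and to upgrade $\bar \V$ to an equivalence I would construct a quasi-inverse $\Phi := \pi \circ \V^!: \rmod{H} \to \mathcal C/\mathcal S$ using the left adjoint $\V^!$ from the proof of Theorem~\ref{abelian1}. One composition is immediate: $\bar \V \Phi = \bar \V \pi \V^! = \V \V^! \cong \operatorname{id}$. For the other, the triangle identity for $\V^! \dashv \V$ combined with the isomorphism $\V \V^! \cong \operatorname{id}$ forces $\V(\epsilon_M)$ to be an isomorphism, where $\epsilon: \V^! \V \to \operatorname{id}_{\mathcal C}$ is the counit, so both $\ker \epsilon_M$ and $\coker \epsilon_M$ lie in $\mathcal S$ by exactness of $\V$; applying $\pi$ to $\epsilon_M$ then gives the natural isomorphism $\Phi \bar \V \cong \operatorname{id}$. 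Composing the quasi-inverse of $\bar \V$ with the universal property of $\pi$ will then give the universal property claimed for $\V$.

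With the equivalence $\rmod{H} \simeq \mathcal C/\mathcal S$ in hand, finite length of objects and one-dimensionality of endomorphism rings of simples in $\rmod{H}$ descend from the corresponding properties in $\mathcal C$, and the pairwise non-isomorphic irreducibles are identified as $\{D(\mu) = \V L(\mu) : \mu \in \Lambda^\circ\}$. For the projective cover claim I would argue directly using Lemma~\ref{okay}: given a short exact sequence in $\rmod{H}$ and choosing $r$ large enough that $Y(\mu)$ and all three terms are $r$-stable, the exactness of $\pr_r$ on $\rmod{H}^\infty$ together with projectivity of $Y^r(\mu)$ in $\rmod{H^r}$ (from Theorem~\ref{dcp} applied inside $\mathcal C_r$) shows that $\Hom_H(Y(\mu), -)$ is exact, so $Y(\mu)$ is projective in $\rmod{H}$. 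The same identification computes $\Hom_H(Y(\mu), D(\nu)) = \delta_{\mu,\nu} \K$, so $Y(\mu)$ has simple head $D(\mu)$ and is its projective cover. The main obstacle is the counit analysis in the second paragraph: since $\epsilon_M$ need not be surjective when $M$ lies in $\mathcal S$, one cannot argue by surjectivity alone and must invoke the triangle identity to control both the kernel and cokernel of the counit simultaneously.
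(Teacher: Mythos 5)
Your proposal is essentially correct and runs parallel to the paper's own proof, reorganizing the same ingredients. The paper first verifies the universal property for $\V$ directly (using the adjunction $\V^! \dashv \V$ and the vanishing of $\V$ on the kernel of the counit), then matches $\rmod{H}$ with the abstract Serre quotient $\mathcal C^\circ$ by invoking both universal properties and uniqueness; you instead compute $\ker\V = \mathcal S$ up front, factor $\V = \bar\V \circ \pi$ via the universal property of $\pi$, and build an explicit quasi-inverse $\Phi = \pi \circ \V^!$. Both arguments hinge on the same facts: $\V\V^! \cong \operatorname{id}$ and that $\V$ kills both kernel and cokernel of the counit. Your direct argument for projectivity of $Y(\mu)$ via Lemma~\ref{okay} and exactness of $\pr_r$ is a nice self-contained alternative to the paper's appeal to the general theory of quotient categories.

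Your counit analysis is in fact \emph{more} careful than the paper's. In the proof of Theorem~\ref{abelian1} the paper asserts that the counit $\eta_M : \V^! \V M \to M$ is surjective, and in the proof of Theorem~\ref{abelian2} it writes a short exact sequence $0 \to K \to \V^!\V M \to M \to 0$; but if $M$ is a nonzero object with $\V M = 0$ (for instance $M = L(\lambda)$ with $\lambda \notin \Lambda^\circ$), then the minimal $r$ with $\V M$ $r$-stable is $r=1$ and $\V^!\V M = 0$, so $\eta_M$ is not onto. Your triangle-identity argument showing that both $\ker\eta_M$ and $\operatorname{coker}\eta_M$ lie in $\mathcal S$ and hence vanish after applying $\pi$ handles this correctly; the paper's proof still goes through because $\mathbb{F}$ kills the cokernel as well as $K$, but this step is not stated.

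One small imprecision: to get $\V^r L(\lambda) \neq 0 \Leftrightarrow \lambda \in \Lambda^\circ_r$ you should apply the finite-interval case of Theorem~\ref{prinj} inside the subquotient $\mathcal C_r$ (as the paper does in the paragraph preceding the theorem), rather than identifying the summands of $T^r$ by applying the infinite-interval Theorem~\ref{prinj} inside $\mathcal C$. The latter only tells you a summand $P(\mu)$ of $T^r$ has $\mu \in \Lambda^\circ \cap \Lambda_r$; since $\Lambda^\circ_r$ is by definition the component of $\kappa^r$ reachable using only colors from $I_r$, the inclusion $\Lambda^\circ \cap \Lambda_r \supseteq \Lambda^\circ_r$ could a priori be strict, and working inside $\mathcal C_r$ sidesteps this.
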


\begin{proof}
First we check that the functor $\V:\mathcal C \rightarrow \rmod{H}$
has the universal property of quotients. We need to show for any Abelian category
$\mathcal C'$ and any exact functor $\mathbb{F}:\mathcal C \rightarrow \mathcal C'$
such that $\mathbb{F} L(\lambda) = 0$ for all $\lambda \in \Lambda \setminus \Lambda^\circ$
that there exists a unique (up to isomorphism) functor $\bar{\mathbb{F}}:\rmod{H} \rightarrow \mathcal C'$
such that $\bar{\mathbb{F}} \circ \V \cong \mathbb{F}$.
Composing on the right with $\V^!$
we see at once that the only choice (up to isomorphism) is to take
$\bar{\mathbb{F}} := \mathbb{F} \circ \V^!$.
The counit of adjunction $\eta$ gives
a natural transformation
$\mathbb{F} \eta: \mathbb{F} \circ \V^! \circ \V
= \bar{\mathbb{F}} \circ \V \Rightarrow \mathbb{F}$.
To see this is an isomorphism, take $M \in \mathcal C$ and let $r$ be minimal such that $\V M$ is $r$-stable. Then $(\mathbb{F} \eta)_M : \bar{\mathbb{F}}(\V M)
\rightarrow \mathbb{F}M$ is the morphism
obtained by applying $\mathbb{F}$ to the second arrow in the following short exact sequence:
$$
0 \rightarrow K \rightarrow
\Hom_{\mathcal C}(T^r, M) \otimes_{H^r} T^r \stackrel{\eta_M}{\rightarrow} M
\rightarrow 0.
$$
To see that this map is an isomorphism it suffices by exactness of $\mathbb{F}$ to show that
$\mathbb{F}K = 0$. This follows because $\V K = 0$.

Now we let $\mathcal C^\circ$ be the quotient of $\mathcal C$
by the Serre subcategory generated by the objects $\{L(\lambda)\:|\:\lambda \in \Lambda \setminus \Lambda^\circ\}$ and $\pi:\mathcal C \rightarrow \mathcal C^\circ$
be the quotient functor.
By the general theory of quotients
this is a Schurian category with irreducibles $\{\pi L(\mu)\:|\:\mu \in \Lambda^\circ\}$; the projective cover of $\pi L(\lambda)$ is  $\pi P(\lambda)$.
The universal property established in the previous paragraph gives us
a
functor $\bar \pi:\rmod{H} \rightarrow \mathcal C^\circ$ such that
$\pi \cong \bar \pi \circ \V$.
On the other hand by the universal property of $\mathcal C^\circ$
there is a functor
$\bar{\V}:\mathcal C^\circ \rightarrow \rmod{H}^\infty$
such that $\V \cong \bar{\V} \circ \pi$; we are being careful here since we do not yet know that $\rmod{H}$ is itself Abelian.
The essential image of $\bar{\V}$ 
is $\rmod{H}$, i.e. it 
is actually a functor $\mathcal C^\circ \rightarrow \rmod{H}$.
Then the usual argument with uniqueness shows that $\bar{\V}$ and $\bar\pi$ are quasi-inverse equivalences. The theorem now follows directly.
\end{proof}

At last we can prove the appropriate analog of the double centralizer property
from Theorem~\ref{dcp}.

\begin{Theorem}\label{ffp}
The functor $\V:\mathcal C \rightarrow \rmod{H}$
is fully faithful on projectives. 
Moreover for each $\lambda \in \Lambda$ the object
$Y(\lambda) = 
\V P(\lambda) \in \rmod{H}$
is independent (up to isomorphism)
of the particular choice of $\mathcal C$.
\end{Theorem}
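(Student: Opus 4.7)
The plan is to reduce the proof to the finite-interval case of Theorem~\ref{dcp} applied to the subquotient $\mathcal C_r := \mathcal C_{\leq r}/\mathcal C_{<r}$ for a sufficiently large $r$. Fix $\lambda,\mu \in \Lambda$. Since $P(\lambda)$ and $P(\mu)$ have finite $\Delta$-flags in $\mathcal C$, one can choose $r \geq 1$ large enough that $P(\lambda),P(\mu) \in \mathcal C_{\leq r}$ and, by Remark~\ref{howbig}, such that both $Y(\lambda)$ and $Y(\mu)$ are $r$-stable in $\rmod H$. Lemma~\ref{okay} then supplies a natural isomorphism
\[
\pr_r:\Hom_H(Y(\lambda),Y(\mu)) \stackrel{\sim}{\longrightarrow} \Hom_{H^r}(Y^r(\lambda),Y^r(\mu)),
\]
so the full-faithfulness assertion reduces to showing that the natural map
\[
\V^r:\Hom_{\mathcal C}(P(\lambda),P(\mu)) \longrightarrow \Hom_{H^r}(Y^r(\lambda),Y^r(\mu))
\]
is an isomorphism.

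To establish this, I would pass from $\mathcal C$ down to $\mathcal C_r$ in two steps. First, since $P(\lambda),P(\mu) \in \mathcal C_{\leq r}$, formula (\ref{ext1}) identifies $\Hom_{\mathcal C}(P(\lambda),P(\mu))$ with $\Hom_{\mathcal C_{\leq r}}(P(\lambda),P(\mu))$. Second, the isomorphism $\pi_r^! P(\lambda) \cong P(\lambda)$ noted in $\S$\ref{rhw} combined with the adjunction $(\pi_r^!,\pi_r)$ yields $\Hom_{\mathcal C_{\leq r}}(P(\lambda),P(\mu)) \cong \Hom_{\mathcal C_r}(\pi_r P(\lambda),\pi_r P(\mu))$. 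On the target side, because $T^r$ has a $\Delta$-flag with sections indexed by $\Lambda_r$, the formulas (\ref{ext1})--(\ref{ext2}) together identify
\[
Y^r(\lambda) = \Hom_{\mathcal C}(T^r,P(\lambda)) \;\cong\; \Hom_{\mathcal C_r}(\pi_r T^r,\pi_r P(\lambda)),
\]
exhibiting $Y^r(\lambda)$ as precisely the Young module produced by Theorem~\ref{dcp} for the $\mathfrak{sl}_{I_r}$-tensor product categorification $\mathcal C_r$ (with tensor space $\pi_r T^r$ and endomorphism algebra $H^r$). Applying the finite-interval form of Theorem~\ref{dcp} to $\mathcal C_r$ then supplies exactly the desired isomorphism, and tracing through the naturality shows that the composite map is the one induced by $\V$.

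For the independence statement, the $r$-stability of $\V P(\lambda)$ yields a canonical isomorphism $Y(\lambda) \cong \pr_r^! Y^r(\lambda)$, and by its construction $\pr_r^!$ depends only on the tower $H^1 \hookrightarrow H^2 \hookrightarrow \cdots$, not on $\mathcal C$. The finite case of Theorem~\ref{dcp} applied to $\mathcal C_r$ shows that the $H^r$-module $Y^r(\lambda)$ is determined up to isomorphism by $\lambda$ alone, and combining these two facts gives the required independence of $Y(\lambda)$. The main technical obstacle is the careful bookkeeping around the displayed isomorphisms — verifying that $\pi_r^! \pi_r P(\lambda) \cong P(\lambda)$ holds in $\mathcal C_{\leq r}$ and assembling the various adjunctions so that the end-to-end map really is $\V^r$ — but once these compatibilities are in place the argument is a direct reduction to the finite case already treated in $\S$\ref{finite}.
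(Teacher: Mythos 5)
Your proposal is correct and follows essentially the same route as the paper's proof: reduce to the finite-interval double centralizer property of Theorem~\ref{dcp} applied to the subquotient $\mathcal C_r$ using $r$-stability, Lemma~\ref{okay}, and Remark~\ref{howbig}. You spell out more explicitly the bookkeeping around (\ref{ext1}) and the adjunction $(\pi_r^!,\pi_r)$ that the paper compresses into the assertion that $P$ and $Q$ become projective in $\mathcal C_r$, but the key lemmas and the logical structure are the same.
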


\begin{proof}
Take projectives $P, Q \in \mathcal C$
and choose $r$ so that both $\V P$ and $\V Q$
are $r$-stable.
By Remark~\ref{howbig}
this means that $P$ and $Q$ are projective objects also in $\mathcal C_r$.
Hence by the double centralizer property from Theorem~\ref{dcp} the composition $\V^r$ of the following two 
maps is an isomorphism:
$$
\Hom_{\mathcal C}(P, Q)
\stackrel{\V}{\rightarrow} \Hom_H(\V P, \V Q)
\stackrel{\pr_r}{\rightarrow}
\Hom_{H^r}(\V^r P, \V^r Q).
$$
Also the second map here is an isomorphism because of Lemma~\ref{okay}.
Hence the first map is an isomorphism, proving that $\V$ is fully faithful on projectives.

Now suppose we are given another tensor product categorification $\mathcal C'$.
Define $\V':\mathcal C' \rightarrow \rmod{H}$
exactly as above then set 
$$
Y'(\lambda) = ({Y'}^1(\lambda)\rightarrow {Y'}^2(\lambda) \rightarrow\cdots) := \V' P'(\lambda).
$$
Pick $r$ so that both $Y(\lambda)$ and $Y'(\lambda)$ are $r$-stable.
Then $P(\lambda)$ is projective in $\mathcal C_r$ and $P'(\lambda)$ is
projective in $\mathcal C_r'$.
By the final part of Theorem~\ref{dcp}
we get that $Y^r(\lambda) \cong {Y'}^r(\lambda)$
in $\rmod{H^r}$, hence
$Y(\lambda) \cong \pr_r^! Y^r(\lambda) \cong \pr_r^! {Y'}^r(\lambda)
\cong Y'(\lambda)$.
\end{proof}

\begin{Remark}\label{stronger2}
\rm
The slightly stronger result from Remark~\ref{stronger} also holds in the present situation; the proof is the same.
\end{Remark}

\subsection{Categorical action on stable modules}\label{secca}
Next we are going to introduce a categorical $\mathfrak{sl}_I$-action onto the category $\rmod{H}$ in
such a way that the quotient functor 
$\V:\mathcal C \rightarrow \rmod{H}$ is strongly equivariant.
For each $r \geq 1$ we have the usual induction and restriction functors $F^r, E^r:\rmod{H^r}
\rightarrow \rmod{H^r}$
from (\ref{forthis}).
In terms of cyclotomic quiver Hecke algebras, these are the direct sums 
over all $i \in I_r$ of the
$i$-induction and $i$-restriction functors
\begin{equation}\label{Fir}
F^r_i:\rmod{H^r} \rightarrow \rmod{H^r},
\qquad
E^r_i:\rmod{H^r} \rightarrow \rmod{H^r}
\end{equation}
defined as follows.
Recalling the idempotent (\ref{inductionidempts}),
$F^r_i$ is given by 
tensoring over $H^r_d$ with the bimodule $1_{d;i}^r H^r_{d+1}$,
viewing $1_{d;i}^r H^r_{d+1}$ as an
$(H^r_d, H^r_{d+1})$-bimodule via the homomorphism (\ref{theinclusion}).
Its canonical right adjoint $E^r_i$ is given on a right $H^r_{d+1}$-module
simply
by right multiplication by this idempotent, viewing the result as a right
$H^r_d$-module via $\iota^r_{d;i}$ again.
The endomorphism 
$\xi \in \End(F^r_i)$ is induced by the 
endomorphism of the bimodule
$1_{d;i}^r H^r_{d+1}$ defined by left multiplication by $\xi_{d+1}$.
To define 
$\tau \in \Hom(F_j^r \circ F_i^r, F_i^r \circ F_j^r)$, note obviously that
$$
1_{d;i}^r H^r_{d+1} \otimes_{H^r_{d+1}} 1_{d+1;j}^r H^r_{d+2}
\cong 1_{d;ij} H^r_{d+2}
\quad
\text{where}
\quad
1_{d;ij} := 
\sum_{\substack{\bi \in I_r^{d+2}\\i_{d+1}=i\\i_{d+2}=j}} 1_\bi \in H^r_{d+2}.
$$
Then $\tau$ comes from the bimodule homomorphism
$1_{d;ij}^r H^r_{d+2} \rightarrow 1_{d;ji}^r H^r_{d+2}$
defined by left multiplication by
$\tau_{d+1}$.
The 
canonical adjunction making each $(F_i^r, E_i^r)$ into an adjoint pair comes 
simply from adjunction of tensor and hom.

We are ready to define $F_i:\rmod{H}\rightarrow \rmod{H}$.
Take $M = (M^1 \stackrel{\iota^1}{\rightarrow} M^2 \stackrel{\iota^2}{\rightarrow}\cdots) \in \rmod{H}$.
Suppose to start with that $r$ is chosen so that $i \in I_r$.
By Lemma~\ref{tl}(iv),
the restriction of 
$\phi_{d+1}^r$
 gives 
a right $H_{d+1}^r$-module homomorphism
$$
\psi_{d;i}^r:1_{d;i}^r H_{d+1}^r
\hookrightarrow 1_{d_r+d;i}^{r+1} H_{d_r+d+1}^{r+1} e^r_{d+1}.
$$
Let $\psi_i^r 
:= \bigoplus_{d \geq 0} \psi^r_{d;i}$ so that
the map
$\iota^r \otimes \psi^r: F_i^r M^r \rightarrow (F_i^{r+1} M^{r+1})e^r$
is an $H^r$-module homomorphism.
Now we assume that $r$ is minimal such that $M$ is $r$-stable and $i \in I_r$.
Then define
\begin{equation}\label{stfdef}
F_iM := \head_{r} (F_i^r M^r \stackrel{\iota^r \otimes \psi^{r}}{\longrightarrow}
F_i^{r+1} M^{r+1} \stackrel{\iota^{r+1}\otimes \psi^{r+1}}{\longrightarrow} \cdots).
\end{equation}
For this to even make sense we need to justify that the maps
$\iota^s \otimes \psi^s:F_i^s M^s \rightarrow (F_i^{s+1} M^{s+1}) e^s$
are isomorphisms for all $s \geq r$, so that the object in parentheses really is an object of $\rmod{H^{\geq r}}$.
This follows at once from the next lemma, which shows moreover
that $F_i M$ is $r$-stable.

\begin{Lemma}\label{indcheck}
If $M\in\rmod{H}$ is $r$-stable and $i \in I_r$ then
the maps
$$
\eta^{s}:F_i^s M^s \otimes_{H^s} e^s H^{s+1}
\rightarrow F_i^{s+1} M^{s+1}, 
\qquad
v \otimes h' \otimes h \mapsto \iota^s(v)\otimes \psi^s(h') h
$$
are isomorphisms for all $s \geq r$.
Hence $F_i$ sends $r$-stable objects to $r$-stable objects.
\end{Lemma}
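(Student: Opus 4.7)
Proof proposal for Lemma~\ref{indcheck}. The plan is to propagate $r$-stability to the next level $s$ and then match both sides of $\eta^s$ via an explicit bimodule identity in $H^{s+1}$ arising from the compatibilities in Lemma~\ref{tl}. The main obstacle is purely a bookkeeping one: one has to check that the two natural ways of assembling $M^s_d$ with a left ideal in $H^{s+1}$ carry the same left $H^s_d$-module structure, and that this matches the map $\eta^s$ on the nose.

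First I would check that $r$-stability propagates, i.e.\ that an $r$-stable module is $s$-stable for every $s \geq r$. This follows by iterating the defining stability isomorphism $M^r \otimes_{H^r} e^{r,s} H^s \stackrel{\sim}{\to} M^s$ together with the factorization $e^{r,s} H^s \otimes_{H^s} e^s H^{s+1} \cong e^{r,s+1} H^{s+1}$, which produces the required isomorphism $M^s \otimes_{H^s} e^s H^{s+1} \cong M^{s+1}$. So it suffices to prove that $\eta^s$ is an isomorphism under the weaker assumption that $M$ is $s$-stable.

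The heart of the argument is the natural isomorphism of $(H^s_d, H^{s+1}_{d_s+d+1})$-bimodules
$$
1^s_{d;i} H^s_{d+1} \otimes_{H^s_{d+1}} e^s_{d+1} H^{s+1}_{d_s+d+1} \ \cong\ e^s_d H^{s+1}_{d_s+d} \otimes_{H^{s+1}_{d_s+d}} 1^{s+1}_{d_s+d;i} H^{s+1}_{d_s+d+1}.
$$
Using the standard identification $fH \otimes_H L \cong fL$ for an idempotent $f$ and a left $H$-module $L$, the left side collapses to $\phi^s_{d+1}(1^s_{d;i}) \cdot e^s_{d+1} H^{s+1}_{d_s+d+1}$, which by Lemma~\ref{tl}(iv) is $1^{s+1}_{d_s+d;i} e^s_{d+1} H^{s+1}_{d_s+d+1}$. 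Similarly the right side collapses to $\iota^{s+1}_{d_s+d;i}(e^s_d) \cdot 1^{s+1}_{d_s+d;i} H^{s+1}_{d_s+d+1}$, which by Lemma~\ref{tl}(iii) and the commutativity of $e^s_{d+1}$ and $1^{s+1}_{d_s+d;i}$ in $H^{s+1}_{d_s+d+1}$ also equals $1^{s+1}_{d_s+d;i} e^s_{d+1} H^{s+1}_{d_s+d+1}$. This commutativity is immediate from the explicit description of $e^s_{d+1}$ in Lemma~\ref{tl} as an idempotent of the form $b \otimes \mathrm{id}$ supported on the first $d_s$ strands, whereas $1^{s+1}_{d_s+d;i}$ depends only on the colour of the last strand. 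That the identification respects the left $H^s_d$-action reduces to the equality $\phi^s_{d+1} \circ \iota^s_{d;i} = \iota^{s+1}_{d_s+d;i} \circ \phi^s_d$ of algebra homomorphisms $H^s_d \to H^{s+1}_{d_s+d+1}$, which again holds by direct inspection of the formulas in Lemma~\ref{tl}.

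Tensoring this bimodule identity with $M^s_d$ on the left, using associativity of tensor product and the $s$-stability isomorphism $M^{s+1}_{d_s+d} \cong M^s_d \otimes_{H^s_d} e^s_d H^{s+1}_{d_s+d}$, yields the required $H^{s+1}_{d_s+d+1}$-module isomorphism $(F_i^s M^s \otimes_{H^s} e^s H^{s+1})_{d_s+d+1} \cong F_i^{s+1}(M^{s+1})_{d_s+d+1}$. Unwinding the chain of $fH \otimes_H L \cong fL$ identifications shows that the composite sends $(m \otimes h') \otimes h$ to $\iota^s(m) \otimes \psi^s(h') h$, since $\psi^s$ is by definition $\phi^s$ restricted to $1^s_i H^s$; hence this composite is exactly $\eta^s$. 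Finally, since each $\iota^s \otimes \psi^s$ is an isomorphism, the diagram defining $F_i M = \head_r(F_i^r M^r \to F_i^{r+1} M^{r+1} \to \cdots)$ lies in the essential image of $\pr_r^!$ applied to $F_i^r M^r$, proving that $F_i M$ is $r$-stable.
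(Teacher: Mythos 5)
Your proof is correct and takes essentially the same route as the paper's: both contract the left tensor $1^s_{d;i}H^s_{d+1}\otimes_{H^s_{d+1}}e^s_{d+1}H^{s+1}$ via Lemma~\ref{tl}(iv), contract the right tensor $e^s_d H^{s+1}\otimes_{H^{s+1}}1^{s+1}_{d_s+d;i}H^{s+1}$ via Lemma~\ref{tl}(iii), and observe the two collapsed expressions coincide, with $r$-stability supplying the remaining isomorphism. The only organizational difference is that you isolate the identification as a clean $(H^s_d, H^{s+1}_{d_s+d+1})$-bimodule identity before tensoring with $M^s_d$, while the paper manipulates the full three-fold tensor products directly; this is cosmetic rather than a different argument.
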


\begin{proof}
Suppose that $M^s \in \rmod{H^s_d}$.
To prove the lemma we need to show that the map
\begin{align*}
M^s \otimes_{H_d^s} 1_{d;i}^s H_{d+1}^s \otimes_{H^s_{d+1}} e^s_{d+1} H^{s+1}_{d_r+d+1}
&\rightarrow
M^{s+1} \otimes_{H_{d_r+d}^{s+1}} 1_{d_r+d;i}^{s+1} H_{d_r+d+1}^{s+1}\\
v \otimes h' \otimes h &\mapsto \iota^s(v) \otimes \psi^{s}(h') h
\end{align*}
is an isomorphism. 
Taking $h' = 1^s_{d;i}$ and contracting the second tensor product
using Lemma~\ref{tl}(iv),
this is equivalent to showing that
the map
\begin{align*}
\iota^s \otimes \operatorname{id}: M^s \otimes_{H_d^s} 1^{s+1}_{d_r+d;i} e^s_{d+1}H^{s+1}_{d_r+d+1}
&\rightarrow
M^{s+1} \otimes_{H_{d_r+d}^{s+1}} 1_{d_r+d;i}^{s+1} H_{d_r+d+1}^{s+1}
\end{align*}
is an isomorphism.
As $M$ is $r$-stable the following map is an isomorphism:
\begin{align*}
M^s \otimes_{H^s_d} e^s_d H^{s+1}_{d_r+d} \otimes_{H_{d_r+d}^{s+1}} 1_{d_r+d;i}^{s+1} H_{d_r+d+1}^{s+1}
&\rightarrow
 M^{s+1} \otimes_{H_{d_r+d}^{s+1}} 1_{d_r+d;i}^{s+1} H_{d_r+d+1}^{s+1}\\
 v \otimes h' \otimes h & \mapsto \iota^s(v) h' \otimes h.
\end{align*}
Taking $h' = e^s_d$ and contracting the second tensor product
using Lemma~\ref{tl}(iii), gives exactly the desired isomorphism.
\end{proof}

So now we have defined the functor $F_i$ on objects.
On a morphism 
$f:M \rightarrow N$
we just pick $r$ so that $i \in I_r$ and $M$ is $r$-stable.
Then $F_i M$ is $r$-stable too, so by Lemma~\ref{okay} 
there is a unique morphism $F_i f: F_i M \rightarrow F_i N$
such that $(F_i f)^r =  F_i^r f^r$.
This is independent of the choice of $r$, which is all that is needed
to check that $F_i:\rmod{H} \rightarrow \rmod{H}$ is a well-defined functor.

Next we define natural transformations 
\[\xi \in \End(F_i),\qquad\quad\tau \in \Hom(F_j \circ F_i,
F_i \circ F_j).\]
Take some $M \in \rmod{H}$ and pick $r$ so that $i \in I_r$ and $M$ is $r$-stable.
Then by Lemma~\ref{okay} again 
there is a unique morphism $\xi_M:F_i M \rightarrow F_i M$
such that $(\xi_M)^r=\xi_{M^r}$;
similarly, there is a unique $\tau_M:F_j F_i M \rightarrow F_i F_j M$
such that $(\tau_M)^r = \tau_{M^r}$.
The naturality of $\xi$ and $\tau$ follows because these definitions of $\xi_M$ and $\tau_M$
are independent of the particular choice of $r$, as may be checked using
Lemma~\ref{tl}.

\begin{Lemma}\label{dan}
There are isomorphisms $\zeta_i:F_i \circ \mathbb{U} 
\stackrel{\sim}{\rightarrow} \mathbb{U} \circ F_i$ for each $i \in I$
such that $\zeta_i \circ \xi \mathbb{U} = \mathbb{U} \xi \circ
\zeta_i$ in $\Hom(F_i \circ \mathbb{U}, \mathbb{U} \circ F_i)$
and $\zeta_i F_j \circ F_i \zeta_j \circ \tau \mathbb{U} = \mathbb{U}
\tau \circ \zeta_j F_i \circ F_j \zeta_i$ in $\Hom(F_j \circ F_i \circ
\mathbb{U}, \mathbb{U} \circ F_i \circ F_j)$.
\end{Lemma}

\begin{proof}
To define $\zeta_i$ on an object $M \in \mathcal C$,
take any $r$ such that $i \in I_r$ and both
$\V M$ and $\V (F_i M)$ are $r$-stable.
By Lemma~\ref{seq},
there exists a canonical isomorphism
$\zeta_i^r:\V^r \circ F_i^r \stackrel{\sim}{\rightarrow}
F_i^r \circ \V^r$.
Then we define $(\zeta_i)_M:\V (F_i M) \rightarrow
F_i (\V M)$ to be the unique morphism
with $r$th component equal to $(\zeta_i^r)_M$.
For the naturality one just needs to observe that this is independent of the choice of $r$.
The $\xi$- and $\tau$-equivariance
properties follow because the $\zeta_i^r$ 
satisfy analogous equivariance properties for sufficiently large $r$.
\end{proof}

We turn our attention to
$E_i:\rmod{H} \rightarrow \rmod{H}$.
For this we start by defining a functor
$E_i:\rmod{H}^\infty \rightarrow \rmod{H}^\infty$.
Let $r$ be minimal such that $i \in I_r$.
Then for $M = (M^1 \rightarrow M^2 \rightarrow \cdots) \in \rmod{H}^\infty$
we let \begin{equation}
E_i M := (0 \rightarrow \cdots \rightarrow 0\rightarrow E_i^r M^r \rightarrow E_i^{r+1} M^{r+1} \rightarrow\cdots)
\end{equation}
where the maps are simply the restrictions.
Lemma~\ref{easych} below verifies that this is indeed an object of $\rmod{H}^\infty$.
On a morphism $f:M \rightarrow N$ we define $(E_i f)^s := E_i^s f^s$
if $i \in I_s$ and $(E_i f)^s := 0$ otherwise.

\begin{Lemma}\label{easych}
Suppose we are given $M \in \rmod{H}^\infty$, $i \in I$ and $r \geq 1$.
If $i \in I_r$ then
the restriction of $\iota^r:M^r \stackrel{\sim}{\rightarrow} M^{r+1} e^r$
is an isomorphism $E_i^r M^r \stackrel{\sim}{\rightarrow} (E_i^{r+1} M^{r+1})e^r$.
If $i \in I_{r+1} \setminus I_r$ then $(E_i^{r+1} M^{r+1}) e^r = 0$.
\end{Lemma}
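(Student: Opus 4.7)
The plan is to translate both sides of the putative isomorphism into explicit right-multiplication by idempotents in $H^{r+1}$, then apply parts (i), (iii), and (iv) of Lemma~\ref{tl}. Set $u := 1^{r+1}_{d_r+d;i} e^r_{d+1}$. By Lemma~\ref{tl}(iii), $u = \iota^{r+1}_{d_r+d;i}(e^r_d)$, and by Lemma~\ref{tl}(iv), $u = \phi^r_{d+1}(1^r_{d;i})$. In particular $u$ is an idempotent lying in both corner algebras $1^{r+1}_{d_r+d;i} H^{r+1}_{d_r+d+1} 1^{r+1}_{d_r+d;i}$ and $e^r_{d+1} H^{r+1}_{d_r+d+1} e^r_{d+1}$, so
\[
u \cdot 1^{r+1}_{d_r+d;i} = 1^{r+1}_{d_r+d;i} \cdot u = u, \qquad u \cdot e^r_{d+1} = e^r_{d+1} \cdot u = u.
\]

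For the first assertion, I would work in degree $d_r + d + 1$ of $M^{r+1}$. Unraveling, the image of $E_i^r M^r$ under $\iota^r$ equals $\iota^r(M^r_{d+1}) \cdot \phi^r(1^r_{d;i}) = M^{r+1}_{d_r+d+1} \cdot e^r_{d+1} \cdot u = M^{r+1}_{d_r+d+1} \cdot u$, while $(E_i^{r+1} M^{r+1}) e^r$ at the same degree equals $M^{r+1}_{d_r+d+1} \cdot 1^{r+1}_{d_r+d;i} \cdot \iota^{r+1}_{d_r+d;i}(e^r_d) = M^{r+1}_{d_r+d+1} \cdot u$. So the underlying sets coincide. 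To check that the $H^r_d$-module structures agree, one verifies that the two algebra maps $\phi^r_{d+1} \circ \iota^r_{d;i}$ and $\iota^{r+1}_{d_r+d;i} \circ \phi^r_d$ from $H^r_d$ into $u H^{r+1}_{d_r+d+1} u$ send each generator to the same element; for example, both send $\xi_j$ to $\xi_{d_r+j}\,u$, as one sees from the formulas in (\ref{theinclusion}) and Lemma~\ref{tl}(i) together with the identities above.

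For the second assertion, the task reduces to showing $u = 0$ when $i \in I_{r+1} \setminus I_r$. By the ``moreover'' clause of Lemma~\ref{tl}(i), $1_\bk\,e^r_{d+1} = 0$ unless $\bk \in I_{r+1}^{d_r+d+1}$ has the form $\bi\,\bj$ with $\bj \in I_r^{d+1}$. Since $1^{r+1}_{d_r+d;i}$ is the sum of $1_\bk$ over $\bk$ with $k_{d_r+d+1} = i$, any surviving $\bk$ in $1^{r+1}_{d_r+d;i}\,e^r_{d+1}$ would force $j_{d+1} = i$ for some $\bj \in I_r^{d+1}$, which is impossible when $i \notin I_r$. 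Hence $u = 0$, and so $(E_i^{r+1} M^{r+1}) e^r = M^{r+1} \cdot u = 0$.

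The only mildly subtle point is recognizing that $u$ sits symmetrically inside the two corner algebras $1^{r+1}_{d_r+d;i} H^{r+1}_{d_r+d+1} 1^{r+1}_{d_r+d;i}$ and $e^r_{d+1} H^{r+1}_{d_r+d+1} e^r_{d+1}$; this is what makes the two descriptions of the common subset $M^{r+1} \cdot u$ coincide and forces the two $H^r_d$-actions to agree. Once this is pinned down, everything else is a direct bookkeeping exercise using the explicit formulas provided by Lemma~\ref{tl}.
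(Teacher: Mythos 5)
Your proof mirrors the paper's own argument: both reduce the first assertion to the identities $\phi^r_{d+1}(1^r_{d;i}) = 1^{r+1}_{d_r+d;i}e^r_{d+1} = \iota^{r+1}_{d_r+d;i}(e^r_d)$ from Lemma~\ref{tl}(iii)--(iv), and the second assertion to the vanishing of $1^{r+1}_{d_r+d;i}e^r_{d+1}$ via the ``moreover'' clause of Lemma~\ref{tl}(i). Your treatment of the first assertion is actually slightly more careful than the paper's, which only asserts the set-theoretic identity of the two images; your explicit check that $\phi^r_{d+1}\circ\iota^r_{d;i}$ and $\iota^{r+1}_{d_r+d;i}\circ\phi^r_d$ agree on generators of $H^r_d$ is the step that verifies the two $H^r_d$-module structures coincide, which the paper leaves implicit.

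One point deserves more care in the second assertion, and it is a subtlety you inherit from the paper rather than introduce. The degree-$d$ piece of $(E_i^{r+1}M^{r+1})e^r$ is $M^{r+1}_{d_r+d+1}\cdot\iota^{r+1}_{d_r+d;i}(e^r_d)$, not $M^{r+1}_{d_r+d+1}\cdot u$ on the nose, and the equality of the two relies on the identity $\iota^{r+1}_{d_r+d;i}(e^r_d)=1^{r+1}_{d_r+d;i}e^r_{d+1}$ from Lemma~\ref{tl}(iii), which is stated and proved only under the hypothesis $i\in I_r$. For $i\in I_{r+1}\setminus I_r$ the right-hand side $u$ vanishes (as you correctly show from the moreover clause of Lemma~\ref{tl}(i)), but the left-hand side is the idempotent $F_ie^r_d$ projecting onto the summand $F_iT^r_d$ of $T^{r+1}_{d_r+d+1}$, and it is not clear from what is given that this summand vanishes. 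So showing $u=0$ does not by itself show $(E_i^{r+1}M^{r+1})e^r=0$ for arbitrary $M\in\rmod{H}^\infty$; one should either separately check that $F_iT^r_d=0$ (equivalently $\iota^{r+1}_{d_r+d;i}(e^r_d)=0$) for $i\notin I_r$, or restrict attention to $r$-stable $M$, where the tensor-product description $M^{r+1}_{d_r+d+1}\cong M^r_{d+1}\otimes_{H^r_{d+1}}e^r_{d+1}H^{r+1}_{d_r+d+1}$ forces the vanishing because $e^r_{d+1}H^{r+1}_{d_r+d+1}\iota^{r+1}_{d_r+d;i}(e^r_d)=0$ on multiset-of-colors grounds when $i\notin I_r$.
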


\begin{proof}
We may assume that $M^r \in \rmod{H^r_{d+1}}$.
Then we just have to recall that
$E_i^r M^r = M^r 1_{d;i}^r$.
So using also Lemma~\ref{tl}(iv) 
we deduce that $\iota^r$ maps it isomorphically to
$M^{r+1} 1^{r+1}_{d_r+d;i}e^r_{d+1}$,
which is all of $(E_i^{r+1} M^{r+1})e_{d+1}^r$ as required.
For the second statement just note that
$(E_i^{r+1} M^{r+1}) e_{d+1}^r = M^{r+1} 1^{r+1}_{d_r+d;i} e^r_{d+1}$
which is zero by Lemma~\ref{tl}(i)
since $i \notin I_r$.
\end{proof}

\begin{Lemma}\label{presstab}
The functor $E_i:\rmod{H}^\infty\rightarrow\rmod{H}^\infty$ sends stable modules to stable modules,
hence it restricts to $E_i:\rmod{H}\rightarrow\rmod{H}$.
\end{Lemma}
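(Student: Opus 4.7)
The plan is to show that any $r$-stable $M \in \rmod{H}$ yields $E_i M$ which is again stable, after suitably enlarging $r$ so that $i \in I_r$. First I would verify the easier fact that $r$-stability implies $r'$-stability for all $r' \geq r$. This follows from the cocycle identity $\phi^{r,s} = \phi^{r',s} \circ \phi^{r,r'}$, which gives a canonical isomorphism of $(H^r, H^s)$-bimodules
\[
e^{r,r'} H^{r'} \otimes_{H^{r'}} e^{r',s} H^s \stackrel{\sim}{\longrightarrow} e^{r,s} H^s,
\qquad a \otimes b \mapsto \phi^{r',s}(a)\, b,
\]
since $\phi^{r',s}(e^{r,r'}) = e^{r,s}$. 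Tensoring with $M^r$ on the left and using $r$-stability at levels $r'$ and $s$ produces the required isomorphism $M^{r'} \otimes_{H^{r'}} e^{r',s} H^s \stackrel{\sim}{\to} M^s$. Enlarging $r$ if necessary, we may therefore assume from the outset that $i \in I_r$.

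Next I would check that $E_i M$ really is $r$-stable, i.e.\ that for each $s > r$ the canonical map
\[
\eta^{r,s}_{E_i M} : (E_iM)^r \otimes_{H^r} e^{r,s} H^s \longrightarrow (E_iM)^s
\]
is an isomorphism. By the definition of $E_i$ on $\rmod{H}^\infty$, we have $(E_iM)^t = E_i^t M^t = M^t 1^t_{\bullet;i}$ for every $t \geq r$, and the structure maps of $E_iM$ are obtained by restricting those of $M$. That this restriction is well defined and lands in $(E_i^{t+1} M^{t+1}) e^t$ follows from Lemma~\ref{easych} applied iteratively from $r$ up to $s$, using $i \in I_r \subseteq I_t$.

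The heart of the proof is a bimodule identity derived from Lemma~\ref{tl}(iv) iterated: setting $d_{r,s} := d_r + d_{r+1} + \cdots + d_{s-1}$, one obtains
\[
\phi^{r,s}\bigl(1^r_{d;i}\bigr) = 1^s_{d+d_{r,s};i}\, e^{r,s}_{d+1},
\]
and combined with Lemma~\ref{tl}(iii) this identifies the $(H^r_d, H^s_{d+d_{r,s}})$-bimodule $1^r_{d;i} H^r_{d+1} \otimes_{H^r_{d+1}} e^{r,s}_{d+1} H^s_{d+d_{r,s}+1}\, 1^s_{d+d_{r,s};i}$ with $e^{r,s}_d H^s_{d+d_{r,s}}$, the left $H^r_d$-action on the latter being via $\phi^{r,s}_d$. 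Tensoring on the left with $M^r$ over $H^r_{d+1}$, the left-hand side becomes $(E_iM)^r \otimes_{H^r} e^{r,s} H^s$ (in degree $d$), while the right-hand side, after invoking $r$-stability of $M$ to substitute $M^s = M^r \otimes_{H^r} e^{r,s} H^s$, becomes $M^s\, 1^s_{d+d_{r,s};i} = (E_iM)^s$. A direct unwinding shows that the induced isomorphism is exactly $\eta^{r,s}_{E_iM}$, which completes the argument.

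I expect the main obstacle to be purely bookkeeping: correctly tracking the degrees $d$ versus $d + d_{r,s}$, keeping straight which algebra each bimodule lives over, and verifying that the right $H^s$-action obtained via the iso of Lemma~\ref{tl} matches the $H^s$-action on $(E_iM)^s$ produced from the restriction functor $E_i^s$. Once the chain of bimodule identifications in Lemma~\ref{tl} is carefully unwound and the cocycle identity for $\phi^{r,s}$ is invoked, the result is essentially formal.
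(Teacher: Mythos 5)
The paper's proof of this lemma is two lines: by Theorem~\ref{abelian1} the quotient functor $\V:\mathcal C\to\rmod{H}$ is essentially surjective onto the stable modules, and by Lemma~\ref{seq1} $\V\circ E_i\cong E_i\circ\V$, so for $M$ stable one may write $M\cong\V N$ and then $E_iM\cong E_i\V N\cong\V(E_iN)$ is again stable. Your proposal attempts a \emph{direct} bimodule-theoretic proof instead, which is a genuinely different route but which has a real gap at its centre.

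The bookkeeping issue first: for the restriction functor $E_i^r$ the relevant bimodule is $H^r_{d+1}1^r_{d;i}$ (an $(H^r_{d+1},H^r_d)$-bimodule), not $1^r_{d;i}H^r_{d+1}$, which is the $(H^r_d,H^r_{d+1})$-bimodule governing \emph{induction} $F_i^r$. With $1^r_{d;i}H^r_{d+1}$ in the position you put it, the operation ``tensoring on the left with $M^r$ over $H^r_{d+1}$'' does not parse, since the left action on $1^r_{d;i}H^r_{d+1}$ is by $H^r_d$ rather than $H^r_{d+1}$.

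The essential gap is the claimed ``identification'' of bimodules. Even after correcting sides, what is needed is that the evident map
\[
H^r_{d+1}1^r_{d;i}\otimes_{H^r_d}e^{r,s}_dH^s_{d+d_{r,s}}\longrightarrow e^{r,s}_{d+1}H^s_{d+d_{r,s}+1}1^s_{d+d_{r,s};i}
\]
is an isomorphism of $(H^r_{d+1},H^s_{d+d_{r,s}})$-bimodules, and you supply no argument for this. This is precisely where the analogy with the proof of Lemma~\ref{indcheck} breaks down: there, the induction bimodule $1^s_{d;i}H^s_{d+1}$ is \emph{cyclic} as a right $H^s_{d+1}$-module, which lets one contract the tensor product and ultimately fall back on the $r$-stability map for $M$. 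The restriction bimodule $H^r_{d+1}1^r_{d;i}$ is not cyclic as a right $H^r_d$-module, so no analogous contraction is available, and in particular the injectivity of the displayed map is genuinely problematic --- there is no convenient basis theorem for the cyclotomic quotients to appeal to, and elements of a tensor product over an algebra need not be pure tensors. Your appeal to ``Lemma~\ref{tl}(iii)--(iv) iterated'' tells you only where the relevant idempotents go under $\phi^{r,s}$ and $\iota$, which is far short of the bimodule isomorphism. The paper's indirect proof exists exactly to avoid having to establish such an isomorphism by hand.
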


\begin{proof}
In view of Theorem~\ref{abelian1}, 
it suffices to show $E_i \circ \V (M) \cong \V \circ E_i(M)$ for each
$M \in \mathcal C$.
Take some large enough $r$ so that $i \in I_r$ and all composition
factors of $M$ are of the form $L(\lambda)$ for $\lambda \in
\Lambda_r$.
We may assume moreover that $M$ is homogeneous of degree $d$ in
$\mathcal C_r$, i.e.
$\V^r (M) = \Hom_{\mathcal C}(T^r_d, M)$.
We have that $E_i^r \circ \V^r (M)
\cong
\V^r \circ E_i (M)$ by Lemma~\ref{seq}.
The canonical isomorphism here is the map
$f^r:E_i^r \circ \V^r(M) 
=
\Hom_{\mathcal C}(F_i T^r_{d-1},M)
\stackrel{\sim}{\rightarrow} \Hom_{\mathcal C}(T^r_{d-1}, E_i M)$
defined by the adjunction between $F_i$ and $E_i$.
Similarly we have 
$f^{r+1}:E_i^{r+1} \circ \V^{r+1}(M) 
=
\Hom_{\mathcal C}(F_i T^{r+1}_{d_r+d-1},M)
\stackrel{\sim}{\rightarrow} \Hom_{\mathcal C}(T^{r+1}_{d_r+d-1}, E_i
M)$. Now we observe that
the following diagram commutes:
$$
\begin{CD}
E_i^r \circ \V^r (M)&@>f^r >>&\V^r \circ E_i(M)\\
@V\iota^r VV&&@VV \iota^r V\\
E_i^{r+1} \circ \V^{r+1}(M)&@>>f^{r+1}>&\V^{r+1} \circ E_i(M).
\end{CD}
$$
This follows by the naturality of the adjunction, noting that 
the left hand vertical map is induced by $F_i \theta^r_{d-1}:F_i
T_{d-1}^r \rightarrow F_i T_{d_r+d-1}^{r+1}$
as that agrees with $\theta^{r}_{d}$ on the summand
$F_i T_{d-1}^{r}$ of $T_d^r$ by the construction in
Lemma~\ref{tl}.
The isomorphisms $f^r$ for all sufficiently large $r$
define the required isomorphism $f:E_i \circ \V(M) \stackrel{\sim}{\rightarrow} \V \circ E_i(M)$.
\end{proof}

So now we have defined the endofunctors $F_i$ and $E_i$ on $\rmod{H}$.
It remains to define an adjunction making $(F_i, E_i)$ into an 
adjoint pair.
Given an object $M$ we pick $r$ large enough so that $i \in I_r$,
and $M, E_i M$ and $E_i F_i M$ are $r$-stable.
Then we take the unit and counit of the adjunction on object $M$
to be induced by the ones for $E_i^r$ and $F_i^r$ on $M^r \in \rmod{H^r}$.
As usual to prove naturality one needs to observe that the resulting morphisms are independent of the choice of $r$.

\begin{Theorem}\label{strongeq}
The category $\rmod{H}$, together with the adjoint pairs $(F_i, E_i)$
for each $i \in I$ and the natural transformations $\xi$ and $\tau$,
is an $\mathfrak{sl}_I$-categorification in the sense of Definition~\ref{catdef}.
Moreover the quotient functor
$\V:\mathcal C \rightarrow \rmod{H}$
is strongly equivariant.
\end{Theorem}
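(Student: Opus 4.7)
The plan is to deduce each axiom from the corresponding statement at the finite level $\rmod{H^r}$, which is an $\mathfrak{sl}_{I_r}$-categorification by transport of structure along $\V^r:\mathcal C_r\to\rmod{H^r}$ (using Theorems~\ref{dcp} and~\ref{catact} and Lemma~\ref{seq} at the level of the quotient $\mathcal C_r$). The bridge is Lemma~\ref{okay}: any morphism between $r$-stable objects is determined by its $r$th component. Concretely, given any finite collection of objects in $\rmod H$ and any finite list of words $\bi\in I^d$, one can choose a single $r\gg 0$ large enough that all objects concerned, together with all their images under iterates of the $F_i$'s and $E_i$'s indexed by the given words, are simultaneously $r$-stable; this rests on Lemma~\ref{indcheck} and Lemma~\ref{presstab}.

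For (SL1'), on an $r$-stable $M$ the object $F_iM$ is $r$-stable by Lemma~\ref{indcheck}, so $(F_iM)^r = F_i^rM^r$ is a generalized eigenspace decomposition for $\xi$; local nilpotency is then immediate from the finite case. For (SL2'), the quiver Hecke relations among the $\xi_j,\tau_k,1_\bi$ acting on $F^dM$ may, after enlarging $r$ so that $M,F M,\dots,F^{d}M$ are all $r$-stable and $\bi\in I_r^d$, be tested on the $r$th component, where they hold because $\rmod{H^r}$ is an $\mathfrak{sl}_{I_r}$-categorification. For (SL3'), we need a right adjoint of $E_i$ isomorphic to $F_i$. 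At finite level each $(F_i^r,E_i^r)$ is biadjoint (being transported from $\mathcal C_r$), and picking $r$ large enough we lift the extra unit and counit to $\rmod H$ using Lemma~\ref{okay}; they satisfy the triangle identities because these reduce to the finite case. Finally for (SL4'), the exact quotient functor $\V$ identifies $\rmod H$ with $\mathcal C/\mathcal S$ where $\mathcal S$ is the Serre subcategory generated by $\{L(\lambda):\lambda\in\Lambda\smallsetminus\Lambda^\circ\}$ (Theorem~\ref{abelian2}); once we establish strong equivariance of $\V$ in the next step, the subspace of $[\mathcal C]^*$ spanned by $\mathcal S$ is an $\mathfrak{sl}_I$-submodule (it is the kernel of the map induced by $\V$) and we obtain an induced integrable $\mathfrak{sl}_I$-action on $[\rmod H]^*$ whose weight basis is $\{[D(\mu)]:\mu\in\Lambda^\circ\}$, with $[D(\mu)]$ of weight $|\mu|$; weight-vector assertions for projectives follow dually.

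Strong equivariance of $\V$ is essentially Lemma~\ref{seq1}: it supplies isomorphisms $\zeta_i^+:\V\circ E_i\stackrel{\sim}{\rightarrow}E_i\circ\V$ intertwining $\xi$ and $\tau$, which is exactly the formulation (\ref{sep1})--(\ref{sep2}). The dual isomorphism $\zeta_i^-:\V\circ F_i\stackrel{\sim}{\rightarrow}F_i\circ\V$ is then obtained formally from $\zeta_i^+$ via the chosen adjunctions, as recalled after (\ref{sep2}). The main obstacle is the book-keeping for ``pick $r$ large enough'': one must verify that the constructions of $F_i$, $E_i$, $\xi$, $\tau$ and both units/counits all commute with the stabilization maps $\iota^r$, so that every axiom really does reduce to the finite-$r$ case where it is known. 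This is where Lemma~\ref{tl}, in particular parts (iii)--(iv) linking $\iota^r$ with the idempotents $1_{d;i}^r$ and $e^r_d$, does the essential work.
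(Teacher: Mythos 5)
Your proof is correct and follows essentially the same plan as the paper: reduce (SL1\('\)), (SL2\('\)) to the finite level via Lemma~\ref{okay}, invoke Lemma~\ref{seq1} for strong equivariance, and treat (SL3\('\)), (SL4\('\)) by separate arguments. The two places where you diverge slightly are both valid alternatives. For (SL3\('\)) you assemble the second adjunction on $\rmod{H}$ from the finite-level adjunctions on $\rmod{H^r}$ via Lemma~\ref{okay}; the paper instead pushes a single choice of second adjunction on $\mathcal C$ through $\V$ using the strong-equivariance isomorphisms (as in the remark after Lemma~\ref{seq}), which is slicker because it sidesteps the compatibility check across different $r$ that your route requires. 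For (SL4\('\)) you realize $[\rmod{H}]^*$ as the quotient of $[\mathcal C]^*$ by the span of $\{[L(\lambda)]\:|\:\lambda\notin\Lambda^\circ\}$, using strong equivariance to see this is a submodule; the paper takes the dual view, using the left adjoint $\V^!$ to embed $[\rmod{H}]$ into the integrable module $[\mathcal C]$. One small inaccuracy: your citation of Theorem~\ref{catact} is out of place here; the $\mathfrak{sl}_{I_r}$-categorification structure on $\rmod{H^r}$ comes from Theorem~\ref{dcp} and Lemma~\ref{seq} applied to the subquotient $\mathcal C_r$, not from the super category $\mathcal O$ construction.
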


\begin{proof}
We have already defined all of the required data for a categorical
action.
To show that $\V$ is strongly equivariant, we use the 
isomorphism $\zeta_i:F_i \circ \V \stackrel{\sim}{\rightarrow} \V
\circ F_i$ from Lemma~\ref{dan}. We already showed this satisfies the
$\xi$- and $\tau$-versions of properties (E2)--(E3). It satisfies (E1)
since the composition of the natural transformation in (E1) with $\pr_r$
is an isomorphism on each
$M \in \mathcal C$ and sufficiently large $r$ thanks to
Lemma~\ref{seq}. Therefore it induces an isomorphism $\V \circ E_i \cong
E_i \circ \V$ (which we already exploited in the
proof of Lemma~\ref{presstab}).
It remains to verify the axioms (SL1$'$)--(SL4$'$).
The first two follow  via Lemma~\ref{okay} and the truth of 
the corresponding axioms
on each $\rmod{H^r}$.
For (SL3$'$), we can produce a second adjunction making $(E_i, F_i)$
into an adjoint pair by pushing some choice of such an adjunction in
$\mathcal C$ through the functor $\V$; this argument uses our
isomorphisms
$F_i \circ \V \cong \V
\circ F_i$ and
$\V \circ E_i \cong
E_i \circ \V$.
Finally (SL4$'$) holds because the left adjoint functor $\V^!$ 
embeds $[\rmod{H}]$ into $[\mathcal C]$, and the latter is integrable.
\end{proof}

\subsection{Proof of Theorem~\ref{lwmain} for infinite intervals}\label{uniq}
Now we can complete the proof of Theorem~\ref{lwmain} for infinite
intervals. Existence follows from
Theorem~\ref{catact}, so we just need to establish the uniqueness. 
Suppose we are 
given another $\mathfrak{sl}_I$-tensor product categorification $\mathcal C'$ of the same type as $\mathcal C$. Introduce the primed analog $\V'$ 
of the quotient functor $\V$,
and set $Y'(\lambda) :=
\V' P'(\lambda)$ for each $\lambda \in \Lambda$.
Letting $A$ and $A'$ be the basic algebras underlying $\mathcal C$ and
$\mathcal C'$ as in (\ref{A1}) and (\ref{A2}),
the double centralizer property from Theorem~\ref{ffp} 
implies the existence of isomorphisms
(\ref{C1})--(\ref{C2}).
Applying 
the last assertion of Theorem~\ref{ffp}, we pick isomorphisms $Y(\lambda)\cong Y'(\lambda)$ in $\rmod{H}$ for each $\lambda \in \Lambda$.
These induce algebra isomorphisms $A \cong A'$ hence an isomorphism
of categories $\rmod{A} \stackrel{\sim}{\rightarrow} \rmod{A'}$.
Since $\mathcal C$ is equivalent to $\rmod{A}$ and $\mathcal C'$
is equivalent to $\rmod{A'}$, we get the desired equivalence
$\mathbb{G}:\mathcal C \rightarrow \mathcal C'$.
To see that this equivalence is strongly equivariant we just need to 
introduce the bimodules $B$ and $B'$ from (\ref{B1}) and (\ref{B2}).
These satisfy (\ref{D1})--(\ref{E2}) for the same reasons as in $\S$\ref{finite},
using Theorem~\ref{strongeq} in place of Lemma~\ref{seq}.

\section{Graded tensor product categorifications}\label{gtpc}

We are in the business now of constructing graded lifts of the 
structures introduced so far. We are going to 
use the same notation in this section for 
graded versions as was used in the earlier sections in the ungraded setting.
To avoid confusion we add bars to all our earlier notation.
For example we denote the natural $\mathfrak{sl}_I$-module $V_I$ now by $\overline{V}_I$, so that the notation $V_I$ can be reused for
its quantum analog.

\subsection{Quantized enveloping algebras}\label{qgp}
Consider the field $\Q(q)$ equipped with the {\em bar involution}
defined by $\overline{f(q)} := f(q^{-1})$.
For an interval $I$, the quantized enveloping algebra $U_q \mathfrak{sl}_I$
is the $\Q(q)$-algebra 
with generators $\{f_i, e_i, k_i, k_i^{-1}\:|\:i \in I\}$ subject
to well-known relations.
We will often appeal to facts from \cite{Lubook}; note for this that our $q$ is Lusztig's $v^{-1}$ while our $f_i, e_i, k_i$ are Lusztig's $F_i, E_i, K_i^{-1}$.
We make $U_q \mathfrak{sl}_I$ into a Hopf algebra as in \cite{Lubook} with 
comultiplication $\Delta$ defined from
\begin{equation*}
\Delta(f_i) := 1 \otimes f_i + f_i \otimes k_i,\quad
\Delta(e_i) := k_i^{-1} \otimes e_i + e_i \otimes 1,\quad
\Delta(k_i) := k_i \otimes k_i.
\end{equation*}
There is a linear algebra antiautomorphism $*:U_q\mathfrak{sl}_I \rightarrow U_q\mathfrak{sl}_I$
such that
\begin{align}\notag
f_i^* &:= q e_i k_i,
&e_i^* &:= q f_i k_i^{-1},
&k_i^* &:= k_i.\\
\intertext{This is a coalgebra automorphism whose square is the identity.
Also $U_q \mathfrak{sl}_I$ possesses an antilinear 
algebra automorphism 
$\psi$, also usually called the {bar involution}, which is defined from}
\label{one}
\psi(f_i) &:= f_i,
&\psi(e_i) &:= e_i,
&\psi(k_i) &:= k_i^{-1}.\\\intertext{Let $\psi^* := * \circ \psi \circ *$, so that $\psi^*$ is another antilinear
algebra automorphism with}
\label{two}
\psi^*(f_i^*) &= f_i^*,
&\psi^*(e_i^*) &= e_i^*,
&\psi^*(k_i^*) &= {k_i^*}^{-1}.\\\intertext{Equivalently}
\notag
\psi^*(f_i) &= q^2 f_i k_i^{-2},&
\psi^*(e_i) &= q^2 e_i k^2_i,&
\psi^*(k_i) &= k_i^{-1}.
\end{align}

For $\varpi \in P_I$ the $\varpi$-weight space of a
$U_q \mathfrak{sl}_I$-module $M$ is the subspace
$$
M_\varpi := \{v \in M\:|\:k_i v = q^{\varpi\cdot\alpha_i} 
v\text{ for each $i \in I$}\}.
$$
Then the notion of integrable module is defined in the same way as for $\mathfrak{sl}_I$.
For $n \geq 0$ and $c \in \{0,1\}$,
let $\bigwedge^{n,c} V_I$ be the integrable $U_q\mathfrak{sl}_I$-module
on basis $\{v_\lambda\:|\:\lambda \in \Lambda_{I;n,c}\}$, with $f_i$ and $e_i$
acting by the same formulae (\ref{cact1})--(\ref{cact2}) as before
and 
$k_i v_\lambda := q^{\lambda_i-\lambda_{i+1}} v_\lambda$. 
Note that $f_i^*$ and $e_i^*$ act on $\bigwedge^{n,c} V_I$ in exactly the same way as $e_i$ and $f_i$, respectively.

More generally given a type $(\bn,\bc)$ of level $l$ we have the tensor product
$\textstyle
\bigwedge^{\bn,\bc} V_I
:= \bigwedge^{n_1,c_1} V_I\otimes\cdots\otimes \bigwedge^{n_l,c_l} V_I$.
It has a 
monomial basis $\{v_\lambda\:|\:\lambda \in \Lambda_{I;\bn,\bc}\}$ just like before.
The actions of $f_j$ and $e_j$ on these basis vectors are given explicitly by the formulae
\begin{align}\label{eact}
f_j v_\lambda&=
\sum_{\substack{1 \leq i \leq l \\ \lambda_{ij}=1 \\ \lambda_{i(j+1)}=0}} q^{(|\lambda_{i+1}|+\cdots+|\lambda_l|) \cdot\alpha_j} v_{t_{ij}(\lambda)},\\
e_j v_\lambda&=
\sum_{\substack{1 \leq i \leq l \\ \lambda_{ij}=0 \\ \lambda_{i(j+1)}=1}} q^{-(|\lambda_1|+\cdots+|\lambda_{i-1}|)\cdot \alpha_j} v_{t_{ij}(\lambda)},\label{fact}
\end{align}
writing $t_{ij}(\lambda)$ for the $01$-matrix obtained from $\lambda$ 
by flipping its entries $\lambda_{ij}$ and $\lambda_{i(j+1)}$.
In general it is no longer the case that $f_i^*$ and $e_i^*$ act in the same way as $e_i$ and $f_i$.
Instead we have that
\begin{equation}\label{form}
(u v,w) = (v,u^* w)
\end{equation}
for $u \in U_q \mathfrak{sl}_I$ and $v,w \in \bigwedge^{\bn,\bc} V_I$,
where $(-,-)$ is the symmetric bilinear form on $\bigwedge^{\bn,\bc} V_I$
with respect to which the monomial basis is orthonormal.

\subsection{Graded lifts}
In this subsection, we review briefly some basic notions 
related to graded lifts of Abelian categories and functors. 
The ideas here have their origins in \cite{Soergel, BGS}; our exposition is based on
\cite[Appendix E]{AJS}. 

\begin{Definition}\rm
By a {\em graded category} we mean a
category $\mathcal C$ equipped with an adjoint pair
$(Q,Q^{-1})$ of self-equivalences; 
the adjunction induces canonical isomorphisms
$Q^m \circ Q^n \stackrel{\sim}{\rightarrow} Q^{m+n}$
for all $m,n \in \Z$ making
the obvious square of isomorphisms from $Q^m\circ Q^n \circ Q^l$ to
$Q^{m+n+l}$ commute.
Let $\widehat{\mathcal C}$ denote
the category with the same
objects as $\mathcal C$ and
$$
\Hom_{\widehat{\mathcal C}}(M,N) := \bigoplus_{m\in\Z} 
\Hom_{\mathcal C}(M,N)_m
$$
where
$\Hom_{\mathcal C}(M,N)_m$ denotes 
$\Hom_{\mathcal C}(Q^m M, N)
\cong\Hom_{\mathcal C}(M, Q^{-m}N)$;
composition is induced by that of $\mathcal C$ 
making
$\widehat{\mathcal C}$ into a category enriched in graded vector spaces.
We refer to elements of
$\Hom_{\mathcal C}(M,N)_m$
as {\em homogeneous morphisms of degree $m$}.
Thus morphisms in $\mathcal C$ itself are the
homogeneous morphisms of degree zero in $\widehat{\mathcal C}$.
Assuming $\mathcal C$ is Abelian, we define $$\Ext^n_{\widehat{\mathcal C}}(M,N) := \bigoplus_{m \in \Z}
\Ext^n_{\mathcal C}(M,N)_m$$ 
similarly. 

A {\em graded functor} $F:\mathcal C \rightarrow \mathcal C'$ between
two graded categories means a 
functor between the underlying categories 
plus the additional data of an isomorphism of functors
$\gamma_F:F \circ Q \stackrel{\sim}{\rightarrow} Q' \circ F$; using 
the given adjunctions we get from this canonical isomorphisms
$F \circ Q^n\stackrel{\sim}{\rightarrow} {Q'}^n\circ F$ for all $n
\in \Z$ making the obvious pentagon of isomorphisms from 
$F \circ Q^m \circ Q^n$
to ${Q'}^{m+n} \circ F$ commute for all $m,n\in\Z$.
There is an induced functor $\widehat{F}:\widehat{\mathcal C}
\rightarrow \widehat{\mathcal C}'$ which is equal to $F$ on objects;
on a morphism
$f \in \Hom_{\mathcal C}(M,N)_m$
we define
$\widehat{F} f \in \Hom_{\mathcal C'}(FM,FN)_m$ from $FM
\stackrel{Ff}{\rightarrow} F Q^{-m} N\stackrel{\sim}{\rightarrow}
{Q'}^{-m} FN$.
There is an obvious way to compose two graded functors to obtain
another graded functor. The identity functor
$\operatorname{id}:\mathcal C \rightarrow \mathcal C$ is a graded functor
with $\gamma_{\operatorname{id}}$ being $1_Q$. Also each $Q^n$ is a
graded functor in a canonical way, e.g. $\gamma_Q = 1_{Q^2}$.
A {\em graded equivalence} $F:\mathcal C \rightarrow \mathcal C'$
between two graded categories 
is a
graded functor that is also an equivalence of categories.
A {\em graded duality} is a graded equivalence
$D:\mathcal C \rightarrow \mathcal C^{\operatorname{op}}$,
viewing $\mathcal C^{\operatorname{op}}$ as a graded category via 
the inverse adjoint pair $(Q^{-1},Q)$ of self-equivalences to $(Q,Q^{-1})$.

A {\em graded natural transformation}
$\alpha:F \rightarrow G$ 
between graded functors $F, G:\mathcal C \rightarrow \mathcal C'$
is a natural transformation in the usual sense with the additional
property that
$Q' \alpha \circ \gamma_F = \gamma_G \circ \alpha Q$.
We write $\Hom(F,G)_0$ for the vector space of all graded natural
transformations $\alpha:F \rightarrow G$.
In fact, there is a strict 2-category 
whose objects are 
all (small) graded categories, whose 1-morphisms are
graded functors, and whose 2-morphisms are graded natural
transformations.
The morphism categories $\mathcal{H}om(\mathcal C, \mathcal C')$ in
this 2-category are themselves graded categories with grading shift
functors $Q$ and $Q^{-1}$ defined by horizontally composing on the left with the ones
in $\mathcal C'$.
This means that we also have the spaces $\Hom(F,G)_n := \Hom(Q^n F, G)_0$ for $n \in \Z$
consisting of {\em homogeneous natural transformations of degree $n$}.
\end{Definition}

Suppose $\mathcal C$ is a graded Abelian category
such that all objects have finite length,
there are enough projectives, and the endomorphism algebras of the 
irreducible objects are one dimensional.
Assume further that $\mathcal C$ is {\em acyclic} in the sense that
$L \not\cong Q^n L$ for each irreducible $L$ and $n \neq 0$.
Fix a choice 
of representatives 
$\{L(\lambda)\:|\:\lambda\in\Lambda\}$
for the homogeneous isomorphism classes of irreducible objects in $\mathcal C$,
and let $P(\lambda)$ be a projective cover of $L(\lambda)$.
Then the assignment $q := [Q]$ makes
$K_0(\mathcal C)$ and $G_0(\mathcal C)$ into free $\Z[q,q^{-1}]$-modules
with bases $\{[P(\lambda)]\:|\:\lambda \in \Lambda\}$ and
$\{[L(\lambda)]\:|\:\lambda \in \Lambda\}$, respectively.
Set $[\mathcal{C}]_q := \Q(q) \otimes_{\Z[q,q^{-1}]} K_0(\mathcal C)$
and $[\mathcal{C}]_q^* := \Q(q) \otimes_{\Z[q,q^{-1}]} G_0(\mathcal C)$.
Given $M, L\in\mathcal C$
with $L$ irreducible, 
we write $[M:L]_q$ for 
$\sum_{n \in \Z} q^n [M:Q^n L]$; in particular
$[M:L(\lambda)]_q = \qdim \Hom_{\widehat{\mathcal C}}(P(\lambda), M).$

Continuing with $\mathcal C$ as in the previous paragraph,
let $\boldrmod A$ be the category of finite dimensional locally unital graded
right modules over the locally unital graded algebra
\begin{equation}\label{gradedA}
A := \bigoplus_{\lambda,\mu \in\Lambda} \Hom_{\widehat{\mathcal C}}(P(\lambda),P(\mu)).
\end{equation}
This can be viewed as a graded category with $Q$ being the
grading shift functor, i.e. 
$QM$ is the same underlying module with new
grading defined from $(QM)_n := M_{n-1}$.
We write simply $\Hom_A(M,N)$ for morphisms in the enriched category
$\widehat{\boldrmod{A}}$.
Then the functor
\begin{equation}\label{stupid2}
\mathbb{H}:\mathcal C \rightarrow \boldrmod A, \qquad
M \mapsto \bigoplus_{\lambda \in \Lambda}
\Hom_{\widehat{\mathcal C}}(P(\lambda), M)
\end{equation}
is a graded equivalence.
The right ideals $1_\lambda A$ are all finite dimensional.
The left ideals $A 1_\lambda$ are all
finite dimensional if and only if $\mathcal C$ has enough injectives; in that case $\mathcal C$ is an {\em acyclic graded Schurian category}.

\begin{Definition}\rm\label{liftdef}
By a {\em graded lift} of a Schurian category $\overline{\mathcal C}$, we mean a
graded Abelian category $\mathcal C$ 
together with a fully faithful functor
$\nu:\widehat{\mathcal C} \rightarrow \overline{\mathcal C}$
such that 
\begin{itemize}
\item[(GL1)]
$\nu$ is {dense on projectives}, i.e.
every projective object $\overline{P} \in \overline{\mathcal C}$ is isomorphic to $\nu P$ for some projective object $P \in \mathcal C$;
\item[(GL2)]
$\nu \circ \widehat{Q} \cong \nu$.
\end{itemize}
The hypotheses that $\nu$ is fully faithful and dense on projectives imply
that the restriction of $\nu$ to $\mathcal C$ is exact.
\end{Definition}

Given a graded lift $\mathcal C$ of $\overline{\mathcal C}$,
the {\em gradable objects} 
of $\overline{\mathcal C}$ mean the objects in the essential image of $\nu$.
All projective objects of $\overline{\mathcal C}$ are gradable by
the definition (GL1). We will see shortly that all irreducible and all injective objects of
$\overline{\mathcal C}$ are gradable too.
An important point is that graded lifts of an indecomposable object
are unique up to homogeneous isomorphism; see
\cite[Lemma 2.5.3]{BGS}.
Note also that the functor $\nu$ induces a canonical isomorphism
$$
\Ext^n_{\widehat{\mathcal C}}(M,N) \stackrel{\sim}{\rightarrow}
\Ext^n_{\overline{\mathcal C}}(\nu  M, \nu N)
$$
for any $M, N \in \mathcal C$ and $n \geq 0$.

The problem of finding a graded lift $\mathcal C$
of a Schurian category $\overline{\mathcal C}$ is easy to understand in terms of algebras as follows.
Let $\{\overline{L}(\lambda)\:|\:\lambda\in\Lambda\}$
be a complete set of pairwise non-isomorphic irreducible objects of
$\overline{\mathcal C}$. 
Let $\overline{P}(\lambda)$ be a projective cover of $\overline{L}(\lambda)$ in $\overline{\mathcal C}$. Recall from (\ref{stupid1}) that $\overline{\mathcal C}$
is equivalent to the category $\rmod A$ where
\begin{equation}\label{ungradedA}
A := \bigoplus_{\lambda,\mu \in \Lambda} \Hom_{\overline{\mathcal C}}(\overline{P}(\lambda), \overline{P}(\mu)).
\end{equation}
Now suppose that $\mathcal C$ is a graded lift of $\overline{\mathcal C}$, and
pick graded lifts $P(\lambda)$ of each $\overline{P}(\lambda)$. 
As $\End_{\widehat{\mathcal C}}(P(\lambda)) \cong \End_{\overline{\mathcal C}}(\overline{P}(\lambda))$ which is local, $P(\lambda)$
is an indecomposable projective object of $\mathcal C$
with irreducible head denoted $L(\lambda)$. It is easy to see that 
$L(\lambda)$ is a graded lift of $\overline{L}(\lambda)$, i.e. all irreducible objects of $\overline{\mathcal C}$ are gradable. 
Moreover if $L$ is any irreducible object of $\mathcal C$ then $\nu L$ is
irreducible in $\overline{\mathcal C}$, from which we get that 
$L \cong Q^n L(\lambda)$ for some $n \in \Z$ and $\lambda \in \Lambda$.
Also each $\End_{\widehat{\mathcal C}}(L(\lambda))$ is one-dimensional,
hence using also (GL2) 
we see
that $\mathcal C$ is acyclic.
Thus $
\{Q^n L(\lambda)\:|\:\lambda\in\Lambda, n \in \Z\}
$ 
is a complete set of pairwise non-isomorphic irreducible objects in $\mathcal C$.
Finally note that every object of $\mathcal C$ has finite length by the exactness of $\nu$.
This puts us in the setup of (\ref{stupid2}). 
Using $\nu$ to identify the algebras
$A$ from (\ref{gradedA}) and (\ref{ungradedA}), 
the following diagram of functors commutes up to
isomorphism:
\begin{equation}\label{ged}
\begin{CD}
\widehat{\mathcal C}&@>\widehat{\mathbb{H}}>>&\widehat{\boldrmod A}\phantom{\,.}\\
@V\nu VV&&@VV\nu V\\
\overline{\mathcal C}&@>>\overline{\mathbb{H}}>&\rmod A\,.
\end{CD}
\end{equation}
Here, the functor $\nu$ on the right is the obvious functor that forgets the grading.
In this way we see that a choice of a graded lift of $\overline{\mathcal C}$ amounts to choosing a $\Z$-grading on the underlying basic algebra $A$
with respect to which the idempotents $1_\lambda \in A$ are
homogeneous.

\begin{Lemma}\label{acycliclemma}
Let $\mathcal C$ be a graded lift of Schurian category $\overline{\mathcal C}$
with notation as above.
For each $\lambda \in \Lambda$ 
the irreducible object $L(\lambda)$ has an injective hull $I(\lambda)$
in $\mathcal C$, which is a graded lift of the injective hull of $\overline{L}(\lambda)$ in $\overline{\mathcal C}$.
Hence $\mathcal C$ is an acyclic graded Schurian category.
\end{Lemma}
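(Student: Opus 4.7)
The plan is to construct $I(\lambda)$ directly inside a suitable graded module category and then transport it back to $\mathcal C$ via the equivalence $\mathbb{H}$ of (\ref{stupid2}). First I would use (GL1) together with the graded lifts $P(\lambda)$ of $\overline{P}(\lambda)$ to identify the graded algebra $A$ of (\ref{gradedA}) with the ungraded algebra $\overline{A}$ of (\ref{ungradedA}) (the latter being the ungraded vector space underlying $A$, equipped with the idempotents $1_\lambda$). Under this identification the commutative square (\ref{ged}) becomes the diagram in which the right-hand $\nu$ is simply the functor that forgets the $\Z$-grading.

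Next, observe that each left ideal $A 1_\lambda$ is a graded left $A$-module whose underlying ungraded module equals the left ideal $\overline{A} 1_\lambda$. The latter is finite dimensional, since in the Schurian category $\overline{\mathcal C}$ it is the linear dual of the indecomposable injective hull $\overline{I}(\lambda)$. Hence $A 1_\lambda$ is a finite dimensional graded left module. Its graded linear dual $(A 1_\lambda)^{*}$, with right action $(f\cdot a)(x):=f(ax)$, is then a finite dimensional graded right $A$-module. The standard argument for finite dimensional (locally unital) graded algebras shows that $(A 1_\lambda)^{*}$ is the injective hull of $L(\lambda)$ in $\boldrmod A$; I would therefore define $I(\lambda):=\mathbb{H}^{-1}\bigl((A 1_\lambda)^{*}\bigr)\in\mathcal C$.

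To check that $\nu I(\lambda)$ recovers $\overline{I}(\lambda)$, I would use that forgetting the grading on a finite dimensional graded module commutes with taking linear duals. Thus $\nu$ sends $(A 1_\lambda)^{*}$ to $(\overline{A} 1_\lambda)^{*}$, which via $\overline{\mathbb{H}}^{-1}$ is exactly $\overline{I}(\lambda)$. Full faithfulness of $\nu$ ensures that the essential embedding $L(\lambda)\hookrightarrow I(\lambda)$ in $\widehat{\mathcal C}$ descends to the essential embedding $\overline{L}(\lambda)\hookrightarrow\overline{I}(\lambda)$ in $\overline{\mathcal C}$, so $I(\lambda)$ is genuinely an injective hull of $L(\lambda)$ in $\mathcal C$.

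Combining the existence of $I(\lambda)$ for every $\lambda\in\Lambda$ with the properties already established in the paragraph preceding the lemma, namely enough projectives (from (GL1)), finite length (from exactness of $\nu$), one-dimensional endomorphism algebras of simples, and the acyclicity $L\not\cong q^n L$ for $n\neq 0$, we conclude that $\mathcal C$ is an acyclic graded Schurian category. The only non-formal point in the argument is verifying the finite dimensionality of $A 1_\lambda$, which reduces to unpacking the identification $A\cong\overline{A}$ and invoking the Schurian hypothesis on $\overline{\mathcal C}$; I expect no serious obstacle beyond this bookkeeping.
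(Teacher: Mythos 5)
Your proposal is correct and follows essentially the same route as the paper: pass through the equivalence to graded modules over $A$, note that each left ideal $A 1_\lambda$ is finite dimensional (its underlying ungraded module being the left ideal of $\overline A$, which is finite dimensional by the Schurian hypothesis), and take graded linear duals to obtain the indecomposable injectives, which are evidently graded lifts of the ungraded ones. The only cosmetic difference is that you spell out the identification $A\cong\overline A$ and the compatibility of $\nu$ with duals more explicitly than the paper does; the last sentence about essential embeddings descending under $\nu$ is harmless but redundant once one knows $(A1_\lambda)^*$ is already the injective hull of $L(\lambda)$ in $\boldrmod A$.
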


\begin{proof}
In view of (\ref{ged}),
we may assume that $\overline{\mathcal C} = \rmod{A}$ and $\mathcal C
= \boldrmod{A}$ for a locally unital graded algebra $A$.
The indecomposable injectives in $\rmod A$ are the linear duals of the (necessarily finite dimensional) left ideals $A
1_\lambda$. They are naturally graded, hence give indecomposable injective objects in $\boldrmod{A}$ too. This proves the first statement.
Hence $\mathcal C$ has enough injectives. All the other properties of an acyclic graded Schurian category have already been verified above.
\end{proof}

Finally let
 $\overline{F}:\overline{\mathcal C} \rightarrow \overline{\mathcal C}'$ be a functor between two Schurian categories, and $\mathcal C$ and $\mathcal C'$ be graded lifts
of $\overline{\mathcal C}$ and $\overline{\mathcal C}'$, respectively. A {\em graded lift}
of $\overline{F}$ means a graded functor $F:\mathcal C\rightarrow\mathcal C'$ such that $\nu' \circ \widehat{F} \cong \overline{F} \circ \nu$.
Assuming $\overline{F}$ has a right adjoint,
it corresponds to a functor
between the underlying module categories 
$\rmod{A}$ and $\rmod{A'}$
that is defined 
by tensoring with an $(A,A')$-bimodule.
Then a choice of graded lift amounts to choosing a $\Z$-grading on this bimodule
making it into a graded $(A, A')$-bimodule.

\subsection{Graded highest weight categories}
The next definition is the graded analog of
Definition~\ref{hwdef};
the basic example to keep in mind is
the category $\boldrmod{A}$ of finite dimensional graded modules over a
graded quasi-hereditary algebra $A$ in the sense of \cite{CPSgr}.

\begin{Definition}\rm\label{ghwdef}
A {\em graded highest weight category} is an acyclic graded Schurian category
$\mathcal C$ plus the data of a 
distinguished set of irreducible objects 
$\{L(\lambda)\:|\:\lambda\in\Lambda\}$
indexed by some interval-finite poset $\Lambda$ such that the following two axioms hold.
\begin{itemize}
\item[(GHW1)]
Every irreducible object of $\mathcal C$ is isomorphic to $Q^n L(\lambda)$
for unique $\lambda \in \Lambda$ and $n \in \Z$.
Hence letting $P(\lambda)$ be a projective cover of $L(\lambda)$ in $\mathcal C$,
the objects $\{Q^n P(\lambda)\:|\:\lambda \in \Lambda, n \in \Z\}$ give a complete set of pairwise non-isomorphic indecomposable projective objects in $\mathcal C$.
\item[(GHW2)]
Define the {\em standard object}
$\Delta(\lambda)$ to be the largest quotient of $P(\lambda)$
such that $[\Delta(\lambda):L(\mu)]_q = \delta_{\lambda,\mu}$
for $\mu \not< \lambda$.
Then $P(\lambda)$ has a filtration with top section isomorphic to
$\Delta(\lambda)$ and other sections of the form $Q^n\Delta(\mu)$ for 
$n \in \Z$ and $\mu > \lambda$.
\end{itemize}
There is also an equivalent dual formulation of this definition in terms of 
 indecomposable injective objects $I(\lambda)$
and costandard objects $\nabla(\lambda)$.
\end{Definition}

An object $M$ of a graded highest weight category $\mathcal C$ has a {\em graded $\Delta$-flag} if it has a filtration with sections of the form $Q^n \Delta(\lambda)$ for $n \in \Z$ and $\lambda \in \Lambda$.
The notion of a {\em graded $\nabla$-flag} is defined similarly.
We let $\mathcal C^\Delta$ be the exact subcategory of $\mathcal C$ consisting of all objects
with a graded $\Delta$-flag.
The $\Q(q)$-form 
$[\mathcal C^\Delta]_q$ 
of its 
Grothendieck group 
has basis $\{[\Delta(\lambda)]\:|\:\lambda \in \Lambda\}$. 

The other basic facts about highest weight categories from $\S$\ref{rhw}
also extend to the graded setting. 

If $\mathcal C$ is a graded highest weight category and $\overline{\mathcal C}$ is the underlying Schurian category, i.e. $\mathcal C$ is a graded lift of $\overline{\mathcal C}$, then it is easy to check that $\overline{\mathcal C}$ is a highest weight category in the sense of Definition~\ref{hwdef} with the same weight poset and standard objects
$\overline{\Delta}(\lambda) := \nu \Delta(\lambda)$. The following lemma establishes the converse of this statement.

\begin{Lemma}\label{tol}
Suppose that $\mathcal C$ is a graded lift of a highest weight category 
$\overline{\mathcal C}$.
For each $\lambda \in \Lambda$
let $L(\lambda)$ be some choice of graded lift
of the irreducible object $\overline{L}(\lambda)$
of $\overline{\mathcal C}$.
Then $\mathcal C$ is 
a graded highest weight category with $\{L(\lambda)\:|\:\lambda\in\Lambda\}$
as its distinguished irreducible objects.
Moreover its standard objects are graded lifts of the ones in $\overline{\mathcal C}$.
\end{Lemma}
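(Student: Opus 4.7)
The plan is to verify axioms (GHW1) and (GHW2) of Definition~\ref{ghwdef} by pulling back the known ungraded highest weight structure on $\overline{\mathcal C}$ through the fully faithful functor $\nu$, exploiting throughout that any ungraded morphism between gradable objects decomposes uniquely as a finite sum of homogeneous components in $\widehat{\mathcal C}$. For (GHW1), Lemma~\ref{acycliclemma} already makes $\mathcal C$ an acyclic graded Schurian category. Any irreducible $L \in \mathcal C$ has $\nu L$ irreducible in $\overline{\mathcal C}$ (by the discussion preceding that lemma), so $\nu L \cong \overline{L}(\lambda)$ for a unique $\lambda$; by uniqueness of graded lifts of indecomposable objects \cite[Lemma 2.5.3]{BGS} we get $L \cong q^n L(\lambda)$ for some $n \in \Z$, uniqueness of $n$ following from acyclicity. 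A graded projective cover $P(\lambda)$ of $L(\lambda)$ arises as an appropriate grading shift of a graded lift of $\overline{P}(\lambda)$.

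To verify (GHW2), I would define $\Delta(\lambda) := P(\lambda)/K(\lambda)$, where $K(\lambda) \subseteq P(\lambda)$ is the graded subobject generated by the images of all homogeneous morphisms $q^n P(\mu) \to P(\lambda)$ with $\mu \not\leq \lambda$ and $n \in \Z$. The key step is to show that $\nu K(\lambda)$ coincides with the kernel $\overline{K}$ of $\overline{P}(\lambda) \twoheadrightarrow \overline{\Delta}(\lambda)$, which then implies that $\Delta(\lambda)$ is a graded lift of $\overline{\Delta}(\lambda)$ with head $L(\lambda)$. The inclusion $\nu K(\lambda) \subseteq \overline{K}$ is clear since $\Hom_{\overline{\mathcal C}}(\overline{P}(\mu), \overline{\Delta}(\lambda)) = 0$ when $\mu \not\leq \lambda$. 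For the reverse, observe that $\overline{K}$ carries a $\Delta$-flag with sections $\overline{\Delta}(\mu_j)$ for $\mu_j > \lambda$, so the head of $\overline{K}$---and of any nonzero quotient thereof---consists solely of $\overline{L}(\mu_j)$'s with $\mu_j > \lambda$; were $\overline{K} \not\subseteq \nu K(\lambda)$, projectivity of $\overline{P}(\mu)$ would produce a morphism $\overline{P}(\mu) \to \overline{P}(\lambda)$ with $\mu \not\leq \lambda$ whose image escapes $\nu K(\lambda)$, contradicting the definition of $K(\lambda)$. The graded defining property $[\Delta(\lambda):L(\mu)]_q = \delta_{\lambda,\mu}$ for $\mu \not< \lambda$ then follows, and maximality among graded quotients with this property is immediate: any such graded quotient $Q$ of $P(\lambda)$ kills every homogeneous $q^n P(\mu) \to P(\lambda)$ with $\mu \not\leq \lambda$, hence factors through $\Delta(\lambda)$.

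For the graded $\Delta$-flag axiom, I would prove the more general statement that any $M \in \mathcal C$ with $\nu M \in \overline{\mathcal C}^\Delta$ admits a graded $\Delta$-flag, by induction on the length of the flag of $\nu M$; the base case reduces to uniqueness of graded lifts of the indecomposable $\overline{\Delta}(\mu)$. For the inductive step, pick a maximal weight $\mu$ appearing as a section, giving a surjection $\nu M \twoheadrightarrow \overline{\Delta}(\mu)$ in $\overline{\mathcal C}$, and decompose it through $\nu$ into homogeneous components $\pi_n: M \to \Delta(\mu)$ (shifted appropriately). Composing each with the head projection $\Delta(\mu) \twoheadrightarrow L(\mu)$ and using that the resulting ungraded surjection onto $\overline{L}(\mu)$ must have some nonzero homogeneous summand, we identify a $\pi_n$ whose image contains the head of $\Delta(\mu)$; since $\Delta(\mu)$ has unique maximal subobject, Nakayama's lemma then forces $\pi_n$ itself to be surjective. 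The kernel of this graded surjection lifts the ungraded kernel, which has a strictly shorter $\Delta$-flag by the standard closure of $\Delta$-flags under short exact sequences with standard quotient, so induction applies and the pieces glue to give a graded $\Delta$-flag on $M$. The main obstacle is this lifting step: an arbitrary ungraded surjection has no a priori reason to admit a surjective homogeneous component, and the reduction via Nakayama to the irreducible head is what makes it go through.
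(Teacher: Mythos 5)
Your proposal is correct, and the core of it matches the paper's: both reduce everything to the observation that an arbitrary morphism between gradable objects decomposes into homogeneous pieces, so that the kernel of $\overline{P}(\lambda)\twoheadrightarrow\overline{\Delta}(\lambda)$ — the trace of the projectives indexed by $\mu\not\leq\lambda$ (or equivalently $\mu>\lambda$, as you tacitly use; the two traces coincide in any highest weight category) — is the same as the trace of homogeneous maps $q^nP(\mu)\to P(\lambda)$, making $\overline{\Delta}(\lambda)$ gradable with lift $\Delta(\lambda)$. Where you diverge is in the $\Delta$-flag verification: the paper merely gestures at "an argument mimicking the usual proof of the criterion for a module to have a $\Delta$-flag," which would proceed via the graded analogs of the costandard objects $\nabla(\mu)$ and the Ext-vanishing criterion. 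You instead give a self-contained induction on flag length, isolating a maximal section $\overline{\Delta}(\mu)$, decomposing the surjection $\nu M\twoheadrightarrow\overline{\Delta}(\mu)$ into homogeneous pieces, and using that a homogeneous map whose composite with the head projection is nonzero must itself be surjective (because the image otherwise lies in the radical of $\Delta(\mu)$) — a clean Nakayama-style argument. Your route avoids constructing the graded costandards and the homological criterion entirely, at the modest cost of a bit more casework; either works, and yours arguably makes the $\Delta$-flag step genuinely verifiable from what is written on the page.
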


\begin{proof}
We know that $\mathcal C$ is an acyclic graded Schurian category by Lemma~\ref{acycliclemma}.
Let $A$ be as in (\ref{gradedA}).
We may as well assume that
$\overline{\mathcal C}$ is $\rmod A$ and
$\mathcal C$ is 
$\boldrmod A$.
So we can take
$$
\overline{P}(\lambda) = 1_\lambda A = P(\lambda),
\qquad
\Hom_{\overline{\mathcal C}}(\overline{P}(\mu), \overline{P}(\lambda))
= 1_\lambda A 1_\mu = \Hom_{\widehat{\mathcal C}}(P(\mu), P(\lambda)).
$$
By (HW) the standard object $\overline{\Delta}(\lambda)$ in $\overline{\mathcal C}$ is 
the quotient of $P(\lambda)$ 
by the submodule generated by the images of all (not necessarily homogeneous)
homomorphisms $P(\mu) \rightarrow P(\lambda)$ 
for $\mu > \lambda$.
The image of an arbitrary homomorphism is contained in the sum of the images of its homogeneous pieces, so $\overline{\Delta}(\lambda)$
can also be described as the quotient of $P(\lambda)$ by the submodule generated by the images of all homogeneous homomorphisms $Q^n P(\mu) \rightarrow P(\lambda)$
of degree zero for $\mu > \lambda$ and $n \in \Z$. Thus it is gradable and a graded lift is given by
$\Delta(\lambda)$ as defined by (GHW2).
Similarly each $\nabla(\lambda)$ is a graded lift of the corresponding costandard object
$\overline{\nabla}(\lambda)$ of $\overline{\mathcal C}$.
Then an argument mimicking the usual proof of the criterion for a module
to have $\Delta$-flag shows 
that $P(\lambda)$ has a graded $\Delta$-flag.
\end{proof}

\subsection{Graded categorifications}
Next we formulate the graded version of the definition of $\mathfrak{sl}_I$-categorification from
Definition~\ref{catdef}.

\begin{Definition}\rm\label{qcatdef}
A {\em $U_q\mathfrak{sl}_I$-categorification} is an acyclic graded Schurian category
$\mathcal C$ 
together with the data of graded endofunctors $F_i, E_i, K_i$ and $K_i^{-1}$ 
for each $i \in I$,
an adjunction making $F_i^* := Q E_i K_i$ into a right adjoint 
to $F_i$,
and homogeneous natural transformations $\xi \in \Hom(F_i, F_i)_2$
and $\tau \in \Hom(F_j \circ F_i, F_i\circ F_j)_{-\alpha_i\cdot\alpha_j}$ for each $i,j \in I$,
such that the following axioms hold.
\begin{itemize}
\item[(GSL1)]
There is a decomposition $\mathcal C = \bigoplus_{\varpi \in P_I} \mathcal C_{\varpi}$
such that 
$K_i|_{\mathcal C_\varpi}\cong Q^{\varpi\cdot\alpha_i}$ and
$K_i^{-1}|_{\mathcal C_\varpi}\cong Q^{-\varpi\cdot\alpha_i}$.
\item[(GSL2)]
Letting $F := \bigoplus_{i \in I} F_i$,
the endomorphisms $\xi_j := \widehat{F}^{d-j} \xi \widehat{F}^{j-1}$
and $\tau_k := \widehat{F}^{d-k-1} \tau \widehat{F}^{k-1}$ of $\widehat{F}^d$ 
plus the projections $1_\bi$ of $\widehat{F}^d$ onto its summands
$\widehat{F}_\bi := \widehat{F}_{i_d}\circ\cdots\circ \widehat{F}_{i_1}$ for each $\bi
\in I^d$ satisfy the relations
of the quiver Hecke algebra $QH_{I,d}$.
\item[(GSL3)]
Each functor $E_i^* := Q F_i K_i^{-1}$ is isomorphic to a right adjoint of $E_i$.
\item[(GSL4)] The endomorphisms $f_i, e_i$ and $k_i$ of $[\mathcal C]_q$
  induced by the functors $F_i, E_i$ and $K_i$ make $[\mathcal C]_q$ into an integrable $U_q\mathfrak{sl}_I$-module.
\end{itemize}
The last axiom here has the following equivalent dual formulation.
\begin{itemize}
\item[(GSL4$^*$)] The endomorphisms $f_i, e_i$ and $k_i$ of $[\mathcal
  C]^*_q$ induced by the functors $F_i, E_i$ and $K_i$ make $[\mathcal C]^*_q$ into an integrable $U_q\mathfrak{sl}_I$-module.
\end{itemize}

There is also a graded analog of Definition~\ref{sev}:
a {\em strongly equivariant graded functor} (resp. {\em equivalence}) between $U_q
\mathfrak{sl}_I$-categorifications $\mathcal C$ and $\mathcal C'$
is a graded functor (resp. equivalence)
$\mathbb{G}:\mathcal C \rightarrow \mathcal C'$ which is strongly equivariant as before, 
such that the required isomorphisms
$\zeta_i:F_i' \circ \mathbb{G} \stackrel{\sim}{\rightarrow} \mathbb{G}
\circ F_i$ is also a graded natural transformation.
\end{Definition}

The axioms (GSL1)--(GSL4) are equivalent to saying that the given data defines a 
homogeneous action of the Kac-Moody 2-category associated to
$\mathfrak{sl}_I$ on $\mathcal C$ making it into a graded integrable 
$2$-representation in the sense of \cite{R}. 
This is a variation on \cite[Theorem 5.30]{R}; see the proofs of the next two lemmas.

\begin{Lemma}\label{forward}
If $\mathcal C$ is a $U_q\mathfrak{sl}_I$-categorification that is a graded lift of a Schurian category $\overline{\mathcal C}$,
then there is an induced structure of $\mathfrak{sl}_I$-categorification on 
$\overline{\mathcal C}$.
\end{Lemma}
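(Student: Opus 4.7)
The plan is to transport each piece of the graded data on $\mathcal C$ through the degrading functor $\nu:\widehat{\mathcal C}\to\overline{\mathcal C}$ of Definition~\ref{liftdef}. It is convenient to work via the algebraic picture in (\ref{ged}), identifying $\mathcal C$ with $\boldrmod A$ and $\overline{\mathcal C}$ with $\rmod A$ for the same locally unital algebra $A$ (graded on one side, ungraded on the other). Each graded exact endofunctor $F_i$ (resp.\ $E_i$) is given by tensoring with a graded $(A,A)$-bimodule; forgetting the $\Z$-grading on these bimodules yields ungraded $(A,A)$-bimodules defining endofunctors $\overline F_i$ (resp.\ $\overline E_i$) of $\overline{\mathcal C}$, and we set $\overline F := \bigoplus_{i \in I} \overline F_i$ and $\overline E := \bigoplus_{i \in I} \overline E_i$.

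Next I would transport the structural morphisms. The homogeneous natural transformations $\xi \in \End(F_i)_2$ and $\tau \in \Hom(F_jF_i,F_iF_j)_{-\alpha_i\cdot\alpha_j}$ correspond to homogeneous maps of graded bimodules, which after forgetting degrees become natural transformations of $\overline F_i$ and of $\overline F_j\circ \overline F_i\to \overline F_i\circ \overline F_j$; summing gives $\xi \in \End(\overline F)$ and $\tau \in \End(\overline F^2)$. For the adjunction, axiom (GSL1) says that $K_i$ acts on the summand $\mathcal C_\varpi$ as $q^{\varpi\cdot\alpha_i}$, so the functor $F_i^*= qE_iK_i$ becomes isomorphic to $\overline E_i$ after forgetting the grading; the graded adjunction between $F_i$ and $F_i^*$ thus descends to an adjunction $(\overline F_i,\overline E_i)$ on $\overline{\mathcal C}$, and (GSL3) likewise yields an adjunction $(\overline E_i,\overline F_i)$.

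Finally I would verify the axioms (SL1$'$)--(SL4$'$) of the quiver Hecke algebra formulation of Definition~\ref{catdef}, which is equivalent to the affine Hecke one via Theorem~\ref{grth}. Axiom (SL2$'$) is immediate: the relations (QH1)--(QH7) are homogeneous in $\mathcal C$ and so survive the degrading. Axiom (SL3$'$) is part of what was established in the previous paragraph. Axiom (SL4$'$) follows from (GSL4) by specialising $q\mapsto 1$ through the natural surjection $[\mathcal C]_q \twoheadrightarrow [\overline{\mathcal C}]$: integrability of the $U_q\mathfrak{sl}_I$-module $[\mathcal C]_q$ descends to integrability of the $\mathfrak{sl}_I$-module $[\overline{\mathcal C}]$, and the classes of projectives are weight vectors because (GSL1) decomposes $\mathcal C$ into $P_I$-graded summands.

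The one axiom requiring a moment's thought is local nilpotence (SL1$'$), and it is the only real obstacle. Given $M \in \overline{\mathcal C}$, choose any graded lift $M'\in\mathcal C$ (which exists as every projective, hence every object built from projectives, is gradable). Then $FM'$ has finite length, so the set of degrees in which its graded composition factors appear is bounded above and below; since $\xi$ is homogeneous of strictly positive degree $2$, iterating it eventually shifts out of this window and must therefore act nilpotently on $FM'$. This nilpotency passes to $\overline FM\cong \nu(FM')$, giving (SL1$'$) and completing the proof.
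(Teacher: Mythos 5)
Your proof follows the paper's essentially step for step: both pass to the algebraic picture of~(\ref{ged}), degrade the bimodules defining $F_i$ and $E_i$ together with the homogeneous structure maps $\xi$, $\tau$ and the adjunctions, observe that (SL2$'$) and (SL3$'$) are then immediate, and recover (SL4$'$) from (GSL4) at $q=1$. The one place where you have a real gap is (SL1$'$). To run your degree argument you need a graded lift $M'$ of an arbitrary $M\in\overline{\mathcal C}$, and your justification---that every projective, hence \emph{every object built from projectives}, is gradable---does not hold: the gradable objects are exactly the essential image of $\nu$, and a general finite-length module over a graded basic algebra need not lie in that image (gradability is preserved by direct sums and summands, but not by passing to arbitrary quotients or subobjects). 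The paper avoids this by running the finite-degree-support argument only for $M$ a graded \emph{projective}, where a graded lift is guaranteed by (GL1), and then deducing local nilpotence of $\overline\xi$ on $\overline F M$ for arbitrary $M\in\overline{\mathcal C}$ from naturality together with right exactness of $\overline F$: any $M$ is a quotient of a projective $\overline P$, so $\overline F M$ is a quotient of $\overline F\overline P$, and $\overline\xi$ commutes with that surjection, so nilpotence descends. Replace your parenthetical by this two-step reduction and the proof is correct.
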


\begin{proof}
Let us work in terms of graded modules over the algebra $A$ from (\ref{gradedA}).
Then $F$ and $E$ are defined by tensoring with certain graded
$(A,A)$-bimodules, and $\xi$, $\tau$ and the various adjunctions
become homogeneous graded bimodule homomorphisms. Forgetting the grading we get
endofunctors $\overline{F}$ and $\overline{E}$
of $\rmod{A}$, natural transformations $\overline{\xi}$ and $\overline{\tau}$
satisfying the relations of (SL2$'$),
and adjunctions both ways round between $\overline{F}$ and $\overline{E}$.
It remains to verify the axioms (SL1$'$) and (SL4$'$).
The first follows as $\xi$ acts nilpotently on any graded projective
by degree considerations, hence $\overline{\xi}$ is locally nilpotent too.
Finally (SL4$'$) follows from (GSL4) specialized at $q=1$.
\end{proof}

\begin{Lemma}\label{backwards}
Let $\overline{\mathcal C}$ be an $\mathfrak{sl}_I$-categorification, denoting its various functors and natural transformations by $\overline{F}_i, \overline{E}_i,
\overline{\xi}$ and $\overline{\tau}$.
For each $\varpi \in P_I$ 
let $\overline{\mathcal C}_{\varpi}$ be the full 
subcategory of $\overline{\mathcal C}$ 
consisting of all objects $\overline{M}$ such that $[\overline{M}]$
lies in the $\varpi$-weight space of the integrable
$\mathfrak{sl}_I$-module $[\overline{\mathcal C}]$. 
Suppose that we are given the following additional data:
\begin{itemize}
\item[(i)]
graded lifts $\mathcal C_{\varpi}$ of
each $\overline{\mathcal C}_\varpi$, hence a graded lift
$\mathcal C = \bigoplus_{\varpi \in P_I} \mathcal C_\varpi$
of $\overline{\mathcal C}$;
\item[(ii)] graded functors $K_i$ and $K_i^{-1}$ satisfying (GSL1);
\item[(iii)]
graded lifts $F_i$ and $E_i$ of the functors 
$\overline{F}_i$ and $\overline{E}_i$ together with an adjunction making
$F_i^* = Q E_i K_i$ into a right adjoint to $F_i$;
\item[(iv)]
graded lifts $\xi \in \Hom(F_i, F_i)_2$ and $\tau
\in \Hom(F_j \circ F_i, F_i \circ F_j)_{-\alpha_i\cdot\alpha_j}$
of $\overline{\xi}$ and $\overline{\tau}$, meaning
 that
$\chi \circ \nu \xi= \overline{\xi}\nu \circ \chi$
in $\Hom(\nu\circ\widehat{F}, \overline{F}\circ \nu)$
and $\overline{F}\chi \circ \chi\widehat{F} \circ \nu \tau = \overline{\tau} \nu \circ \overline{F}\chi \circ \chi\widehat{F}$
in $\Hom(\nu \circ \widehat{F}^2, \overline{F}^2 \circ \nu)$
for some choice of the
isomorphism $\chi:\nu \circ \widehat{F} \stackrel{\sim}{\rightarrow}
\overline{F} \circ \nu$.
\end{itemize}
Then $\mathcal C$ is a $U_q\mathfrak{sl}_I$-categorification.
(Note we do not insist that the graded adjunction in (iii) is a lift of the given adjunction between $\overline{F}_i$ and $\overline{E}_i$.)
\end{Lemma}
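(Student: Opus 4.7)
The plan is to verify the four axioms (GSL1)--(GSL4) in turn, with (GSL3) being the main obstacle. Axiom (GSL1) is immediate from hypothesis (ii). For (GSL2), note that the defining relations of $QH_{I,d}$ are homogeneous once $\xi_j, \tau_k$ and $1_\bi$ are assigned the degrees specified at the end of $\S$\ref{ha}, and our hypotheses on the degrees of $\xi$ and $\tau$ are precisely what makes $\xi_j, \tau_k$ on $\widehat{F}^d$ homogeneous of the correct degree. Each relation therefore asserts an equality of two homogeneous morphisms of the same degree in $\widehat{\mathcal C}$. Applying the faithful functor $\nu$ and using the compatibility $\chi$ from hypothesis (iv), the relation becomes the corresponding KLR relation in $\overline{\mathcal C}$, which holds by (SL2$'$). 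Faithfulness of $\nu$ then gives it back in $\widehat{\mathcal C}$.

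For (GSL4), the weight decomposition of $[\mathcal C]_q$ comes from (i), (ii) and (GSL1), which also yields the relations $k_i f_j k_i^{-1} = q^{-\alpha_i\cdot\alpha_j} f_j$ and $k_i e_j k_i^{-1} = q^{\alpha_i\cdot\alpha_j} e_j$ (the ingredient that $F_j, E_j$ shift weights by $\mp\alpha_j$ on the underlying ungraded category is part of (SL4) for $\overline{\mathcal C}$). The remaining relations of $U_q\mathfrak{sl}_I$, namely the Cartan commutator $[e_i, f_j] = \delta_{i,j}(k_i - k_i^{-1})/(q - q^{-1})$ and the quantum Serre relations, decompose over weight spaces; on $[\mathcal C_\varpi]_q$, after substituting $k_i = q^{\varpi\cdot\alpha_i}$, each reduces to a collection of equalities of classes in $[\mathcal C_\varpi]_q$ whose underlying ungraded specializations hold by (SL4) for $\overline{\mathcal C}$, with the $q$-powers on the two sides matching thanks to (GSL1).

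The main obstacle is (GSL3): we must construct a graded adjunction making $E_i^* = qF_iK_i^{-1}$ into a right adjoint of $E_i$. The approach is to invoke \cite[Theorem~5.30]{R} in the graded setting. The data already secured, namely (GSL1), (GSL2), the first graded adjunction in (iii), and the homogeneous natural transformations in (iv), assemble into a graded integrable $2$-representation of the $2$-Kac-Moody algebra associated to $\mathfrak{sl}_I$. In Rouquier's framework only the first adjunction $(F_i, F_i^*)$ is required; the second adjunction is then built formally from this first adjunction, the natural transformations $\xi, \tau$, and the invertibility of certain ``bubble'' natural transformations of the identity. Concretely, one may take the ungraded unit $\overline{c}$ and counit $\overline{d}$ of the second adjunction in $\overline{\mathcal C}$ (which exist by (SL3$'$)), identify them up to invertible bubbles with Rouquier's formulas, and observe that once the formulas are interpreted in $\widehat{\mathcal C}$ the resulting morphisms are homogeneous of degree zero into $qF_iK_i^{-1}E_i$ and out of $E_i\cdot qF_iK_i^{-1}$, respectively. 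The hard part will be carefully checking the degree bookkeeping in Rouquier's construction: on the weight space $\mathcal C_\varpi$ one needs the total shift to come out to exactly $1-\varpi\cdot\alpha_i$, which is precisely what matches $qF_iK_i^{-1}|_{\mathcal C_\varpi} \cong q^{1-\varpi\cdot\alpha_i} F_i$, and this depends crucially on the degree $2$ of $\xi$ and the degree $-\alpha_i\cdot\alpha_j$ of $\tau$ declared in (iv).
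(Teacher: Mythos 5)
Your verification of (GSL1) and (GSL2) is fine, and for (GSL3) you correctly identify that the second adjunction must come from Rouquier's $2$-Kac--Moody formalism rather than a formal degree-shift argument; the paper invokes \cite[Theorem 5.16]{R} for exactly this point, while you cite \cite[Theorem 5.30]{R} and gesture at the bubble construction. That part is essentially the same as the paper's proof.

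Your argument for (GSL4), however, has a genuine gap. You claim that the quantum Cartan commutator and quantum Serre relations hold on $[\mathcal C_\varpi]_q$ because the ungraded specializations hold by (SL4) ``with the $q$-powers on the two sides matching thanks to (GSL1).'' But (GSL1) only controls how $K_i$ acts; it tells you nothing about which powers of $q$ appear in $e_if_i - f_ie_i$. Consider $i \neq j$: the ungraded isomorphism $\overline{E}_i\overline{F}_j \cong \overline{F}_j\overline{E}_i$ lifts to an isomorphism of graded functors $E_iF_j \cong q^m F_jE_i$ for \emph{some} $m \in \Z$ (since graded lifts of indecomposables are unique only up to shift), and nothing in the weight decomposition forces $m = 0$. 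Similarly, for $i = j$ on a weight space with $\varpi\cdot\alpha_i = n$, the ungraded relation $[e_i,f_i] = n$ is compatible with $[e_i,f_i]$ being \emph{any} element of $\Z[q,q^{-1}]$ specializing to $n$ at $q=1$; one needs to know it is precisely $[n]_q = q^{n-1} + q^{n-3} + \cdots + q^{1-n}$, and these particular powers are not visible from (GSL1) or from the weight decomposition. What pins them down is the graded isomorphism of functors $E_iF_i \cong F_iE_i \oplus \bigoplus_{k} q^{n-1-2k}\operatorname{id}$ coming from the $2$-relations in the graded $2$-Kac--Moody algebra — a structural theorem, not a formal consequence of weight data. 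The paper's proof routes (GSL4) through the $2$-representation: almost all relations follow from the $2$-relations, and the quantum Serre relations are \cite[Lemma 3.13]{R}. You should do the same; having already invoked \cite[Theorem 5.30]{R} for (GSL3), it costs nothing to derive (GSL4) from the $2$-relations as well, rather than from the specialization argument.
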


\begin{proof}
We know that $\mathcal C$ is an acyclic graded Schurian category by Lemma~\ref{acycliclemma}.
In view of \cite[Theorem 5.30]{R}, we have the data required to define a
homogeneous action of the Kac-Moody 2-category associated to $\mathfrak{sl}_I$
on $\mathcal C$, making it into a graded integrable
$2$-representation. The axioms (GSL1)--(GSL2) follow immediately from
the definition of the latter, while (GSL3) follows from \cite[Theorem
5.16]{R} (or \cite[Theorem 4.3]{Bnew}).
To check (GSL4) we need to verify the relations of $U_q \mathfrak{sl}_I$.
Almost all of them follow from the $2$-relations in the Kac-Moody
2-category.
We are just left with the quantum Serre relations which follow from \cite[Lemma 3.13]{R}.
\end{proof}

\subsection{Graded tensor product categorifications}\label{second}
Given a type $(\bn,\bc)$ of level $l$, we can at last formulate the graded analog of Definition~\ref{tpcdef}.

\begin{Definition}\rm\label{qtpcdef}
A {\em $U_q\mathfrak{sl}_I$-tensor product categorification}
 of type $(\bn,\bc)$
is a graded highest weight category $\mathcal C$
together with graded endofunctors $F_i, E_i, K_i$ and $K_i^{-1}$,
an adjunction making $F_i^* := Q E_i K_i$ into a right adjoint 
to $F_i$,
and homogeneous natural transformations $\xi$ and $\tau$ as above such that
such that (GSL1)--(GSL3) and (GTP1)--(GTP3) hold.
\begin{itemize}
\item[(GTP1)]
Same as (TP1).
\item[(GTP2)]
The exact functors $F_i$ and $E_i$ send objects with graded
$\Delta$-flags to objects with graded $\Delta$-flags.
\item[(GTP3)]
The linear isomorphism
$[\mathcal C^\Delta] \stackrel{\sim}{\rightarrow} \bigwedge^{\bn,\bc} V_I,
[\Delta(\lambda)] \mapsto v_\lambda$ intertwines the endomorphisms
$f_i, e_i$ and $k_i$ of $[\mathcal C^\Delta]$ induced by 
$F_i, E_i$ and $K_i$ with the endomorphisms of $\bigwedge^{\bn,\bc} V_I$ arising from
the actions of $f_i, e_i, k_i \in U_q\mathfrak{sl}_I$.
\end{itemize}
Since $[\mathcal C]_q$ embeds into $[\mathcal C^\Delta]_q =
\bigwedge^{\bn,\bc} V_I$, these axioms imply that (GSL4) holds, so
that
$\mathcal C$ is a $U_q\mathfrak{sl}_I$-categorification in the sense of Definition~\ref{qcatdef} too.
\end{Definition}

\begin{Lemma}\label{oppy}
Suppose that $\mathcal C$ is a $U_q \mathfrak{sl}_I$-tensor product categorification as above.
Then $\mathcal C^{\operatorname{op}}$ is a $U_q \mathfrak{sl}_I$-tensor product categorification with
categorification functors $F_i^{\operatorname{op}} := Q^2 F_i K_i^{-2}$,
$E_i^{\operatorname{op}} := Q^2 E_i K_i^2$,
$K_i^{\operatorname{op}} := K_i^{-1}$ and
$(K_i^{\operatorname{op}})^{-1} := K_i$, taking the distinguished irreducible objects and other required
adjunctions and natural transformations to be the same as in $\mathcal C$.
\end{Lemma}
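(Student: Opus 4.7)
The plan is to verify the axioms (GTP1)--(GTP3) and (GSL1)--(GSL3) for $\mathcal C^{\operatorname{op}}$ equipped with the specified data, reducing each to properties of $\mathcal C$. The unifying observation is that the twisting factors $q^2K_i^{\mp 2}$ in the definitions of $F_i^{\operatorname{op}}$ and $E_i^{\operatorname{op}}$ are precisely designed to convert the action of $U_q\mathfrak{sl}_I$ on $[\mathcal C^\nabla]_q$ (which, as a consequence of the adjunction properties, is naturally a $\psi^*$-twist of the action on $[\mathcal C^\Delta]_q$) into the plain action.

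The first batch of axioms is straightforward. Axiom (GTP1) is immediate from the equivalent formulation of a graded highest weight category in terms of injectives and costandard objects, yielding $\mathcal C^{\operatorname{op}}$ as graded highest weight with poset $\Lambda$ and standard objects $\nabla(\lambda)$. Axiom (GSL1) follows from the decomposition $\mathcal C^{\operatorname{op}} = \bigoplus_\varpi\mathcal C^{\operatorname{op}}_\varpi$ together with the fact that $q^{\operatorname{op}} = q_{\mathcal C}^{-1}$ (since left and right adjoints swap under $\operatorname{op}$), giving that $K_i^{\operatorname{op}} = K_i^{-1}$ acts as $(q^{\operatorname{op}})^{\varpi\cdot\alpha_i}$ on $\mathcal C^{\operatorname{op}}_\varpi$. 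Axiom (GTP2) reduces to the self-dual statement that $F_i$ and $E_i$ preserve $\nabla$-filtered objects in $\mathcal C$, which grading shifts do not affect.

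For the adjunctions in Definition \ref{qtpcdef} and axiom (GSL3) for $\mathcal C^{\operatorname{op}}$, I would interpret the formulas $F_i^{\operatorname{op}}, E_i^{\operatorname{op}}$ using $q = q_{\mathcal C}$ consistently, and use that a right adjoint in $\mathcal C^{\operatorname{op}}$ is a left adjoint in $\mathcal C$. From the adjoint pair $(E_i, qF_iK_i^{-1})$ of (GSL3), one derives $F_i^L \cong q^{-1}E_iK_i^{-1}$ via the composition-of-adjoints rule; then a direct calculation using the weight-space commutation $K_i^2E_iK_i^{-1} \cong q^4E_iK_i$ yields $(q^2F_iK_i^{-2})^L \cong q_{\mathcal C}E_iK_i$, matching $q^{\operatorname{op}}E_i^{\operatorname{op}}K_i^{\operatorname{op}}$ as required. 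Axiom (GSL3) for $\mathcal C^{\operatorname{op}}$ is verified analogously. For (GSL2), the natural transformations $\xi^{\operatorname{op}}$ and $\tau^{\operatorname{op}}$ are induced from $\xi$ and $\tau$ on $\mathcal C$ with the degree conventions preserved and the quiver Hecke algebra relations intact.

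The main obstacle is (GTP3). Using the perfect pairing $\langle[\Delta(\lambda)], [\nabla(\mu)]\rangle_q = \dim_q\Hom(\Delta(\lambda),\nabla(\mu)) = \delta_{\lambda\mu}$, the adjunction $(E_i, qF_iK_i^{-1})$, and equation (\ref{form}), one computes that the categorical $F_i, E_i, K_i$ act on $[\mathcal C^\nabla]_q$ (identified with $\bigwedge^{\bn,\bc}V_I$ via $[\nabla(\lambda)] \mapsto v_\lambda$) as the operators $\psi^*(f_i) = q^2f_ik_i^{-2}$, $\psi^*(e_i) = q^2 e_ik_i^2$, and $k_i$ respectively. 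Applying the shifts $q^2K_i^{\mp 2}$ from the definitions of $F_i^{\operatorname{op}}$ and $E_i^{\operatorname{op}}$ precisely cancels the $\psi^*$-twists, yielding the plain $f_i$ and $e_i$ actions on $[\mathcal C^{\operatorname{op},\Delta}]_q$ and confirming (GTP3). The most delicate aspect of the proof is bookkeeping the sesquilinearity of the Grothendieck pairing together with the convention $q^{\operatorname{op}} = q_{\mathcal C}^{-1}$ so that the weight-space twists land correctly.
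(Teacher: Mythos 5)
Your proposal is correct in its conclusion and overall structure, but takes a genuinely different route from the paper's proof of (GTP2)--(GTP3). The paper argues concretely: from (\ref{fact}) it reads off the graded $\Delta$-flag of $F_j\Delta(\lambda)$, dualizes via the adjoint pair $(F_j,F_j^*)$ to obtain the graded $\nabla$-flag of $F_j^*\nabla(\lambda)$ section by section, rescales by the explicit factors $q\,K_j$ to get the $\nabla$-flag of $E_j^{\operatorname{op}}\nabla(\lambda)$, and then matches coefficients against (\ref{eact}), all the while tracking that $q$ on $\mathcal C^{\operatorname{op}}$ induces $q^{-1}$ on the Grothendieck group. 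Your argument instead works abstractly on $[\mathcal C^\nabla]_q$ via the $\qdim\Hom$ pairing and the $*$-equivariance of (\ref{form}), identifying the induced action of the old functors as a $\psi^*$-twist which the prefactors $q^2K_i^{\mp 2}$ are designed to cancel; this is cleaner and more conceptual, and also explains \emph{why} those prefactors appear rather than just verifying that they work. One bookkeeping slip to fix: if you read the actions of $F_i,E_i$ on $[\mathcal C^\nabla]_q$ as $\psi^*(f_i),\psi^*(e_i)$, then you must be using the $\mathcal C^{\operatorname{op}}$-convention $[q_{\mathcal C}^n N]=q^{-n}[N]$, in which convention $K_i$ acts as $k_i^{-1}$ (not $k_i$ as you wrote); conversely, in the $\mathcal C$-convention $K_i$ does act as $k_i$, but then $F_i,E_i$ act as the componentwise bar-conjugates of $\psi^*(f_i),\psi^*(e_i)$, not as $\psi^*(f_i),\psi^*(e_i)$ themselves. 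The two slips cancel in the final computation, so the conclusion stands, but the intermediate claim as literally stated mixes the two conventions. Your handling of the adjunction calculation for $(F_i^{\operatorname{op}})^L$ via the composition rule and the weight-space commutation $K_i^2 E_i K_i^{-1}\cong q^4E_iK_i$ is correct.
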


\begin{proof}
All of the axioms follow immediately except for (GTP2)--(GTP3).
To see these 
note using (GTP2)--(GTP3) for $\mathcal C$
and (\ref{fact}) that $F_j \Delta(\lambda)$
has a graded $\Delta$-flag 
with sections
$Q^{(|\lambda_{i+1}|+\cdots+|\lambda_l|)\cdot \alpha_j} \Delta(t_{ij}(\lambda))$
for $i=1,\dots,l$ such that $\lambda_{ij}=1$ and $\lambda_{i(j+1)}=0$.
Hence, by
an argument involving the adjoint pair $(F_j, F_j^*)$
and the homological criteria for graded $\Delta$- and $\nabla$-flags,
$F_j^* \nabla(\lambda)$ has a graded $\nabla$-flag
with sections
$Q^{-(|\lambda_{i+1}|+\cdots+|\lambda_l|)\cdot \alpha_j} \nabla(t_{ij}(\lambda))$
for $i=1,\dots,l$ such that $\lambda_{ij}=0$ and $\lambda_{i(j+1)}=1$.
Now rescale to deduce that
$E_j^{\operatorname{op}} \nabla(\lambda)$ has a filtration with sections
$Q^{(|\lambda_{1}|+\cdots+|\lambda_{i-1}|)\cdot \alpha_j} \nabla(t_{ij}(\lambda))$
for $i=1,\dots,l$ such that $\lambda_{ij}=0$ and $\lambda_{i(j+1)}=1$.
Comparing with (\ref{eact}) and bearing in mind that it is the shift functor
$Q^{-1}$ on $\mathcal C^{\operatorname{op}}$ that
induces $q$ on its Grothendieck group,
this checks (GTP2) and the equivariance from (GTP3) for $e_j$. A similar argument 
works for $f_j$, while the equivariance with respect to each $k_j$ is obvious.
\end{proof}

The truncation construction of $\S$\ref{trun} can obviously be applied also in the graded setting.
Thus, given some $U_q \mathfrak{sl}_I$-tensor product categorification $\mathcal C$ and a subinterval $J \subset I$,
the subquotient $\mathcal C_J = \mathcal C_{\leq J} / \mathcal C_{<J}$
has a naturally induced structure of $U_q \mathfrak{sl}_J$-tensor
product categorification; cf. Theorem~\ref{subq}.

If $\mathcal C$ is a $U_q\mathfrak{sl}_I$-tensor product categorification 
then the underlying 
Schurian category $\overline{\mathcal C}$ is an $\mathfrak{sl}_I$-tensor product categorification in sense of Definition~\ref{tpcdef}; 
this is just like in 
Lemma~\ref{forward}.
The main goal now is to prove the following theorem
going in the other direction; cf. Lemma~\ref{backwards}.

\begin{Theorem}\label{main}
Let $I$ be an interval and $\overline{\mathcal C}$ be an $\mathfrak{sl}_I$-tensor product categorification of any type.
\begin{itemize}
\item[(i)]
There exists a graded lift $\mathcal C$ of $\overline{\mathcal C}$
together with graded functors $F_i$, $E_i$, $K_i$ and $K_i^{-1}$, an adjunction making $F_i^* = Q E_i K_i$ into a right adjoint to $F_i$,
and homogeneous natural transformations $\xi$ and $\tau$,
satisfying all the hypotheses of Lemma~\ref{backwards}(i)--(iv).
\item[(ii)]
Given any choice for the data in (i),
there exist unique (up to isomorphism and a global shift)
graded lifts 
$L(\lambda) \in \mathcal C$ of the irreducible
objects of $\overline{\mathcal C}$
such that $\mathcal C$ 
satisfies (GTP2)--(GTP3),
viewing $\mathcal C$ as a graded highest weight category with these lifts as its distinguished irreducible objects.
Thus $\mathcal C$ becomes a
$U_q\mathfrak{sl}_I$-tensor product categorification.
\item[(iii)]
If $\mathcal C'$ 
is another $U_q\mathfrak{sl}_I$-tensor product categorification lifting
$\overline{\mathcal C}$ as in (i)--(ii), there is a strongly equivariant graded equivalence
$\mathbb{G}:\mathcal C \stackrel{\sim}{\rightarrow}
\mathcal C'$ with
$\nu' \circ \mathbb{G} \cong \nu$ and $\mathbb{G} L(\lambda) \cong L'(\lambda)$
for each weight $\lambda$.
\end{itemize}
\end{Theorem}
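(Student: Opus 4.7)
The plan is to upgrade the strategy of Sections~\ref{tpc}--\ref{sstable} to the graded setting, exploiting the fact that the cyclotomic quiver Hecke algebras $H^r$ of (\ref{hr}) already carry natural $\Z$-gradings (with $1_\bi$ in degree $0$, $\xi_j$ in degree $2$, and $\tau_k 1_\bi$ in degree $-\alpha_{i_k}\cdot\alpha_{i_{k+1}}$) and that the homomorphisms $\iota^r_{d;i}$ and the algebra isomorphisms $\phi^r$ of Lemma~\ref{tl} are homogeneous. All three parts of the theorem will then follow by transplanting the arguments of $\S\ref{finite}$ and $\S\ref{uniq}$ to a graded version of the stable module category of Section~\ref{sstable}.

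For existence (part (i)), I would first treat finite $I$, where a graded lift with all the required data is supplied either by parabolic category $\mathcal O$ for $\mathfrak{gl}_N$, which is Koszul by \cite{BGS,Back} and admits a graded categorical action as in \cite{BS}, or by the tensor product algebras of \cite{W1}, which are $\Z$-graded by construction. For infinite $I$, pick an exhaustion $I_1\subset I_2\subset\cdots$ as in $\S\ref{trun}$. Using Theorem~\ref{lwmain}, each truncation $\overline{\mathcal C}_r$ is strongly equivariantly equivalent to the finite graded model, so a graded structure can be transported onto $\overline{\mathcal C}_r$. To glue these into a graded structure on $\overline{\mathcal C}$, build a graded version $\boldrmod{H}$ of the stable module category of Section~\ref{sstable} by declaring the $H^r$-module morphisms to be graded and re-running Definitions~\ref{rmod}--\ref{rmod2}; graded analogs of Theorems~\ref{abelian1}, \ref{abelian2} and \ref{strongeq} then produce a graded quotient functor $\V:\mathcal C\to\boldrmod{H}$ and identify the graded basic algebra of $\mathcal C$ with $\bigoplus_{\lambda,\mu\in\Lambda}\Hom_{\boldrmod H}(Y(\lambda),Y(\mu))$ where $Y(\lambda):=\V P(\lambda)$. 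Graded lifts of $F_i, E_i, K_i$ and of the natural transformations $\xi,\tau$ are then obtained bimodule-theoretically with the grading conventions dictated by (GSL1)--(GSL2).

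For (ii), Lemma~\ref{tol} shows that any choice of graded lifts $L(\lambda)$ of the irreducibles automatically makes $\mathcal C$ into a graded highest weight category whose standard objects are graded lifts of the ungraded ones, and (GTP2) is inherited from the ungraded version. The content of (ii) is therefore to normalize the shifts so that (GTP3) holds. Argue inductively over the Bruhat poset: the maximal weight $\kappa^r$ of each finite truncation satisfies $L(\kappa^r)=\Delta(\kappa^r)$, which fixes its shift; for any $\lambda$ just below an already-normalized $\mu$, the shift of $\Delta(\lambda)$ is pinned down as the section of $F_i\Delta(\mu)$ or $E_i\Delta(\mu)$ whose $q$-degree is prescribed by the monomial formulas (\ref{eact})--(\ref{fact}), leaving only a single global shift free.

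For (iii), repeat the uniqueness argument of $\S\ref{finite}$ and $\S\ref{uniq}$ verbatim in the graded category $\boldrmod H$. The graded analog of Theorem~\ref{ffp} gives graded versions of (\ref{C1})--(\ref{C2}) and (\ref{D1})--(\ref{E2}); by the uniqueness of graded lifts of indecomposables there exist homogeneous isomorphisms $Y(\lambda)\cong Y'(\lambda)$, and after the normalization of (ii) these can be chosen to share a single global shift. They induce a homogeneous algebra isomorphism $A\cong A'$ and a homogeneous bimodule isomorphism $B\cong B'$ intertwining $\xi,\tau$ with $\xi',\tau'$, yielding the desired strongly equivariant graded equivalence $\mathbb{G}$ with $\nu'\circ\mathbb{G}\cong\nu$. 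The main obstacle is the graded reworking of Section~\ref{sstable}: one must check that the idempotents $e^r_d$ of Lemma~\ref{tl} are homogeneous (which forces a careful analysis of the divided-power functors $F_i^{(m)}$ used to build them), that the maps $\iota^{r,s}$ and $\eta^{r,s}$ are degree zero so that stability is preserved by the graded $F_i,E_i$, and that the graded double centralizer property survives the passage to the inverse system. With those technical checks in place, the remaining steps are a routine upgrade of the ungraded material.
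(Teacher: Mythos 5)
Your proposal captures the overall architecture of the paper's proof -- graded cyclotomic quiver Hecke algebras, a graded stable module category $\boldrmod{H}$, homogeneity of the $e^r_d$ and $\phi^r$, reduction to finite intervals by truncation, graded double centralizer property -- but there is a genuine gap at the heart of the existence construction for infinite $I$, and a more minor concern in your treatment of part (ii).

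The gap is that you never explain how to produce coherent \emph{graded} Young modules $Y(\lambda)\in\boldrmod{H}$, and the step as you describe it is circular: you propose to set $Y(\lambda):=\V P(\lambda)$ and then define the graded basic algebra $A$ of $\mathcal C$ from these, but this presupposes that a graded $\mathcal C$ and a graded $\V$ already exist. What has to happen instead is that the $Y(\lambda)$ are constructed intrinsically inside $\boldrmod{H}$, as graded lifts of the $\overline Y(\lambda)$, \emph{before} $\mathcal C$ is defined; only then can one set $A:=\bigoplus_{\lambda,\mu}\Hom_H(Y(\lambda),Y(\mu))$ and $\mathcal C:=\boldrmod A$. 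The difficulty is that a graded lift $Y^r(\lambda)$ of $\overline Y^r(\lambda)$ is only canonical up to shift, and these shifts must be matched across the whole tower $H^1\hookrightarrow H^2\hookrightarrow\cdots$. This requires two ingredients that are absent from your proposal: (a) the graded self-duality characterization of $Y^r(\lambda)$ involving the defect $\operatorname{def}(\lambda)$ (Lemma~\ref{shift}, Corollary~\ref{shift2}), which pins down the shift of each $Y^r(\lambda)$ independently of the categorification; and (b) the correction factors $\sigma_r$ and cumulative shifts $\Sigma_r$ of (\ref{sigmar})--(\ref{Sigma}), coming from the degree $-\tfrac12 m(m-1)$ of the nil-Hecke idempotent $b_m$ inside $e^r_d$ (Lemma~\ref{yuk}), which are exactly what make the condition ``$q^{\Sigma_r-\operatorname{def}(\lambda)}Y^r(\lambda)$ graded-self-dual'' consistent as $r$ varies (Theorem~\ref{lifts}). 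Without these, the gluing across the tower does not follow from homogeneity of the $e^r_d$ alone, since homogeneity controls only that $\phi^r$ is a graded isomorphism, not that your arbitrarily chosen graded lifts of $Y^r(\lambda)$ and $Y^{r+1}(\lambda)$ agree after restriction by $e^r$.

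A secondary concern is your inductive normalization over the Bruhat poset in part (ii). For infinite $I$ a block need not have a maximum (e.g.\ the principal block of $\mathfrak{gl}(1|1)$ has infinitely many simples and no maximal weight), and when a $\lambda$ can be reached from several already-normalized weights you would need to argue that the prescriptions from (\ref{eact})--(\ref{fact}) agree -- a consistency check you do not address. The paper sidesteps both issues by applying the finite-interval case of Theorem~\ref{main}(ii) to each subquotient $\mathcal C_r$ (whose graded uniqueness is already established via the Grothendieck-group lemma in $\S$\ref{fi}) and then recursively adjusting the global shift of the collection $\{L_{r+1}(\lambda)\}_{\lambda\in\Lambda_r}$ to match $\{L_r(\lambda)\}$; this is a cleaner mechanism and is what you should substitute for the Bruhat induction.
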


We will prove Theorem~\ref{main} in the next two subsections;
see also \cite[Corollary 6.3]{LW} for a closely related result when $I$ is finite.
Before we do that we record the following variation which combines Theorems~\ref{lwmain} and \ref{main}.

\begin{Theorem}\label{known}
Let $I$ be any interval and $(\bn,\bc)$ be any type.
Then there exists a
$U_q\mathfrak{sl}_I$-tensor product categorification $\mathcal C$ of type $(\bn,\bc)$.
It is unique in the sense that
if $\mathcal C'$ is another $U_q\mathfrak{sl}_I$-tensor product
categorification of an equivalent type 
then there exists a strongly equivariant graded equivalence
$\mathbb{G}:\mathcal C \rightarrow \mathcal C'$
such that $\mathbb{G} L(\lambda) \cong L'(\lambda)$ for each weight $\lambda$.
\end{Theorem}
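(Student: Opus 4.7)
The plan is to derive Theorem~\ref{known} as a straightforward consequence of the already-established Theorems~\ref{lwmain} and \ref{main}. The key observation is that the axioms of Definition~\ref{qtpcdef} split cleanly into an ungraded part (the underlying $\mathfrak{sl}_I$-tensor product categorification in the sense of Definition~\ref{tpcdef}) and graded-lift data (the shift functors $K_i^{\pm 1}$, homogeneous $\xi, \tau$, and the graded adjunction making $F_i^* = qE_iK_i$), so the existence and uniqueness can be assembled by running these two theorems in sequence.

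\textbf{Existence.} First I would invoke Theorem~\ref{lwmain} to produce an $\mathfrak{sl}_I$-tensor product categorification $\overline{\mathcal C}$ of the given type $(\bn,\bc)$. Then Theorem~\ref{main}(i) supplies a graded lift $\mathcal C$ of $\overline{\mathcal C}$ together with graded functors $F_i, E_i, K_i^{\pm 1}$, a graded adjunction $(F_i, F_i^*)$ with $F_i^* = qE_iK_i$, and homogeneous natural transformations $\xi, \tau$ of degrees $2$ and $-\alpha_i\cdot\alpha_j$ lifting $\overline{\xi}, \overline{\tau}$. Finally Theorem~\ref{main}(ii) provides a choice of graded lifts $L(\lambda)$ of the irreducible objects $\overline{L}(\lambda)$, making $\mathcal C$ into a graded highest weight category satisfying (GTP1)--(GTP3). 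By Lemma~\ref{backwards} (applied via (GSL1)--(GSL3) which come directly from the construction), this is a $U_q\mathfrak{sl}_I$-tensor product categorification of type $(\bn,\bc)$.

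\textbf{Uniqueness.} Suppose $\mathcal C$ and $\mathcal C'$ are two $U_q\mathfrak{sl}_I$-tensor product categorifications of equivalent types $(\bn,\bc)$ and $(\bn',\bc')$. Their underlying Schurian categories $\overline{\mathcal C}$ and $\overline{\mathcal C'}$ are $\mathfrak{sl}_I$-tensor product categorifications of these types by the graded analog of Lemma~\ref{forward}, so Theorem~\ref{lwmain} furnishes a strongly equivariant equivalence $\overline{\mathbb G}:\overline{\mathcal C}\stackrel{\sim}{\to}\overline{\mathcal C'}$ with $\overline{\mathbb G} \overline{L}(\lambda) \cong \overline{L'}(\lambda)$. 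I would now use $\overline{\mathbb G}$ to transport the graded lift $\mathcal C'$ of $\overline{\mathcal C'}$ backwards to a graded lift of $\overline{\mathcal C}$: concretely, endow $\mathcal C'$ with the forgetful functor $\overline{\mathbb G}^{-1}\circ \nu':\widehat{\mathcal C'} \to \overline{\mathcal C}$ and transport all of its categorification data (functors, $\xi, \tau$, graded adjunctions) through $\overline{\mathbb G}^{-1}$, using the strong equivariance of $\overline{\mathbb G}$ to confirm that the transported $\xi, \tau$ really lift $\overline{\xi}, \overline{\tau}$ on $\overline{\mathcal C}$. Then Theorem~\ref{main}(iii) applied to the two graded lifts $\mathcal C$ and (transported) $\mathcal C'$ of $\overline{\mathcal C}$ produces a strongly equivariant graded equivalence $\mathcal C \stackrel{\sim}{\to} \mathcal C'$ sending $L(\lambda)$ to $L'(\lambda)$; composing with $\overline{\mathbb G}$ on the ungraded level gives the desired $\mathbb G$.

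\textbf{Main obstacle.} The only genuinely delicate point is handling the second kind of type-equivalence in Theorem~\ref{lwmain} (where $c_i \neq c_i'$ and $n_i = |I_+|-n_i'$ for some $i$, which forces $I$ to be finite). Under this equivalence the module isomorphism $\bigwedge^{\bn,\bc} V_I \cong \bigwedge^{\bn',\bc'} V_I, v_\lambda \mapsto v_\lambda$ is only $U_q\mathfrak{sl}_I$-linear up to a global normalization of the bar-involution, so one has to check that the transported graded structure satisfies the degree constraints on $\xi$ and $\tau$ with the correct normalizations. This should be a direct comparison using the explicit formulae (\ref{eact})--(\ref{fact}) and the tensor-identity computations performed in the proof of Lemma~\ref{oppy}; no new ideas are needed, but some bookkeeping with global shifts (the ``unique up to global shift'' clause in Theorem~\ref{main}(ii)) is required to pin down the equivalence on irreducibles.
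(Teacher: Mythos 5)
Your proposal is correct and follows essentially the same two-step route as the paper: existence from Theorems~\ref{lwmain} and \ref{main}(i)--(ii), and uniqueness by using the ungraded equivalence $\overline{\mathbb G}$ from Theorem~\ref{lwmain} to reduce to the rigidity statement of Theorem~\ref{main}(iii). The only presentational difference is the direction of transport: you propose to re-equip $\mathcal C'$ with the forgetful functor $\overline{\mathbb G}^{-1}\circ\nu'$ so that it becomes a graded lift of $\overline{\mathcal C}$, whereas the paper leaves $\mathcal C'$ alone and instead views $\mathcal C$ with the forgetful functor $\overline{\mathbb G}\circ\nu$ as a graded lift of $\overline{\mathcal C}'$. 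Either way, the key input is that strong equivariance of $\overline{\mathbb G}$ guarantees the hypotheses of Lemma~\ref{backwards}(iii)--(iv) hold with the new forgetful functor, so Theorem~\ref{main}(iii) applies. Note also that you do not need to ``transport the categorification data'' of $\mathcal C'$ through $\overline{\mathbb G}^{-1}$: the internal functors $F_i', E_i', K_i'$, transformations $\xi',\tau'$ and adjunctions on $\mathcal C'$ stay exactly as they are; only the forgetful functor is changed.

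Your ``main obstacle'' paragraph, however, raises a concern that isn't actually there. For equivalent types, the map $\bigwedge^{\bn,\bc} V_I \to \bigwedge^{\bn',\bc'} V_I$, $v_\lambda \mapsto v_\lambda$, is an honest $U_q\mathfrak{sl}_I$-module isomorphism (not just one up to a bar-twist): the actions (\ref{cact1})--(\ref{cact2}) of $f_i, e_i$, the formula $k_i v_\lambda = q^{\lambda_i-\lambda_{i+1}}v_\lambda$, and the exponents in (\ref{eact})--(\ref{fact}) depend only on the $01$-matrix $\lambda$ and the weights $|\lambda_k|$, all of which are identical for the two equivalent types (using $\sum_{i \in I_+}\eps_i = 0$ when $I$ is finite). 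So the axioms (GTP1)--(GTP3) for the two equivalent types are literally the same condition, and no normalization or bookkeeping is required. Likewise, the ``unique up to global shift'' clause in Theorem~\ref{main}(ii) does not leave any residual ambiguity to resolve, since Theorem~\ref{main}(iii) already pins down the equivalence to satisfy $\mathbb G L(\lambda) \cong L'(\lambda)$ exactly.
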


\begin{proof}
The existence is clear from Theorems~\ref{lwmain} and \ref{main}(i)--(ii).
For the uniqueness, given $\mathcal C$ and $\mathcal C'$, the underlying Schurian categories $\overline{\mathcal C}$ and
$\overline{\mathcal C}'$ are tensor product categorifications of equivalent types.
Hence by Theorem~\ref{lwmain} there exists a strongly equivariant
equivalence $\overline{\mathbb{G}}:\overline{\mathcal C}
\stackrel{\sim}{\rightarrow} \overline{\mathcal C}'$.
Thus $\mathcal C$ together with the functor
$\overline{\mathbb G} \circ \nu:\widehat{\mathcal C} \rightarrow \overline{\mathcal C}'$ is a graded lift of $\overline{\mathcal C}'$ in the sense of Theorem~\ref{main}(i)--(ii), as of course is $\mathcal C'$ with its given forgetful functor
$\nu':\widehat{\mathcal C}' \rightarrow \overline{\mathcal C}'$.
Hence Theorem~\ref{main}(iii) gives us the desired 
strongly equivariant graded equivalence
$\mathbb{G}:\mathcal C \rightarrow \mathcal C'$
with $\nu' \circ \mathbb{G} \cong \overline{\mathbb{G}} \circ \nu$
and $\mathbb{G} L(\lambda) \cong L'(\lambda)$.
\end{proof}

\begin{Corollary}\label{gradedduality}
Any $U_q\mathfrak{sl}_I$-tensor product categorification $\mathcal C$ admits a 
graded duality
$\circledast$ 
with
$\circledast \circ F_i^* \cong F_i^* \circ \circledast$ and
$\circledast \circ E_i^* \cong E_i^* \circ \circledast$ (as graded
functors);
moreover, $L(\lambda) \cong L(\lambda)^\circledast$ for each weight
$\lambda$.
Similarly its category of projectives has a graded duality
$\#$
with
$\# \circ F_i \cong F_i \circ \#$ and
$\# \circ E_i \cong E_i \circ \#$; moreover
$P(\lambda) \cong P(\lambda)^\#$ for each
$\lambda$.
\end{Corollary}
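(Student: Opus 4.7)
The plan is to adapt the argument for the ungraded Corollary~\ref{duality}, substituting Theorem~\ref{known} for Theorem~\ref{lwmain}. First, by Lemma~\ref{oppy}, $\mathcal C^{\operatorname{op}}$ carries a canonical structure of $U_q\mathfrak{sl}_I$-tensor product categorification of the same type $(\bn,\bc)$ as $\mathcal C$. The uniqueness part of Theorem~\ref{known} then produces a strongly equivariant graded equivalence $\mathbb G: \mathcal C \stackrel{\sim}{\rightarrow} \mathcal C^{\operatorname{op}}$ satisfying $\mathbb G L(\lambda) \cong L(\lambda)$ for every $\lambda$. Viewing $\mathbb G$ as a contravariant graded endofunctor $\circledast: \mathcal C \to \mathcal C$ yields the desired graded duality with $L(\lambda)^\circledast \cong L(\lambda)$.

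To extract the commutation relations, strong equivariance of $\mathbb G$ provides isomorphisms $\mathbb G \circ F_i \cong F_i^{\operatorname{op}} \circ \mathbb G$ and $\mathbb G \circ E_i \cong E_i^{\operatorname{op}} \circ \mathbb G$. Passing to right adjoints (using that conjugation by the equivalence $\mathbb G$ preserves adjoints) yields $\mathbb G \circ F_i^* \cong (F_i^{\operatorname{op}})^* \circ \mathbb G$, where $(F_i^{\operatorname{op}})^*$ is the right adjoint of $F_i^{\operatorname{op}}$ computed inside $\mathcal C^{\operatorname{op}}$. The key technical point is that $(F_i^{\operatorname{op}})^*$ coincides with $F_i^*$ as graded endofunctors. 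This can be verified by a direct computation of grading shifts, carefully tracking that the shift functor on $\mathcal C^{\operatorname{op}}$ is $q^{-1}$: substituting $E_i^{\operatorname{op}} = q^2 E_i K_i^{2}$ and $K_i^{\operatorname{op}} = K_i^{-1}$ into the defining formula $(F_i^{\operatorname{op}})^* = q \cdot E_i^{\operatorname{op}} \cdot K_i^{\operatorname{op}}$ (with the outermost $q$ denoting the shift of $\mathcal C^{\operatorname{op}}$) produces a cancellation yielding $q E_i K_i = F_i^*$. Algebraically, this identity reflects the fact that $\psi^*(f_i^*) = f_i^*$, which is immediate from~(\ref{one})--(\ref{two}). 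Consequently $\mathbb G \circ F_i^* \cong F_i^* \circ \mathbb G$, which translated back to the contravariant $\circledast$ says $\circledast \circ F_i^* \cong F_i^* \circ \circledast$. The same argument starting from the $E_i$ relation (using $\psi^*(e_i^*) = e_i^*$) gives $\circledast \circ E_i^* \cong E_i^* \circ \circledast$.

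For the graded duality $\#$ on projectives, follow the Yoneda-based construction from Corollary~\ref{duality}. Using the graded Yoneda equivalence~(\ref{stupid2}), defining $\#$ on $\mathcal C^{\operatorname{proj}}$ reduces to defining a suitable duality on the graded subcategory of exact functors $\mathcal C^{\operatorname{proj}} \to \mathcal{V}ec^{\operatorname{op}}$; there one sets $\Hom_{\widehat{\mathcal C}}(P, -)^\# := {*} \circ \Hom_{\widehat{\mathcal C}}(P, -) \circ \circledast$ (the outer $*$ being graded $\K$-linear duality). This transports through the equivalence to a graded duality $\#$ on $\mathcal C^{\operatorname{proj}}$ characterized by
\[
\Hom_{\widehat{\mathcal C}}(P^\#, M) \cong \Hom_{\widehat{\mathcal C}}(P, M^\circledast)^*.
\]
Then $P(\lambda)^\# \cong P(\lambda)$ follows at once from $L(\lambda)^\circledast \cong L(\lambda)$, and the commutations $\# \circ F_i \cong F_i \circ \#$ and $\# \circ E_i \cong E_i \circ \#$ are deduced from the commutations of $\circledast$ with $F_i^*, E_i^*$ by adjunction (since $F_i, E_i$ are left adjoints to $F_i^*, E_i^*$).

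The main obstacle will be the identification $(F_i^{\operatorname{op}})^* \cong F_i^*$, which requires delicate bookkeeping of grading shifts because Lemma~\ref{oppy}'s formulas are written in terms of the $\mathcal C$-shift while axiom (GSL3) applied to $\mathcal C^{\operatorname{op}}$ uses the $\mathcal C^{\operatorname{op}}$-shift. The eventual cancellation between these shifts is not accidental: Lemma~\ref{oppy}'s specific formulas for $F_i^{\operatorname{op}}, E_i^{\operatorname{op}}$ were chosen to implement $\psi^*$ at the level of Grothendieck groups, and it is precisely $\psi^*$-invariance of the starred generators that forces $\circledast$ to commute with $F_i^*$ and $E_i^*$ rather than with $F_i$ and $E_i$ themselves.
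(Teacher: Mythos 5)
Your proof is correct and takes essentially the same approach as the paper: both invoke Lemma~\ref{oppy} to equip $\mathcal C^{\operatorname{op}}$ with a tensor product categorification structure, apply Theorem~\ref{known} to produce the strongly equivariant graded equivalence, and construct $\#$ via the graded analogue of \eqref{star-rep}. The only cosmetic difference is that you obtain the commutation $\circledast \circ F_i^* \cong F_i^* \circ \circledast$ by passing to right adjoints and identifying $(F_i^{\operatorname{op}})^* = F_i^*$, whereas the paper rearranges the strong-equivariance isomorphism $\circledast \circ F_i \cong q^2 F_i K_i^{-2} \circ \circledast$ directly; both hinge on exactly the same grading-shift cancellation.
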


\begin{proof}
View $\mathcal C^{\operatorname{op}}$ as a
$U_q \mathfrak{sl}_I$-tensor product categorification as in
Lemma~\ref{oppy}.
Then Theorem~\ref{known} implies that there exists
a strongly equivariant graded equivalence
$\circledast:\mathcal C \rightarrow \mathcal C^{\operatorname{op}}$.
In particular this means that
$\circledast \circ F_i \cong Q^2 F_i K_i^{-2} \circ \circledast$,
which is equivalent to the assertion that
$\circledast \circ E_i^* \cong E_i^* \circ \circledast$.
Similarly $\circledast \circ F_i^* \cong F_i^* \circ \circledast$.
The graded duality $\#$ on projectives is defined by the formula \eqref{star-rep}
as before.
\end{proof}

\subsection{Proof of Theorem~\ref{main} for finite intervals}\label{fi}
Let $I$ be finite and $\overline{\mathcal C}$ be an 
$\mathfrak{sl}_I$-tensor product categorification of some fixed type.
The hardest part of the proof of Theorem~\ref{main} in this situation
is to show that there exists
a $U_q\mathfrak{sl}_I$-tensor product categorification $\mathcal C$ 
lifting $\overline{\mathcal C}$ as in Theorem~\ref{main}(i)--(ii).
Fortunately this is already established in the literature. 
Briefly, in view of Theorem~\ref{lwmain}
and \cite[Theorem 3.12]{LW}, we may assume that $\overline{\mathcal C}$ is the category of modules over a tensor product algebra in the sense of \cite{W1}.
This algebra is naturally graded and its category of graded modules
gives us the desired graded lift $\mathcal C$.
The graded functors $F_i$ and $E_i$ and the other data of a $U_q\mathfrak{sl}_I$-categorification are constructed in \cite{W1}.
The axioms (GTP2)--(GTP3) follow from \cite[Proposition 5.5]{W1}.

\begin{Remark}\label{unfinished}\rm
There are at least two other approaches to the construction of $\mathcal C$ in the literature. It can be realized following
\cite{BGS}, \cite{Back} in terms of 
Soergel's graded lift of parabolic category $\mathcal O$;
see \cite[Corollary 9.10]{W1} for 
the equivalence of graded parabolic category $\mathcal O$ 
with the realization of $\mathcal C$ arising from the tensor product algebras. 
The appropriate 
definition of the graded functors $F_i$ and $E_i$ on graded parabolic category $\mathcal O$ is recorded in
\cite{FKS}, but a direct proof of the axiom (GTP3) via this approach is still missing in the literature.
Also Hu and Mathas \cite{HM} have given another construction of
$\mathcal C$ in terms of their version of quiver Schur algebras; these algebras
are graded Morita equivalent to the tensor product algebras by e.g. \cite[Theorem 6.17]{HM}.
However again this is not sufficient by itself for our purposes
as Hu and Mathas do not 
discuss the graded categorical actions.
\end{Remark}

This proves Theorem~\ref{main}(i), and for this particular choice of $\mathcal C$
it also establishes the existence of the graded lifts $L(\lambda)$ in (ii) 
making $\mathcal C$ into a $U_q\mathfrak{sl}_I$-tensor product categorification.
The uniqueness of the graded lifts in (ii) follows
by passing to the graded 
Grothendieck group $[\mathcal C^\Delta]_q$ then applying the following
elementary combinatorial lemma.

\begin{Lemma}
Suppose that we are given integers $\{n_\lambda\:|\:\lambda\in\Lambda\}$
such that the 
map $\bigwedge^{\bn,\bc} V_I \rightarrow \bigwedge^{\bn,\bc} V_I,
v_\lambda \mapsto q^{n_\lambda} v_\lambda$ is a $U_q\mathfrak{sl}_I$-module homomorphism.
Then all the integers $n_\lambda$ are equal.
\end{Lemma}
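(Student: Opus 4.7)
The plan is to use the explicit formulas \eqref{eact}--\eqref{fact} for the Chevalley generator action on monomials to show that $n_\lambda$ is constant on the edges of a certain graph on $\Lambda = \Lambda_{I;\bn,\bc}$, and then to observe that this graph is connected.

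First I would fix $\lambda \in \Lambda$ and $j \in I$, and write out both sides of the identity $\phi \circ f_j = f_j \circ \phi$ applied to $v_\lambda$, where $\phi$ denotes the diagonal map $v_\mu \mapsto q^{n_\mu} v_\mu$. Using \eqref{eact}, the right hand side is $\sum q^{n_\lambda} q^{(|\lambda_{i+1}|+\cdots+|\lambda_l|)\cdot\alpha_j} v_{t_{ij}(\lambda)}$ while the left hand side is $\sum q^{n_{t_{ij}(\lambda)}} q^{(|\lambda_{i+1}|+\cdots+|\lambda_l|)\cdot\alpha_j} v_{t_{ij}(\lambda)}$, both sums ranging over indices $i$ with $\lambda_{ij}=1, \lambda_{i(j+1)}=0$. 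For different such $i$ the matrices $t_{ij}(\lambda)$ differ in different rows, so the vectors $v_{t_{ij}(\lambda)}$ are distinct basis elements. Comparing coefficients (and noting that every prefactor is a nonzero power of $q$) yields $n_{t_{ij}(\lambda)} = n_\lambda$ whenever $\lambda_{ij}=1$ and $\lambda_{i(j+1)}=0$. Since the relation $n_\lambda = n_\mu$ is symmetric in $\lambda,\mu$, this already gives us the equality $n_\lambda = n_\mu$ whenever $\mu$ is obtained from $\lambda$ by swapping any pair of adjacent entries $\lambda_{ij} \neq \lambda_{i(j+1)}$ in some row.

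Next I would argue that the graph on $\Lambda$ defined by these edges is connected. Since the edges only modify one row at a time and preserve the content (number of non-$c_i$ entries) of each row, this reduces to showing for each $i$ that the graph on $\Lambda_{I;n_i,c_i}$ with edges given by adjacent transpositions of distinct entries is connected. Given two such $01$-tuples $\lambda_i, \mu_i$ with the same content, their sets of non-$c_i$ positions are finite and contained in a common finite subinterval $J_+ \subseteq I_+$; within $J_+$, a standard insertion sort transforms $\lambda_i$ to $\mu_i$ step by step, each step moving the leftmost misplaced non-$c_i$ entry into its target position via adjacent swaps with $c_i$-entries lying between them (these intermediate positions necessarily hold $c_i$, by the leftmost-first ordering of the sort). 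This part is purely combinatorial and elementary; it is the only potential obstacle, but standard.

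Combining the two steps, $n_\lambda$ is constant along every edge of a connected graph on $\Lambda$, hence constant on $\Lambda$.
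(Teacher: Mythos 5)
The paper itself leaves this lemma as ``Exercise,'' so there is no argument of record to compare against; your overall strategy --- extract $n_{t_{ij}(\lambda)}=n_\lambda$ by comparing coefficients in \eqref{eact}--\eqref{fact} (the prefactors are nonzero powers of $q$ and the $v_{t_{ij}(\lambda)}$ are distinct, as you observe), then appeal to connectivity of the resulting graph on $\Lambda$, which is a Cartesian product of row-graphs --- is the natural and correct one, and intertwining with $k_i$ is automatic since both maps are diagonal.

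The one genuine flaw is the parenthetical claim in the connectivity step, namely that the intermediate positions crossed by the leftmost misplaced non-$c_i$ entry ``necessarily hold $c_i$, by the leftmost-first ordering of the sort.'' This is false. Take $c_i=0$, $n_i=2$, $J_+=\{1,2,3,4\}$, $\lambda_i=(1,1,0,0)$, $\mu_i=(0,0,1,1)$: the leftmost misplaced $1$ sits at position $1$ with target position $3$, but position $2$ holds a $1$, not a $0$, so the described swap sequence does not exist. The conclusion (connectivity of each row-graph) is nevertheless correct, and the repair is easy. For instance, writing $a_1<\dots<a_n$ and $b_1<\dots<b_n$ for the non-$c_i$ positions of $\lambda_i$ and $\mu_i$, one shows there is always a single entry that can be moved one step towards its target (take the smallest $k$ with $a_k>b_k$ if one exists --- then $a_{k-1}\leq b_{k-1}<b_k\leq a_k-1$ so position $a_k-1$ is free --- otherwise the largest $k$ with $a_k<b_k$, symmetrically) and inducts on $\sum_k|a_k-b_k|$; or, more cheaply, observes that every configuration is connected to the unique ``left-justified'' one in $J_+$ by pushing entries leftward one at a time. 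With that substitution the proof is complete.
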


\begin{proof}
Exercise.
\end{proof}

To finish the proof,
let $\mathcal C$ be some fixed 
$U_q\mathfrak{sl}_I$-tensor product categorification lifting
$\overline{\mathcal C}$ in the sense of Theorem~\ref{main}(i)--(ii).
Let 
$\kappa := \kappa_{I;\bn,\bc}$ and
\begin{equation}
T = \bigoplus_{d \geq 0} T_d := \bigoplus_{d \geq 0} F^d L(\kappa) \in \mathcal C,
\end{equation}
which is a graded lift of 
$\overline{T} = \bigoplus_{d \geq 0} \overline{T}_d 
= \bigoplus_{d \geq 0} \overline{F}^d \overline{L}(\kappa) \in \overline{\mathcal C}$.
We can identify $\End_{\widehat{\mathcal C}}(T_d)$ with $\End_{\overline{\mathcal C}}(\overline{T}_d)$. Hence, 
applying the first part of Theorem~\ref{dcp},
we can identify
$\End_{\widehat{\mathcal C}}(T) = \End_{\overline{\mathcal C}}(T)$
with the graded algebra
\begin{equation}
H = \bigoplus_{d \geq 0} H_d := \bigoplus_{d \geq 0} QH_{I,d}^{|\kappa|}.
\end{equation}
Note by the definition from Lemma~\ref{backwards}(iv) that the actions of
 $1_\bi, \xi_j, \tau_k \in H_d$ on $T_d$ obtained in this way agree with the endomorphisms
induced by the action of $QH_{I,d}$ on $\widehat{F}^d$.
Now we are going to exploit the graded functor
\begin{align}\label{newu}
\V &:= \Hom_{\widehat{\mathcal C}}(T,-):
\mathcal{C} \rightarrow\boldrmod{H}.
\end{align}
This is a graded lift of $\overline{\V} := \Hom_{\overline{\mathcal C}}(\overline{T},-):
\overline{\mathcal{C}} \rightarrow\rmod{H}$, i.e. the following
diagram of functors commutes up to equivalence:
\begin{equation}\label{newc}
\begin{CD}
\widehat{\mathcal C} &@>\widehat{\V}>>&\widehat{\boldrmod H}\\
@V\nu VV&&@VV\nu V\\
\overline{\mathcal C}&@>>\overline{\V}>&\rmod H.
\end{CD}
\end{equation}
The bottom functor is fully faithful on projectives by
the second part of Theorem~\ref{dcp}, as are the vertical functors,
hence so is the top functor.

The category $\boldrmod{H}$ has the structure of a
$U_q \mathfrak{sl}_I$-categorification, which lifts the categorical $\mathfrak{sl}_I$-action on $\rmod{H}$
in the sense of Lemma~\ref{backwards}.
On $\boldrmod{H_d}$ the functor $F_i$ is the graded induction functor 
$- \otimes_{H_d} 1_{d;i} H_{d+1}$,
where $1_{d;i} := \sum_{\bi \in I^{d+1}, i_{d+1}=i} 1_\bi$.
Its 
natural transformations $\xi$ and $\tau$ defined exactly as explained
in the paragraph after (\ref{Fir}) are automatically homogeneous of the right degree.
The right adjoint $F_i^*$ is the restriction functor
defined on $M \in \boldrmod{H_{d+1}}$ by right multiplication by $1_{d;i}$.
Then $E_i$ is defined so that $F_i^* = Q E_i K_i$,
where $$
K_i M = \bigoplus_{\bj \in I^d}
Q^{(|\kappa|-\alpha_{j_1}-\cdots-\alpha_{j_d})\cdot\alpha_i}
M_\bj
$$
for $M \in \boldrmod{H_d}$.
The graded functor $\V:\mathcal C \rightarrow \boldrmod{H}$
is strongly equivariant in the graded sense; this follows
because the natural transformation $\zeta$ constructed in the proof of
Lemma~\ref{seq} is a graded natural transformation.

For $\lambda\in\Lambda$ we let
\begin{align}\label{gradedy}
Y(\lambda) &:= \V P(\lambda)
\in \boldrmod{H},\\
\overline{Y}(\lambda) &:= \overline{\V} \,\overline{P}(\lambda)
\in \rmod{H}.\label{ungradedy}
\end{align}
Thus $Y(\lambda)$ is a graded lift of
the indecomposable module $\overline{Y}(\lambda)$.
Letting $A$ be the graded algebra from (\ref{gradedA}), 
the functor $\widehat{\V}$ defines a canonical 
isomorphism of graded algebras
$$
A \cong \bigoplus_{\lambda,\mu \in \Lambda} \Hom_{H}(Y(\lambda), Y(\mu)).
$$
From (\ref{stupid2}) we get a canonical graded equivalence 
$\mathbb{H}:\mathcal{C} \stackrel{\sim}{\rightarrow} \boldrmod A$
fitting into a commuting square analogous to (\ref{ged}).
Just like we did in the ungraded case, we lift the 
$U_q\mathfrak{sl}_I$-categorification structure on $\mathcal{C}$ to $\boldrmod{A}$
so that $\mathbb{H}$ becomes a strongly equivariant
graded equivalence.
The endofunctor $F_i:\boldrmod{A}\rightarrow\boldrmod{A}$ is given by tensoring over $A$ with the 
graded $(A,A)$-bimodule 
$$
B_i := \bigoplus_{\lambda,\mu\in\Lambda}\Hom_{\mathcal C}(P(\lambda),F_i P(\mu)).
$$
As in (\ref{D1})--(\ref{E1}) 
we then apply $\widehat{\V}$ and the strong equivariance of $\V$
to obtain isomorphisms
\begin{align}\label{B3}
B_i &\cong 
\bigoplus_{\lambda,\mu \in \Lambda}
\Hom_H(Y(\lambda), F_i Y(\mu)),
\\
B_i \otimes_A B_j &\cong
\bigoplus_{\lambda,\mu \in \Lambda}
\Hom_H(Y(\lambda), F_j F_i Y(\mu)).\label{B4}
\end{align}
Now the homogeneous natural transformations $\xi$ and $\tau$ come from the homogeneous 
bimodule 
homomorphisms $\xi:B_i \rightarrow B_i$ and
$\tau:B_i \otimes_A B_j \rightarrow B_j \otimes B_i$ 
induced by $\xi$ and $\tau$ at the level of $\boldrmod{H}$.

Suppose that $\mathcal C'$ is another graded lift of $\overline{\mathcal C}$
equipped with graded endofunctors $F'_i, E_i', K'_i, {K_i'}^{-1}$,
and an adjunction and homogeneous natural transformations $\xi', \tau'$ just like in Theorem~\ref{main}(i).
Fix a (unique up to homogeneous isomorphism) graded lift $L'(\kappa)
\in \mathcal C'$ of $\overline{L}(\kappa)$. Then we repeat
the above definitions
to get a graded lift $T' \in \mathcal C'$ of $\overline{T}$
such that $\End_{\widehat{\mathcal C}'}(T') = H$, and
a graded functor 
$\V':\mathcal C' \rightarrow \boldrmod H$ 
that is fully faithful on projectives and fits into another commuting square like (\ref{newc}).
Next we must make a coherent choice of graded lifts $L'(\lambda) \in \mathcal C'$ of the other irreducibles $\overline{L}(\lambda) \in \overline{\mathcal C}$,
thereby making $\mathcal C'$ into a graded highest weight category; cf. 
Lemma~\ref{tol}.
Equivalently
we choose graded lifts $P'(\lambda)$ of the indecomposable projectives according to the following lemma.

\begin{Lemma}\label{de}
For each $\lambda \in \Lambda$ there exists a (unique up to isomorphism)
graded lift $P'(\lambda) \in \mathcal C'$ of $\overline{P}(\lambda)$ such that
$Y'(\lambda) := \V' P'(\lambda)$ is isomorphic to $Y(\lambda)$ as a graded $H$-module.
\end{Lemma}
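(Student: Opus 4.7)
The plan is to pick any indecomposable graded lift $\tilde{P}'(\lambda) \in \mathcal{C}'$ of $\overline{P}(\lambda)$---such a lift exists and is unique up to grading shift by \cite[Lemma 2.5.3]{BGS}---and then rescale it by a suitable shift $q^{-n_\lambda}$ to obtain the desired $P'(\lambda)$ satisfying $\V' P'(\lambda) \cong Y(\lambda)$.

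The key step is to observe that both $\V' \tilde{P}'(\lambda)$ and $Y(\lambda)$ are graded lifts of the same indecomposable ungraded $H$-module $\overline{Y}(\lambda) = \overline{\V}\,\overline{P}(\lambda)$. For this one needs to know that forgetting the grading, the functor $\V'$ coincides with $\overline{\V}$ up to natural isomorphism. Indeed, underlying $\V'$ is the functor $\Hom_{\overline{\mathcal C}}(\overline{T},-)\colon \overline{\mathcal C} \to \rmod H$, and its right $H$-action comes from the identification of ungraded $H$ with $\End_{\overline{\mathcal C}}(\overline{T})$. This identification is furnished by the graded categorical action on $T' \in \mathcal C'$ via Theorem~\ref{grth}, but on forgetting the grading it reduces to the identification induced by the underlying ungraded $\mathfrak{sl}_I$-action on $\overline{\mathcal C}$---since $\mathcal C'$ is a graded lift of $\overline{\mathcal C}$ in the sense of Theorem~\ref{main}(i), its graded categorical action is by hypothesis a lift of that ungraded action. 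This is exactly the data that defines $\overline{\V}$, so $\nu\,\widehat{\V}'\tilde{P}'(\lambda) \cong \overline{\V}\,\overline{P}(\lambda) = \overline{Y}(\lambda)$. Moreover $\overline{Y}(\lambda)$ is indecomposable because $\overline{\V}$ is fully faithful on projectives by Theorem~\ref{dcp}, so $\End_H(\overline{Y}(\lambda)) \cong \End_{\overline{\mathcal C}}(\overline{P}(\lambda))$ is local.

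By the standard uniqueness of graded lifts of an indecomposable module, there then exists $n_\lambda \in \Z$ with $\V'\tilde{P}'(\lambda) \cong q^{n_\lambda} Y(\lambda)$ in $\boldrmod H$, and we put $P'(\lambda) := q^{-n_\lambda}\tilde{P}'(\lambda)$. For uniqueness of $P'(\lambda)$ with the required property, one shows that $\widehat{\V}'$ is fully faithful on graded projectives: this follows from the graded analog of the commuting square (\ref{newc}) together with fully faithfulness of $\overline{\V}$ on ungraded projectives (Theorem~\ref{dcp}) and of the forgetful functors on graded morphisms. Consequently any two graded lifts $P'(\lambda), P''(\lambda)$ of $\overline{P}(\lambda)$ with $\V' P'(\lambda) \cong Y(\lambda) \cong \V' P''(\lambda)$ are isomorphic in $\mathcal C'$. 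The main subtle point throughout is the compatibility claim in the second paragraph; once this is granted, everything else follows routinely from the uniqueness of graded lifts and the double centralizer property.
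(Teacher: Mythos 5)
Your proof is correct and follows essentially the same route as the paper's: pick an arbitrary graded lift of $\overline{P}(\lambda)$, observe that applying $\V'$ gives a graded lift of the indecomposable module $\overline{Y}(\lambda)$, invoke uniqueness of graded lifts of indecomposables to find the right shift, and then rescale. The compatibility claim you flag as the "main subtle point'' --- that $\V'$ forgets to $\overline{\V}$ --- is not re-derived in the paper's proof because it has already been established just above the lemma's statement (the paper constructs $\V'$ so that it "fits into another commuting square like~(\ref{newc})''), so you have supplied a justification the paper takes as given; and your uniqueness argument via full faithfulness of $\widehat{\V}'$ on projectives is a slightly heavier tool than necessary, since the uniqueness of the shift $n$ in the paper's proof already does the job, but both routes are valid.
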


\begin{proof}
Let $P' \in \mathcal C'$
be some arbitrary choice of graded lift of $\overline{P}(\lambda)$.
Then $Y' := \V' P'\in\boldrmod H$ is a graded lift of $\overline{Y}(\lambda)$, as is $Y(\lambda)$. As $\overline{Y}(\lambda)$ is indecomposable there exists a unique $n \in \Z$
such that $Q^n Y' \cong Y(\lambda)$.
We then {define} $P'(\lambda)$ to be $Q^n P'$.
\end{proof}

For $Y'(\lambda)$ as in Lemma~\ref{de},
we then introduce the graded algebra $A'$ and graded $(A',A')$-bimodules
$B_i'$ as above so that
\begin{align*}
A' &\cong \bigoplus_{\lambda,\mu\in\Lambda}\Hom_{H}(Y'(\lambda), Y'(\mu)),\\
B_i' &\cong \bigoplus_{\lambda,\mu\in\Lambda}\Hom_{H}(Y'(\lambda), F_i Y'(\mu)),\\
B_i'\otimes_{A'} B_j' &\cong
\bigoplus_{\lambda,\mu\in\Lambda}\Hom_{H}(Y'(\lambda), F_j F_i Y'(\mu)).
\end{align*}
We make $\boldrmod{A'}$ into a $U_q\mathfrak{sl}_I$-categorification
with $F_i := -\otimes_{A'} B_i'$
just like before,
so that there is a strongly equivariant graded equivalence
$\mathbb{H}':\mathcal C' \stackrel{\sim}{\rightarrow} \boldrmod{A'}$
fitting into another analog of (\ref{ged}).
Next
choose
graded $H$-module isomorphisms $Y(\lambda) \cong Y'(\lambda)$ 
for each $\lambda \in \Lambda$. These induce graded algebra isomorphisms
between $A$ and $A'$, 
and 
isomorphisms between the graded bimodules $B_i$ and $B_i'$ with
appropriate equivariance properties.
Hence we get
a strongly equivariant graded isomorphism of categories
$\boldrmod{A}\stackrel{\sim}{\rightarrow}\boldrmod{A'}$ 
sending each $1_\lambda A$ to $1_\lambda A'$.
Since $\mathcal C$ is strongly equivariantly equivalent to $\boldrmod{A}$
and $\mathcal C'$ is strongly equivariantly equivalent to $\boldrmod{A'}$, we deduce 
that there is a strongly equivariant graded equivalence $\mathbb{G}:\mathcal C \stackrel{\sim}{\rightarrow} \mathcal C'$ with $\nu' \circ \mathbb{G} \cong \nu$
and $\mathbb{G}L(\lambda) \cong L'(\lambda)$ for each $\lambda \in \Lambda$.

The existence of $\mathbb{G}$ implies that $\mathcal C'$ also satisfies
(GTP2)--(GTP3), so that it is indeed a tensor product categorification, completing
the proof of Theorem~\ref{main}(ii) for an arbitrary
choice of $\mathcal C$. We have also essentially proved (iii). More
precisely, under the assumptions of (iii), we have proved that there
exists
a strongly equivariant graded equivalence $\mathbb{G}:\mathcal C
\rightarrow \mathcal C'$ with $\nu' \circ \mathbb{G} \cong \nu$.
By the uniqueness in (ii) there exists $n \in \Z$ such that $L'(\lambda) \cong {Q'}^n
\mathbb{G} L(\lambda)$
for all $\lambda$. Now replace $\mathbb{G}$ by ${Q'}^{n} \circ \mathbb{G}$.
This completes the proof of Theorem~\ref{main} for finite $I$.

The uniqueness just established implies that
the {\em graded Young module} $Y(\lambda)$ from (\ref{gradedy})
does not depend on the particular choice of $\mathcal C$ (up to isomorphism in $\boldrmod{H}$). The following lemma gives a slightly different characterization
of this important module.
It also gives a first glimpse of the significance of the combinatorial statistic
{\em defect} introduced way back in (\ref{defect}).
Note in the proof of the lemma 
we use some definitions from $\S$\ref{cc} below, but none of the intermediate theorems.
(In fact more is true here: $Y(\lambda)$ is self-dual with respect to 
a natural graded duality $\#$
on $\boldrmod{H}$.)

\begin{Lemma}\label{shift}
For $\lambda \in \Lambda$,
the graded $H$-module
$Y(\lambda)$ from (\ref{gradedy})
is
the unique (up to isomorphism) graded lift of 
$\overline{Y}(\lambda)$ such that $Q^{-\operatorname{def}(\lambda)} Y(\lambda)$ is self-dual as a graded vector space.
In fact each word space
$Q^{-\operatorname{def}(\lambda)}Y(\lambda) 1_\bi$
is self-dual as a graded vector space.
\end{Lemma}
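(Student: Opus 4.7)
Uniqueness follows from Krull--Schmidt: $\overline{Y}(\lambda)$ is indecomposable in $\rmod{H}$ (by Theorem~\ref{ffp}), so any two graded lifts differ by a unique shift $q^n$, which is pinned down by the bar-invariance of at least one nonzero shifted word space. The task is to verify that the specific lift $Y(\lambda)=\V P(\lambda)$ satisfies the stated self-duality.

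My first step reduces $\qdim Y(\lambda)\,1_\bi$ to a pairing in $\bigwedge^{\bn,\bc}V_I$. By the adjunction $F_i \dashv F_i^*$ and the fact that $L(\kappa)=\Delta(\kappa)=P(\kappa)$ is simple projective,
\begin{equation*}
\qdim Y(\lambda)\,1_\bi \;=\; \qdim \Hom_{\widehat{\mathcal C}}\bigl(L(\kappa),\,F_{i_1}^* \cdots F_{i_d}^* P(\lambda)\bigr) \;=\; \bigl[F_\bi^* P(\lambda):L(\kappa)\bigr]_q.
\end{equation*}
The functor $F_i^* = q E_i K_i$ preserves $\Delta$-flags (by (GTP2) for $E_i$ and since $K_i$ is a grading shift on each block) and induces the action of $f_i^*$ on $[\mathcal C^\Delta]_q = \bigwedge^{\bn,\bc}V_I$. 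Since $\Delta(\kappa)=L(\kappa)$, the right hand side equals the coefficient of $v_\kappa$ in $f_\bi^*[P(\lambda)]$, which by the adjointness in (\ref{form}) equals $(f_\bi v_\kappa,\,[P(\lambda)])$.

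The second step establishes the required symmetry of this pairing. Using Lusztig's bar involution $\psi$ on $\bigwedge^{\bn,\bc}V_I$, characterized by $\psi(v_\kappa) = v_\kappa$ and semilinearity under the $\psi$-invariant Chevalley generators (cf.~(\ref{one})), an easy induction on $d$ gives $\psi(f_\bi v_\kappa) = f_\bi v_\kappa$. The crux is to establish the identity
\begin{equation*}
\psi\bigl([P(\lambda)]\bigr) \;=\; q^{2\operatorname{def}(\lambda)}\,[P(\lambda)],
\end{equation*}
whereupon the standard compatibility between $\psi$ and the form, on the $U_q\mathfrak{sl}_I$-submodule of $\bigwedge^{\bn,\bc}V_I$ generated by $v_\kappa$ (which is where $[P(\lambda)]$ lives), yields $\overline{(f_\bi v_\kappa,[P(\lambda)])} = q^{-2\operatorname{def}(\lambda)}(f_\bi v_\kappa,[P(\lambda)])$, equivalent to the claim.

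The main obstacle is this quasi-bar-invariance of $[P(\lambda)]$, and this is precisely where the defect enters. I would deduce it from the graded duality $\#$ on $\mathcal C^{\operatorname{proj}}$ of Corollary~\ref{gradedduality}, which fixes each $P(\lambda)$ and commutes with $F_i, E_i$: pushing $\#$ through the identification $[\mathcal C^\Delta]_q = \bigwedge^{\bn,\bc}V_I$ produces an antilinear involution commuting with the $\mathfrak{sl}_I$-action, and by the rigidity of bar involutions on an integrable $U_q\mathfrak{sl}_I$-module with a fixed cyclic highest weight vector it must coincide with $\psi$ up to a scalar constant on each weight space. Normalizing via $[P(\kappa)]=v_\kappa$ (fixed, matching $\operatorname{def}(\kappa)=0$) and propagating to general $[P(\lambda)]$ through the $q$-powers appearing in (\ref{eact})--(\ref{fact}) produces exactly the shift $q^{2\operatorname{def}(\lambda)}$, whose quadratic form matches (\ref{defect}). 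As a fallback, by the uniqueness in Theorem~\ref{known} the verification reduces to any concrete model (e.g.\ graded parabolic category $\mathcal O$ for $\mathfrak{gl}_N$) in which the polynomials $([P(\lambda)]:\Delta(\mu))_q$ are parabolic Kazhdan--Lusztig polynomials whose shifted self-duality is classical.
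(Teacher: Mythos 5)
Your first reduction is sound and essentially mirrors the paper's: you use the adjunction $F_i\dashv F_i^* = qE_iK_i$ to move the functors to $P(\lambda)$, where the paper uses the left adjoint $E_i^!$ of $E_i$ (which equals $q^{-1}F_iK_i$; the paper has a typo $K_i^{-1}$ here). Both routes insert the same $q$-power, and one can check directly that $(f_\bi v_\kappa, b_\lambda) = (v_\kappa, f^*_\bi b_\lambda) = q^{\operatorname{def}(\lambda)}(v_\kappa, e_\bi b_\lambda)$, so that after your step 1 the task is exactly to show $(v_\kappa, e_\bi b_\lambda)$ is bar-invariant — the same quantity the paper analyzes.

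The gap is in your ``crux.'' The identity $\psi([P(\lambda)]) = q^{2\operatorname{def}(\lambda)}[P(\lambda)]$ is false. The categorical involution induced by $\#$ fixes every $b_\lambda = [P(\lambda)]$ by construction (this is~(\ref{a})), and Theorem~\ref{klthm} identifies that involution with Lusztig's $\psi$; so $\psi(b_\lambda)=b_\lambda$ with no $q$-shift. Moreover the ``rigidity'' argument you invoke to deduce the scaling fails: $\bigwedge^{\bn,\bc}V_I$ is not cyclic on $v_\kappa$ when $l>1$, so an antilinear involution commuting with the Chevalley generators and fixing $v_\kappa$ is not determined up to weight-space scalars, and $b_\lambda$ need not even lie in $U_q\mathfrak{sl}_I\cdot v_\kappa$ for $\lambda\notin\Lambda^\circ$. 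What the paper does instead is purely formal: after the reduction it considers $(v_\kappa, e b_\lambda)$ with $e = e_{i_1}\cdots e_{i_d}$ and uses only (a)~the adjointness $(\psi v, w) = \overline{(v,\psi^* w)}$ of the two involutions coming from $\#$ and $\circledast$, (b)~the multiplicativity $\psi(uv)=\psi(u)\psi(v)$, $\psi^*(uv)=\psi^*(u)\psi^*(v)$, (c)~the identities $\psi(v_\kappa)=\psi^*(v_\kappa)=v_\kappa$ (since $b_\kappa=v_\kappa=b^*_\kappa$), and (d)~the symmetry of the form on $\bigwedge^{\bn,\bc}V_I$. Flipping the pairing once via symmetry and applying (a) on both sides yields $\overline{(v_\kappa, eb_\lambda)} = (v_\kappa, eb_\lambda)$ directly, without ever needing $\psi(b_\lambda)$ or $\psi^*(b_\lambda)$. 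Your fallback of appealing to graded parabolic category $\mathcal O$ would work in principle, but it imports classical KL self-duality from outside the paper, whereas the proof above is self-contained and uses none of the results of §\ref{cc}.
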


\begin{proof}
We just check that each $Q^{-\operatorname{def}(\lambda)}Y(\lambda)1_\bi$ is graded-self-dual.
Let $E_i^! := Q^{-1} F_iK_i^{-1}$, which is left adjoint to $E_i$.
We may assume that
$\bi = (i_1,\dots,i_d) \in I^d$ satisfies
$\alpha_{i_1}+\cdots+\alpha_{i_d} = |\kappa|-|\lambda|$,
since otherwise $Y(\lambda) 1_\bi$ is zero.
Then a little calculation shows that
$$
Q^{-\operatorname{def}(\lambda)}Y(\lambda)1_\bi \cong
\Hom_{\widehat{\mathcal C}}(Q^{\operatorname{def}(\lambda)}F_{\bi} L(\kappa), P(\lambda))
\cong
\Hom_{\widehat{\mathcal C}}(E^!_{i_d} \cdots E^!_{i_1} L(\kappa), P(\lambda)).
$$
Hence we are reduced to showing that
$$
\qdim 
\Hom_{\widehat{\mathcal C}}(E^!_{i_d} \cdots E^!_{i_1} L(\kappa), P(\lambda))
=
\qdim 
\Hom_{\widehat{\mathcal C}}(L(\kappa), E_{i_1} \cdots E_{i_d} P(\lambda))
$$
is bar-invariant.
Using the notation from (\ref{formies})--(\ref{b}) below,
setting $e := e_{i_1}\cdots e_{i_d}$ for short,
we have that
$$
\qdim 
\Hom_{\widehat{\mathcal C}}(L(\kappa), E_{i_1} \cdots E_{i_d} P(\lambda))
= \overline{(v_\kappa, \psi^*(e b_\lambda))}
=
(\psi(v_\kappa), e b_\lambda)) = (v_\kappa, e b_\lamdba).
$$
By the symmetry of this form, this equals
$$
(e b_\lambda, v_\kappa) = (\psi(e b_\lambda), v_\kappa)
= \overline{(e b_\lambda, \psi^*(v_\kappa))}
= \overline{(e b_\lambda, v_\kappa)}.
$$
So it is bar-invariant.
\end{proof}

\begin{Corollary}\label{shift2}
If $Y'(\lambda)$ is any graded lift of $\overline{Y}(\lambda)$
such that $Q^{-\operatorname{def}(\lambda)}Y'(\lambda) 1_\bi$ is a non-zero self-dual graded vector space
for some word $\bi$ then $Y'(\lambda) \cong Y(\lambda)$.
\end{Corollary}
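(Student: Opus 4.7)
The plan is to reduce directly to Lemma~\ref{shift}. Since the ungraded module $\overline{Y}(\lambda)$ is indecomposable (it is the projective cover of the irreducible module $\overline{D}(\lambda)$, hence has a local endomorphism ring), its graded lifts are unique up to an overall grading shift, as observed in \cite[Lemma 2.5.3]{BGS}. Consequently, there exists some $n \in \Z$ such that $Y'(\lambda) \cong q^n Y(\lambda)$ as graded $H$-modules, and the task reduces to showing that $n = 0$.

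Fix a word $\bi$ such that $V := q^{-\operatorname{def}(\lambda)} Y'(\lambda) 1_\bi$ is non-zero and self-dual as a graded vector space. By Lemma~\ref{shift}, the graded vector space $W := q^{-\operatorname{def}(\lambda)} Y(\lambda) 1_\bi$ is also self-dual, and moreover $V \cong q^n W$. The key elementary observation is that if $W = \bigoplus_{k \in \Z} W_k$ is a non-zero finite-dimensional self-dual graded vector space (so $\dim W_k = \dim W_{-k}$) and $q^n W$ is also self-dual, then $\dim W_k = \dim W_{-k-2n}$ for all $k$, which combined with self-duality of $W$ forces $\dim W_k = \dim W_{k+2n}$ for all $k$. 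Iterating, this contradicts finite-dimensionality unless $n = 0$. Note that $W$ is itself non-zero, since its ungraded dimension equals that of $V$.

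Hence $n = 0$ and $Y'(\lambda) \cong Y(\lambda)$, as required. The only subtle point, which is already verified in the hypothesis of the corollary together with the corresponding conclusion of Lemma~\ref{shift}, is the non-vanishing of the relevant word space; everything else is bookkeeping with grading shifts on finite-dimensional graded vector spaces.
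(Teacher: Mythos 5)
Your proof is correct and fills in exactly the argument the paper leaves implicit: since $\overline{Y}(\lambda)$ is indecomposable, the graded lift $Y'(\lambda)$ must be $q^n Y(\lambda)$ for some $n$, and then comparing graded self-duality of the non-zero finite-dimensional word space with that of its shift forces $n=0$. This is the intended deduction from Lemma~\ref{shift}.
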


\subsection{Proof of Theorem~\ref{main} for infinite intervals}
Now the interval $I$ is infinite.
Fixing a type $(\bn,\bc)$ we set $\Lambda := \Lambda_{I;\bn,\bc}$
and let
$\overline{\mathcal C}$ be some given $\mathfrak{sl}_I$-tensor product
categorification of type $(\bn,\bc)$.
Apart from adding bars to almost everything in sight, we are going to
adopt all of the notation
from Section~\ref{sstable}.
Thus we have chosen subintervals $I_1 \subset I_2 \subset
\cdots\subset I$, leading to subquotients $\overline{\mathcal C}_r$ of
$\overline{\mathcal C}$ with
weight posets 
$\Lambda_1 \subset \Lambda_2\subset\cdots\subset\Lambda$.
Also $\kappa^r := \kappa_{I_r;\bn,\bc} \in \Lambda_r$ and
we have the tensor spaces $\overline{T}^r=\bigoplus_{d \geq 0} \overline{T}^r_d
\in \overline{\mathcal{C}}$ defined like in (\ref{tr}).
Each $\End_{\overline{\mathcal C}}(\overline{T}^r)$ is identified
with the algebra $H^r$ from (\ref{hr}).
We have the Abelian category $\rmod{H}$ of stable modules 
from Definition~\ref{rmod2},
and
the exact functor $\overline{\V}:\overline{\mathcal
  C}\rightarrow \rmod{H}$ from Theorem~\ref{abelian1}.
This functor is fully
faithful on projectives by Theorem~\ref{ffp}. We have the
objects
$\overline{Y}(\lambda)
= (\overline{Y}^1(\lambda)\rightarrow \overline{Y}^2(\lambda)\rightarrow\cdots)
:= \overline{\V}\,\overline{P}(\lambda)\in \rmod{H}
$ 
as in (\ref{young3}).
We have the
functor $\overline{\pr}_r:\rmod{H} \rightarrow \rmod{H^r}$
and its left adjoint $\overline{\pr}_r^!$ from (\ref{mmm})--(\ref{ungradedpr}).
Thus $\overline{\pr}_r \circ \overline{\V}$ is equal to
$\overline{\V^r} := 
\Hom_{\overline{\mathcal C}}(\overline{T}^r, -):\overline{\mathcal C} \rightarrow \rmod{H^r}.$

The idempotents $e^r_d$ and the isomorphisms
$\phi^r_d$ from Lemma~\ref{tl} are evidently homogeneous.
Thus 
$\phi^r:H^{r} \stackrel{\sim}{\rightarrow} e^r H^{r+1} e^r$ is an
isomorphism of graded algebras. 
So it makes sense to introduce a graded version $\boldrmod{H}$ of
the category $\rmod{H}$ of stable modules from Definition~\ref{rmod2}.
Its objects are diagrams
$$
\begin{CD}
M= (M^1 &@>\iota^1>>&M^2&@>\iota^2>> M^3&@>\iota^3>> \cdots)\\
\end{CD}
$$
such that $M^r \in \boldrmod{H^r}$,
$\iota^r:M^r \stackrel{\sim}{\rightarrow} M^{r+1} e^r$ 
is an isomorphism of graded $H^r$-modules for each $r \geq 1$,
and the maps (\ref{topmaps}) are isomorphisms for $r \gg 1$.
Morphisms are tuples $(f^r)_{r\geq 1}$ such that each $f^r$ is a morphism
in $\boldrmod{H^r}$ making the analogous diagram to (\ref{morphism}) commute.
The category $\boldrmod{H}$ is graded with $Q$ being the obvious grading shift functor.
Let $\nu:\widehat{\boldrmod{H}} \rightarrow \rmod{H}$
be the obvious functor that forgets the grading on each $M^r$.
Let $\pr_r:\boldrmod{H} \rightarrow \boldrmod{H^r}$
be the $r$th projection, which is clearly graded. It has an obvious left adjoint
$\pr_r^!$ which is a graded lift of
$\overline{\pr}_r^!$; it is defined in exactly the same way as before just working in the graded module categories.

\begin{Lemma}
The category $\boldrmod{H}$ is a graded lift of $\rmod{H}$
in the sense of Definition~\ref{liftdef}.
\end{Lemma}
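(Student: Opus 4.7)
The plan is to verify the three ingredients of Definition~\ref{liftdef}: that $\nu$ is fully faithful, that it is dense on projectives, and that $\nu \circ \widehat{q} \cong \nu$. Along the way one must also check that $\boldrmod{H}$ is itself a graded abelian category, but this follows by the same reasoning used to establish Theorem~\ref{abelian2}: kernels and cokernels of morphisms of stable modules are stable, since stability at level $r$ can be tested by the requirement that the adjunction maps $\eta^{r,s}$ of (\ref{topmaps}) are isomorphisms for all $s \geq r$, and these conditions are preserved under taking kernels and cokernels. The shift functor $q$ that makes $\boldrmod{H}$ graded is the obvious one, applied componentwise.

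Condition (GL2) is essentially tautological: applying the shift functor $q$ to $M \in \boldrmod{H}$ changes only the grading on each $M^r$, so $\nu(qM)$ and $\nu M$ agree as objects of $\rmod{H}$. For the full faithfulness, fix $M, N \in \boldrmod{H}$ and pick $r$ large enough so that both $M$ and $N$ are $r$-stable. The graded analog of Lemma~\ref{okay} (whose proof is identical once one notes that $\pr_r^!$ is a graded left adjoint to $\pr_r$) produces graded isomorphisms
\[
\Hom_{\widehat{\boldrmod{H}}}(M,N) \stackrel{\sim}{\longrightarrow} \Hom_{H^r}(M^r, N^r),
\qquad
\Hom_{\rmod{H}}(\nu M, \nu N) \stackrel{\sim}{\longrightarrow} \Hom_{H^r}(\nu M^r, \nu N^r),
\]
and both commute with $\nu$. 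Since $\boldrmod{H^r}$ is itself a graded lift of $\rmod{H^r}$ (a standard fact for finite dimensional graded algebras, used implicitly throughout $\S$\ref{fi}), the comparison map on the right is an isomorphism, hence so is the one on the left.

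For (GL1), recall from Theorem~\ref{abelian2} that the indecomposable projectives in $\rmod{H}$ are the modules $\overline{Y}(\mu)$ for $\mu \in \Lambda^\circ$. Given such a $\mu$, choose $r$ with $\mu \in \Lambda^\circ_r$, and let $Y^r(\mu) \in \boldrmod{H^r}$ be the graded Young module constructed in $\S$\ref{fi} from the $U_q\mathfrak{sl}_{I_r}$-tensor product categorification lifting $\overline{\mathcal{C}}_r$ (which exists by the already-proved finite case of Theorem~\ref{main}). Set $Y(\mu) := \pr_r^! Y^r(\mu) \in \boldrmod{H}$. Since $\pr_r$ is exact (it is just evaluation on the $r$th component) and $\pr_r^!$ is its left adjoint, $\pr_r^!$ sends projectives to projectives; and $Y^r(\mu)$ is projective in $\boldrmod{H^r}$ because it is a direct summand of the graded permutation module coming from the graded tensor space, so $Y(\mu)$ is projective in $\boldrmod{H}$. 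Applying $\nu$ recovers $\overline{\pr}_r^! \overline{Y}^r(\mu)$, which is isomorphic to $\overline{Y}(\mu)$ by the argument at the end of the proof of Theorem~\ref{ffp}.

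The step I anticipate to be the most delicate is not the existence of the lift $Y(\mu)$ itself but rather the verification that the graded version of Lemma~\ref{okay} goes through cleanly, because one must check that the adjunction between $\pr_r$ and $\pr_r^!$ is genuinely homogeneous, and that the structure maps $\phi^r$ and $\theta^r$ from Lemma~\ref{tl} are homogeneous of degree zero. The former is built into the definition of $\pr_r^!$; the latter follows from inspection of the formula in Lemma~\ref{tl} and the explicit degree-zero nature of the divided-power idempotents $b_m$ of \cite[Lemma 4.1]{R} used to build $e^r_d$. Once these points are checked, all the remaining verifications proceed by reducing to the already-established finite-interval theory.
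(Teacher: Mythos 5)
Your overall route — verify abelianness, then the three conditions of Definition~\ref{liftdef} — matches the paper, and your treatment of (GL2) and full faithfulness is correct (the paper treats these as immediate and doesn't even mention them). However, there is a genuine gap in your argument for abelianness of $\boldrmod{H}$.

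You assert that the conditions for $r$-stability (the maps $\eta^{r,s}$ of (\ref{topmaps}) being isomorphisms for $s\geq r$) are ``preserved under taking kernels and cokernels.'' For cokernels this is fine, since the functor $-\otimes_{H^r} e^{r,s}H^s$ is right exact. But for kernels it is exactly the kind of statement that fails without a flatness hypothesis: if $M,N$ are $r$-stable with $M^s\cong M^r\otimes_{H^r}e^{r,s}H^s$ and likewise for $N$, then $\ker(f^s)\cong\ker(f^r)\otimes_{H^r}e^{r,s}H^s$ requires vanishing of a $\operatorname{Tor}_1$ over $H^r$ against $e^{r,s}H^s$, which is not established anywhere. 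This is precisely the subtlety flagged in the remark immediately after Definition~\ref{rmod2} (``this is not obvious as it is not a Serre subcategory of $\rmod{H}^\infty$''), so the paper explicitly warns against arguments of this shape. The paper instead leans on the already-established abelianness of $\rmod{H}$ (Theorems~\ref{abelian1}--\ref{abelian2}, proved via exactness of $\V$): given a morphism $f$ in $\boldrmod{H}$, the underlying $\overline f$ has a kernel and cokernel in $\rmod{H}$; one picks $r$ large enough that \emph{all four} of $\overline M,\overline N,\ker\overline f,\operatorname{coker}\overline f$ are $r$-stable — note this $r$ may be strictly larger than the level at which $\overline M$ and $\overline N$ were stable — and then $\ker\overline f\to\overline M$ is $\overline\pr_r^!(\ker\overline f^r\to\overline M^r)$, which lifts to $\pr_r^!(\ker f^r\to M^r)$. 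This sidesteps the missing flatness entirely, and you should replace your preservation-under-kernels claim with this detour through $\rmod{H}$.

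On (GL1) your argument via graded Young modules $Y^r(\mu)$ is correct but unnecessarily heavy: the paper observes more simply that every projective in $\rmod{H}$ is a summand of $\overline{\V}\,\overline T^r\cong\overline\pr_r^! H^r$, and this admits the evident graded lift $\pr_r^! H^r$, projective since $\pr_r^!$ is left adjoint to the exact functor $\pr_r$. This avoids invoking the finite-interval case of Theorem~\ref{main} and the self-duality bookkeeping at this stage, which the paper postpones to Theorem~\ref{lifts}.
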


\begin{proof}
We need to check that $\boldrmod{H}$ is an Abelian category
and that $\nu$ is dense on projectives.

Let $f:M \rightarrow N$ be a morphism in $\boldrmod{H}$.
Let $\overline{f}:\overline{M} \rightarrow \overline{N}$
be the corresponding morphism in $\rmod{H}$ obtained by applying the forgetful functor $\nu$.
We know already that $\rmod{H}$ is an Abelian category.
To show that $\boldrmod{H}$ is Abelian too
it suffices to check that
$\ker \overline{f} \rightarrow \overline{M}$ 
and $\overline{N} \rightarrow 
\operatorname{coker} \overline{f}$ are gradable.
Pick $r$ so that all of $\overline{M},
\overline{N}, \ker \overline{f}$ and $\operatorname{coker} \overline{f}$
are $r$-stable.
Then we have simply that $(\ker \overline{f} \rightarrow \overline{M})
 \cong \overline{\pr}_r^! (\ker \overline{f}^r\rightarrow \overline{M}^r)$.
Hence it is graded by $\pr_r^! (\ker f^r \rightarrow M^r)$.
Similarly the cokernel is gradable.

To see that $\nu$ is dense on projectives, note that
every projective in $\rmod{H}$ is a direct sum of summands of
$\overline{\V} \,\overline{T}^r\cong \overline{\pr}_r^! H^r $, 
where $H^r$ is the regular right $H^r$-module.
Each $\overline{\pr}_r^! H^r$ 
admits the graded lift $\pr_r^! H^r$, which is projective in $\boldrmod{H}$
by properties of adjoints.
\end{proof}

Hence by Lemma~\ref{acycliclemma} 
we see that $\boldrmod{H}$ is a graded Schurian category.
It also has 
a structure of $U_q \mathfrak{sl}_I$-categorification lifting
the categorical action on $\rmod{H}$ 
as in Lemma~\ref{backwards}.
The graded lifts $F_i$ and $E_i$ on $\boldrmod{H}$
are defined in exactly the same way as the original functors
$\overline{F}_i$ and $\overline{E}_i$ on $\rmod{H}$ 
were defined in $\S$\ref{secca}, just replacing each
$\overline{F}_i^r$ and $\overline{E}_i^r$ with its graded version
as defined in the previous subsection.
It is worth noting that 
$K_i:\boldrmod{H} \rightarrow \boldrmod{H}$
is defined on $M = (M^1 \stackrel{\iota^1}{\rightarrow} M^2 \stackrel{\iota^2}{\rightarrow} \cdots)$
so that 
it is the 
degree shift $Q^{(|\kappa^r|-\alpha_{j_1}-\cdots-\alpha_{j_d})\cdot \alpha_i}$ on
$1_\bj M^r$
for each $r \geq 1$ and $\bj \in I_r^d$.
One needs to check here that the maps 
$\iota^r$ remain homogeneous of degree zero after these shifts are performed;
this follows because
$|\kappa^r| = |\kappa^{r+1}| - \alpha_{i_1} - \cdots - \alpha_{i_{d_r}}$
where $\bi = i_1 \cdots i_{d_r}$ is the word appearing in the definition of
$\phi^r$ from Lemma~\ref{tl}.

\begin{Lemma}\label{key}
Suppose we are given 
$\overline{M} \in \overline{\mathcal C}$ and $r \geq 1$ such that
$\overline{\V}\,\overline{M}$ is $r$-stable.
Then $\overline{\V}\,\overline{M}$ is a gradable object of $\rmod{H}$
if and only if $\overline{\V}^r \overline{M}$ is a gradable object of 
$\rmod{H^r}$.
\end{Lemma}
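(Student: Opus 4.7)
The plan is to exploit the fact that the functor $\overline{\pr}_r^!$ is left adjoint to $\overline{\pr}_r$ and admits a graded lift $\pr_r^!$ given by the same explicit formula, so that the forgetful functor $\nu$ commutes with both $\pr_r$ and $\pr_r^!$ up to natural isomorphism.

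For the forward direction, suppose $\overline{\V}\,\overline{M} \cong \nu N$ for some $N \in \boldrmod{H}$. Since $\overline{\pr}_r \circ \nu \cong \nu \circ \pr_r$ directly from the definitions (one just takes the $r$-th component and forgets the grading, in either order), we obtain
$$
\overline{\V}^r \overline{M} \;=\; \overline{\pr}_r \,\overline{\V}\,\overline{M} \;\cong\; \overline{\pr}_r\, \nu N \;\cong\; \nu\, \pr_r N,
$$
so $\overline{\V}^r \overline{M}$ is gradable.

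For the reverse direction, suppose $\overline{\V}^r \overline{M} \cong \nu N^r$ for some $N^r \in \boldrmod{H^r}$. Because $\overline{\V}\,\overline{M}$ is $r$-stable, the counit of adjunction gives an isomorphism $\overline{\V}\,\overline{M} \cong \overline{\pr}_r^!(\overline{\V}^r \overline{M})$ in $\rmod{H}$. The key point is then the natural isomorphism $\overline{\pr}_r^! \circ \nu \cong \nu \circ \pr_r^!$: indeed, on a graded $H^r$-module $N^r$, both sides are computed by the formula $N^r \otimes_{H^r} e^{r,s} H^s$ of (\ref{ungradedpr}), the only difference being whether one remembers the grading. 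Consequently
$$
\overline{\V}\,\overline{M} \;\cong\; \overline{\pr}_r^!\,\nu N^r \;\cong\; \nu\,\pr_r^! N^r,
$$
and $\pr_r^! N^r \in \boldrmod{H}$ provides a graded lift of $\overline{\V}\,\overline{M}$.

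The only step that requires any care is the natural isomorphism $\overline{\pr}_r^! \circ \nu \cong \nu \circ \pr_r^!$. This is where I expect the only work to be needed, but it reduces to an elementary inspection of the definitions: $\pr_r^!$ is built from $\top_r^!$ and $\head_s$, both of which are given by formulas (tensoring with $e^{r,s} H^s$, and assembling a diagram by tacking on the truncations $M^r := M^s e^{r,s}$) that are manifestly compatible with forgetting the grading, since the idempotents $e^{r,s}$ and the isomorphisms $\phi^{r,s}$ are homogeneous. Hence the graded and ungraded constructions of $\pr_r^!$ agree after forgetting degrees.
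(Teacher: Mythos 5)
Your proof is correct and takes essentially the same approach as the paper's: the paper likewise dismisses the forward direction and, for the converse, applies the graded lift $\pr_r^!$ to a graded lift of $\overline{\V}^r\overline{M}$, using $r$-stability to identify $\overline{\V}\,\overline{M}$ with $\overline{\pr}_r^!\overline{\V}^r\overline{M}$. The compatibility $\nu\circ\pr_r^!\cong\overline{\pr}_r^!\circ\nu$ that you flag as the only step needing care is exactly the fact the paper records just before the lemma, when it introduces $\pr_r^!$ as a graded lift of $\overline{\pr}_r^!$.
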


\begin{proof}
The forward direction is clear. For the converse,
let $M^r$ be a graded lift of $\overline{M}^r := \Hom_{\overline{\mathcal C}}(\overline{T}^r, \overline{M})$.
Since $\overline{\V}\, \overline{M}$ is $r$-stable
it is isomorphic to $\overline{\pr}_r^! \overline{M}^r \in \rmod{H}$.
Hence $\pr_r^! M^r \in \boldrmod{H}$ gives the desired graded lift.
\end{proof}

Lemma~\ref{key} applies in particular to the objects $\overline{Y}(\lambda)$,
since we know already that 
$\overline{Y}^r(\lambda)$ is gradable
for sufficiently large $r$ by (\ref{gradedy})--(\ref{ungradedy}).
We need one more piece of book-keeping in order to make
a canonical choice of such a graded lift.

\begin{Lemma}\label{yuk}
Fix $r \geq 1$. Let $\bi$ and $p_1,\dots,p_a$ be as in Lemma~\ref{tl}.
Set
\begin{equation}\label{sigmar}
\sigma_r :=\textstyle \frac{1}{2} p_1 + \cdots + \frac{1}{2} p_a.
\end{equation}
Then
$\qdim M^r 1_\bj = q^{\sigma_r} \qdim M^{r+1} 1_{\bi \bj} / [p_1]!\cdots [p_a]!$
for each $M \in \boldrmod{H}$.
\end{Lemma}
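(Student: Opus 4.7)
The plan is to unwind the graded isomorphism $\iota^r : M^r \xrightarrow{\sim} M^{r+1} e^r$ of right $H^r$-modules (where $H^r$ acts on the right-hand side through $\phi^r$), and to reduce the identity to a standard graded-dimension computation in the nilHecke algebra. Since $\phi^r(1_\bj) = 1_{\bi\bj} e^r$ by Lemma~\ref{tl}(i), and since $e^r$ is supported on the first $d_r$ positions and therefore commutes with the partial word idempotent $1_\bj$ acting only on the last $d$ positions, one obtains a degree-preserving isomorphism of graded vector spaces
$$M^r 1_\bj \;\cong\; M^{r+1} 1_{\bi\bj} e^r.$$
The task thereby reduces to showing that right multiplication by $e^r$ scales graded dimensions by exactly $q^{\sigma_r}/\bigl([p_1]!\cdots[p_a]!\bigr)$.

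To analyze $e^r$, recall from the proof of Lemma~\ref{tl} that it was constructed as a product of divided power idempotents: schematically $e^r = b_{p_a}\cdots b_{p_1}$, where $b_{p_k}$ is the divided power idempotent of \cite[Lemma~4.1]{R} acting on the $p_k$ consecutive positions of $\bi$ all colored $s+\eps k$. Because the $a$ blocks involve pairwise distinct colors and disjoint sets of positions, these factors commute pairwise in $H^{r+1}$ and also commute with the action on the last $d$ positions of $1_{\bi\bj}$. So the calculation factors across $k$, and it suffices to establish, for each $m\geq 1$ and any graded right module $N$ of the subalgebra $1_{i^m} H^{r+1} 1_{i^m}$ generated by the $\xi$'s and $\tau$'s on a fixed block of $m$ same-color positions,
$$\qdim(N\cdot b_m) \;=\; q^{m/2}\,\qdim(N\cdot 1_{i^m})\,\big/\,[m]!.$$

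This last identity is a standard nilHecke calculation: the subalgebra in question is a nilHecke algebra $NH_m$, and $1_{i^m}$ decomposes as a sum $\sum_{w\in S_m}$ of $m!$ pairwise conjugate primitive idempotents whose graded shifts combine to give the Poincar\'e polynomial $[m]!$ of $S_m$ (after the overall shift dictated by the normalization of $b_m$ in \cite[Lemma~4.1]{R}). Summing the per-block shifts $m/2$ across $m=p_1,\ldots,p_a$ produces $\sigma_r=\tfrac12\sum_k p_k$ and the denominator $\prod_k[p_k]!$, which is the desired answer.

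The main obstacle, and essentially the only nontrivial point, is verifying that the normalization of $b_m$ chosen in the construction of $e^r$ produces precisely the grading shift $q^{m/2}$ (rather than some other power of $q$). This is a careful but routine bookkeeping exercise against the explicit formulas in \cite[Lemma~4.1]{R}, and it involves no new ideas beyond the standard graded Morita equivalence between $NH_m$ and $\K[x_1,\ldots,x_m]^{S_m}$.
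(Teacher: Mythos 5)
Your reduction through $\iota^r$ and $\phi^r$ to the identity $M^r 1_\bj \cong M^{r+1}1_{\bi\bj}e^r$, followed by a block-by-block nilHecke computation, is exactly the paper's strategy, so the architecture is fine.

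The gap is the asserted per-block shift, which you leave unverified and which is also not correct. You claim $\qdim(Nb_m) = q^{m/2}\,\qdim(N1_{i^m})/[m]!$, but the correct exponent is $\tfrac{1}{2}m(m-1)$. To see this quickly: $1_{i^m}QH_{I_{r+1},m}1_{i^m}$ is a nilHecke algebra, whose unique graded irreducible $L$ can be normalized so that $\qdim L = [m]!$; the idempotent $b_m = \tau_{w_0}\xi_1^{m-1}\xi_2^{m-2}\cdots\xi_{m-1}$ is homogeneous of degree $0$ with polynomial factor $\xi_1^{m-1}\cdots\xi_{m-1}$ of degree $m(m-1)$, so by degree considerations the one-dimensional space $Lb_m$ sits in the extreme degree $\tfrac{1}{2}m(m-1)$, giving $\qdim Lb_m = q^{\frac{1}{2}m(m-1)}$; the general identity then follows since any finite-dimensional graded nilHecke module is a direct sum of shifts of $L$. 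Your $q^{m/2}$ would instead require the polynomial factor of $b_m$ to have degree $m$, which fails for every $m>2$. (The displayed definition $\sigma_r = \tfrac{1}{2}p_1 + \cdots + \tfrac{1}{2}p_a$ in (\ref{sigmar}), which your $q^{m/2}$ is presumably pattern-matching, should in fact read $\sigma_r = \sum_k \tfrac{1}{2}p_k(p_k-1)$; one can cross-check this against the divided-power normalization $F_i^{(m)} = q^{-\frac{1}{2}m(m-1)}b_m F_i^m$ invoked in the proof of Lemma~\ref{tricky}.) So the single nontrivial computation in this lemma is neither carried out nor correctly stated in your write-up.
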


\begin{proof}
This follows from the explicit form of the idempotent $e^r_d$
constructed in the proof of Lemma~\ref{tl}.
The essential point is that $b_m \in 1_{i^m} QH_{I_{r+1},m} 1_{i^m}$ 
has the property for any finite dimensional graded $QH_{I_{r+1},m}$-module $M$
that
$\qdim M b_m = q^{\frac{1}{2}m(m-1)}\qdim M 1_{i^m} / [m]!$.
To see this note that
$1_{i^m} QH_{I_{r+1},m} 1_{i^m}$ is a nil-Hecke algebra.
So it has a unique irreducible module $\overline{L}$, a graded lift $L$ of which has graded dimension $[m]!$.
Since 
$b_m = \tau_{w_0} \xi_1^{m-1}\xi_2^{m-2}\cdots \xi_{m-1}$ 
we deduce by degree considerations that $\qdim L b_m = q^{\frac{1}{2}m(m-1)}$.
\end{proof}

Then for $\sigma_r$ as in (\ref{sigmar}) we define
\begin{equation}
\Sigma_r := \sigma_1+\cdots+\sigma_{r-1}\label{Sigma}.
\end{equation}
Recall also the definition of $\operatorname{def}(\lambda)$
for infinite intervals from Lemma~\ref{defectL}.

\begin{Theorem}\label{lifts}
For $\lambda \in \Lambda$, there exists 
a unique (up to isomorphism)
graded lift
$
Y(\lambda) = (Y^1(\lambda)\rightarrow Y^2(\lambda)\rightarrow\cdots)
\in \boldrmod{H}
$ 
of $\overline{Y}(\lambda)$
such that $Q^{\Sigma_r - \operatorname{def}(\lambda)}Y^r(\lambda)$ 
is self-dual as a graded vector space for each $r \geq 1$.
In fact if $Y'(\lambda)$ is any graded lift of $\overline{Y}(\lambda)$
such that
$Q^{\Sigma_s - \operatorname{def}(\lambda)}{Y'}^s(\lambda)$
is non-zero and self-dual as a graded vector space for some $s \geq 1$,
then we have that $Y'(\lambda) \cong Y(\lambda)$ in $\boldrmod{H}$.
\end{Theorem}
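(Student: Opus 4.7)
The plan is to assemble the desired $Y(\lambda)$ from the finite-interval graded Young modules furnished by Lemma~\ref{shift}, using the shifts $\Sigma_r$ to make them compatible. First I would pick $r_0 \geq 1$ large enough that $\overline{Y}(\lambda)$ is $r_0$-stable (Remark~\ref{howbig}), that $\lambda \in \Lambda_{r_0}$, and that $\operatorname{def}(\lambda)$ can be computed using $I_r$ for every $r \geq r_0$ (Lemma~\ref{defectL}). For each such $r$, the finite-interval case of Theorem~\ref{main} already established in $\S$\ref{fi} supplies a $U_q\mathfrak{sl}_{I_r}$-tensor product categorification $\mathcal{C}_r$ lifting $\overline{\mathcal{C}}_r$, and Lemma~\ref{shift} then produces a canonical graded Young module $\widetilde{Y}^r(\lambda) \in \boldrmod{H^r}$ characterized up to isomorphism by the self-duality of each $q^{-\operatorname{def}(\lambda)} \widetilde{Y}^r(\lambda)1_\bj$. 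Define $Y^r(\lambda) := q^{-\Sigma_r} \widetilde{Y}^r(\lambda)$, so by construction $q^{\Sigma_r - \operatorname{def}(\lambda)} Y^r(\lambda) 1_\bj$ is self-dual for every $\bj$.

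The central step is the compatibility $Y^{r+1}(\lambda) e^r \cong Y^r(\lambda)$ as graded $H^r$-modules for each $r \geq r_0$. Since both sides are graded lifts of the indecomposable module $\overline{Y}^{r+1}(\lambda) e^r \cong \overline{Y}^r(\lambda)$ (using $r$-stability of $\overline{Y}(\lambda)$), Corollary~\ref{shift2} applied within $\mathcal{C}_r$ reduces this to showing that $q^{\Sigma_r - \operatorname{def}(\lambda)}(Y^{r+1}(\lambda) e^r) 1_\bj$ is non-zero and bar-invariant for some $\bj$. Applying Lemma~\ref{yuk} with the fixed word $\bi \in I_{r+1}^{d_r}$ of Lemma~\ref{tl} yields
\[
q^{\Sigma_r - \operatorname{def}(\lambda)} \qdim (Y^{r+1}(\lambda) e^r) 1_\bj
= q^{\Sigma_{r+1} - \operatorname{def}(\lambda)} \qdim Y^{r+1}(\lambda) 1_{\bi \bj} \big/ \bigl([p_1]! \cdots [p_a]!\bigr),
\]
using $\Sigma_{r+1} = \Sigma_r + \sigma_r$. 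The numerator on the right is bar-invariant by the defining property of $\widetilde{Y}^{r+1}(\lambda)$, and quantum factorials are themselves bar-invariant, so the quotient is bar-invariant; choosing $\bj$ so the expression is non-zero (possible because $\widetilde{Y}^{r+1}(\lambda) \neq 0$) gives the compatibility. Gluing via these isomorphisms yields an object of $\boldrmod{H^{\geq r_0}}$, to which we apply $\head_{r_0}$ to obtain an $r_0$-stable $Y(\lambda) \in \boldrmod{H}$ lifting $\overline{Y}(\lambda)$. Iterating the same identity from $r_0$ down to arbitrary $r \geq 1$ shows $q^{\Sigma_r - \operatorname{def}(\lambda)} Y^r(\lambda)$ is self-dual as a graded vector space for every $r$, completing existence.

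For uniqueness, suppose $Y'(\lambda)$ is any graded lift of $\overline{Y}(\lambda)$ with $q^{\Sigma_s - \operatorname{def}(\lambda)} {Y'}^s(\lambda)$ non-zero and self-dual for some $s$. Because $\overline{Y}(\lambda)$ is $r_0$-stable and the stability maps are detected by their underlying ungraded versions, $Y'(\lambda)$ is itself $r_0$-stable, so it suffices to prove ${Y'}^{r_0}(\lambda) \cong Y^{r_0}(\lambda)$ in $\boldrmod{H^{r_0}}$. Indecomposability of $\overline{Y}^{r_0}(\lambda)$ forces ${Y'}^{r_0}(\lambda) \cong q^n \widetilde{Y}^{r_0}(\lambda)$ for some $n \in \Z$; iterating Lemma~\ref{yuk} between levels $s$ and $r_0$ translates the self-duality of $q^{\Sigma_s - \operatorname{def}(\lambda)} {Y'}^s(\lambda)$ (modulo bar-invariant quantum-factorial corrections) into the bar-invariance of $q^{n + \Sigma_{r_0}} \bigl[q^{-\operatorname{def}(\lambda)} \sum_\bj \qdim \widetilde{Y}^{r_0}(\lambda) 1_{\bi' \bj}\bigr]$, with the bracketed quantity itself bar-invariant and non-zero (using the non-vanishing hypothesis at level $s$). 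Hence $n = -\Sigma_{r_0}$, giving ${Y'}^{r_0}(\lambda) \cong Y^{r_0}(\lambda)$ and therefore $Y'(\lambda) \cong \pr_{r_0}^! Y^{r_0}(\lambda) \cong Y(\lambda)$. The main obstacle throughout is the delicate bookkeeping of degree shifts: the normalisation $\Sigma_r = \sigma_1 + \cdots + \sigma_{r-1}$ is chosen precisely so that $\Sigma_{r+1} - \Sigma_r = \sigma_r$ cancels the shift introduced under restriction through $e^r$ in Lemma~\ref{yuk}, which is exactly what permits the finite-case characterisations to glue consistently across all levels.
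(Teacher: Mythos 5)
Your proof is correct and relies on exactly the same chain of lemmas as the paper: Lemma~\ref{shift} for the finite-level characterization, Lemma~\ref{yuk} (with Lemma~\ref{defectL}) to relate the self-duality conditions between consecutive levels, and Corollary~\ref{shift2} to pin down the shift. The only difference is organizational: you construct $Y(\lambda)$ from scratch by gluing the shifted finite-level canonical lifts $q^{-\Sigma_r}\widetilde Y^r(\lambda)$ together, whereas the paper invokes Lemma~\ref{key} to know $\overline{Y}(\lambda)$ is gradable, normalizes the resulting lift so that the level-$r$ word spaces are self-dual, and then propagates that property to all other levels via Lemma~\ref{yuk} and Corollary~\ref{shift2}; both routes deliver the same object for the same reasons.
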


\begin{proof}
Choose $r$ so that $\overline{Y}(\lambda)$ is $r$-stable.
We already observed by Lemma~\ref{key}
that $\overline{Y}(\lambda)$ is gradable.
In view of Lemma~\ref{shift} we can pick a graded lift 
$Y(\lambda)$ so that each word space of
$Q^{\Sigma_r - \operatorname{def}(\lambda)}Y^r(\lambda)$ is graded-self-dual.
Lemma~\ref{yuk} then implies immediately
that $Q^{\Sigma_s - \operatorname{def}(\lambda)} Y^s(\lambda)$
is graded-self-dual for each $s < r$; this depends also on the independence
from Lemma~\ref{defectL}.
The same argument together also with Corollary~\ref{shift2}
proves the same thing for $s > r$ too.

To establish the claim about uniqueness,
let $Y'(\lambda)$ be another graded lift of $\overline{Y}(\lambda)$
such that $Q^{\Sigma_s - \operatorname{def}(\lambda)} {Y'}^s(\lambda)$ is a non-zero
self-dual graded vector space for some $s \geq 1$.
Then ${Y'}^r(\lambda)$ is a graded lift of $\overline{Y}^r(\lambda)$
and the argument just given using Lemmas~\ref{yuk}, \ref{shift} 
and Corollary~\ref{shift2} imply that ${Y'}^r(\lambda)$ is also graded-self-dual.
Hence actually ${Y'}^r(\lambda) \cong Y^r(\lambda)$.
But then the $r$-stability implies that
$Y'(\lambda) \cong \pr_r^! ({Y'}^r(\lambda)) \cong 
\pr_r^!(Y^r(\lambda)) \cong Y(\lambda)$.
\end{proof}

With Theorem~\ref{lifts} in hand it is clear how to 
construct a graded lift $\mathcal C$ of $\overline{\mathcal C}$.
We fix a choice of object $Y(\lambda) = (Y^1(\lambda)\rightarrow Y^2(\lambda)\rightarrow\cdots) \in \boldrmod{H}$ as in Theorem~\ref{lifts}
for each $\lambda\in\Lambda$,
then define
\begin{equation}\label{lastA}
A := \bigoplus_{\lambda,\mu \in \Lambda} \Hom_{H}(Y(\lambda),Y(\mu)).
\end{equation}
This is a graded lift of the basic algebra underlying the original 
category $\overline{\mathcal C}$.
Thus $\mathcal C := \boldrmod{A}$ is
a graded lift of $\overline{\mathcal C}$,
with forgetful functor
$\nu:\widehat{\mathcal C} \rightarrow \overline{\mathcal C}$ that is the composite of the
forgetful functor $\widehat{\boldrmod{A}}\rightarrow \rmod{A}$
and the adjoint equivalence $\rmod{A} \rightarrow \overline{\mathcal{C}}$
to the usual equivalence 
$\overline{\mathbb{H}}:\overline{\mathcal C}\rightarrow \rmod{A}$. 

Next we introduce a categorical $U_q \mathfrak{sl}_I$-action 
on $\mathcal C$. Let
\begin{align}\label{B5}
B_i &:= \bigoplus_{\lambda,\mu\in\Lambda} \Hom_H(Y(\lambda), F_i Y(\mu)),\phantom{\qquad\text{a so that}}\\
\!\!\!\text{so that}\qquad\qquad\qquad B_i \otimes_A B_j &\cong \bigoplus_{\lambda,\mu\in\Lambda} \Hom_H(
Y(\lambda), F_j F_i Y(\mu)).\label{B6}
\end{align}
The natural transformations $\xi$ and $\tau$ from the categorical action on
$\boldrmod{H}$ induce 
induce homogeneous bimodule homomorphisms 
$\xi:B_i \rightarrow B_i$ and $\tau:B_i \otimes_A B_j \rightarrow B_j \otimes_A B_i$
as usual.
Let $F_i:\boldrmod{A} \rightarrow \boldrmod{A}$
be the functor defined by tensoring with $B_i$, with it homogeneous 
natural transformations
$\xi$ and $\tau$ induced by the preceding bimodule endomorphisms.
This gives us a choice of graded lifts of $\overline{F}_i:\overline{\mathcal C}
\rightarrow \overline{\mathcal C}$, $\overline{\xi}\in\End(\overline{F}_i)$ and $\overline{\tau}\in\Hom(\overline{F}_j\circ\overline{F}_i,\overline{F}_i\circ\overline{F}_j)$.
For $F_i^*$ we take the canonical graded right adjoint
$\bigoplus_{\lambda \in \Lambda} \Hom_A(1_\lambda B_i, -)$ to $F_i$. 
All the other required lifts are obvious, and then Lemma~\ref{backwards} tells us that all this data makes $\mathcal C$
into a $U_q \mathfrak{sl}_I$-categorification.

\begin{Theorem}\label{done}
There exists a (unique up to isomorphism and global shift) choice of distinguished irreducible 
objects $\{L(\lambda)\:|\:\lambda \in \Lambda\}$
making the 
graded lift $\mathcal C$ of $\overline{\mathcal{C}}$ just constructed into a 
$U_q\mathfrak{sl}_I$-tensor product categorification.
\end{Theorem}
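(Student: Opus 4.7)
The plan is to reduce to the finite-interval case of Theorem~\ref{main}(ii) (established in $\S$\ref{fi}) via the graded version of the truncation machinery of $\S$\ref{trun}, using the canonical normalization of the Young modules from Theorem~\ref{lifts} to synchronize the global shifts across different truncations.

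First, for each $r \geq 1$ form the subquotient $\mathcal C_r := \mathcal C_{\leq I_r}/\mathcal C_{<I_r}$. Truncation commutes with the forgetful functor, so $\mathcal C_r$ is a graded lift of $\overline{\mathcal C}_r$; the restrictions of $F_i, E_i, K_i, K_i^{\pm 1}$ for $i \in I_r$ together with $\xi, \tau$ and the adjunction descend as graded functors, natural transformations, and adjunctions on $\mathcal C_r$, providing exactly the data required by Lemma~\ref{backwards}(i)--(iv) for the finite $\mathfrak{sl}_{I_r}$-tensor product categorification $\overline{\mathcal C}_r$. The finite case of Theorem~\ref{main}(ii) then supplies distinguished irreducibles $L^r(\lambda) \in \mathcal C_r$ (unique up to a uniform global shift) making $\mathcal C_r$ into a $U_q\mathfrak{sl}_{I_r}$-tensor product categorification.

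Second, pin down the global shift in each $\mathcal C_r$ via the Young modules. By the construction of $\mathcal C = \boldrmod A$ with $A$ as in (\ref{lastA}), we have $P(\lambda) = 1_\lambda A$ and $\V P(\lambda) \cong Y(\lambda)$. Applying $\pi_r$ yields $\V^r P^r(\lambda) \cong Y^r(\lambda) = \pr_r Y(\lambda)$ in $\boldrmod{H^r}$. Now $Y^r(\lambda)$ is normalized by Theorem~\ref{lifts} so that $q^{\Sigma_r-\operatorname{def}(\lambda)}Y^r(\lambda)$ is graded-self-dual, whereas the canonical Young module of the finite case (Lemma~\ref{shift}, Corollary~\ref{shift2}) requires $q^{-\operatorname{def}(\lambda)}(-)$ to be self-dual; the two normalizations differ by the $\lambda$-independent shift $q^{\Sigma_r}$, which is exactly the admissible global shift in $\mathcal C_r$. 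We choose the $L^r(\lambda)$ so that $P^r(\lambda)$ realizes $Y^r(\lambda)$ with its given normalization, which determines the distinguished irreducibles in each $\mathcal C_r$ uniquely.

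Third, glue these choices together. For $\lambda \in \Lambda_r$, define $L(\lambda) \in \mathcal C$ to be the unique irreducible of the highest weight subcategory $\mathcal C_{\leq I_r}$ sent to $L^r(\lambda)$ by $\pi_r$. For $s > r$ and $\lambda \in \Lambda_r$, the definitions via $\mathcal C_r$ and $\mathcal C_s$ must agree. This coherence is precisely what the integer $\Sigma_r = \sigma_1 + \cdots + \sigma_{r-1}$ from (\ref{Sigma}) was designed to ensure: by Lemma~\ref{yuk} the graded dimensions of $Y^r(\lambda)$ and $Y^s(\lambda)$ at matching word spaces differ by the cumulative shift $q^{\Sigma_s - \Sigma_r}$, exactly compensating for the $\lambda$-independent discrepancy between the finite normalizations in $\mathcal C_r$ and $\mathcal C_s$, while the invariance statement of Lemma~\ref{defectL} guarantees that $\operatorname{def}(\lambda)$ is the same whether computed at level $r$ or at level $s$.

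Finally, verify the axioms. The graded highest weight axiom (GHW2) and the axioms (GTP1)--(GTP2) follow for any sufficiently large $r$ by passing to $\mathcal C_r$, since each $P(\lambda)$, each $F_i \Delta(\lambda)$, and each $E_i \Delta(\lambda)$ has only finitely many composition factors, all of which lie in some $\Lambda_r$. The graded decomposition of $F_i \Delta(\lambda)$ reduces under $\pi_r$ to the known graded decomposition in $\mathcal C_r$, giving (GTP3). Uniqueness up to global shift is immediate from the finite case: another choice $L'(\lambda)$ must differ from $L(\lambda)$ in each $\mathcal C_r$ by a uniform shift $q^{n_r}$, and the gluing compatibility forces $n_r$ to be independent of $r$. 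The principal subtlety, handled by the second and third steps, is the cross-compatibility of the finite normalizations; this is the only place where the explicit shift $\Sigma_r$ is essential and constitutes the main obstacle.
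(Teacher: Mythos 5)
Your proof is correct and follows the same overall skeleton as the paper's — truncate to the finite subquotients $\mathcal C_r$ and invoke the finite case of Theorem~\ref{main}(ii) — but it handles the synchronization of global shifts across the tower by a genuinely different mechanism.

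The paper's argument is a soft recursion: it takes \emph{arbitrary} choices of distinguished irreducibles $\{L_r(\lambda)\}$ at each level, observes that the images of $\{L_{r+1}(\lambda)\colon\lambda\in\Lambda_r\}$ in $\mathcal C_r$ provide another valid choice, and then uses the uniqueness-up-to-global-shift in the finite case to recursively replace $L_{r+1}(\lambda)$ by $q^{n_r} L_{r+1}(\lambda)$ so that consecutive levels agree. At no point does it need to know what the shifts $n_r$ are. Your argument instead pins down a \emph{canonical} choice at every level simultaneously: $L^r(\lambda)$ is the head of $\pi_r(1_\lambda A)$, which is legitimate because the algebra $A$ in \eqref{lastA} was built from the $\Sigma_r$-normalized Young modules of Theorem~\ref{lifts}, and the normalization is designed (via Lemma~\ref{yuk}) precisely so that passage from level $r$ to level $r+1$ preserves self-duality. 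This makes the coherence across $r$ manifest, and it has the pleasant side effect of establishing directly the identification recorded by the paper only afterwards in Remark~\ref{lastr}. The cost is that you are re-deriving inside this proof the very content of Theorem~\ref{lifts} and Lemma~\ref{yuk}, and the argument is less self-contained in two small respects you should be aware of: (a) the functor $\V^r$ is defined via $T^r = q^{\Sigma_r}\bigoplus_d F_{I_r}^d L(\kappa^r)$, so you must first fix $L(\kappa^r)$ (e.g.\ as the head of $1_{\kappa^r}A$) before $\V^r P^r(\lambda)\cong Y^r(\lambda)$ is a meaningful assertion; and (b) $\pi_r P(\lambda)$ is the projective cover of $L^r(\lambda)$ in $\mathcal C_r$ only once $r$ is large enough that $P(\lambda)\in\mathcal C_{\leq I_r}$ (cf.\ Remark~\ref{howbig}), so the claimed identification at every $r$ should really be phrased for $r$ sufficiently large depending on $\lambda$. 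Neither of these is a fatal gap — both are easy to repair — but they illustrate why the paper prefers the recursive bookkeeping, which applies to any graded lift satisfying Theorem~\ref{main}(i) and never needs to open the hood on $A$, $\V^r$ or the $\Sigma_r$.

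One further caution worth flagging: your second step asserts that matching $\V^r P^r(\lambda)$ with $Y^r(\lambda)$ ``determines the distinguished irreducibles in each $\mathcal C_r$ uniquely.'' If $P^r(\lambda)$ were read as ``the projective cover of whatever $L^r(\lambda)$ we end up choosing,'' that condition is actually shift-invariant — a global shift of $L^r(\kappa^r)$ shifts $T^r$, $\V^r$ and $P^r(\lambda)$ in compensating ways — and so determines nothing. The claim is correct only under the intended reading that $P^r(\lambda)$ is the \emph{fixed} object $\pi_r(1_\lambda A)$, in which case the condition is not a normalizing device at all but simply a statement that the construction of $A$ already baked in the right shifts. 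Stating this explicitly would remove the ambiguity.
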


\begin{proof}
We apply the truncation construction
to define subquotients $\mathcal C_r := \mathcal C_{I_r}$
for each $r \geq 1$.
The structures defined on $\mathcal C$ make each $\mathcal C_r$
into a graded lift of $\overline{\mathcal C}_r$ in the sense of
Theorem~\ref{main}(i). Thus by Theorem~\ref{main}(ii) for the finite interval $I_r$ there exists a unique (up to isomorphism and global shift) set $\{L_r(\lambda)\:|\:\lambda \in \Lambda_r\}$ of distinguished irreducible objects in $\mathcal C_r$ making it into a $U_q\mathfrak{sl}_{I_r}$-tensor product categorification.

Now $\mathcal C_r$ is a subquotient of $\mathcal C_{r+1}$, and 
the objects $\{L_{r+1}(\lambda)\:|\:\lambda\in\Lambda_r\}$ give another choice of distinguished irreducible objects of $\mathcal C_r$ making it into a
$U_q\mathfrak{sl}_{I_r}$-tensor product categorification.
By the uniqueness in Theorem~\ref{main}(ii) this means that
$L_r(\lambda) \cong Q^n L_{r+1}(\lambda)$ for some $n \in \Z$ and all $\lambda \in \Lambda_r$.
Replacing $L_{r+1}(\lambda)$ by $Q^n L_{r+1}(\lambda)$ this shows that we may assume that
$L_r(\lambda)  \cong L_{r+1}(\lambda)$ for each $\lambda \in \Lambda_r$.
Starting at $r=1$ and proceeding recursively in this way we can ensure this is the case for all $r \geq 1$.

Then we define irreducible objects $L(\lambda)\in\mathcal C$ for $\lambda \in \Lambda$ by taking $L(\lambda)$ to be any irreducible object of $\mathcal C$
such that $L(\lambda) \cong L_r(\lambda)$ in $\mathcal C_r$
whenever $\lambda \in \Lambda_r$.
This gives the desired set of distinguished irreducible objects
$\{L(\lambda)\:|\:\lambda\in\Lambda\}$ for $\mathcal C$.
This makes $\mathcal C$ into a graded highest weight category by Lemma~\ref{tol}. 
Moreover it satisfies (GTP2)--(GTP3) because they 
hold in each subquotient $\mathcal C_r$.
\end{proof}

\begin{Remark}\rm\label{lastr}
We will see momentarily that the irreducible objects $L(\lambda)$ in Theorem~\ref{done}
can be taken to be the irreducible heads of the indecomposable projectives
$P(\lambda) := 1_\lambda A \in \boldrmod{A}$.
\end{Remark}

We have now done the hard work, proving Theorem~\ref{main}(i)
and establishing the existence of graded lifts $L(\lambda)$ as in (ii) 
for one particular choice of $\mathcal C$.
The proof of the remainder of Theorem~\ref{main}
follows the same argument as
explained for finite intervals in the previous subsection. 
So
let $\mathcal C$ be some fixed 
$U_q\mathfrak{sl}_I$-tensor product categorification lifting
$\overline{\mathcal C}$ in the sense of Theorem~\ref{main}(i)--(ii).
Let 
\begin{equation}\label{clean1}
T^r = \bigoplus_{d \geq 0} T^r_d := Q^{\Sigma_r} 
\bigoplus_{d \geq 0} F_{I_r}^d L(\kappa^r) \in \mathcal C,
\end{equation}
which is a graded lift of 
$\overline{T}^r$, hence $\End_{\widehat{\mathcal C}}(T^r) = H^r$.
The shifts $\Sigma_r$ here are as in (\ref{Sigma}),
and their appearance is explained this time by the following lemma.

\begin{Lemma}\label{tricky}
For $r \geq 1$ there is an isomorphism
$\theta^r:T^r  \stackrel{\sim}{\rightarrow}  e^r T^{r+1}$ in $\mathcal C$
such that $\theta^r \circ h = \phi^r(h) \circ \theta^r$ for each $h \in H^r$.
\end{Lemma}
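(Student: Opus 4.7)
The plan is to upgrade the ungraded isomorphism produced by Lemma~\ref{tl}(ii) to a graded one, exploiting the fact that the shifts $q^{\Sigma_r}$ appearing in the definition~(\ref{clean1}) of $T^r$ were chosen precisely to make this work. First I would invoke Lemma~\ref{tl}(ii) at the ungraded level to obtain $\overline{\theta}^r\colon \overline{T}^r \stackrel{\sim}{\rightarrow} e^r \overline{T}^{r+1}$ together with the required intertwining identity with $\phi^r$. Both $T^r$ and $e^r T^{r+1}$ are graded lifts of $\overline{T}^r$ in $\mathcal C$: the first by~(\ref{clean1}) itself, the second because $e^r \in H^{r+1}$ is a homogeneous idempotent, so $e^r(-)$ acts as a graded endofunctor on graded modules and transports back through the graded equivalence $\mathcal C \cong \boldrmod{A}$. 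By uniqueness of graded lifts of the indecomposable summands of $\overline{T}^r$, the two graded objects are isomorphic up to an a priori unknown integer shift on each summand.

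To pin these shifts down, I would reduce to the summand with $d=0$, since $F_{I_r}^d$ is a graded functor that behaves identically on the two sides. Using the decomposition $e^r_d F^{d_r+d}_{I_{r+1}} L(\kappa^{r+1}) = F^d_{I_r} F^{(p_1)}_{s+\eps 1}\cdots F^{(p_a)}_{s+\eps a} L(\kappa^{r+1})$ recorded in the proof of Lemma~\ref{tl}, the claim reduces to showing that
\[
L(\kappa^r) \;\cong\; q^{\sigma_r}\, F^{(p_1)}_{s+\eps 1}\circ\cdots\circ F^{(p_a)}_{s+\eps a}\, L(\kappa^{r+1})
\]
as graded objects of $\mathcal C$, with $\sigma_r = \Sigma_{r+1}-\Sigma_r$ as in~(\ref{sigmar}). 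Both sides are graded lifts of the same indecomposable object of $\overline{\mathcal C}$, hence are isomorphic up to a single integer shift, and the correct shift is detected by computing in the $\Q(q)$-form of the Grothendieck group $[\mathcal C^\Delta]_q$, which is canonically identified with $\bigwedge^{\bn,\bc} V_I$ by axiom~(GTP3).

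The main step is therefore a computation in $\bigwedge^{\bn,\bc}V_I$: apply the quantum divided powers $f^{(p_1)}_{s+\eps 1}\cdots f^{(p_a)}_{s+\eps a}$ to $v_{\kappa^{r+1}}$ using~(\ref{eact}), and extract the coefficient of $v_{\kappa^r}$, which I claim equals exactly $q^{-\sigma_r}$. I expect the main obstacle to be in organising this combinatorial bookkeeping carefully, separating the two cases of Lemma~\ref{tl} (according to whether $\max I_r = \max I_{r+1}$ or $\min I_r = \min I_{r+1}$) and tracking how the $q$-weights from~(\ref{eact})--(\ref{fact}) interact with the $q$-factorials $[p_k]!$ normalising each divided power. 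This is consistent with Lemma~\ref{yuk}, which forces the same $\sigma_r$ to appear between adjacent word-space dimensions of any object of $\boldrmod H$ and is what fixes the precise value $\sigma_r = \tfrac{1}{2}(p_1+\cdots+p_a)$ in~(\ref{sigmar}). Once this is verified, $\overline{\theta}^r$ admits a homogeneous lift $\theta^r$ of degree zero; the intertwining identity $\theta^r \circ h = \phi^r(h) \circ \theta^r$ transports automatically from the ungraded setting, since $\phi^r$ is homogeneous and $\theta^r$ is uniquely determined up to a graded scalar on each indecomposable summand, which can be absorbed into the normalisation of $\theta^r$.
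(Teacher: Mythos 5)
Your overall strategy is sound and close in spirit to the paper's: upgrade the ungraded isomorphism from Lemma~\ref{tl}(ii), observe both sides are graded lifts of the same indecomposable, reduce to $d=0$, and pin down the residual shift. The paper's own proof is a one-line degree count; yours routes the computation through the Grothendieck group. Both are valid strategies in principle. However, the concrete computation at the heart of your argument is wrong, and the error is not a matter of bookkeeping details but a misattribution of where $\sigma_r$ comes from.

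You claim that the coefficient of $v_{\kappa^r}$ in $f^{(p_1)}_{s+\eps 1}\cdots f^{(p_a)}_{s+\eps a}\, v_{\kappa^{r+1}}$ equals $q^{-\sigma_r}$. In fact it equals $1$. Both $v_{\kappa^{r+1}}$ and $v_{\kappa^r}$ are extremal weight vectors, and the $|\kappa^r|$-weight space of $\bigwedge^{\bn,\bc}V_{I_{r+1}}$ is one-dimensional, so the only question is the scalar; a direct check using~(\ref{eact}) gives exactly $1$. For a concrete test, take $l=2$, $\bc=(0,0)$, $\bn=(1,1)$, $I_1=\{0\}$, $I_2=\{-1,0\}$. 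Then $p_1=2$, $a=1$, $\bi=(-1,-1)$, $v_{\kappa^2}=v_{-1}\otimes v_{-1}$, $v_{\kappa^1}=v_0\otimes v_0$, and
\begin{equation*}
f_{-1}^{2}(v_{-1}\otimes v_{-1}) = (q+q^{-1})\,v_0\otimes v_0 = [2]\,v_{\kappa^1}, \qquad\text{so}\quad f_{-1}^{(2)}v_{\kappa^2} = v_{\kappa^1},
\end{equation*}
while $\sigma_1 \neq 0$. So the reduced claim $L(\kappa^r)\cong q^{\sigma_r}F^{(p_1)}_{s+\eps 1}\cdots F^{(p_a)}_{s+\eps a}L(\kappa^{r+1})$ is false whenever some $p_k>1$: the correct graded isomorphism with quantum divided powers is shift-free, $L(\kappa^r)\cong F^{(p_1)}_{s+\eps 1}\cdots F^{(p_a)}_{s+\eps a}L(\kappa^{r+1})$.

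The actual source of $\sigma_r$ is the discrepancy between the idempotent truncation $b_m F_i^m$ and the genuine quantum divided power $F_i^{(m)} := q^{-\frac{1}{2}m(m-1)}b_m F_i^m$. The idempotent $e^r_d$ is built from the $b_m$'s, not the $F_i^{(m)}$'s, so $e^r_d F_{I_{r+1}}^{d_r+d}L(\kappa^{r+1}) = F_{I_r}^d(b_{p_1}F_{s+\eps 1}^{p_1})\cdots(b_{p_a}F_{s+\eps a}^{p_a})L(\kappa^{r+1})$, and replacing each $b_{p_k}F^{p_k}$ by $q^{\frac{1}{2}p_k(p_k-1)}F^{(p_k)}$ introduces a total shift of $\sigma_r$. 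That shift is exactly cancelled by the overall factor built into the definition~(\ref{clean1}) of $T^r$ through $\Sigma_r$. This is the substance of the paper's one-line proof: it simply records the normalization $F_i^{(m)} = q^{-\frac{1}{2}m(m-1)}b_m F_i^m$ and deduces $F_{I_r}^d L(\kappa^r)\cong q^{-\sigma_r}e^r F_{I_{r+1}}^{d_r+d}L(\kappa^{r+1})$. It never passes through the Grothendieck group; the Grothendieck computation you propose, done correctly, shows the quantum-divided-power isomorphism carries \emph{no} shift, and it is the translation from quantum divided powers back to the $b_m$-truncations appearing in $e^r$ that produces $\sigma_r$. (You should also be aware that the formula~(\ref{sigmar}) for $\sigma_r$ as displayed does not match the proof of Lemma~\ref{yuk}; what that proof produces is $\sigma_r = \sum_k \frac{1}{2}p_k(p_k-1)$, and this is the value that makes the present lemma and Theorem~\ref{lifts} cohere, so it is worth tracking the normalization carefully rather than inferring it from the Grothendieck coefficient, which is trivially $1$.)
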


\begin{proof}
This follows from the proof of Lemma~\ref{tl}.
The only new observation in the graded setting is that
the quantum divided power functor $F_i^{(m)}$ is defined from
$F_i^{(m)} := Q^{-\frac{1}{2}m(m-1)} b_m F_i^m$, where $b_m \in 1_{i^m} QH_{I_{r+1},m} 1_{i^m}$
is the distinguished idempotent from \cite[Lemma 4.1]{R}.
This implies that $F_{I_r}^d L(\kappa^r) \cong Q^{-\sigma_r} 
e^r F_{I_{r+1}}^{d_r+d} L(\kappa^{r+1})$.
\end{proof}

Then we can introduce the graded functors
\begin{align}\label{clean2}
\V^r:\mathcal{C} \rightarrow\boldrmod{H^r},
\qquad
\V:
\mathcal{C} \rightarrow\boldrmod{H},
\end{align}
exactly like (\ref{ur})--(\ref{atlast}) but working in the graded categories.
Thus $\V^r = \pr_r \circ \V = \Hom_{\widehat{\mathcal C}}(T^r, -)$ is a graded lift of $\overline{\V}^r$, and
$\V$ is a graded lift of $\overline{\V}$, i.e. the 
analog of the diagram (\ref{newc}) commutes up to equivalence.
Applying Theorem~\ref{ffp} we deduce that $\V$ is fully faithful on projectives.
Moreover the proof of Theorem~\ref{strongeq} 
works also in the graded setup,
so $\V$ is a strongly equivariant graded functor.

For $\lambda\in\Lambda$ we let
\begin{equation}\label{clean3}
Y(\lambda) = (Y^1(\lambda)\rightarrow Y^2(\lambda)\rightarrow\cdots)
:= \V P(\lambda)
\in \boldrmod{H}.
\end{equation}
Thus $Y(\lambda)$ is a graded lift of $\overline{Y}(\lambda)$.
For any sufficiently large $r$,
Lemmas~\ref{defectL} and \ref{shift} imply
that $Q^{\Sigma_r-\operatorname{def}(\lambda)} 
Y^r(\lambda)$ is self-dual as a graded vector space.
Hence the uniqueness assertion of Theorem~\ref{lifts} implies that 
the object $Y(\lambda)$ just defined is isomorphic to the object $Y(\lambda)$
from that theorem. 
Hence the graded algebra from (\ref{gradedA}) is isomorphic to the
algebra (\ref{lastA}). As usual there is a strongly equivariant graded equivalence
between $\mathcal C$ and $\boldrmod{A}$ with respect to the graded categorical action 
on $\boldrmod{A}$ arising from the bimodules (\ref{B3})--(\ref{B4}); these bimodules are
equivariantly isomorphic to the ones appearing in (\ref{B5})--(\ref{B6}).
In the present situation, it is also clear that the 
projective indecomposable object $P(\lambda) \in \mathcal C$ 
corresponds under this equivalence to the graded $A$-module
$1_\lambda A$, so that this justifies the claim made in Remark~\ref{lastr}.

To complete the proof of Theorem~\ref{main} we take another
graded lift $\mathcal C'$ of $\overline{\mathcal C}$
as in Theorem~\ref{main}(i).
We fix choices of graded lifts $L'(\kappa^r) \in \mathcal C'$
of each $\overline{L}(\kappa^r)$ in $\overline{\mathcal{C}}$ such that
${F'_{s+\eps 1}}^{\!\!\!\!\!\!(p_1)}\cdots {F'_{s+\eps a}}^{\!\!\!\!\!\!(p_a)} L'(\kappa^{r+1}) \cong L'(\kappa^r)$
for each $r \geq 1$, notation as in Lemma~\ref{tl}.
Then we define ${T'}^r$ just like in (\ref{clean1}). 
The careful choices just made ensure that
the analog of Lemma~\ref{tricky} holds for $\mathcal C'$. So we can continue to 
define $\V':\mathcal C' \rightarrow \boldrmod{H}$ like in (\ref{clean2}),
which is fully faithful on projectives. Using this repeat the rest of the above constructions 
to get an algebra $A'$ as in (\ref{clean3}), bimodules $B_i'$, etc
\dots. Then run the logic from the previous subsection again
to finish the proof.

\subsection{Koszulity}
We say that a graded Schurian category $\mathcal C$ is 
{\em mixed} 
if the graded algebra $A$ defined by (\ref{gradedA}) is strictly 
positively graded, i.e.
$A = \bigoplus_{n \geq 0} A_n$
with $A_0 = \bigoplus_{\lambda\in\Lambda} \K 1_\lambda$. 
(The terminology ``mixed'' here goes back to \cite{BGS}; it is equivalent to 
property that $\Ext^1_{\mathcal C}(L, L') = 0$ for irreducibles $L, L'$
with $\operatorname{wt}(L) \leq \operatorname{wt}(L')$, where the
{\em weight} of $L \cong Q^n L(\lambda)$ is defined to be $-n$.)
A graded highest weight category is mixed 
if and only if both of the following holding for all weights $\lambda$ and $\mu$, respectively:
\begin{align}\label{pos1}
[P(\lambda)] &= [\Delta(\lambda)] +
\text{(a $q \N[q]$-linear combination of $[\Delta(\mu)]$ for $\mu > \lambda$),}\\\label{pos2}
[\Delta(\mu)] &= [L(\mu)] +
\text{(a $q \N[q]$-linear combination of $[L(\lambda)]$ for $\lambda < \mu$).}
\end{align}
If $\mathcal C$ possesses a graded duality fixing the distinguished 
irreducible objects, then we can use graded BGG reciprocity to deduce further that (\ref{pos1}) and (\ref{pos2})  are equivalent.

Now assume instead just
that $\mathcal C$ is a graded highest weight category with a graded duality.
Then $\mathcal C$ is {\em standard Koszul}
if the minimal projective resolution 
$$
\cdots\rightarrow P^1(\lambda) \rightarrow P^0(\lambda)
\rightarrow \Delta(\lambda) \rightarrow 0
$$ 
of each standard object $\Delta(\lambda)$
 is linear, i.e. $P^0(\lambda) \cong P(\lambda)$ and
for each $n \geq 1$ the object $P^n(\lambda)$ is a direct sum of objects $Q^n P(\mu)$
for $\mu > \lambda$.
This 
implies that (\ref{pos1}) holds for all $\lambda$, 
hence that $\mathcal C$ is mixed.

In fact standard Koszul implies Koszul in the usual sense, i.e. 
$\mathcal C$ is mixed and
$\Ext^n_{\mathcal C}(L(\lambda),L(\mu))_m = 0$ unless $m+n=0$ 
for any $\lambda,\mu \in \Lambda$.
This is proved for finite weight posets in
\cite[Theorem 1]{ADL}; the general case follows on passing to sufficiently large subquotients.

\begin{Theorem}\label{kosmain}
Let $\mathcal C$ be a 
$U_q\mathfrak{sl}_I$-tensor product categorification of any type and
for any interval $I$.
Then $\mathcal C$ is standard Koszul.
\end{Theorem}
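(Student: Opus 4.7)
The plan is to establish standard Koszulity of $\mathcal C$ via the homological characterization in terms of $\Ext$ groups, and then to reduce the $\Ext$ computation from $\mathcal C$ to a finite subquotient $\mathcal C_J$ via the truncation technique developed in $\S$\ref{trun}, where the result is already known from existing literature on tensor product algebras and parabolic category $\mathcal O$.

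First I would verify the standard homological characterization: $\mathcal C$ is standard Koszul if and only if for all $\lambda,\mu\in\Lambda$ and $n\geq 0$ the graded group $\Ext^n_{\widehat{\mathcal C}}(\Delta(\lambda), L(\mu))_m$ vanishes unless $m+n=0$. The forward direction is immediate from the existence of the linear resolution $P^\bullet(\lambda)$, since $\Ext^n$ is computed as $\Hom_{\widehat{\mathcal C}}(P^n(\lambda), L(\mu))$ with $P^n(\lambda)$ a direct sum of copies of $q^n P(\mu)$. For the converse, I would induct on $n$: assuming $P^0\leftarrow\cdots\leftarrow P^{n-1}$ has been constructed linearly, the syzygy $K^{n-1} := \ker(P^{n-1}\to P^{n-2})$ carries a graded $\Delta$-flag with sections indexed by weights strictly greater than $\lambda$, and the graded multiplicity of each $L(\mu)$ in its head is precisely $\qdim\Ext^n_{\widehat{\mathcal C}}(\Delta(\lambda), L(\mu))$; concentration of this $\Ext$ in degree $-n$ forces the projective cover of $K^{n-1}$ to be a direct sum of copies of $q^n P(\mu)$ for $\mu>\lambda$, as required.

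Second I would reduce to the case of a finite interval. Given $\lambda,\mu\in\Lambda$, pick a finite subinterval $J\subset I$ with $\lambda,\mu\in\Lambda_J$. By Lemma~\ref{pset}, any composition factor $L(\nu)$ of $\Delta(\lambda)$ satisfies $\nu\leq\lambda$, and the partial-sum inequalities characterizing the order then force $\nu\in\Lambda_{\leq J}$ (the sums defining $\Lambda_{\leq J}$ for $\nu$ dominate those for $\lambda$, which vanish outside $J$ since $\lambda\in\Lambda_J$). Hence $\Delta(\lambda)$ and $L(\mu)$ both lie in the highest weight subcategory $\mathcal C_{\leq J}$. The graded analogues of the isomorphisms (\ref{ext1}) and (\ref{ext2})---which carry over verbatim, since the inclusion $\mathcal C_{\leq J}\hookrightarrow\mathcal C$ and the quotient $\pi_J:\mathcal C_{\leq J}\to\mathcal C_J$ are both graded functors and the Grothendieck spectral sequence arguments of \cite[A3]{Donkin} work equally well with graded Hom spaces---together yield a natural graded isomorphism
\[
\Ext^n_{\widehat{\mathcal C}}(\Delta(\lambda), L(\mu))
\;\cong\; \Ext^n_{\widehat{\mathcal C_J}}(\Delta(\lambda_J), L(\mu_J)),
\]
where the quotient step uses that $\Lambda_J$ is a coideal of $\Lambda_{\leq J}$ and that $\Delta(\lambda)$ is itself a (trivial) $\Delta$-flag indexed by the weight $\lambda\in\Lambda_J$.

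Finally, the subquotient $\mathcal C_J$ inherits the structure of a $U_q\mathfrak{sl}_J$-tensor product categorification of type $(\bn,\bc)$ over the finite interval $J$. By the uniqueness Theorem~\ref{known}, $\mathcal C_J$ is strongly equivariantly graded equivalent to the tensor product algebra model of \cite{W1} (equivalently, to graded parabolic category $\mathcal O$ of \cite{BGS,Back,FKS}, as indicated in Remark~\ref{unfinished}), and standard Koszulity is known for that model. Transporting the resulting $\Ext$ concentration back along the above isomorphism gives the desired vanishing in $\mathcal C$, completing the proof. The main technical obstacle is the careful verification that the ungraded Ext identifications (\ref{ext1})--(\ref{ext2}) lift with the correct grading bookkeeping to the graded setting; this is routine but must be checked, as is the selection of $J$ large enough that the needed modules genuinely lie in $\mathcal C_{\leq J}$.
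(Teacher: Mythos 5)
Your proof is correct, and the overall strategy---reduce to a finite subinterval via truncation and invoke the known standard Koszulity in the finite case (ultimately from \cite{BGS}, \cite{Back}, \cite{ADL}, filtered through \cite{W1}/\cite{W2} or \cite{HM})---is the same as the paper's. The mechanism differs, though. The paper argues directly with the minimal projective resolution of $\Delta(\lambda)$: if linearity fails at step $n$, pick a finite $J$ large enough that all composition factors of $P^0(\lambda),\dots,P^n(\lambda)$ lie in $\Lambda_J$; the truncated complex then remains the beginning of a minimal projective resolution in the subquotient $\mathcal C_J$, contradicting the finite-interval result. You instead recast standard Koszulity as the vanishing $\Ext^n_{\widehat{\mathcal C}}(\Delta(\lambda),L(\mu))_m=0$ for $m+n\neq 0$ and transfer these $\Ext$ groups to $\mathcal C_J$ using graded analogues of (\ref{ext1})--(\ref{ext2}), requiring only that $\lambda,\mu\in\Lambda_J$. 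Your choice of $J$ is thus smaller (it does not depend on the resolution itself), and the $\Ext$-transfer is formally cleaner; the cost is that you must establish the graded versions of the Donkin isomorphisms, which you have plausibly sketched but not fully carried out---though the paper itself implicitly uses these graded $\Ext$ identifications later in \S\ref{cc} when discussing $p_{\lambda,\mu}(q)$, so this is a defensible debt to incur. One small point worth making explicit in your first step: the $\Ext$-vanishing characterization of standard Koszulity works because for $n\geq 1$ in a highest weight category $\Ext^n(\Delta(\lambda),L(\mu))=0$ automatically unless $\mu>\lambda$, so concentration in degree $-n$ does indeed force $P^n(\lambda)$ to be a sum of $q^n P(\mu)$ with $\mu>\lambda$ as the definition requires.
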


\begin{proof}
When $I$ is finite this is already known. In fact there are a couple
of proofs available in the literature depending on which realization of $\mathcal C$ is
adopted; see \cite{W2} and \cite{HM}. Both proofs depend 
ultimately on
the known standard Koszulity of graded parabolic category $\mathcal O$
which follows from \cite{BGS}, \cite{Back}
and \cite[Corollary 3.8]{ADL}.
(If the gap mentioned in Remark~\ref{unfinished}
related to checking directly that graded parabolic
category $\mathcal O$ satisfies (GTP3) can be filled,
then Theorem~\ref{main} would allow \cite{W2} or \cite{HM} to be bypassed entirely here.)

For the general case,
take $\lambda \in \Lambda$ and consider a minimal
projective resolution 
$$
\cdots\rightarrow P^2(\lambda)\rightarrow P^1(\lambda) \rightarrow P^0(\lambda)
\rightarrow \Delta(\lambda)\rightarrow 0
$$
of $\Delta(\lambda)$.
Suppose for some $n$ that $P^n(\lambda)$ is not a direct sum of $Q^n P(\mu)$.
Pick a finite subinterval $J \subset I$ such that all composition factors of 
$P^m(\lambda)$ belong to $\Lambda_J$ for all $m \leq n$.
Then $P^n(\lambda)\rightarrow\cdots\rightarrow P^0(\lambda)\rightarrow \Delta(\lambda)\rightarrow 0$ is the start of a non-linear minimal projective resolution for $\Delta(\lambda)$ in the subquotient $\mathcal C_J$ too.
This contradicts the standard Koszulity of $\mathcal C_J$.
\end{proof}

\begin{Corollary}\label{pg}
All $U_q\mathfrak{sl}_I$-tensor product categorifications in the sense of 
Definition~\ref{qtpcdef}
are mixed.
\end{Corollary}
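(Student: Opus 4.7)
The plan is to deduce the corollary immediately from Theorem~\ref{kosmain}. By that theorem, any $U_q\mathfrak{sl}_I$-tensor product categorification $\mathcal{C}$ is standard Koszul. As was observed in the discussion preceding the statement of Theorem~\ref{kosmain}, standard Koszulity implies that for each $\lambda \in \Lambda$ the minimal projective resolution of $\Delta(\lambda)$ is linear, so in particular $[P(\lambda)] = [\Delta(\lambda)] + (\text{a } q\N[q]\text{-linear combination of } [\Delta(\mu)] \text{ for } \mu > \lambda)$, which is exactly the positivity property (\ref{pos1}). Combined with the analogous positivity (\ref{pos2}) for $[\Delta(\mu)]$ in terms of $[L(\lambda)]$ (which follows from Koszulity in the usual sense, also a consequence of standard Koszulity as noted in the excerpt), one obtains that $\mathcal{C}$ is mixed.

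There is essentially no further work: the main content has been absorbed into Theorem~\ref{kosmain}, whose proof uses the truncation argument to reduce to the finite case $\mathcal{C}_J$, where standard Koszulity is a known result going back to \cite{BGS}, \cite{Back} and \cite{ADL}. So the only step here is to invoke Theorem~\ref{kosmain} and the implication ``standard Koszul $\Rightarrow$ mixed'' that was recorded just before that theorem. There is no obstacle to speak of; the proof occupies a single sentence.
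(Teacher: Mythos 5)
Your proposal is correct and takes the same route as the paper: the corollary is a one-liner following from Theorem~\ref{kosmain} combined with the implication ``standard Koszul $\Rightarrow$ mixed'' recorded in the discussion immediately preceding that theorem. One small remark on the logic: the cleanest version of that implication in the paper runs ``standard Koszul $\Rightarrow$ (\ref{pos1})'', and then one uses the graded duality from Corollary~\ref{gradedduality} together with graded BGG reciprocity to upgrade (\ref{pos1}) to (\ref{pos2}). Your route through ``Koszul in the usual sense $\Rightarrow$ (\ref{pos2})'' is a bit circular, since Koszulity in the usual sense is defined in the paper to include mixedness (and hence (\ref{pos2})) as part of its content; invoking the graded duality is the more economical way to close the loop. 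But the substance is the same and the argument is valid.
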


\begin{Remark}\rm
One very interesting question is whether any reasonable description of
the Koszul dual of $\mathcal{C}$ is possible.  In the finite case, the
usual singular-parabolic duality of \cite{Back} shows that the Koszul
dual of a weight space in an $\mathfrak{sl}_n$-tensor product
categorification of level $l$ is a weight space
in an $\mathfrak{sl}_l$-tensor product
categorification of level $n$; this is a
categorical version of skew Howe duality. It is not clear how
much of this structure survives in the infinite case.
\end{Remark}

\subsection{Kazhdan-Lusztig polynomials}\label{cc}
Let $\mathcal C$ be a $U_q\mathfrak{sl}_I$-tensor product
categorification of type $(\bn,\bc)$ and set $\Lambda := \Lambda_{I;\bn,\bc}$.
We conclude by 
proving the graded analog of the Kazhdan-Lusztig conjecture.
This
allows the polynomials
\begin{align}\label{decomp}
d_{\lambda,\mu}(q) &:= [\Delta(\mu):L(\lambda)]_q,\\
p_{\lambda,\mu}(q) &:= \sum_{n \geq 0} \dim \Ext^n_{\widehat{\mathcal
  C}}(\Delta(\lambda), L(\mu))q^n\label{decomp2}
\end{align}
to be computed in principle in terms of parabolic Kazhdan-Lusztig
polynomials in finite type $A$.
Note by the general theory of standard Koszul categories
that the two families of polynomials here are closely related:
the matrices $(d_{\lambda,\mu}(q))_{\lambda,\mu\in\Lambda}$
and $(p_{\lambda,\mu}(-q))_{\lambda,\mu \in \Lambda}$ are inverse to each other.

To start with, we have the graded Grothendieck groups $[\mathcal C]_q
\hookrightarrow [\mathcal C^\Delta]_q \hookrightarrow [\mathcal C]_q^*$. 
The functors $F_i, E_i$ and $K_i$ induce endomorphisms $f_i, e_i$ and $k_i$ 
making these Grothendieck groups into $U_q \mathfrak{sl}_I$-modules
and the inclusions above are 
module homomorphisms.
Recall also that $[\mathcal C^\Delta]_q$ is identified
with the $U_q \mathfrak{sl}_I$-module
$\bigwedge^{\bn,\bc} V_I$ 
so that 
$v_\lambda = [\Delta(\lambda)]$.
We let
$b_\lambda := [P(\lambda)]$ and $b_\lambda^* := [L(\lambda)]$;
in view of the uniqueness from
Theorem~\ref{known} these vectors are independent of the particular choice of the tensor product categorification
$\mathcal C$.
Thus 
we have constructed $U_q\mathfrak{sl}_I$-modules
$\textstyle[\mathcal C]_q \subseteq \bigwedge^{\bn,\bc} V_I \subseteq
[\mathcal C]_q^*$
with the distinguished bases
$\{b_\lambda\:|\:\lambda\in\Lambda\},
\{v_\lambda\:|\:\lambda\in\Lambda\}$ and
$\{b^*_\lambda\:|\:\lambda\in\Lambda\}$,
respectively.
When $I$ is finite, we have equalities
$\textstyle[\mathcal C]_q = \bigwedge^{\bn,\bc} V_I =
[\mathcal C]_q^*$, but in general this is not the case.

Let $\circledast:\mathcal C \rightarrow \mathcal C$ 
and $\#:\pC \rightarrow \pC$
be graded dualities
as in Corollary~\ref{gradedduality}.
Then define a bilinear pairing
$(-,-):[\mathcal C]_q \times [\mathcal C]^*_q \rightarrow \Q(q)$
by setting
\begin{align}\label{formies}
([P],[L]) &:= \qdim \Hom_{\widehat{\mathcal C}}(P^\#, L)
= \overline{\qdim \Hom_{\widehat{\mathcal C}}(P, L^\circledast)}.
\end{align}
Since $F^*_i$ is right adjoint to $F_i$ and $E^*_i$ is right adjoint to
$E_i$ this pairing has the property that $(u v,w) = (v,u^* w)$
for any $u \in U_q\mathfrak{sl}_I$.
Also it is immediate that 
the bases $\{b_\lambda\:|\:\lambda\in\Lambda\}$ and $\{b_\lambda^*\:|\:\lambda\in\Lambda\}$ are dual to each other.
Recall from (\ref{form}) that we have also already introduced a symmetric bilinear form $(-,-)$ on $\bigwedge^{\bn,\bc} V_I$ such that $(v_\lambda,v_\mu) = \delta_{\lambda,\mu}$. Our choice of notation here is not ambiguous
as the two forms agree on the intersection of their domains.
This follows because 
$$
(b_\lambda,v_\mu)
=\overline{\qdim \Hom_{\widehat{\mathcal C}}(P(\lambda), \nabla(\mu))}
=\qdim \Hom_{\widehat{\mathcal C}}(P(\lambda), \Delta(\mu))
= [\Delta(\mu):L(\lambda)]_q
$$
is the graded
multiplicity of $\Delta(\mu)$ in a graded $\Delta$-flag of $P$.
This argument recovers graded
{BGG reciprocity}: we have that
\begin{align}\label{trans}
b_\lambda &= \sum_{\mu \in \Lambda} d_{\lambda,\mu}(q) v_\mu,&
 v_\mu &= \sum_{\lambda \in \Lambda} d_{\lambda,\mu}(q)
b_\lambda^*,
\end{align}
recalling (\ref{decomp}).
When $I$ is finite we can invert these formulae to obtain also
\begin{align}\label{tran2}
v_\lambda &= \sum_{\mu \in \Lambda} p_{\lambda,\mu}(-q) b_\mu,
& b_\mu^* &= \sum_{\lambda \in \Lambda} p_{\lambda,\mu}(-q)
v_\lambda.
\end{align}
However the last two formulae do not make sense in general when $I$ is
infinite as the sums become infinite too; this issue can be bypassed
by introducing a suitable completion of $\bigwedge^{\bn,\bc} V_I$
but there is no need for us to do that here.

The dualities $\#$ and $\circledast$ induce antilinear involutions
\begin{equation}\label{psiies}
\psi:[\mathcal C]_q \rightarrow [\mathcal C]_q,
\qquad
\psi^*:[\mathcal C]_q^* \rightarrow [\mathcal C]_q^*,
\end{equation}
i.e. $\psi([P]) := [P^\#]$ and $\psi^*([L]) := [L^\circledast]$.
Equivalently, these are the unique antilinear involutions 
satisfying
\begin{equation}\label{a}
\psi(b_\lambda) = b_\lambda,
\qquad
\psi^*(b_\lambda^*) = b_\lambda^*
\end{equation}
for each $\lambda \in \Lambda$.
In view of \eqref{star-rep},
$\psi$ and $\psi^*$ are adjoint antilinear maps,
i.e. we have that $(\psi(v),w) = \overline{(v, \psi^*(w))}$.  
Also we have for any vector $v$ and any $u \in U_q\mathfrak{sl}_I$ that
\begin{equation}\label{b}
\psi(uv) = \psi(u) \psi(v),
\qquad
\psi^*(uv) = \psi^*(u) \psi^*(v),
\end{equation}
thanks to the equivariance properties from
Corollary~\ref{gradedduality}.

The following theorem is a special case of \cite[Proposition 7.3]{W3}, where it is proved using the tensor product algebra realization of $\mathcal C$. 
We give a slightly different proof here in terms of the axiomatic framework of \cite{LW}.

\begin{Theorem}\label{klthm}
Assume that $I$ is finite so that $[\mathcal C]_q =
\bigwedge^{\bn,\bc} V_I = [\mathcal C]_q^*$.
Then $\psi:\bigwedge^{\bn,\bc} V_I \rightarrow \bigwedge^{\bn,\bc} V_I$ 
coincides with Lusztig's bar involution
from \cite[$\S$27.3]{Lubook}.
\end{Theorem}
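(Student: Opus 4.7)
The plan is to identify $\psi$ with Lusztig's bar involution $\bar{\phantom{v}}$ on $\bigwedge^{\bn,\bc} V_I$ by checking that $\psi$ satisfies the characterizing properties of $\bar{\phantom{v}}$ recalled from \cite[\S 27.3]{Lubook}. Lusztig's bar involution on a tensor product of based modules is the unique antilinear involution $\bar{\phantom{v}}$ which is semilinear with respect to the bar involution $\psi$ on $U_q\mathfrak{sl}_I$ (i.e. $\overline{uv} = \psi(u)\overline{v}$), fixes the distinguished highest weight vector, and is unitriangular with respect to the monomial basis $\{v_\lambda\}$ in the appropriate direction. I plan to verify each of these three conditions for our $\psi$ in turn.

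The semilinearity $\psi(uv) = \psi(u)\psi(v)$ is exactly the first identity of (\ref{b}), which is forced on us by the equivariance of $\#$ with respect to $F_i$ and $E_i$ in Corollary~\ref{gradedduality}. For the second condition, note that $\kappa \in \Lambda$ is the unique maximal weight; hence the block containing it is trivial, giving $P(\kappa) = \Delta(\kappa) = L(\kappa)$ and consequently $b_\kappa = v_\kappa$. Combining this with (\ref{a}) yields $\psi(v_\kappa) = \psi(b_\kappa) = b_\kappa = v_\kappa$.

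The key step is the unitriangularity, and here I would invoke the standard Koszulity proved in Theorem~\ref{kosmain}. Koszulity implies the positivity (\ref{pos1}), which says
\begin{equation*}
b_\lambda = v_\lambda + \sum_{\mu > \lambda} d_{\lambda,\mu}(q)\, v_\mu
\qquad \text{with } d_{\lambda,\mu}(q) \in q\N[q].
\end{equation*}
Inverting this upper-unitriangular change of basis expresses each $v_\lambda$ as $b_\lambda$ plus a $\Z[q,q^{-1}]$-combination of $b_\mu$ for $\mu > \lambda$. Applying $\psi$, which fixes each $b_\mu$ by (\ref{a}), and translating back to the monomial basis using the same triangular relation, gives $\psi(v_\lambda) - v_\lambda \in \bigoplus_{\mu > \lambda}\Z[q,q^{-1}]\, v_\mu$. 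This is the unitriangularity that Lusztig's characterization requires.

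The main obstacle I anticipate is bookkeeping rather than conceptual: reconciling our conventions (in particular the fact that our $q$ is Lusztig's $v^{-1}$ and that $f_i,e_i,k_i$ correspond to Lusztig's $F_i,E_i,K_i^{-1}$, as noted in \S\ref{qgp}) with the precise form in which the uniqueness theorem appears in \cite[Ch.~27]{Lubook}. A secondary subtlety is that Lusztig's uniqueness on a tensor product is most directly formulated via the quasi-$R$-matrix rather than via unitriangularity on the full tensor product; if that formulation is needed, one can instead argue inductively using the factorization of $\bar{\phantom{v}}$ through the quasi-$R$-matrix together with the fact that each single tensor factor $\bigwedge^{n_i,c_i} V_I$ is minuscule, so its monomial basis is the canonical basis and the bar involution restricted to such a factor acts trivially — a fact one can check internally from our axioms by applying the argument above to the level-one case.
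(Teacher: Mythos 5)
The crux of your argument rests on the claim that bar-semilinearity, fixing $v_\kappa$, and unitriangularity with respect to the monomial basis uniquely characterize Lusztig's bar involution, and you present this as something ``recalled from \cite[$\S$27.3]{Lubook}.'' No such characterization appears there: Lusztig defines $\tilde\psi$ via the quasi-$R$-matrix $\Theta$, and proving your proposed uniqueness statement is itself nontrivial. Indeed, if both $\psi$ and $\tilde\psi$ satisfy your three conditions then $\sigma := \psi\circ\tilde\psi$ is a $\Q(q)$-linear, $U_q\mathfrak{sl}_I$-equivariant, unitriangular automorphism fixing $v_\kappa$; but $v_\kappa$ does not generate $\bigwedge^{\bn,\bc}V_I$ as a $U_q\mathfrak{sl}_I$-module once multiplicities are present, so knowing $\sigma$ on the top isotypic component does not force $\sigma = \operatorname{id}$. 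You would need an argument propagating the identity on $v_\kappa$ across the other isotypic components, and this is exactly what is missing. Your closing paragraph concedes the point and proposes instead an induction via the quasi-$R$-matrix factorization, but you never show that $\psi$ itself factors through $\Theta$ in the required way; that is precisely the hard step.

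The paper's proof is by induction on the level $l$, using a different and easier characterization of $\tilde\psi$: (i) commutation with the $f_i$'s, and (ii) the recursion $\tilde\psi(v\otimes v_{\kappa_l}) = \tilde\psi^+(v)\otimes v_{\kappa_l}$, where $\tilde\psi^+$ is the bar involution at level $l-1$. These determine $\tilde\psi$ because the $U_q^-$-span of $\bigwedge^{\bn^+,\bc^+}V_I\otimes v_{\kappa_l}$ is all of $\bigwedge^{\bn,\bc}V_I$. The substantial work is in checking (ii) for $\psi$: the paper constructs the Serre quotient $\mathcal C^+$ associated to the coideal $\{\lambda\in\Lambda : \lambda_l = \kappa_l\}$, endows it with a $U_q\mathfrak{sl}_I$-tensor product categorification structure of type $(\bn^+,\bc^+)$ via the categorical splitting construction of \cite{LW} together with Theorem~\ref{main}(ii), and then uses the adjoint $\pi^!$ of the quotient functor together with Lemma~\ref{shriek} to obtain $b_\lambda = b^+_\lambda\otimes v_{\kappa_l}$ for $\lambda\in\Lambda^+$, from which (ii) follows. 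Finally, note that your route feeds the standard Koszulity of Theorem~\ref{kosmain} (through (\ref{pos1})) into a statement that the paper proves without it; Koszulity enters only afterwards, in the Corollary identifying $\{b_\lambda\}$ with Lusztig's canonical basis. This is not circular, but it would make Theorem~\ref{klthm} lean on the geometric inputs from \cite{BGS}, \cite{Back} unnecessarily.
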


\begin{proof}
We prove this by induction on the level $l$ of $(\bn,\bc)$.
When $l=1$ the result is clear as both $\psi$ and Lusztig's bar involution
fix the highest weight vector $v_\kappa$ and commute with each $f_i$.
Now suppose that $l > 1$. Let $\bn^+ := (n_1,\dots,n_{l-1})$ and $\bc^+ := (c_1,\dots,c_{l-1})$, so that $\bigwedge^{\bn,\bc} V_I = \bigwedge^{\bn^+,\bc^+} V_I \otimes \bigwedge^{n_l,c_l} V_I$. 
Let $\Lambda^+ := \Lambda_{I;\bn^+,\bc^+}$.
To avoid potential confusion later on we denote the monomial basis vectors for $\bigwedge^{\bn^+,\bc^+} V_I$ by $v_\lambda^+$ instead of $v_\lambda$.
Let 
$\tilde\psi:\bigwedge^{\bn,\bc} V_I \rightarrow \bigwedge^{\bn,\bc} V_I$
and
$\tilde\psi^+:\bigwedge^{\bn^+,\bc^+} V_I \rightarrow \bigwedge^{\bn^+,\bc^+} V_I$
be Lusztig's bar involutions. It is easy to see from Lusztig's definition that the following two properties are satisfied:
\begin{itemize}
\item[(i)] $\tilde\psi(f_i v) = f_i \tilde\psi(v)$ for all $i \in I$ and $v \in \bigwedge^{\bn,\bc} V_I$;
\item[(ii)] $\tilde\psi(v \otimes v_{\kappa_l}) = \tilde\psi^+(v) \otimes v_{\kappa_l}$
for each $v \in \bigwedge^{\bn^+,\bc^+} V_I$.
\end{itemize}
Moreover these two properties characterize $\tilde\psi$ uniquely.
Thus to prove the theorem we must show that
$\psi(v \otimes v_{\kappa_l}) = \tilde\psi^+(v) \otimes v_{\kappa_l}$
too.
Let $\mathcal C^+$ be a 
$U_q \mathfrak{sl}_I$-tensor product categorification of
type $(\bn^+,\bc^+)$.
Its Grothendieck group $[\mathcal C^+] = \bigwedge^{\bn^+,\bc^+} V_I$
is equipped with the antilinear involution $\psi^+$
defined as above, so that $\psi^+(b^+_\lambda) = b^+_\lambda$ for each $\lambda \in \Lambda^+$ where $b^+_\lambda\in \bigwedge^{\bn^+,\bc^+} V_I$ denotes the class of the indecomposable projective $P(\lambda) \in \mathcal C^+$.
By the induction hypothesis $\psi^+ = \tilde\psi^+$.
So we are reduced to showing that
$\psi(v \otimes v_{\kappa_l}) = \psi^+(v) \otimes v_{\kappa_l}$;
equivalently we will show that
$\psi(b^+_{\lambda} \otimes v_{\kappa_l}) = b^+_\lambda \otimes v_{\kappa_l}$ for each $\lambda \in \Lambda^+$.

Now we make a particular choice for the category $\mathcal C^+$.
Let us identify
$\Lambda^+$ with the coideal
$\{\lambda \in \Lambda\:|\:\lambda_l = \kappa_l\}$ of $\Lambda$
so that 
$(\lambda_1,\dots,\lambda_{l-1}) \in \Lambda^+$
is identified with $(\lambda_1,\dots,\lambda_{l-1},\kappa_l) \in \Lambda$.
Let $\mathcal C^+$ be the
quotient category of $\mathcal C$ associated to this coideal. It is a
graded highest weight category with distinguished irreducible objects
$\{L(\lambda)\:|\:\lambda\in\Lambda^+\}$.
The functor $F_i$ obviously leaves invariant the Serre subcategory of $\mathcal C$ generated by the irreducible objects $\{Q^m L(\lambda)\:|\:m \in \Z,
\lambda \in \Lambda, |\lambda_l| < |\kappa_l|\}$. Hence it induces a
well-defined graded endofunctor $F_i^+:\mathcal C^+ \rightarrow \mathcal C^+$.
The homogeneous natural transformations $\xi$ and $\tau$ restrict to give
$\xi \in \End(F_i^+)_2$ and $\tau \in \Hom(F_j^+ \circ F_i^+,F_i^+\circ F_j^+)_{-\alpha_i\cdot\alpha_j}$.
The functors $K_i$ and $K_i^{-1}$ obviously descend to $\mathcal C^+$ too.
We claim further that there exists a graded endofunctor $E_i^+:\mathcal C^+ \rightarrow \mathcal C^+$ such that $Q E_i^+ K_i$ is right adjoint to $F_i^+$,
and that this data
endows the graded highest weight category 
$\mathcal C^+$ with the structure of
a $U_q\mathfrak{sl}_I$-tensor product categorification of type $(\bn^+,\bc^+)$.

The proof of the claim depends on the categorical splitting construction from \cite{LW}. To construct $E_i^+$ for a fixed $i \in I$, we let
$\mathcal C_i$ be the quotient of $\mathcal C$ associated to the coideal
$\Lambda_i := \{\lambda \in \Lambda\:|\:|\kappa_l|-|\lambda_l| \in \N \alpha_i\}$.
Both of the functors $F_i$ and $E_i$ descend to endofunctors of $\mathcal C_i$.
Let $\mathcal C_i^-$ be the subcategory of $\mathcal C_i$ associated to
the ideal $\{\lambda \in \Lambda_i\:|\:|\lambda_l| = |\kappa_l|-r\alpha_i\}$
where $r := \kappa_l\cdot\alpha_i$ (which happens in our special minuscule situation to be either $0$ or $1$).
Thus we have constructed categories and functors
$\mathcal C_i^-\stackrel{\iota}{\rightarrow} \mathcal C_i \stackrel{\pi}{\rightarrow} \mathcal C^+$.
The functor $F_i$ on $\mathcal C_i$ restricts to an endofunctor $F_i^-$ of 
$\mathcal C_i^-$.
Moreover \cite[Proposition 4.3]{LW} implies that
$\pi \circ E_i^{(r)} \circ \iota:\mathcal C_i^-\rightarrow \mathcal C^+$ is a graded equivalence of categories, which intertwines $F_i^-$ and $F_i^+$
by \cite[Lemma 4.7]{LW}.
Let $E_i^- := \iota^* \circ E_i \circ \iota:\mathcal C_i^- \rightarrow \mathcal C_i^-$. Since $Q E_i K_i$ is right adjoint to $F_i$ it is immediate that $Q E_i^- K_i$ is right adjoint to $F_i^-$.
Then we transfer $E_i^-$ through the equivalence
to obtain the desired functor 
$E_i^+:\mathcal C^+ \rightarrow \mathcal C^+$ such that $Q E_i^+ K_i$ is right adjoint to $F_i^+$.
Finally let $\overline{\mathcal C}^+$ be the underlying ungraded category.
In \cite[Theorem 4.10]{LW} it is shown that $\overline{F}_i^+$ and
$\overline{E}_i^+$ (together with the various other natural transformations induced by the ones constructed above)
make $\overline{\mathcal C}^+$ into an $\mathfrak{sl}_I$-tensor product categorification of type $(\bn^+,\bc^+)$.
We have in front of us graded lifts as in Theorem~\ref{main}(i). Then we apply
Theorem~\ref{main}(ii) to deduce that $\mathcal C^+$ is a 
$U_q\mathfrak{sl}_I$-tensor product categorification of type $(\bn^+,\bc^+)$.
So we have proved the claim.

We can now complete the proof of the theorem.
Let $\pi:\mathcal C \rightarrow \mathcal C^+$ be the quotient functor
and $\pi^!:\mathcal C^+ \rightarrow \mathcal C$ be a left adjoint.
In $\mathcal C$ we have that $\pi^! P(\lambda) \cong P(\lambda)$
and $\pi^! \Delta(\lambda) \cong \Delta(\lambda)$ for each $\lambda \in \Lambda^+$;
the latter isomorphism follows by the graded analog of Lemma~\ref{shriek}.
Thus $\pi^!$ induces a linear map
$\textstyle\bigwedge^{\bn^+,\bc^+} V_I \hookrightarrow
\bigwedge^{\bn,\bc} V_I$
such that $b^+_\lambda \mapsto b_\lambda$ and $v^+_\lambda \mapsto v_\lambda$
for each $\lambda \in \Lambda^+$.
It follows immediately that $b_\lambda = b_\lambda^+ \otimes v_{\kappa_l}$
for each $\lambda \in \Lambda^+$.
Hence $\psi(b_\lambda^+ \otimes v_{\kappa_l}) = b_\lambda^+ \otimes v_{\kappa_l}$
as required.
\end{proof}

Now we apply the positivity from Corollary~\ref{pg} (which we recall
depended itself on the results from
\cite{BGS}, \cite{Back} which exploit relations to geometry of flag
varieties) to get the following; see also \cite[Theorem 8.8]{W3} for the generalization of this to more general tensor products (with a different proof via geometry of certain quiver varieties).

\begin{Corollary}
For finite $I$,
$\{b_\lambda\:|\:\lambda\in\Lambda\}$ is Lusztig's canonical basis for
$\bigwedge^{\bn,\bc} V_I$ from \cite[$\S$27.3]{Lubook},
while $\{b_\lambda^*\:|\:\lambda\in\Lambda\}$ is the dual canonical basis.
\end{Corollary}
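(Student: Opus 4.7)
The plan is to apply Lusztig's uniqueness characterization of the canonical basis of $\bigwedge^{\bn,\bc} V_I$, which says that $\{b_\lambda^L\}$ is determined by two properties: bar-invariance ($\tilde\psi(b_\lambda^L) = b_\lambda^L$) and unitriangularity with respect to the monomial basis with off-diagonal coefficients in $q\Z[q]$. Everything needed to verify these for $\{b_\lambda\}$ has now been established in the excerpt; the argument is essentially a matter of assembling the pieces.

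First, I would invoke Theorem~\ref{klthm} to identify the antilinear involution $\psi$ coming from the graded duality $\#$ with Lusztig's bar involution $\tilde\psi$ on $\bigwedge^{\bn,\bc} V_I$. Combined with (\ref{a}), this immediately gives bar-invariance: $\tilde\psi(b_\lambda) = \psi(b_\lambda) = b_\lambda$ for each $\lambda \in \Lambda$.

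Next, for the unitriangularity, I would appeal to the BGG reciprocity formula (\ref{trans}), which writes $b_\lambda = \sum_{\mu} d_{\lambda,\mu}(q) v_\mu$ with $d_{\lambda,\mu}(q) = [\Delta(\mu):L(\lambda)]_q$. The highest weight structure forces $d_{\lambda,\lambda}(q) = 1$ and $d_{\lambda,\mu}(q) = 0$ unless $\mu \geq \lambda$, so $b_\lambda = v_\lambda + \sum_{\mu > \lambda} d_{\lambda,\mu}(q) v_\mu$. The mixedness from Corollary~\ref{pg}, specifically property (\ref{pos1}), tells us that $d_{\lambda,\mu}(q) \in q\N[q]$ for $\mu > \lambda$. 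By Lusztig's uniqueness theorem \cite[$\S$27.3]{Lubook}, this identifies $\{b_\lambda\}$ with his canonical basis.

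For the dual canonical basis $\{b_\lambda^L{}^*\}$, recall it is defined as the $\Q(q)$-basis of $\bigwedge^{\bn,\bc} V_I$ dual to the canonical basis with respect to the standard symmetric bilinear form on $\bigwedge^{\bn,\bc} V_I$ (the one for which the monomial basis is orthonormal). By the computation following (\ref{formies}), the pairing $(-,-)$ constructed from the adjunctions satisfies $(v_\lambda,v_\mu) = \delta_{\lambda,\mu}$ on monomials, so it agrees with the standard form on the intersection of their domains. Since the pairing $(-,-)$ makes $\{b_\lambda\}$ and $\{b_\lambda^*\}$ dual bases (an immediate consequence of the definition of $b_\lambda^*$), and since $\{b_\lambda\}$ is Lusztig's canonical basis by the previous step, $\{b_\lambda^*\}$ must be the dual canonical basis.

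The only real step requiring care is ensuring that the sign conventions and normalizations in Lusztig's setup (for instance, the identification $q \leftrightarrow v^{-1}$ noted at the beginning of $\S$\ref{qgp}) line up with the $q\N[q]$-positivity we actually produce; no step of the argument is itself an obstacle since all the hard inputs (Theorem~\ref{klthm}, Corollary~\ref{pg}, and Lusztig's uniqueness) are in place.
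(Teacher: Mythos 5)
Your proposal is correct and follows essentially the same route as the paper's own proof: Theorem~\ref{klthm} plus~(\ref{a}) give $\tilde\psi$-invariance, and the positivity from Corollary~\ref{pg} (the paper cites~(\ref{pos2}) while you cite the equivalent~(\ref{pos1}), but both say $d_{\lambda,\mu}(q)\in q\N[q]$) gives the unitriangularity $b_\lambda \in v_\lambda + \sum_{\mu>\lambda}q\Z[q]v_\mu$ needed for Lusztig's characterization. Your explicit treatment of the dual canonical basis via the agreement of the two bilinear forms is a detail the paper leaves implicit, but it is not a different argument.
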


\begin{proof}
Corollary~\ref{pg}, (\ref{pos2}) and (\ref{a}) 
show that
$b_\lambda$ is a $\psi$-invariant vector in $v_\lambda + \sum_{\mu > \lambda}
q\Z[q] v_\mu$.
\end{proof}

This shows for finite $I$ that the polynomials $d_{\lambda,\mu}(q)$ from (\ref{decomp}) are the entries of the transition matrix from the canonical to the monomial basis of $\bigwedge^{\bn,\bc} V_I$, while the polynomials
$p_{\lambda,\mu}(-q)$ are the entries of the inverse transition
matrix.
In particular, the $p_{\lambda,\mu}(q)$ are certain finite type $A$ parabolic Kazhdan-Lusztig polynomials;
see e.g. \cite{FKK} or \cite[Remark 14]{Bdual} where this elementary
combinatorial identification is made explicitly.

To determine the polynomials $d_{\lambda,\mu}(q)$ and
$p_{\lambda,\mu}(q)$ when $I$ is infinite
it just remains to pick a finite subinterval $J \subset I$ such that
$\lambda,\mu \in \Lambda_J$. Then it is immediate from the definition 
(\ref{decomp}) and exactness of the quotient functor
that $d_{\lambda,\mu}(q)$ computed in $\mathcal C$ is the same as 
in $\mathcal C_J$. The same thing holds for $p_{\lambda,\mu}(q)$
in view of (\ref{ext1})--(\ref{ext2}).
Thus again all $p_{\lambda,\mu}(q)$ are identified
with some finite type $A$ parabolic Kazhdan-Lusztig polynomials.
If we specialize to $q=1$ this proves the super Kazhdan-Lusztig
conjecture as formulated in \cite[Conjecture 4.32]{B} (see also \cite{BcatO}), and all of its subsequent generalizations
to other Borels and parabolics. Note that this also establishes
\cite[Conjecture 3.13]{CLW} and \cite[Conjecture 2.28(i--ii)]{B}, showing the coefficients of this
canonical basis are positive, since they are identified with the
manifestly positive $d_{\lambda,\mu}(q)$, and similarly the coefficients
of the dual canonical basis are the manifestly alternating $p_{\lambda,\mu}(-q)$.

\begin{Remark}\rm
The basis called ``canonical basis'' in \cite{B} is a twisted version of the canonical basis here.
It corresponds to the indecomposable tilting objects
 rather than the indecomposable projectives in $\mathcal C$. In more detail,
let $T(\lambda) 
\in \mathcal C$ be the unique (up to isomorphism) 
$\circledast$-self-dual object
possessing a graded $\Delta$-flag with
$\Delta(\lambda)$ at the bottom and other sections of
the form $Q^n \Delta(\mu)$ for $\mu < \lambda$ and $n\in \Z$.
The existence of such an object follows by a construction due to Ringel
involving taking iterated 
extensions of standard objects;
cf. \cite{Btilt} which justifies 
in the context of super parabolic category $\mathcal O$
that Ringel's construction terminates after finitely many steps.
Since $\mathcal C$ is mixed the higher sections of a graded $\Delta$-flag
of $T(\lambda)$ are actually all
of the form $Q^n \Delta(\mu)$ for $\mu < \lambda$ and $n < 0$.
Let 
$$
\textstyle
\tilde b_\lambda := [T(\lambda)] \in \bigwedge^{\bn,\bc} V_I.
$$
When $I$ is finite this gives us the {\em twisted canonical basis}
$\{\tilde b_\lambda\:|\:\lambda \in \Lambda\}$ for $\bigwedge^{\bn,\bc} V_I$; 
each
$\tilde b_\lambda$ here is 
the unique $\psi^*$-invariant vector in
$v_\lambda + \sum_{\mu < \lambda} q^{-1} \Z[q^{-1}] v_\mu$.
In any case we have that
$$
\tilde b_\lambda = \sum_{\mu \in \Lambda} d_{\tilde \lambda, \tilde \mu}(q^{-1})
 v_\mu,
$$
where we write $\tilde\lambda$ for the 
$01$-matrix obtained from $\lambda$ by reversing the order of its rows.
This follows from 
\cite[Remark 3.10]{LW}, which implies that the
 Ringel dual of $\mathcal C$ has the
induced structure of a 
$U_q\mathfrak{sl}_I$-tensor product categorification 
of type $(\tilde \bn, \tilde \bc)$
where $\tilde \bn = (n_l,\dots,n_1)$ and $\tilde \bc = (c_l,\dots,c_1)$.
\end{Remark}

\begin{Remark}\rm
It was established already in \cite[Theorem 3.12]{CLW} that the structure constants describing the
actions of the quantum divided powers
$f_i^{(r)} / [r]!$ and $e_i^{(r)} / [r]!$ on 
the bases $\{b_\lambda\:|\:\lambda\in\Lambda\}$ and
$\{b_\lambda^*\:|\:\lambda\in\Lambda\}$
all belong to $\N[q,q^{-1}]$, proving \cite[Conjecture 2.28(iii--iv)]{B}.
Our results give a second proof of this conjecture since $f_i^{(r)}$ and $e_i^{(r)}$ have been
categorified by $F_i^{(r)}$ and $E_i^{(r)}$. 
In fact, this argument generalizes to show that any element of
Lusztig's canonical basis of the modified quantum algebra $\dot{U}_q \mathfrak{sl}_I$ 
acts with coefficients
in $\N[q,q^{-1}]$, since by \cite[Theorem A(b)]{W3} each of these basis
vectors can be lifted to a functor acting on any tensor product categorification.
\end{Remark}

\end{document}